\newtheorem{theorem}{Theorem}[section]
\newtheorem{lemma}[theorem]{Lemma}
\newtheorem{conjecture}[theorem]{Conjecture}
\newtheorem{proposition}[theorem]{Proposition}
\newtheorem{definition}[theorem]{Definition\rm}
\newtheorem{remark}{Remark}
\newcommand{\T}{\ensuremath{\mathbb{T}}}
\newcommand*{\R}{\ensuremath{\mathbb{R}}}
\newcommand*{\N}{\ensuremath{\mathbb{N}}}
\newcommand*{\Z}{\ensuremath{\mathbb{Z}}}
\newcommand*{\C}{\ensuremath{\mathbb{C}}}
\newcommand*{\supp}{\ensuremath{\mathrm{supp\,}}}
\renewcommand*{\div}{\ensuremath{\mathrm{div\,}}}
\newcommand*{\tr}{\ensuremath{\mathrm{tr\,}}}
\newcommand*{\Id}{\ensuremath{\mathrm{Id}}}
\newcommand{\eps}{\varepsilon}
\newcommand*{\curl}{\ensuremath{\mathrm{curl\,}}}
\renewcommand*{\P}{\ensuremath{\mathcal{P}}}
\newcommand*{\RR}{\ensuremath{\mathcal{R}}}
\newcommand{\norm}[1]{\left\|#1\right\|}
\newcommand{\abs}[1]{\left|#1\right|}
\newcommand{\vertiii}[1]{{\left\vert\kern-0.25ex\left\vert\kern-0.25ex\left\vert #1 
    \right\vert\kern-0.25ex\right\vert\kern-0.25ex\right\vert}}
\begin{document}

\title{Dissipative Euler flows with Onsager-critical spatial regularity}

\author[1]{Tristan Buckmaster\thanks{ERC Grant Agreement No. 277993}}
\author[2]{Camillo De Lellis\thanks{Grant 200020-146349 of the Swiss National Foundation}}
\author[1]{L\'aszl\'o Sz\'ekelyhidi Jr.\thanks{ERC Grant Agreement No. 277993}}

\affil[1]{Universit\"at Leipzig}

\affil[2]{Universit\"at Z\"urich}


\maketitle

\begin{abstract}
For any $\varepsilon >0$ we show the existence of continuous periodic weak solutions $v$ of the Euler equations which do not conserve the kinetic energy and belong to the space $L^1_t (C_x^{\sfrac{1}{3}-\eps})$, namely $x\mapsto v (x,t)$ is $(\sfrac{1}{3}-\eps)$-H\"older continuous in space at a.e. time $t$ and the integral $\int [v(\cdot, t)]_{\sfrac{1}{3}-\eps}\, dt$ is finite. A well-known open conjecture of L. Onsager claims that such solutions exist even in the class $L^\infty_t (C_x^{\sfrac{1}{3}-\eps})$.
\end{abstract}

\section{Introduction}

In what follows, $\T^3$ denotes the $3$-dimensional flat torus, i.e. $\T^3 = \R^3/\Z^3$. We consider $L^2$ functions $v: \T^3 \times [0, 1] \to \mathbb R^3$ for which there is a (distributional) pressure field $p$ such that the Euler equations
\begin{equation}\label{e:Euler}
\left\{\begin{array}{l}
\partial_t v + \div (v\otimes v) + \nabla p =0\\ \\
\div v = 0
\end{array}\right.
\end{equation}
hold (in the sense of distributions). Such $v$ will be called a {\em weak solution} of \eqref{e:Euler}. In some other occasions the pressure field will be a specified function, and $(v,p)$ will again solve \eqref{e:Euler} distributionally: such pairs will also be called {\em weak solutions} of \eqref{e:Euler}.

Given a weak solution $v$, we define its kinetic energy $E:[0,1]\rightarrow \R$ by the formula
\begin{equation}\label{e:energy}
E (t) := \frac{1}{2} \int_{\T^3} |v (x,t)|^2\, dx 
\end{equation}
In the case of smooth solutions to \eqref{e:Euler}, a simple calculation yields the conservation of kinetic energy. This formal calculation however does not necessary hold for weak solutions. This was first demonstrated by V.~Scheffer's construction of a nontrivial weak solution with compact support in space and time in $\R^2$ \cite{Scheffer93}. Subsequently, a different construction (still in the 2-dimensional case) was provided by A.~Shnirelman \cite{Shnirelmandecrease} and then for general dimension $n\geq 2$ by the second and third author in \cite{DS1,DS2}. We note that these constructions lead to bounded but in general discontinuous weak solutions $v$. Furthermore, in \cite{DS1,DS2} the failure of energy conservation (as well as the non-uniqueness) were identified as a weak $h$-principle - we refer to the survey \cite{DSsurvey} for more details. The first construction of a {\it continuous }weak solution where $E(t)$ is not constant, was presented in \cite{DS3} -- moreover, the method of \cite{DS3} was further adapted to show also the non-uniqueness of continuous and H\"older-continuous {\it admissible } weak solutions in \cite{Daneri} (see also \cite{Choffrut} for associated $h$-principle statements). 

The relation between non-uniqueness and non-conservation of the energy for various classes of weak solutions is not yet clear. In this paper we will concentrate on the latter, primarily because of its relevance to the Kolmogorov-Onsager theory of fully developed turbulence in 3D and the dissipation anomaly \cite{Kolmogorov,Onsager}. We refer to \cite{FrischBook} for an excellent exposition of the K41 theory and to \cite{EyinkSreenivasan} for a survey of Onsager's contributions. In 1949 L. Onsager famously made the following conjecture \cite{Onsager}:
\begin{conjecture}\label{c:Onsager}\ 
\begin{enumerate}
\item[(a)] If $v$ is a continuous solution of \eqref{e:Euler} and there exists an $\eps>0$ such that
\begin{equation}\label{e:Holder-Onsager}
\sup_{t\in [0,1]}\; [v (\cdot, t)]_{\sfrac13-\eps} := \sup_{t\in [0,1]}\;  \sup_{x\neq y} \frac{|v (x,t) - v (y,t)|}{|x-y|^{\sfrac13-\eps}} < \infty\, ,
\end{equation}
then the total kinetic energy is constant: $E(t)=E(0)$.
\item[(b)] For each $\eps>0$ there is a continuous solution $v$ of \eqref{e:Euler} such that
\begin{equation}\label{e:Holder-Onsager-2}
\sup_{t\in [0,1]}\; [v (\cdot, t)]_{\sfrac13-\eps} < \infty
\end{equation}
and the kinetic energy $E$ is not constant.
\end{enumerate}
\end{conjecture}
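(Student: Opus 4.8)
\medskip
\noindent\textbf{Proof proposal.} The two halves of the conjecture are of opposite character --- (a) is a rigidity/regularity statement, (b) a flexibility/construction statement --- and they meet at the critical exponent $\sfrac13$, so I would treat them by entirely different methods. For (a) the plan is the mollification--commutator argument of Constantin--E--Titi. Let $\varphi_\ell$ be a standard spatial mollifier at scale $\ell$, put $v_\ell:=v\ast\varphi_\ell$, $p_\ell:=p\ast\varphi_\ell$, and convolve \eqref{e:Euler}:
\ali{ \partial_t v_\ell + \div(v_\ell\otimes v_\ell) + \nabla p_\ell = \div R_\ell,\qquad R_\ell := v_\ell\otimes v_\ell - (v\otimes v)\ast\varphi_\ell . }
From this mollified identity $\partial_t v_\ell$ is smooth in $x$ and locally bounded, so $t\mapsto v_\ell(\cdot,t)$ is Lipschitz with values in $L^2(\T^3)$ and we may differentiate its energy legitimately:
\ali{ \frac{d}{dt}\int_{\T^3}\tfrac12|v_\ell|^2\,dx = \int_{\T^3}\div R_\ell\cdot v_\ell\,dx = -\int_{\T^3} R_\ell : \nabla v_\ell\,dx . }
Writing $\theta$ for the H\"older exponent in \eqref{e:Holder-Onsager}, the classical commutator estimate gives $\norm{R_\ell(\cdot,t)}_{C^0}\lesssim\ell^{2\theta}[v(\cdot,t)]_\theta^2$ and $\norm{\nabla v_\ell(\cdot,t)}_{C^0}\lesssim\ell^{\theta-1}[v(\cdot,t)]_\theta$, so the right-hand side is $O(\ell^{3\theta-1})$ \emph{uniformly in $t$}; since $3\theta-1>0$ exactly when $\theta>\sfrac13$, for any H\"older exponent above the critical value it vanishes as $\ell\to0$, and integrating in $t$ together with $v_\ell(\cdot,t)\to v(\cdot,t)$ in $L^2$ for every $t$ gives $E(t)=E(0)$. (This forces part (a) to be read with exponent $\sfrac13+\eps$, consistently with (b) below; the endpoint $\theta=\sfrac13$ needs the Besov refinement $v\in L^3_t B^{\sfrac13}_{3,c_0}$.)

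For (b) the plan is the convex-integration iteration of \cite{DS3}. One works with the \emph{Euler--Reynolds system} $\partial_t v+\div(v\otimes v)+\nabla p=\div\mathring R$, $\div v=0$, with $\mathring R$ symmetric and traceless, and constructs a sequence $(v_q,p_q,\mathring R_q)$ indexed by a geometrically growing frequency $\lambda_q$ ($\lambda_{q+1}\approx\lambda_q^{\,b}$, $b>1$) and an amplitude $\delta_q:=\lambda_q^{-2\beta}$ with $\beta<\sfrac13$, such that
\ali{ \norm{v_{q+1}-v_q}_{C^0}\lesssim\delta_{q+1}^{1/2},\quad \norm{v_{q+1}-v_q}_{C^1}\lesssim\delta_{q+1}^{1/2}\lambda_{q+1},\quad \norm{\mathring R_{q+1}}_{C^0}\lesssim\delta_{q+2}\lambda_{q+1}^{-\alpha} }
for some $\alpha>0$, and with the kinetic energy of $v_{q+1}$ converging to a prescribed strictly decreasing $e\in C^\infty([0,1])$ at rate $O(\delta_{q+2})$. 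Starting from $v_0\equiv0$ (with $p_0,\mathring R_0$ encoding $e$), the increment $w_{q+1}:=v_{q+1}-v_q$ is a finite superposition of phase-shifted \emph{Beltrami flows} at frequency $\lambda_{q+1}$ with slowly varying amplitudes $a_\xi(x,t)$, chosen through an algebraic identity $\sum_\xi a_\xi^2\,\xi\otimes\xi=\rho\,\Id-\mathring R_q$ so that the low-frequency part of $w_{q+1}\otimes w_{q+1}$ cancels $\mathring R_q$ up to an admissible remainder, and so that $\tfrac12\int|w_{q+1}|^2\approx\tfrac32\int\rho$ steers the energy toward $e(t)$. One then defines $\mathring R_{q+1}$ by applying a bounded right inverse $\mathcal R$ of $\div$ (which gains a factor $\lambda_{q+1}^{-1}$ on high-frequency inputs) to the high--high self-interaction of $w_{q+1}$, the cross terms $w_{q+1}\otimes v_q+v_q\otimes w_{q+1}$, the ``Nash'' error $\partial_t w_{q+1}+(v_q\cdot\nabla)w_{q+1}$, and the mollification errors from coarse-graining $v_q,\mathring R_q$ at an intermediate scale $\ell_q$ with $\lambda_q^{-1}\ll\ell_q\ll\lambda_{q+1}^{-1}$. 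Summing, $v:=\sum_q w_{q+1}\in C^0_{t,x}$, and since $[w_{q+1}(\cdot,t)]_\theta\lesssim\norm{w_{q+1}}_{C^0}^{1-\theta}\norm{w_{q+1}}_{C^1}^\theta\lesssim\lambda_{q+1}^{\theta-\beta}$, the series $\sum_q[w_{q+1}(\cdot,t)]_\theta$ converges whenever $\theta<\beta$, giving $v(\cdot,t)\in C^\theta_x$, while the energy estimate gives $E=e$, not constant.

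The hard part is pushing $\beta\uparrow\sfrac13$ while retaining the $\sup_t$ bound of \eqref{e:Holder-Onsager-2}. The whole scheme is a balancing of $b$ against $\beta$: each term in $\mathring R_{q+1}$ scales as a power of $\lambda_q$ that must stay below $\delta_{q+2}\approx\lambda_q^{-2\beta b}$. The mollification errors alone force $b>1$, hence $\beta<\sfrac13$ strictly; but the real bottleneck is the \emph{transport} of the microstructure --- the $a_\xi$ must be almost constant along the coarse flow of $v_q$, yet that flow shears the fast Beltrami phases, and over a time interval long enough to profit from the oscillation this spreads their frequency support and degrades the Nash error. Optimizing the time-decomposition scale $\tau_q$ against $\ell_q$ and $\lambda_{q+1}$ is precisely what capped the exponent at $\sfrac15$ in earlier work, and at the critical exponent $\sfrac13$ this transport estimate is only \emph{logarithmically} divergent, so $\sum_q[w_{q+1}(\cdot,t)]_{\sfrac13-\eps}$ need not converge uniformly in $t$. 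The way forward I would adopt --- and which the present paper carries out --- is to let the $C^1$-type bound on $w_{q+1}$ degrade on a set of times of small, summable measure $\mu_q$, so that $[w_{q+1}(\cdot,t)]_{\sfrac13-\eps}$ is controlled off a small bad set and its \emph{time integral} remains summable even though its supremum need not; this yields $v\in L^1_t(C_x^{\sfrac13-\eps})$ --- i.e.\ $x\mapsto v(x,t)$ is $(\sfrac13-\eps)$-H\"older at a.e.\ $t$ and $\int_0^1[v(\cdot,t)]_{\sfrac13-\eps}\,dt<\infty$ --- with $E$ non-constant. Upgrading this $L^1_t$ conclusion to the uniform-in-time statement of part (b), for which one expects to need additional ingredients (gluing of exact solutions, a more efficient family of building blocks than Beltrami flows), is the genuine remaining difficulty in Onsager's conjecture.
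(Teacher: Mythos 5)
This statement is a \emph{conjecture}, not a theorem of the paper; the paper itself only records that part (a) has been settled (by Eyink and then Constantin--E--Titi) and that part (b) remains open, with Theorem \ref{t:main} proving the strictly weaker $L^1_t(C_x^{\sfrac13-\eps})$ version. Your treatment is faithful to that state of affairs. You correctly spot that part (a) as printed (exponent $\sfrac13-\eps$) would contradict part (b) and must be read with exponent $\sfrac13+\eps$ (or the Besov refinement at the endpoint), and your sketch of the Constantin--E--Titi mollification/commutator argument for (a) is accurate. For (b) you correctly say it is open, and you describe the convex-integration route and identify the genuine bottleneck: the transport of the slow amplitudes along the coarse flow degrades the $C^1$-type bound over oscillation-length time scales, so that at $\beta\to\sfrac13$ one only controls $[w_{q+1}(\cdot,t)]_{\sfrac13-\eps}$ away from a time set of small measure, giving $L^1_t$ rather than $L^\infty_t$ control --- precisely what the paper does via its $j_q(\alpha)$ bookkeeping and the decaying measure bound \eqref{e:regions_est}.

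One small point of divergence from the paper: your sketch of the convex integration step frames the iteration around prescribing a decreasing energy profile $e(t)$ (the \cite{DS4}/\cite{BDS} formulation), whereas the paper's Theorem \ref{t:main} instead produces a solution with compact temporal support (the \cite{Isett}/\cite{Buckmaster} formulation), where the ``overlapping region'' time cut-offs make the local estimates \eqref{e:w_est} degrade but keep the $L^1_t$ accumulation summable. Either device produces non-constant energy, so this does not affect the correctness of your outline; but if your intent is to shadow the paper you should note that the energy-prescription framework is not the one actually used for Theorem \ref{t:main}. Your conclusion --- that upgrading $L^1_t$ to $L^\infty_t$ is the remaining open problem --- is exactly the paper's own framing of the gap.
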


Statement (a) has been completely settled: a slightly weaker statement was first proved by Eyink in \cite{Eyink} and later the full statement was proven by Constantin, E and Titi in \cite{ConstantinETiti} (cf.\ \cite{RobertDuchon,CCFS2007}). In fact \cite{ConstantinETiti} contains the following stronger statement:
\begin{theorem}\label{t:L^3-conserves}
For any $\eps>0$ any solution $v\in L^3_t (C_x^{\sfrac{1}{3}+\eps})$ of \eqref{e:Euler} preserves the total kinetic energy.
\end{theorem}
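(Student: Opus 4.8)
**The plan is to prove Theorem \ref{t:L^3-conserves} (the Constantin–E–Titi result) via a commutator estimate using mollification.** The strategy rests on the observation that the only obstruction to energy conservation in the weak formulation is the quadratic nonlinearity, which does not commute with mollification. Fix a standard mollifier $\varphi_\ell(x) = \ell^{-3}\varphi(x/\ell)$ on $\T^3$ and set $v_\ell := v * \varphi_\ell$ (mollification in space only). Then $v_\ell$ is smooth in $x$, still divergence-free, and satisfies the mollified Euler equations
\[
\partial_t v_\ell + \div (v_\ell \otimes v_\ell) + \nabla p_\ell = \div R_\ell, \qquad R_\ell := v_\ell \otimes v_\ell - (v\otimes v)*\varphi_\ell.
\]
Since $v_\ell$ is smooth in $x$ and (by the equation) Lipschitz in $t$, one may legitimately pair the equation with $v_\ell$ and integrate by parts on $\T^3$ to get the exact identity
\[
\frac{d}{dt} \frac12 \int_{\T^3} |v_\ell|^2\, dx = \int_{\T^3} \nabla v_\ell : R_\ell\, dx.
\]
Integrating in $t$ from $0$ to any $t\in[0,1]$, it then suffices to show that the right-hand side tends to $0$ in $L^1_t$ as $\ell \to 0$, together with the (routine) fact that $\frac12\int |v_\ell(\cdot,t)|^2\,dx \to E(t)$ uniformly in $t$.

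\textbf{The heart of the matter is the commutator estimate} controlling $\nabla v_\ell$ and $R_\ell$. For the first factor, a standard mollification bound gives $\|\nabla v_\ell(\cdot,t)\|_{\infty} \lesssim \ell^{\sfrac13+\eps-1}[v(\cdot,t)]_{\sfrac13+\eps} = \ell^{\eps-\sfrac23}[v(\cdot,t)]_{\sfrac13+\eps}$. For the second, I would use the Constantin–E–Titi commutator lemma: writing $\delta_z v(x) := v(x+z)-v(x)$, one has the algebraic identity
\[
R_\ell(x) = \int \varphi_\ell(z)\, \delta_z v(x) \otimes \delta_z v(x)\, dz \;-\; (v_\ell(x)-v(x))\otimes(v_\ell(x)-v(x)),
\]
so that $\|R_\ell(\cdot,t)\|_\infty \lesssim \ell^{2(\sfrac13+\eps)} [v(\cdot,t)]_{\sfrac13+\eps}^2 = \ell^{\sfrac23 + 2\eps}[v(\cdot,t)]_{\sfrac13+\eps}^2$. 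Multiplying the two bounds,
\[
\left| \int_{\T^3} \nabla v_\ell : R_\ell\, dx \right| \lesssim \ell^{3\eps} \, [v(\cdot,t)]_{\sfrac13+\eps}^3.
\]

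\textbf{The remaining issue — and the one requiring the integrability hypothesis — is the time dependence.} The pointwise-in-$t$ bound above is $\lesssim \ell^{3\eps}\,[v(\cdot,t)]_{\sfrac13+\eps}^3$, and since $v\in L^3_t(C^{\sfrac13+\eps}_x)$ the function $t\mapsto [v(\cdot,t)]_{\sfrac13+\eps}^3$ is in $L^1_t$; hence $\int_0^1 |\int \nabla v_\ell : R_\ell\,dx|\,dt \lesssim \ell^{3\eps}\|v\|_{L^3_t C^{\sfrac13+\eps}_x}^3 \to 0$. This is exactly why the $L^3$ norm in time (rather than, say, $L^\infty_t$ or $L^2_t$) is the natural hypothesis: the cubic structure of the commutator term forces three factors of the Hölder seminorm, and $L^3_tC^{\sfrac13+\eps}_x$ is precisely the space that makes their product integrable in time while leaving a positive power $\ell^{3\eps}$ to spare. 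One should also check the elementary convergence $\sup_t |\tfrac12\int|v_\ell|^2 - E(t)| \to 0$, which follows from $v(\cdot,t)\to v(\cdot,t)$ in $L^2_x$ uniformly in $t$ (a consequence of $v\in C^0_tL^2_x$, itself implied by the weak formulation together with $v\in L^\infty_tC^{\sfrac13+\eps}_x$, or more carefully extracted from the hypothesis). The main obstacle I anticipate is not any single estimate but the bookkeeping needed to justify the pairing of the mollified equation with $v_\ell$ rigorously — ensuring $t\mapsto v_\ell(\cdot,t)$ is absolutely continuous into a suitable space so that the fundamental theorem of calculus applies — and handling the pressure term, which drops out because $\div v_\ell = 0$ but whose disappearance must be justified at the level of distributions before taking limits.
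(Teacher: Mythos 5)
Note first that the paper does not contain its own proof of Theorem~\ref{t:L^3-conserves}; the result is quoted from \cite{ConstantinETiti}. Your proposal reproduces essentially that argument, and the core estimates are correct: $\|\nabla v_\ell(\cdot,t)\|_\infty\lesssim\ell^{\eps-\sfrac{2}{3}}[v(\cdot,t)]_{\sfrac{1}{3}+\eps}$, $\|R_\ell(\cdot,t)\|_\infty\lesssim\ell^{\sfrac{2}{3}+2\eps}[v(\cdot,t)]_{\sfrac{1}{3}+\eps}^2$, hence the flux term is $O\bigl(\ell^{3\eps}[v(\cdot,t)]_{\sfrac{1}{3}+\eps}^3\bigr)$ for a.e.\ $t$, and $o(1)$ in $L^1_t$ precisely because $v\in L^3_t(C_x^{\sfrac{1}{3}+\eps})$. (Two immaterial sign slips: with $R_\ell=v_\ell\otimes v_\ell-(v\otimes v)*\varphi_\ell$ the commutator identity reads $R_\ell=(v_\ell-v)\otimes(v_\ell-v)-\int\varphi_\ell\,\delta_zv\otimes\delta_zv\,dz$, and the mollified energy balance is $\frac{d}{dt}\frac12\int|v_\ell|^2=-\int\nabla v_\ell:R_\ell$; neither affects the magnitude estimate.)

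The one place your writeup is on thin ice is the final appeal to $v\in C^0_tL^2_x$ to get convergence $\frac12\int|v_\ell(\cdot,t)|^2\to E(t)$ uniformly in $t$. Strong $L^2$-continuity in time is \emph{not} implied by $L^3_t C_x^{\sfrac{1}{3}+\eps}$, nor even by $L^\infty_t C_x^{\sfrac{1}{3}+\eps}$; combined with weak $L^2$-continuity (which does follow from the weak formulation) it is in fact essentially equivalent to the constancy of $E$ you are trying to prove, so assuming it begs the question. Fortunately it is not needed. Either (i) observe that you only need the convergence at the two fixed endpoint times $s\in\{0,t\}$, which holds once $v(\cdot,s)\in L^2_x$ (true for the weakly $L^2$-continuous representative), together with absolute continuity of $t\mapsto\tfrac12\int|v_\ell|^2$, which is easy since $\partial_t v_\ell=-\div((v\otimes v)*\varphi_\ell)-\nabla p_\ell\in L^{3/2}_t C^\infty_x$; or, cleaner and closer to \cite{ConstantinETiti}, (ii) test the mollified equation against $\phi(t)v_\ell$ with $\phi\in C^\infty_c(]0,1[)$ and send $\ell\to0$ to obtain
\[
\int_0^1\phi'(t)\,E(t)\,dt \;=\;\lim_{\ell\to0}\int_0^1\phi(t)\int_{\T^3}\nabla v_\ell:R_\ell\,dx\,dt\;=\;0\,,
\]
which gives $E'=0$ in the sense of distributions on $]0,1[$ and sidesteps the endpoint issue entirely.
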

We recall that the mixed space-time norms $L^q_t (C^\alpha_x)$ for $q\in [1, \infty]$ and $\alpha\in ]0,1[$ are defined as
\[
\|v\|_{L^q_t (C^\alpha_x)} := \begin{cases}\left(\int_0^1 \left(\|v (\cdot, t)\|_\alpha\right)^q\, dt\right)^{1/q}& q<\infty\\
\sup_{t\in[0,1]}\|v (\cdot, t)\|_\alpha& q=\infty,
\end{cases}
\]
where $\|f\|_\alpha=\|f\|_0+[f]_\alpha$ is the usual H\"older-norm for functions $f:\T^3\to\R^3$, and $v\in L^q_t (C^\alpha_x)$ provided $\|v\|_{L^q_t (C^\alpha_x)}<\infty$.

The focus of this paper is Statement (b), which is still open. Following the construction introduced in \cite{DS3}, H\"older-continuous weak solutions have been constructed in \cite{DS4,Isett,BDS,Buckmaster,Isett3}. There are two types of statements: 
\begin{enumerate}
\item[(i)] In \cite{DS4} and \cite{BDS} weak solutions are constructed with $E(t)=e(t)$ for all $t\in[0,1]$ {\it for any prescribed smooth function} $e=e(t)>0$;
\item[(ii)] In \cite{Isett,Buckmaster,Isett3} a (nontrivial) weak solution {\it with compact support in time } is constructed.
\end{enumerate}
Obviously both statements lead to a weak solution with non-constant energy, thus aiming towards Statement (b) in Onsager's conjecture. Concerning the actual regularity of the solutions, the statements are
\begin{enumerate}
\item[(iii)] In \cite{DS4} $v\in L^{\infty}_t (C^{\sfrac{1}{10}-\eps}_x)$;
\item[(iv)] In \cite{Isett,Isett3} and \cite{BDS} $v\in L^{\infty}_t (C^{\sfrac{1}{5}-\eps}_x)$;
\item[(v)] In \cite{Buckmaster} $v\in L^{\infty}_t (C^{\sfrac{1}{5}-\eps}_x)$ and in addition $v(\cdot, t)\in C^{\sfrac{1}{3}-\eps}_x$ for almost every time $t\in[0,1]$.
\end{enumerate} 
In fact the solutions in (iii)-(iv) have the same H\"older-regularity in time as in space -- it has been shown in \cite{Isett2} that this improved regularity in time is not an artifact of the method of construction, but rather a regularization property of the equations themselves. In view of Theorem \ref{t:L^3-conserves} one could however speculate that the ``threshold'' for the energy conservation should in fact be $L^3_t (C_x^{\sfrac{1}{3}})$. The goal of this note is to show the following

\begin{theorem}\label{t:main}
For any $\eps>0$ there exists a non-trivial continuous weak solution $v: \T^3 \times \mathbb R\to \mathbb R^3$ of \eqref{e:Euler}, with $v\in L^1_t (C_x^{\sfrac{1}{3}-\eps})$ with compact support in time.
\end{theorem}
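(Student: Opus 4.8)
The plan is to obtain $v$ as the limit of a convex-integration iteration on the Euler--Reynolds system, following the scheme of \cite{DS3} and its H\"older refinements \cite{DS4, BDS, Buckmaster}. One constructs smooth triples $(v_q, p_q, \mathring R_q)$, $q\ge 0$, with
\begin{equation*}
\partial_t v_q + \div(v_q\otimes v_q) + \nabla p_q = \div\mathring R_q, \qquad \div v_q = 0,
\end{equation*}
starting from $v_0\equiv 0$ and a fixed nonzero smooth symmetric tensor $\mathring R_0$ supported in time in an interval $I\Subset(0,1)$, and at each stage puts $v_{q+1}=v_q+w_{q+1}$ with $w_{q+1}$ a divergence-free perturbation built from (geometric families of) Beltrami flows oscillating at spatial frequency $\lambda_{q+1}$ with amplitude $\sim\delta_{q+1}^{1/2}$, so that $\div(w_{q+1}\otimes w_{q+1})$ absorbs $\div\mathring R_q$ to leading order modulo a pressure. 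All perturbations stay supported in time in $I$. Choosing $\lambda_q\to\infty$, $\delta_q\to 0$ with $\sum_q\delta_q^{1/2}<\infty$ gives uniform convergence $v_q\to v$, hence continuity; $\|\mathring R_q\|_0\to 0$ makes $v$ a distributional solution of \eqref{e:Euler} on $\T^3\times\R$; compact support in time is then automatic, and non-triviality follows since $w_1$ (with $w_1\otimes w_1\approx\delta_1\Id-\mathring R_0$) has $C^0$-size $\sim\delta_1^{1/2}$, which dominates $\sum_{q\ge 2}\|w_q\|_0$, so $E$ is strictly positive on part of $I$.

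The point that lets one reach the Onsager-critical exponent $\sfrac13$ in the time-averaged norm --- whereas the known uniform-in-time schemes are stuck at $\sfrac15$ --- is a temporal localization of the perturbation in the manner of \cite{Buckmaster}, together with a careful tracking, through the iteration, of the set of times on which the construction is ``expensive''. One mollifies $\mathring R_q$ in time at a short scale, chooses a partition of unity $\{\chi_i\}$ in time subordinate to intervals $I_i$ of that length, and writes $w_{q+1}=\sum_i\chi_i w_{q+1,i}$, where $w_{q+1,i}$ is tailored to cancel the essentially frozen stress $\mathring R_q(\cdot,t_i)$ on $I_i$, with amplitudes advected along the flow of $v_q$ within each $I_i$ and reset when the cutoff $\chi_i$ switches. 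The key structural claim to be proved inductively is a \emph{two-level} bound: $\mathring R_q$ has the large size $\sim\delta_{q+1}$ only on a set $B_q\subset I$ of small measure --- essentially a neighbourhood of $B_{q-1}$ together with the cutoff-transition regions --- and is much smaller off $B_q$; correspondingly $w_{q+1}$ carries the full frequency $\lambda_{q+1}$ (and the full gradient $\sim\delta_{q+1}^{1/2}\lambda_{q+1}$) only near $B_q$, while off $B_q$ the stress being corrected is so small that a much lower frequency $\mu_{q+1}\ll\lambda_{q+1}$ suffices. One arranges $|B_q|\to 0$ (fast), so that the high-frequency part of $v$ is supported, in time, on a shrinking family of sets whose intersection is Lebesgue-null --- which is precisely why the resulting H\"older gain is invisible to the $L^\infty_t$ norm but visible to $L^1_t$. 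Concretely, interpolating $\|w_q(\cdot,t)\|_0\lesssim\delta_q^{1/2}$ with $\|w_q(\cdot,t)\|_1\lesssim\delta_q^{1/2}\lambda_q$ on $B_q$ and $\lesssim\delta_q^{1/2}\mu_q$ off $B_q$ gives
\begin{equation*}
\int_0^1 [w_q(\cdot,t)]_{\sfrac13-\eps}\,dt \;\lesssim\; |B_q|\,\delta_q^{1/2}\lambda_q^{\sfrac13-\eps} + \delta_q^{1/2}\mu_q^{\sfrac13-\eps},
\end{equation*}
and the parameters $\lambda_q,\delta_q,\mu_q$, the mollification scale, and the decay rate of $|B_q|$ are to be chosen so that the right-hand side is summable in $q$; then $v=\sum_q w_q\in L^1_t(C^{\sfrac13-\eps}_x)$, and in particular $[v(\cdot,t)]_{\sfrac13-\eps}<\infty$ for a.e.\ $t$.

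I expect the crux to be precisely the simultaneous calibration of these parameters so that \emph{every} term of the new stress $\mathring R_{q+1}$ obeys the required two-level bound: the transport error (made small by advecting the amplitudes along $v_q$ on each $I_i$, at the cost of a ``restart'' error each time a $\chi_i$ turns on or off), the Nash/oscillation error, the low--high interaction error with $v_q$, and the new error of size $\sim\mu_{q+1}\delta_{q+1}^{1/2}/\lambda_{q+1}$ coming from $\chi_i'$, must each be shown small --- and, crucially, much smaller off $B_{q+1}$ --- while the measure of the new bad set $B_{q+1}$ is kept under control. Propagating the estimate on $|B_q|$ is the genuinely new ingredient compared with \cite{DS4, Buckmaster}; the admissible window of parameters is narrow, and making all the inequalities close simultaneously is where the real work lies.
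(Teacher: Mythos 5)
Your high-level strategy matches the paper's: a convex-integration iteration on the Euler--Reynolds system with time-localized Beltrami perturbations, with the $L^1_t$ gain coming from the fact that the places in time where the construction is ``expensive'' form a set of small (and shrinking) measure. You also identify correctly that propagating a measure bound on the bad set through the iteration is the essential new ingredient. However, two structural points go wrong in your sketch.

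\textbf{The two-level bad/good dichotomy cannot close the iteration; the paper needs a $(q+1)$-level graded hierarchy.} At each step the scheme only improves the H\"older exponent \emph{incrementally}: if a time interval enters step $q$ with $\|\mathring R_q(t)\|_0\le\lambda_{q+1}^{-2\beta_j}$, then after one perturbation step one can only guarantee $\|\mathring R_{q+1}(t)\|_0\le\lambda_{q+2}^{-2\beta_{j+1}}$ with $\beta_{j+1}$ obtained from the recursion $b(\beta_{j+1}-\beta_\infty)=\beta_j-\beta_\infty$ (see \eqref{e:betas} and Proposition \ref{p:R}); a one-step jump from ``bad'' exponent $\beta_0$ to the target $\beta_\infty\approx\sfrac13-\eps$ is not achievable. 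Correspondingly, the paper partitions $[0,1]$ at step $q$ into $q+1$ families $V^{(q)}_0,\dots,V^{(q)}_q$ with the inductive measure bound $\bigl|\bigcup_{i\le j}V^{(q)}_i\bigr|\le\lambda_0\lambda_{q+1}^{\beta_j-\beta_\infty+\eps/4}$ of \eqref{e:regions_est}. In the $L^1_t(C^{\sfrac13-\eps}_x)$ computation this is exactly what makes the sum $\sum_{j=0}^q|V_j^{(q)}|\lambda_q^{\sfrac13-\eps-\beta_{j-1}}$ finite: by the recursion every term is comparable to $\lambda_q^{-\eps/4}$, and the $q+1$ terms sum to something summable in $q$. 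With only two levels, the ``good'' part (having full measure) must have exponent $\ge\sfrac13-\eps$, which would require an immediate jump past the entire $\beta_j$ hierarchy and cannot be produced by one iteration step.

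\textbf{The role of $\mu_{q+1}$ is temporal, not spatial.} You write that off the bad set ``a much lower frequency $\mu_{q+1}\ll\lambda_{q+1}$ suffices.'' In the paper the perturbation $w_{q+1}$ oscillates at the fixed spatial frequency $\lambda_{q+1}$ everywhere; what varies with the time-label $j$ is the \emph{amplitude}, $\|w_{q+1}(\cdot,t)\|_0\lesssim\lambda_{q+1}^{-\beta_{(j-1)_+}}$ as in \eqref{e:w_est}, and the parameter $\mu_{q+1,j}$ of \eqref{e:mu} governs the lengths of the time intervals $J_{\alpha,\alpha'}$ on which the amplitudes are coherently advected (a temporal CFL-type parameter, not a spatial frequency). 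Mixing these up leads to an interpolation estimate $\int[w_q]_{\sfrac13-\eps}\,dt\lesssim|B_q|\delta_q^{1/2}\lambda_q^{\sfrac13-\eps}+\delta_q^{1/2}\mu_q^{\sfrac13-\eps}$ that does not reflect the actual mechanism: the second term should instead involve the smaller amplitude scale $\lambda_q^{-\beta_{j-1}}$ at the full frequency $\lambda_q$.

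Finally, one technical wrinkle your sketch does not anticipate: the paper finds it necessary to use \emph{two different} stress-regularization devices depending on the time region --- the mollification-along-the-flow of \cite{Isett} on $j=0$ regions and the transport-from-a-reference-time of \cite{BDS} on $j\ge1$ regions (Definition \ref{d:regularized_Reynolds}) --- precisely so that the advective-derivative estimate \eqref{e:D_tR_est} can be propagated with the right $j$-dependence; this is flagged in the introduction as a point where neither method alone suffices.
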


In line with previous works \cite{DS3,DS4,Isett,BDS,Buckmaster}, the solution will be constructed as a limit of a rather complicated \emph{convex integration} scheme. A key observation of the first author, made in \cite{Buckmaster}, is that the very same approach of \cite{BDS} (with some more careful choice of the parameters) yields better estimates on the H\"older continuity at most times, allowing to reach $C^{\sfrac{1}{3}-\eps}$ almost everywhere.  Building upon this important remark of \cite{Buckmaster}, the principal challenge of the present work is to carefully modify the convex integration scheme presented in \cite{BDS} in order to obtain better time localized estimates (a goal which anyway will be achieved at the expense of sacrificing the \emph{global} H\"older estimate).  In order to prove Theorem \ref{t:L^3-conserves}, these modifications will be required to be far more subtle than those presented in \cite{Buckmaster}. In addition, a significantly more complicated bookkeeping system will become necessary. One important remark is that, although the vast majority of the scheme adheres to the one in \cite{BDS}, there is one relevant difference: for some time intervals we use one tool introduced in \cite{Isett} to smooth carefully the so-called Reynolds stress, respecting the key estimate for its advective derivative. This approach was entirely substituted in \cite{BDS} by another smoothing device. An interesting point is that it seems necessary to use {\em both} approaches in different time regions.

\section{Iteration scheme}\label{s:scheme}

The aim of this section is to introduce the main ingredients required by our iteration scheme.

\subsection{Euler-Reynolds system}\label{ss:euler_reynolds} At each step $q\in \N$ we construct a triple $(v_q, p_q, \mathring{R}_q)$ of smooth compactly supported functions which solve the Euler-Reynolds system 
(see \cite[Definition 2.1]{DS3}):
\begin{equation}\label{e:euler_reynolds}
\left\{\begin{array}{l}
\partial_t v_q + \div (v_q\otimes v_q) + \nabla p_q =\div\mathring{R}_q\\ \\
\div v_q = 0\, ,
\end{array}\right.
\end{equation} 
where $\mathring{R}_q$ is a $3\times 3$ symmetric traceless tensor. The pair $(v, p)$ of Theorem \ref{t:main} will be the uniform limits of $v_q$ and $p_q$, whereas $\mathring{R}_q$ converges uniformly to $0$ as $q\to\infty$. The difference $v_q-v_{q-1}$ will be denoted by $w_q$.

\subsection{Parameters of regularity}\label{ss:lambda-betas} The principal parameter for measuring the regularity of the pair $(v_q, p_q)$ is an integer valued \emph{frequency parameter} $\lambda_q$ which blows-up as a double exponential as $q\to\infty$. In particular, there is a $\lambda_0$ (sufficiently large) and a $b$ close to but slightly larger than $1$ (the size of $b-1$ is in fact constrained by the parameter $\eps>0$ from Theorem \ref{t:main}) such that
\begin{equation}\label{e:lambda}
\lambda_q \in \left[\lambda_0^{b^q}, 2 \lambda_0^{b^q}\right]\, .
\end{equation}
The exponent $\eps$ in Theorem \ref{t:main} is also related to two exponents $0 <\beta_0 < \beta_\infty< \frac{1}{3}$, which are the endpoints of a sequence of increasing positive exponents $\beta_j$ defined by the recursive relation
\begin{equation}\label{e:betas}
b (\beta_{j+1} - \beta_\infty) = \beta_j - \beta_\infty \, ,
\end{equation}
that is
\begin{equation}\label{e:betas_again}
\beta_j = \frac{\beta_0}{b^j} + \left(1 - \frac{1}{b^j}\right) \beta_\infty\, .
\end{equation}
For notational convenience we also introduce the exponent
\begin{equation}\label{e:beta-1}
\beta_{-1} = b \beta_0 + (1-b) \beta_\infty\, ,
\end{equation}
which we also assume to be positive.
The parameters $\beta_\infty$ and $\beta_0$ should be thought of, respectively, \emph{approximate} upper and lower bounds for the H\"older regularity exponent of the final velocity field $v$ at any time $t$.   

\subsection{Subdivision of the time interval}\label{ss:time_intervals}

We start with a division of the time interval $[0,1]$ into finitely many closed intervals $I^{(q)}_\alpha$, $\alpha \in \{0, 1, \ldots, N (q)+1\}$, where each pair of closed segments $I^{(q)}_\alpha$ and $I^{(q)}_{\alpha+1}$ will intersect at one single point: the right endpoint of $I^{(q)}_\alpha$, which is the left endpoint of $I^{(q)}_{\alpha+1}$. The number $N(q) +2$ denotes the total number of intervals. The precise value will not play a role.

To each $I^{(q)}_\alpha$ we will associate a natural number $j_q (\alpha)\in \{0,1,\dots,q\}$. We require lower bounds on the size of each interval $I^{(q)}_\alpha$ and upper bounds on the total measure of all intervals with  $j_q (\alpha)=j$ for fixed $j$. We begin with the lower bound. 
We will require that the intervals $I^{(q)}_\alpha$ be large enough in order that they may be subdivided into intervals of length $\approx \mu^{-1}_{q+1, j_q (\alpha)}$, where the parameter $\mu_{q+1, j_q (\alpha)}$ is defined by the formula
\begin{equation}\label{e:mu}
\mu_{q+1, j} =
\left\{\begin{array}{ll}
\lambda_{q+1}^{1-\beta_j}&\qquad\mbox{for $j\geq 2$}\\
\lambda_{q+1}^{(1-\beta_0)\frac{(b+1)}{2b} + \frac{(b-1)}{2}\beta_\infty}&\qquad\mbox{for $j\leq 1$.}
\end{array}\right.
\end{equation}
More precisely, we impose the constraint
\begin{equation}\label{e:interval_constraint}
|I^{(q)}_\alpha|\geq \frac{4}{\mu_{q+1, j_q (\alpha)}}\, .
\end{equation}

As for the upper bound, define the regions:
\begin{equation}\label{e:regions_V}
V^{(q)}_j = \bigcup_{\alpha\in\{1,\dots,N(q)\} : j_q (\alpha) =j} I^{(q)}_\alpha\, . 
\end{equation}
In these regions we will claim several (inductive) estimates on the triple $(v_q,p_q,\mathring{R}_q)$ (see Section \ref{ss:estimates} below). Notice that 
\begin{equation}\label{e:regions_increase}
V^{(q)}_j = \emptyset \qquad \mbox{for every $j>q$,}
\end{equation} 
and the intersection of two distinct sets in the collection $\{V^{(q)}_j:\,j=0\dots q\}$ consists of (at most) a finite number of points and it is a subset of the set of endpoints of the intervals $I^{(q)}_\alpha$.

Upon the (Lebesgue) measure of each region $V^{(q)}_j$ we require
\begin{equation}\label{e:regions_est}
\Bigl|\bigcup_{i=0}^jV^{(q)}_i\Bigr| \leq \lambda_0 \lambda_{q+1}^{\beta_j - \beta_\infty+\eps/4}\, .
\end{equation}

\subsection{Inductive estimates}\label{ss:estimates} 

In order to ensure the convergence of the sequence $(v_q, p_q)$ to a solution $(v,p)$ of the Euler equation satisfying the regularity condition $v\in L^1 (C^{\sfrac{1}{3}-\eps})$, we will require a series of inductive estimates on the triple $(v_q,p_q,\mathring{R}_q)$ along the iteration. There are two sets of estimates. One set will be local in time, i.e. depend on the specific time interval $I^{(q)}_\alpha$, and the second set of estimates will be global in time, i.e. hold uniformly for all $t\in [0,1]$.

\subsubsection{Local estimates}
Fix $j=0,1\dots,q$ and consider intervals $I^{(q)}_\alpha$ with $j_{q}(\alpha)=j$. We assume on the triple $(v_q,p_q,\mathring{R}_q)$ for all $t\in I^{(q)}_\alpha$:
\begin{align}
&\lambda_q^{-2} \|v_q (t)\|_2 + \lambda_q^{-1} \|v_q (t)\|_1  \leq M \lambda_q^{-\beta_{(j-1)_+}}\,, \label{e:velocity_est}\\
&\lambda_q^{-2}\|p_q (t)\|_2 + \lambda_q^{-1}\|p_q (t) \|_1  \leq M^2 \lambda_q^{-2\beta_{(j-1)_+}}\,,\label{e:pressure_est}\\
&\lambda_q^{-2} \|\mathring{R}_q (t)\|_2 + \lambda_q^{-1} \|\mathring{R}_q (t)\|_1 + \|\mathring{R}_q (t)\|_0 \leq \lambda_{q+1}^{-2\beta_j}\,,\label{e:R_est}\\
&\|(\partial_t +v_q\cdot \nabla) \mathring{R}_q (t)\|_0 \leq \lambda_q^{1-\beta_{j-1}} \lambda_{q+1}^{-2\beta_j}\, .\label{e:D_tR_est}
\end{align}
Here $(j-1)_+$ equals $j-1$ for all $j\in \mathbb N$ larger than $0$ and  $(0-1)_+ = 0$; $M$ is a geometric constant which is independent of $q$ and of all parameters introduced so far. Its value, however, will be specified much latter, in the proof of
Lemma \ref{l:ugly_lemma3}, cf.~\eqref{e:determines_M}.
Notice that, since $j\mapsto \beta_j$ is monotonic increasing, the estimates \eqref{e:velocity_est}-\eqref{e:D_tR_est} in fact hold for all $t\in \bigcup_{i\geq j}V^{(q)}_i$.

\smallskip

\subsubsection{Global estimates}\label{sss:global}
In order to ensure the convergence to a solution with compact temporal support, we impose the following constraint on the (temporal) support of the triple
\begin{equation}\label{e:spt_triple}
\supp (v_q,p_q,\mathring R_q)(x,\cdot)\subset \bigcup_{1\leq \alpha\leq N(q)} I^{(q)}_\alpha \subset [2^{-q-2},1-2^{-q-2}]
\end{equation}
for all $x\in\T^3$. In particular the role of the two intervals $I^{(q)}_0$ and $I^{(q)}_{N(q)+1}$ is only to identify 
the portion of the time interval $[0,1]$ where we know that the solution vanishes identically.

Consequently, since
$$
\bigcup_{j=0}^qV_j^{(q)}=\bigcup_{1\leq\alpha\leq N(q)}I^{(q)}_\alpha,
$$
the estimates \eqref{e:velocity_est}-\eqref{e:D_tR_est} for the case $j=0$ hold in fact for all $t\in[0,1]$. Therefore, for clarity of the presentation we repeat them here:
\begin{align}
&\lambda_q^{-2} \|v_q (t)\|_2 + \lambda_q^{-1} \|v_q (t)\|_1  \leq M \lambda_q^{-\beta_{0}}\,, \label{e:g-velocity_est}\\
&\lambda_q^{-2}\|p_q (t)\|_2 + \lambda_q^{-1}\|p_q (t) \|_1  \leq M^2 \lambda_q^{-2\beta_{0}}\,,\label{e:g-pressure_est}\\
& \lambda_q^{-2}\|\mathring{R}_q (t)\|_2 + \lambda_q^{-1} \|\mathring{R}_q (t)\|_1 + \|\mathring{R}_q (t)\|_0 \leq \lambda_{q+1}^{-2\beta_0}\,,\label{e:g-R_est}\\
&\|(\partial_t +v_q\cdot \nabla) \mathring{R}_q (t)\|_0 \leq \lambda_q^{1-\beta_{-1}} \lambda_{q+1}^{-2\beta_0}\, .\label{e:g-D_tR_est}
\end{align}

These estimates will be complemented with the uniform estimates
\begin{align}
&\|v_q\|_0 \leq M + M \sum_{i=0}^q \lambda_i^{-\beta_0}\label{e:C0_v_global}\\
&\|p_q\|_0 \leq M^2 + M^2 \sum_{i=0}^q \lambda_i^{-2\beta_0}\label{e:C_p_global}\, .
\end{align}

\subsection{The main Proposition of the iterative procedure} 

We are now in a position to state the main proposition which will enable us to perform the iteration step, from which we will conclude Theorem \ref{t:main}.

\begin{proposition}\label{p:inductive_step}  Let $\eps$ be as in Theorem \ref{t:main} and assume the positive parameters  $\lambda_0>1$, $b>1$ and $\beta_{-1}=b\beta_0+(1-b)\beta_{\infty}<\beta_{\infty}<\sfrac13$ satisfy the following constraints:
\begin{align}
&1 -3b(\beta_0+\beta_{\infty})>0\label{e:condition}\\
&5\beta_{\infty} > b(1+3\beta_0)\, ,\label{e:condition2}\\
&\mbox{$\lambda_0$ is sufficiently large, depending only upon $b, \beta_0, \beta_\infty$ and $\eps$.}
\end{align}
Let $(v_q, p_q, \mathring{R}_q)$ be a triple which solves the Euler-Reynolds system in $\T^3\times \mathbb R$ and $\{I^{(q)}_\alpha\}$ a subdivision of $[0,1]$ in closed time intervals which satisfy the assumptions of the Sections \ref{ss:time_intervals} and \ref{ss:estimates}. Then there is a second triple $(v_{q+1}, p_{q+1}, \mathring{R}_{q+1})$ which solves the Euler-Reynolds system in $\T^3\times \mathbb R$, together with a subdivision $\{I^{(q+1)}_\alpha\}$ satisfying the very same requirements. In addition we have
\begin{align}
&\|v_{q+1}(\cdot, t)-v_q (\cdot, t)\|_0 \leq M \lambda_{q+1}^{-\beta_{(j-1)_+}} \qquad \forall t\in V^{(q+1)}_j\, ,\label{e:w_est}\\
&\|p_{q+1} (\cdot, t) - p_q (\cdot, t)\|_0 \leq M^2 \lambda_{q+1}^{-\beta_{(j-1)_+}} \qquad \forall t\in V^{(q+1)}_j\, .\label{e:p_difference_est}
\end{align} 
\end{proposition}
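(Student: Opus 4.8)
The plan is to carry out one step of the convex-integration scheme of \cite{BDS}, but with every estimate made local to the relevant time interval and with the combinatorial datum $\alpha\mapsto j_q(\alpha)$ propagated to step $q+1$. We set $v_{q+1}=v_\ell+w_{q+1}$, with $p_{q+1}$ differing from $p_\ell$ by a local pressure correction, where $(v_\ell,p_\ell,\mathring R_\ell)$ is a regularisation of $(v_q,p_q,\mathring R_q)$ at a length scale $\ell$ with $\lambda_q^{-1}\ll\ell\ll\lambda_{q+1}^{-1}$ and $w_{q+1}$ oscillates in space at frequency $\approx\lambda_{q+1}$ with amplitude $\approx\lambda_{q+1}^{-\beta_{j_q(\alpha)}}$ on $I^{(q)}_\alpha$. \textbf{Mollification --- where the two devices meet.} One regularises so that $(v_\ell,p_\ell,\mathring R_\ell)$ still solves \eqref{e:euler_reynolds} up to the traceless part of the commutator $(v_\ell\otimes v_\ell)-(v_q\otimes v_q)_\ell$, quadratically small in $\ell\lambda_q$. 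The point flagged in the introduction is that a single mollifier does not suffice: on the intervals $I^{(q)}_\alpha$ with $j_q(\alpha)$ large --- where the frequency gap $\lambda_{q+1}/\lambda_q$ barely exceeds what the estimates can afford --- one must regularise $\mathring R_q$ along the coarse flow of $v_q$ as in \cite{Isett}, so that $(\partial_t+v_q\cdot\nabla)\mathring R_\ell$ inherits \eqref{e:D_tR_est} with the exponent $\beta_{j-1}$ (read $\beta_{-1}$ of \eqref{e:beta-1} when $j=0$); on the intervals with $j_q(\alpha)$ small the coarser spatial mollification of \cite{BDS} is both sufficient and more convenient. Matching the two choices across interval endpoints is one of the delicate points.

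\textbf{The perturbation.} By \eqref{e:interval_constraint} each $I^{(q)}_\alpha$ may be subdivided into subintervals of length $\approx\mu_{q+1,j_q(\alpha)}^{-1}$; pick a quadratic partition of unity $\{\chi_k^2\}$ subordinate to them and, on $\supp\chi_k$, solve $(\partial_t+v_\ell\cdot\nabla)\Phi_k=0$ with $\Phi_k=\Id$ at the centre of the subinterval --- the choice \eqref{e:mu} of $\mu_{q+1,j}$ being precisely what forces $\|\nabla\Phi_k-\Id\|_0\le\tfrac12$ there. Using the stress-decomposition (geometric) lemma of \cite{DS3,BDS} one sets
\[
w_{q+1}^{(p)}=\sum_k\sum_\xi \chi_k\,a_{k,\xi}(v_\ell,\mathring R_\ell)\,B_\xi\,e^{i\lambda_{q+1}\xi\cdot\Phi_k},
\]
a superposition of Beltrami modes $B_\xi$ transported by the coarse flow, with amplitudes $a_{k,\xi}\approx\rho_k^{1/2}$ and $\rho_k\approx\lambda_{q+1}^{-2\beta_{j_q(\alpha)}}\gtrsim\|\mathring R_\ell\|_0$, chosen so that the low-frequency component of $w_{q+1}^{(p)}\otimes w_{q+1}^{(p)}$ equals $\sum_k\chi_k^2(\rho_k\Id-\mathring R_\ell)$; one then adds the usual divergence corrector $w_{q+1}^{(c)}$ and puts $w_{q+1}=w_{q+1}^{(p)}+w_{q+1}^{(c)}$. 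From $|a_{k,\xi}|\lesssim\lambda_{q+1}^{-\beta_{j_q(\alpha)}}$ one reads off \eqref{e:w_est} and \eqref{e:p_difference_est} directly, once $j_{q+1}$ is arranged so that $j_q(\alpha)=(j_{q+1}-1)_+$ on the corresponding intervals; summing \eqref{e:w_est} over $q$ gives \eqref{e:C0_v_global}--\eqref{e:C_p_global}; and the frequency localisation at $\lambda_{q+1}$ with amplitude $\lambda_{q+1}^{-\beta_{j_q(\alpha)}}$ gives \eqref{e:velocity_est}--\eqref{e:pressure_est} at level $q+1$, provided the geometric constant $M$ is taken large enough to absorb all implicit constants (this is where its value is finally pinned down).

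\textbf{The new Reynolds stress.} Following \cite{BDS} one defines $\mathring R_{q+1}=\RR(\,\cdot\,)$, where $\RR$ is the inverse-divergence operator applied to the sum of the mollification/commutator term, the transport error $(\partial_t+v_\ell\cdot\nabla)w_{q+1}^{(p)}$ (small because the phases are transported, apart from the $\partial_t\chi_k\sim\mu_{q+1,j}$ contribution), the Nash error $w_{q+1}^{(p)}\cdot\nabla v_\ell$, the oscillation error $\div\bigl(w_{q+1}^{(p)}\otimes w_{q+1}^{(p)}+\mathring R_\ell-\sum_k\chi_k^2\rho_k\Id\bigr)$ whose slow part cancels by design (so $\RR$ gains a full $\lambda_{q+1}^{-1}$), and the corrector terms in $w_{q+1}^{(c)}$. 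Estimating each term in $C^0$ gives \eqref{e:R_est} at level $q+1$ with exponent $\beta_{j_{q+1}}$, and the inequalities \eqref{e:condition}, \eqref{e:condition2} --- together with the recursion \eqref{e:betas} --- are exactly the algebraic conditions under which all contributions are $\le\lambda_{q+2}^{-2\beta_{j_{q+1}}}$. The genuinely hard part is \eqref{e:D_tR_est} at level $q+1$: one must commute the material derivative $D_{q+1}=\partial_t+v_{q+1}\cdot\nabla$ past $\RR$ and past the mollifications, and control the resulting commutators. Since the phases are transported by $v_\ell\approx v_{q+1}$, $D_{q+1}$ essentially only hits the amplitudes (producing $\mu_{q+1,j}$-sized factors) plus lower-order errors; it is precisely here that the \cite{Isett}-type regularisation in the large-$j$ regions is indispensable, so that $D_{q+1}\mathring R_\ell$ carries the exponent $\beta_{j-1}$ within the narrow margin available. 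I expect this advective estimate, held uniformly over all regions $V^{(q+1)}_j$, to be the principal obstacle of the whole argument.

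\textbf{Updating the bookkeeping.} Let $\{I^{(q+1)}_\beta\}$ refine the $\mu_{q+1,j_q(\alpha)}^{-1}$-subdivision used above, supplemented near $t=2^{-q-2}$ and $t=1-2^{-q-2}$ by the two newly activated intervals that carry the small portion of $w_{q+1}$ extending the support and restore \eqref{e:spt_triple} at level $q+1$. Define $j_{q+1}$ so that the newly activated intervals receive index $0$ while an interval carried over from $I^{(q)}_\alpha$ receives index $j_q(\alpha)+1$ --- compatible with $j_{q+1}\le q+1$ (as $j_q(\alpha)\le q$) and with the amplitude used in \eqref{e:w_est}. The length constraint \eqref{e:interval_constraint} at level $q+1$ then reduces, via \eqref{e:mu} and \eqref{e:betas}, to $\mu_{q+2,j}\gtrsim\mu_{q+1,j-1}$, i.e.\ essentially to $\beta_\infty<1$. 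For the measure constraint, note that $V^{(q+1)}_i\subset V^{(q)}_{i-1}$ for $i\ge1$ while $V^{(q+1)}_0$ is the (newly activated, hence small) part, so by the inductive hypothesis \eqref{e:regions_est} at level $q$ and the identity $b(\beta_i-\beta_\infty)=\beta_{i-1}-\beta_\infty$,
\[
\Bigl|\bigcup_{i=0}^{j}V^{(q+1)}_i\Bigr|\;\le\;\bigl|V^{(q+1)}_0\bigr|+\lambda_0\lambda_{q+1}^{\beta_{j-1}-\beta_\infty+\eps/4}\;\le\;\lambda_0\lambda_{q+2}^{\beta_j-\beta_\infty+\eps/4},
\]
the last step using $\lambda_{q+2}\approx\lambda_{q+1}^{b}$, the slack $\eps/4$, the standing relation between $\eps$ and $\beta_\infty-\beta_0$, and $\lambda_0$ large. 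This closes the induction; together with the advective estimate of the previous step it is the point where the present construction departs most substantially from \cite{Buckmaster}.
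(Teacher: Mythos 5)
Your sketch correctly identifies the broad architecture (mollification with two devices, Beltrami modes transported by the coarse flow, a Reynolds-stress decomposition, and propagation of the combinatorial data $j_q$), but there are two gaps in how it interacts with the bookkeeping, and the second one is fatal to the argument as written.

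First, the assignment of the two smoothing devices is reversed. You propose the \cite{Isett}-style mollification along the flow on intervals with $j_q(\alpha)$ \emph{large}, and the \cite{BDS}-style transport of a spatial mollification on intervals with $j_q(\alpha)$ \emph{small}. The paper does the opposite: on the ``worst'' intervals $j=0$ (where the advective estimate \eqref{e:D_tR_est} carries the weakest exponent $\beta_{-1}$), one mollifies $\mathring R_\ell$ in time along the flow of $v_\ell$ at scale $\tau=\lambda_{q+1}^{-1+\beta_0}$, while for $j\ge 1$ one simply pushes $\mathring R_\ell(\cdot,t_\varsigma)$ forward along the flow, so that $D_t\mathring R_\varsigma\equiv 0$. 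This is not a cosmetic swap: the \cite{BDS} transport device is only affordable when the local CFL parameter $\mu_{q+1,j}^{-1}\|v_q\|_1$ is sufficiently small relative to the required error, and for $j=0$ that margin is not there.

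Second, and more importantly, your treatment of the new index map $j_{q+1}$ misses the overlapping regions entirely. You put $j_{q+1}=0$ only on the two ``newly activated'' intervals near $t=2^{-q-2}$, $t=1-2^{-q-2}$, and $j_{q+1}=j_q(\alpha)+1$ everywhere else. But on the support of $\chi_\varsigma'\chi_{\varsigma+1}'$, i.e.\ wherever two cutoffs overlap, the time derivatives $\chi_\varsigma'\sim\mu_{q+1,j}/\eta_{q+1,j}$ enter $D_t w_{q+1}$ and hence the transport part of the new Reynolds stress. These contributions are much larger than $\delta_{q+2,j+1}\,\delta_{q+1,j}^{1/2}\lambda_{q+1}$, so \eqref{e:D_tR_est} at level $q+1$ with exponent $\beta_{j+1}$ \emph{fails} on the overlap of consecutive cutoffs. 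The paper's remedy is precisely to carve out, around each subinterval endpoint, an overlapping region $K_\varsigma$ of length $\eta_{q+1,j}/\mu_{q+1,j}$ (not $1/\mu_{q+1,j}$), to declare $j_{q+1}=0$ there, and to accept there only the weakest estimates \eqref{e:g-R_est}--\eqref{e:g-D_tR_est} with exponent $\beta_{-1}$. The price is controlled by choosing $\eta_{q+1,j}$ small enough (Lemma~\ref{l:parameters3}), and by showing that the total measure $\sum_\varsigma|K_\varsigma|$ of all these overlap regions is $\le \lambda_0\lambda_{q+2}^{\beta_0-\beta_\infty+\eps/4}$ --- this is exactly where the bound \eqref{e:regions_est} is non-trivial. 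In your sketch $V^{(q+1)}_0$ was declared ``newly activated, hence small'' without any such mechanism, so neither the advective estimate on the overlaps nor the measure bound on $V^{(q+1)}_0$ is actually secured. This overlapping/non-overlapping dichotomy --- and the parameter $\eta_{q+1,j}$ that governs it --- is the crucial new device of the paper beyond \cite{BDS,Buckmaster}, and without it the induction does not close.

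(A minor difference: you set $v_{q+1}=v_\ell+w_{q+1}$; the paper sets $v_{q+1}=v_q+w_{q+1}$ and absorbs $v_q-v_\ell$ into an explicit stress term $R^3$. Either can be made to work, and is orthogonal to the two points above.)
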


The proof of Theorem \ref{t:main} will be based on Proposition \ref{p:inductive_step}, starting from a nontrivial solution $(v_0, p_0, \mathring{R}_0)$ of \eqref{e:euler_reynolds}. \eqref{e:w_est} will then ensure that the limiting pair $(v.p)$ reached by the sequence constructed wth the help of Proposition \ref{p:inductive_step} is nontrivial. \eqref{e:w_est} together with \eqref{e:regions_est} will provide the key bound in the space $L^1 (C^{\sfrac{1}{3}-\eps})$.

\section{The inductive construction}\label{s:new_triple}

In order to commence the proof of Proposition \ref{p:inductive_step}, in this section we will detail the inductive  construction of the tuple $(v_{q+1},p_{q+1},\mathring R_{q+1})$ from $(v_{q},p_{q},\mathring R_{q})$. Before starting specifying the definition of the new tuple, we need several preliminary lemmas.

\subsection{Preliminaries}\label{ss:old_preliminaries} In this paper we denote by $\R^{n\times n}$, as usual, the space of $n\times n$ matrices, whereas $\mathcal{S}^{n\times n}$
and $\mathcal{S}^{n\times n}_0$ denote, respectively, the corresponding subspaces of symmetric matrices 
and of trace-free symmetric matrices. The $3\times 3$ identity matrix will be denoted with $\Id$. 
For definitiveness we will use the matrix operator norm $|R|:=\max_{|v|=1}|Rv|$. Since we will
deal with symmetric matrices, we have the identity $|R|= \max_{|v|=1} |Rv \cdot v|$.

\begin{proposition}[Beltrami flows]\label{p:Beltrami}
Let $\bar\lambda\geq 1$ and let $A_k\in\R^3$ be such that 
$$
A_k\cdot k=0,\,|A_k|=\tfrac{1}{\sqrt{2}},\,A_{-k}=A_k
$$
for $k\in\Z^3$ with $|k|=\bar\lambda$.
Furthermore, let 
$$
B_k=A_k+i\frac{k}{|k|}\times A_k\in\C^3.
$$
For any choice of $a_k\in\C$ with $\overline{a_k} = a_{-k}$ the vector field
\begin{equation}\label{e:Beltrami}
W(\xi)=\sum_{|k|=\bar\lambda}a_kB_ke^{ik\cdot \xi}
\end{equation}
is real-valued, divergence-free and satisfies
\begin{equation}\label{e:Bequation}
\div (W\otimes W)=\nabla\frac{|W|^2}{2}.
\end{equation}
Furthermore
\begin{equation}\label{e:av_of_Bel}
\langle W\otimes W\rangle= \fint_{\T^3} W\otimes W\,d\xi = \frac{1}{2} \sum_{|k|=\bar\lambda} |a_k|^2 \left( \Id - \frac{k}{|k|}\otimes\frac{k}{|k|}\right)\, .  
\end{equation}
\end{proposition}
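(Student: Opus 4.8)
\textbf{Proof proposal for Proposition \ref{p:Beltrami}.} The plan is to verify the three assertions — that $W$ is real-valued and divergence-free, that it satisfies the Beltrami-type identity \eqref{e:Bequation}, and the averaging formula \eqref{e:av_of_Bel} — essentially by direct Fourier computation, exploiting the structure of the vectors $B_k$. First I would record the basic algebraic properties of $B_k$: since $|k|=\bar\lambda$, writing $\hat k=k/|k|$ we have $B_k=A_k+i\,\hat k\times A_k$, so that $k\cdot B_k = 0$ (because $k\cdot A_k=0$ and $k\cdot(\hat k\times A_k)=0$), $|B_k|^2 = |A_k|^2+|\hat k\times A_k|^2 = 2|A_k|^2 = 1$ (using $A_k\perp \hat k$), and the key eigenvector relation $i\hat k\times B_k = i\hat k\times A_k + i\hat k\times(i\hat k\times A_k) = i\hat k\times A_k - i(\hat k(\hat k\cdot A_k)-A_k) = A_k + i\hat k\times A_k = B_k$, i.e. $i k\times B_k = |k| B_k$. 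I would also note $B_{-k} = \overline{B_k}$, which combined with $\overline{a_k}=a_{-k}$ gives that $W$ is real-valued, since pairing the $k$ and $-k$ terms yields $a_kB_k e^{ik\cdot\xi} + \overline{a_kB_k e^{ik\cdot\xi}}$. Divergence-freeness is immediate from $k\cdot B_k=0$: $\div W = \sum_k a_k\,(i k\cdot B_k)\,e^{ik\cdot\xi}=0$.

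Next I would establish \eqref{e:Bequation}. The cleanest route is to observe that the eigenvector relation $i k\times B_k = |k| B_k = \bar\lambda B_k$ says precisely that $W$ is a Beltrami field: $\curl W = \bar\lambda W$. Indeed $\curl W = \sum_k a_k\,(i k \times B_k)\,e^{ik\cdot\xi} = \bar\lambda\sum_k a_k B_k e^{ik\cdot\xi} = \bar\lambda W$. Then I would invoke the standard vector identity valid for any smooth field, $\div(W\otimes W) = (W\cdot\nabla)W = \curl W \times W + \tfrac12\nabla|W|^2$ — using $\div W = 0$ in the first equality — so that $\div(W\otimes W) = \bar\lambda\,(W\times W) + \tfrac12\nabla|W|^2 = \tfrac12\nabla|W|^2$ since $W\times W=0$. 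This gives \eqref{e:Bequation}. (If one prefers to avoid quoting the identity, the same computation can be carried out term-by-term in Fourier space, writing $\div(W\otimes W) = \sum_{k,k'} a_k a_{k'}\, i(k+k')\cdot\big((B_k\cdot \cdot)B_{k'}\big)$-type expressions and using $k\cdot B_k=0$; the Beltrami route is shorter and less error-prone.)

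Finally, for \eqref{e:av_of_Bel} I would compute the average over $\T^3$ by orthogonality of the characters: $\fint_{\T^3} W\otimes W\,d\xi = \sum_{|k|=|k'|=\bar\lambda} a_k a_{k'}\,(B_k\otimes B_{k'})\,\fint e^{i(k+k')\cdot\xi}\,d\xi = \sum_{|k|=\bar\lambda} a_k a_{-k}\, B_k\otimes B_{-k} = \sum_{|k|=\bar\lambda} |a_k|^2\, B_k\otimes \overline{B_k}$, using $a_{-k}=\overline{a_k}$ and $B_{-k}=\overline{B_k}$. It remains to identify the Hermitian symmetrization: grouping $k$ with $-k$ and taking real parts, the sum equals $\tfrac12\sum_{|k|=\bar\lambda}|a_k|^2\big(B_k\otimes\overline{B_k} + \overline{B_k}\otimes B_k\big) = \sum_{|k|=\bar\lambda}|a_k|^2\,\mathrm{Re}(B_k\otimes\overline{B_k})$, and a direct expansion using $B_k = A_k + i\hat k\times A_k$ gives $\mathrm{Re}(B_k\otimes\overline{B_k}) = A_k\otimes A_k + (\hat k\times A_k)\otimes(\hat k\times A_k)$. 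Since $\{A_k/|A_k|,\ (\hat k\times A_k)/|A_k|,\ \hat k\}$ is an orthonormal basis of $\R^3$ and $|A_k|^2 = \tfrac12$, we get $A_k\otimes A_k + (\hat k\times A_k)\otimes(\hat k\times A_k) = \tfrac12(\Id - \hat k\otimes\hat k)$, which yields \eqref{e:av_of_Bel}.

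The computation is entirely elementary; I do not anticipate a genuine obstacle, only bookkeeping. The one place to be careful is the reality/conjugation structure — keeping straight that $B_{-k}=\overline{B_k}$ (not $B_{-k}=B_k$, which fails because of the $i\hat k\times A_k$ term even though $A_{-k}=A_k$) and that the pairing $k\leftrightarrow -k$ is what produces both the real-valuedness of $W$ and the Hermitian symmetrization in the averaging formula. The identity $\mathrm{Re}(B_k\otimes\overline{B_k}) = \tfrac12(\Id-\hat k\otimes\hat k)$, or equivalently the verification that $B_k$ together with its conjugate spans the plane orthogonal to $k$ with the stated normalization, is the small geometric fact underpinning the whole proposition and the only step worth writing out in detail.
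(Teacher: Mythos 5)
Your proof is correct, and it takes a genuinely different route from the one the paper indicates. The paper does not spell out the proof of Proposition~\ref{p:Beltrami} in full; it says that \eqref{e:Bequation} is ``based on'' the algebraic identity of Lemma~\ref{l:BkBk'}, namely $(B_k\otimes B_{k'}+B_{k'}\otimes B_k)(k+k')=(B_k\cdot B_{k'})(k+k')$, and points to \cite{DS3}. That route expands $\div(W\otimes W)$ in Fourier, $\div(W\otimes W)=\sum_{k,k'}i\,a_k a_{k'}\,(B_k\otimes B_{k'})(k+k')\,e^{i(k+k')\cdot\xi}$, symmetrizes in $(k,k')$, and then uses Lemma~\ref{l:BkBk'} to recognize each Fourier mode as a gradient of $\tfrac12 B_k\cdot B_{k'}\,a_ka_{k'}e^{i(k+k')\cdot\xi}$. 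Your argument instead isolates the eigenvector relation $i\hat k\times B_k=B_k$, concludes that $\curl W=\bar\lambda W$, and then invokes the pointwise vector identity $(W\cdot\nabla)W=\curl W\times W+\tfrac12\nabla|W|^2$; this is shorter and conceptually transparent (it makes explicit that $W$ is a Beltrami field), at the cost of appealing to that classical identity rather than producing it from scratch. One thing the paper's route buys that yours does not: Lemma~\ref{l:BkBk'} is reused verbatim in Section~\ref{ss:R_decomposed} (to show the term $II$ in the estimate of $R^1$ vanishes), so the authors have reason to isolate it as a standalone lemma rather than absorb it into a Beltrami observation. Your treatment of real-valuedness, divergence-freeness, and the average \eqref{e:av_of_Bel} is the standard one and agrees in substance with \cite{DS3}; in particular the identification $\mathrm{Re}(B_k\otimes\overline{B_k})=A_k\otimes A_k+(\hat k\times A_k)\otimes(\hat k\times A_k)=\tfrac12(\Id-\hat k\otimes\hat k)$ and the cancellation of the antisymmetric imaginary part over $\pm k$ are exactly right.
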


The proof of \eqref{e:Bequation}, which is quite elementary
(see also \cite{DS3}), is based on the following algebraic identity, which we state separately for future reference:
\begin{lemma}\label{l:BkBk'}
Let $k,k'\in \Z^3$ with $|k|=|k'|=\bar{\lambda}$ and let $B_k,B_{k'}\in \C^3$ be the associated vectors from Proposition \ref{p:Beltrami}. Then we have
\begin{equation}\label{e:BkBk'}
(B_k\otimes B_{k'}+B_{k'}\otimes B_k)(k+k')=(B_k\cdot B_{k'})(k+k').
\end{equation}
\end{lemma}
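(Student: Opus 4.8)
The plan is to verify the purely algebraic identity \eqref{e:BkBk'} by direct computation, exploiting the explicit structure $B_k = A_k + i\,\tfrac{k}{|k|}\times A_k$ together with the constraints $A_k\cdot k = 0$ and $|A_k| = \tfrac{1}{\sqrt 2}$. First I would expand both sides. Writing $\hat k = k/|k|$, $\hat k' = k'/|k'|$ (both unit vectors since $|k|=|k'|=\bar\lambda$), and setting $n = k+k'$, the right-hand side is $(B_k\cdot B_{k'})\,n$, a scalar multiple of $n$. For the left-hand side, $(B_k\otimes B_{k'} + B_{k'}\otimes B_k)\,n = (B_{k'}\cdot n)\,B_k + (B_k\cdot n)\,B_{k'}$. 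So the identity to prove reduces to
\[
(B_{k'}\cdot n)\,B_k + (B_k\cdot n)\,B_{k'} = (B_k\cdot B_{k'})\,n\, ,\qquad n = k+k'\, .
\]

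The key step is to compute the three complex dot products $B_k\cdot n$, $B_{k'}\cdot n$ and $B_k\cdot B_{k'}$ using $B_k\cdot k = (A_k + i\hat k\times A_k)\cdot k = 0$ (since $A_k\perp k$ and $\hat k\times A_k \perp k$) and then plug in. From $B_k\cdot k = 0$ we get $B_k\cdot n = B_k\cdot k' $ and $B_{k'}\cdot n = B_{k'}\cdot k = B_{k'}\cdot k' + (B_{k'}\cdot k - B_{k'}\cdot k')$; more usefully, $B_{k'}\cdot n = B_{k'}\cdot k$ and $B_k\cdot n = B_k\cdot k'$. Now one computes $B_k\cdot k' = A_k\cdot k' + i(\hat k\times A_k)\cdot k'$ and similarly for $B_{k'}\cdot k$; since $(\hat k\times A_k)\cdot k' = A_k\cdot(k'\times\hat k) = -\,\tfrac{1}{|k|}A_k\cdot(k\times k')$ and $(\hat{k'}\times A_{k'})\cdot k = \tfrac{1}{|k'|}A_{k'}\cdot(k\times k')$, these are explicit. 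Likewise $B_k\cdot B_{k'} = A_k\cdot A_{k'} - (\hat k\times A_k)\cdot(\hat{k'}\times A_{k'}) + i\big[A_k\cdot(\hat{k'}\times A_{k'}) + (\hat k\times A_k)\cdot A_{k'}\big]$, and the real part simplifies via the identity $(\hat k\times A_k)\cdot(\hat{k'}\times A_{k'}) = (\hat k\cdot\hat{k'})(A_k\cdot A_{k'}) - (\hat k\cdot A_{k'})(\hat{k'}\cdot A_k)$, using $\hat k\cdot A_k = \hat{k'}\cdot A_{k'} = 0$.

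After these substitutions I would compare components. It is cleanest to test the vector identity against a basis: since both sides lie in the span of $\{k,k'\}$ once one uses $B_k\cdot k = B_{k'}\cdot k' = 0$ appropriately, one can dot the claimed identity with $k$ and with $k'$ separately. Dotting with $k$: the left side gives $(B_{k'}\cdot n)(B_k\cdot k) + (B_k\cdot n)(B_{k'}\cdot k) = (B_k\cdot n)(B_{k'}\cdot k)$, while the right gives $(B_k\cdot B_{k'})(n\cdot k)$; using $B_k\cdot n = B_k\cdot k'$ and $n\cdot k = |k|^2 + k\cdot k'$ one checks equality. Dotting with $k'$ is symmetric. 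Since $\{k,k'\}$ spans a plane containing $n$, and the difference of the two sides is easily seen to be orthogonal to that plane (it has no component along $k\times k'$, because every term on each side is a complex combination of $B_k, B_{k'}, k, k'$, all of which lie in $\mathrm{span}_{\mathbb C}\{k,k',k\times k'\}$ — and a short check of the $k\times k'$ component, using that $A_k, A_{k'}$ need not lie in the $k,k'$ plane, shows it vanishes), this suffices.

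The main obstacle I anticipate is bookkeeping the $i$'s and the cross-product terms so that the imaginary parts cancel correctly; in particular one must use the full force of $A_{-k}=A_k$ is \emph{not} needed here, but $A_k\cdot k = 0$, $|k|=|k'|$, and the Jacobi/triple-product identities are all essential, and it is easy to drop a sign on a term like $(\hat k\times A_k)\cdot(\hat{k'}\times A_{k'})$. A safer route that avoids this is to recall (or re-derive) that $B_k$ is an eigenvector of $ik\times(\cdot)$ with eigenvalue $|k|$, i.e. $i k\times B_k = |k|\,B_k$; combined with the vector identity $a\times(b\times c) = (a\cdot c)b - (a\cdot b)c$ applied to $n\times(B_k\times B_{k'})$, this turns \eqref{e:BkBk'} into a statement about $n\times B_k$ and $n\times B_{k'}$ that collapses using $n = k+k'$ and the eigenvector relations. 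Either way the identity is elementary once the right algebraic handle is chosen; I expect no conceptual difficulty, only the need for care.
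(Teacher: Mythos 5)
The paper states this lemma without proof (the argument appears in \cite{DS3}), so there is no in-paper argument to compare against; I'll just assess the soundness of your plan.

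Your main route — testing the vector identity against the basis $\{k,k',k\times k'\}$ after reducing it to $(B_{k'}\cdot n)B_k+(B_k\cdot n)B_{k'}=(B_k\cdot B_{k'})n$ — does work, although the phrasing is garbled: you say the difference ``is easily seen to be orthogonal to that plane (it has no component along $k\times k'$\ldots)'', which is self-contradictory as written. What you mean (and what the computation actually does) is: the dot products with $k$ and $k'$ vanish, so the difference is a multiple of $k\times k'$, and then a separate check shows the $k\times k'$-component vanishes too. That separate check is non-trivial; the cleanest way to see it is via $B_k\cdot(k\times w)=i\bar\lambda\,B_k\cdot w$ (a rephrasing of $k\times B_k=-i\bar\lambda B_k$), which gives $B_k\cdot(k\times k')=i\bar\lambda(B_k\cdot n)$ and $B_{k'}\cdot(k\times k')=-i\bar\lambda(B_{k'}\cdot n)$, whence the contributions cancel. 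You should also note the degenerate case $k\parallel k'$ separately (if $k'=-k$ then $n=0$ and both sides vanish; if $k'=k$ one uses $B_k\cdot B_k=0$).

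The ``safer route'' you sketch has a genuine slip: applying $a\times(b\times c)=(a\cdot c)b-(a\cdot b)c$ to $n\times(B_k\times B_{k'})$ yields $(n\cdot B_{k'})B_k-(n\cdot B_k)B_{k'}$ — the \emph{antisymmetric} combination, not the symmetric one in \eqref{e:BkBk'} — and it produces no $n\times B_k$ or $n\times B_{k'}$ terms, contrary to what you assert. A correct version of the same idea: apply BAC--CAB to $B_{k'}\times(k\times B_k)$ and to $B_k\times(k'\times B_{k'})$, substitute $k\times B_k=-i\bar\lambda B_k$ and $k'\times B_{k'}=-i\bar\lambda B_{k'}$ to get
\[
k(B_k\cdot B_{k'})-(B_{k'}\cdot k)B_k=i\bar\lambda\,(B_k\times B_{k'}),\qquad
k'(B_k\cdot B_{k'})-(B_k\cdot k')B_{k'}=-i\bar\lambda\,(B_k\times B_{k'}),
\]
add the two and use $B_k\cdot k=B_{k'}\cdot k'=0$ (so $B_{k'}\cdot k=B_{k'}\cdot n$ and $B_k\cdot k'=B_k\cdot n$) to obtain \eqref{e:BkBk'} directly. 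This is shorter and avoids any component-by-component bookkeeping.
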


Another important ingredient is the following geometric lemma, also taken from \cite{DS3}.

\begin{lemma}[Geometric Lemma]\label{l:split}
For every $N\in\N$ we can choose $r_0>0$ and $\bar{\lambda} > 1$ with the following property.
There exist pairwise disjoint subsets 
$$
\Lambda_j\subset\{k\in \Z^3:\,|k|=\bar{\lambda}\} \qquad j\in \{1, \ldots, N\}
$$
and smooth positive functions 
\[
\gamma^{(j)}_k\in C^{\infty}\left(B_{r_0} (\Id)\right) \qquad j\in \{1,\dots, N\}, k\in\Lambda_j
\]
such that
\begin{itemize}
\item[(a)] $k\in \Lambda_j$ implies $-k\in \Lambda_j$ and $\gamma^{(j)}_k = \gamma^{(j)}_{-k}$;
\item[(b)] For each $R\in B_{r_0} (\Id)$ we have the identity
\begin{equation}\label{e:split}
R = \frac{1}{2} \sum_{k\in\Lambda_j} \left(\gamma^{(j)}_k(R)\right)^2 \left(\Id - \frac{k}{|k|}\otimes \frac{k}{|k|}\right) 
\qquad \forall R\in B_{r_0}(\Id)\, .
\end{equation}
\end{itemize}
\end{lemma}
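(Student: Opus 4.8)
The plan is to isolate the one convexity fact that underlies the statement, extract the lattice directions from it by a density argument, and then produce the coefficients $\gamma^{(j)}_k(R)$ by finite-dimensional linear algebra. The key fact is that the identity $\Id$ lies in the \emph{interior} (relative to $\mathcal{S}^{3\times3}$) of the closed convex cone $\mathcal K$ generated by the rank-two projections $P_e:=\Id-e\otimes e$, $e\in S^2$. That $\Id\in\mathcal K$ is immediate, e.g. $\Id=\tfrac12(P_{e_1}+P_{e_2}+P_{e_3})$ for the standard basis $e_1,e_2,e_3$. For the interior statement I would compute the dual cone: $\mathcal K^\ast=\{A\in\mathcal{S}^{3\times3}:\langle A,P_e\rangle\ge 0\ \forall e\in S^2\}=\{A:\lambda_{\max}(A)\le\tr A\}$, and a one-line argument with the ordered eigenvalues $\mu_1\ge\mu_2\ge\mu_3$ of $A$ (the condition reads $\mu_2+\mu_3\ge 0$) shows that $\mathcal K^\ast$ contains no line and that every nonzero $A\in\mathcal K^\ast$ satisfies $\langle A,\Id\rangle=\tr A>0$. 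Hence $\mathcal K$ has nonempty interior and $\Id$ belongs to it.

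Next I would pass to finitely many \emph{rational} directions and then produce the $N$ disjoint families. Since $\Id$ is interior, the convex hull of a small cross-polytope $\{\Id\pm\rho A_l\}$, with $\{A_l\}$ a basis of $\mathcal{S}^{3\times3}$, lies in $\mathcal K$; writing each vertex as a finite nonnegative combination of $P_e$'s and collecting the directions that occur yields a finite symmetric set $D_0\subset S^2$ with $\Id$ in the interior of the cone $\mathcal K_{D_0}$ generated by $\{P_e:e\in D_0\}$ (in particular these $P_e$ span $\mathcal{S}^{3\times3}$). Being interior is an open condition on the finitely many generators, and $S^2\cap\QQ^3$ is dense in $S^2$, so I may take $D_0\subset S^2\cap\QQ^3$. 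Now conjugation by $R\in SO(3)$ sends $P_e\mapsto P_{Re}$ and fixes $\Id$, so $\Id$ is interior to $\mathcal K_{RD_0}$ for \emph{every} rotation $R$; I choose rotations $R_1=\Id,R_2,\dots,R_N$ with rational entries in general position, so the sets $R_jD_0$ are pairwise disjoint rational configurations, pick an integer $\bar\lambda>1$ clearing all their denominators, and set $\Lambda_j:=\bar\lambda\,(R_jD_0)\subset\{k\in\Z^3:|k|=\bar\lambda\}$. These are pairwise disjoint, each is invariant under $k\mapsto-k$, and as $k$ runs over $\Lambda_j$ the unit vector $k/|k|$ runs over $R_jD_0$, so $\Id$ is interior to the cone generated by $\{P_{k/|k|}:k\in\Lambda_j\}$; requiring $\gamma^{(j)}_k=\gamma^{(j)}_{-k}$ below takes care of (a).

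For the smooth coefficients, fix $j$ and list $\Lambda_j=\{\pm k_1,\dots,\pm k_m\}$ with $P_{k_i}=P_{-k_i}$. By the interior property there are $c^0_i>0$ with $\Id=\sum_{i=1}^m c^0_i P_{k_i}$, and since $(x_i)\mapsto\sum_i x_iP_{k_i}$ is onto $\mathcal{S}^{3\times3}$ I fix a linear right inverse $\Sigma$. Then $c_i(R):=c^0_i+(\Sigma(R-\Id))_i$ is affine in $R$, equals $c^0_i>0$ at $R=\Id$, and satisfies $\sum_i c_i(R)P_{k_i}=R$ identically. Choosing $r_0>0$ small enough that $c_i(R)>0$ on $B_{r_0}(\Id)$ for all $i$ and all $j=1,\dots,N$ (finitely many conditions), I set $\gamma^{(j)}_{k_i}:=\sqrt{c_i}\in C^\infty(B_{r_0}(\Id))$ and $\gamma^{(j)}_{-k_i}:=\gamma^{(j)}_{k_i}$; pairing the $\pm k_i$ contributions gives $\tfrac12\sum_{k\in\Lambda_j}(\gamma^{(j)}_k(R))^2\bigl(\Id-\tfrac{k}{|k|}\otimes\tfrac{k}{|k|}\bigr)=\sum_{i=1}^m c_i(R)P_{k_i}=R$, which is \eqref{e:split}.

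The only genuinely geometric input is the first step — that $\Id$ lies in the \emph{interior} of, rather than merely belonging to, the cone spanned by the $P_e$'s; everything after that is a density/perturbation argument together with finite-dimensional linear algebra. The one further point requiring care is the bookkeeping needed to fit $N$ pairwise disjoint admissible families simultaneously onto a single sphere $\{|k|=\bar\lambda\}$, which is handled by conjugating one good configuration by $N$ rational rotations in general position.
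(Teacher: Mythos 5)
Your proof is correct. A point of context: the paper does not prove this lemma itself but cites it to \cite{DS3}, so the relevant comparison is with the argument there, and your proposal follows essentially the same strategy: show $\Id$ lies in the interior of the convex cone generated by $\{\Id-e\otimes e:e\in S^2\}$ (your dual-cone computation $\mathcal K^\ast=\{\mu_2+\mu_3\ge 0\}$ is a clean way to see this), pass by density and openness to finitely many rational directions, rescale to a common integer length $\bar\lambda$, and obtain the coefficients from a fixed linear right inverse of $(x_i)\mapsto\sum_i x_i(\Id-\hat k_i\otimes\hat k_i)$, which is exactly what makes the $\gamma_k^{(j)}$ smooth (indeed real-analytic, being square roots of positive affine functions). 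The only cosmetic difference is in how the $N$ disjoint families are produced: you conjugate one good configuration by $N$ rational rotations in general position, whereas one can equivalently appeal directly to the density of $S^2\cap\QQ^3$ and the openness of the spanning/interior condition to carve out $N$ pairwise disjoint rational sets; both routes need the same density fact (rational rotations dense in $SO(3)$, or rational points dense in $S^2$) and neither is harder. One small step you invoke without comment is that any interior point of a finitely generated cone whose generators span admits a representation with \emph{strictly} positive coefficients; this is the standard perturbation $x=\sum(c_i'+\epsilon)v_i$ after writing $x-\epsilon\sum v_i$ as a nonnegative combination, and is worth a sentence, but it is not a gap.
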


Following \cite{DS3}, we introduce the following operator
in order to deal with the Reynolds stresses.

\begin{definition}\label{d:reyn_op}
Let $v\in C^\infty (\T^3, \R^3)$ be a smooth vector field. 
We then define $\RR v$ to be the matrix-valued periodic function
\begin{equation*}
\RR v:=\frac{1}{4}\left(\nabla\P u+(\nabla\P u)^T\right)+\frac{3}{4}\left(\nabla u+(\nabla u)^T\right)-\frac{1}{2}(\div u) \Id,
\end{equation*}
where $u\in C^{\infty}(\T^3,\R^3)$ is the solution of
\begin{equation*}
\Delta u=v-\fint_{\T^3}v\textrm{ in }\T^3
\end{equation*}
with $\fint_{\T^3} u=0$ and $\P$ is the Leray projection onto divergence-free fields with zero average.
\end{definition}

\begin{lemma}[$\RR=\textrm{div}^{-1}$]\label{l:reyn}
For any $v\in C^\infty (\T^3, \R^3)$ we have
\begin{itemize}
\item[(a)] $\RR v(x)$ is a symmetric trace-free matrix for each $x\in \T^3$;
\item[(b)] $\div \RR v = v-\fint_{\T^3}v$.
\end{itemize}
\end{lemma}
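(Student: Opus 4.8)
The plan is to verify both claims by direct computation, reducing everything to two standard identities for the divergence of symmetrized gradients on the torus together with the explicit formula for the Leray projection on mean-zero fields.

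For part (a), symmetry of $\RR v$ is immediate, since each of the three summands in Definition \ref{d:reyn_op} is manifestly symmetric. For the trace I would use $\tr(\nabla w + (\nabla w)^T) = 2\div w$ for any vector field $w$. Applying this with $w = \P u$ gives $\tr(\nabla \P u + (\nabla \P u)^T) = 2\div \P u = 0$, because $\P$ projects onto divergence-free fields; applying it with $w = u$ gives $2\div u$; and $\tr((\div u)\Id) = 3\div u$. Hence $\tr \RR v = \tfrac14\cdot 0 + \tfrac34\cdot 2\div u - \tfrac12\cdot 3\div u = 0$.

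For part (b), I would first record the identities $\div(\nabla w + (\nabla w)^T) = \Delta w + \nabla\div w$ and $\div((\div u)\Id) = \nabla\div u$. Using them together with $\div \P u = 0$ yields
\[
\div \RR v = \tfrac14 \Delta \P u + \tfrac34 \Delta u + \tfrac14 \nabla\div u\, .
\]
Next, on mean-zero periodic fields the Leray projection is $\P u = u - \nabla\Delta^{-1}\div u$; here $\fint_{\T^3} u = 0$ and $\fint_{\T^3}\div u = 0$ (the latter automatically, by periodicity), so $\Delta^{-1}$ applied to $\div u$ is well defined, and therefore $\Delta \P u = \Delta u - \nabla\div u$. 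Substituting this into the previous display cancels the two $\nabla\div u$ contributions and leaves $\div \RR v = \Delta u = v - \fint_{\T^3} v$ by the defining equation for $u$.

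The only points requiring a little care are checking that $\div u$ has zero average, so that the formula for $\P$ and the operator $\Delta^{-1}$ make sense, and keeping the numerical coefficients $\tfrac14,\tfrac34,\tfrac12$ straight so that both the trace and the spurious gradient terms cancel exactly. There is no genuine obstacle: the content of the lemma is precisely that these coefficients in Definition \ref{d:reyn_op} are chosen to make these cancellations occur.
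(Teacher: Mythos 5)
Your computation is correct: the trace vanishes because the coefficients $\tfrac14\cdot 0 + \tfrac34\cdot 2 - \tfrac12\cdot 3 = 0$, and after using $\div(\nabla w + (\nabla w)^T)=\Delta w + \nabla\div w$, $\div \P u = 0$, and $\Delta\P u = \Delta u - \nabla\div u$, the $\nabla\div u$ terms cancel, leaving $\Delta u = v - \fint_{\T^3} v$. Note that the present paper does not actually give a proof of this lemma --- it cites \cite{DS3}, where the operator originates --- but your direct-verification argument is exactly the standard one used there, so there is nothing to reconcile.
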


\bigskip

\subsection{New intervals}\label{ss:new_intervals} 

We now describe the inductive procedure in order to define the new time intervals $I^{(q+1)}_\alpha$ in terms of the old intervals $I^{(q)}_\alpha$.  In addition we will describe a partition of unity of time which will provide a crucial ingredient to the construction of the perturbation $w_{q+1}$.

We begin by subdividing each interval $I^{(q)}_\alpha$ for $\alpha\in \{1, \ldots , N (q)\}$ into further subintervals $J^{(q+1)}_{\alpha, \alpha'}$, based on the set of parameters $\mu_{q+1, j_q (\alpha)}$:
\begin{itemize}
\item We let $n (\alpha,q)$ be the largest integer smaller than $\mu_{q+1, j_q (\alpha)} |I^{(q)}_\alpha|/2$ and note that the estimate \eqref{e:interval_constraint} ensures that $n (\alpha, q)\geq 2$.
\item We subdivide $I^{(q)}_\alpha$ from left to right in $n (\alpha, q)$ closed intervals $J_{\alpha, \alpha'}$, satisfying the conditions:
\begin{enumerate}
\item The right endpoint of $J_{\alpha, \alpha'}$ coincides with the left endpoint of $J_{\alpha, \alpha'+1}$.
\item The first $n (\alpha, q)-1$ intervals have length exactly $2\mu_{q+1,j_q (\alpha)}^{-1}$.
\item The last interval has length 
\[
|I^{(q)}_\alpha| - 2 (n (\alpha, q) -1)\mu_{q+1, j_q (\alpha)}^{-1}\, ,
\] 
which in particular is bounded below by $2\mu_{q+1,j_q (\alpha)}^{-1}$ and  above by $4\mu_{q+1,j_q (\alpha)}^{-1}$.
\end{enumerate}
\end{itemize}

\bigskip
 
We next relabel the intervals $J_{\alpha, \alpha'}$ as $J_\varsigma$, $\varsigma \in \{1, \ldots N'\}$ and complete the collection of intervals by settings $J_0:=I^{(q)}_{0}$ and $J_{N'+1}:=I^{(q)}_{N(q)+1}$. Therefore  each $\varsigma$ is associated with an index $\alpha_q (\varsigma)$ such that $J_\varsigma \subset I^{(q)}_{\alpha_q (\varsigma)}$.  We then call $J_\varsigma$ an internal interval if it is contained in the interior of $I^{(q)}_{\alpha (\varsigma)}$, otherwise we call it a boundary interval (note that $J_0$ and $J_{N'+1}$ are boundary intervals). 

\smallskip

\begin{definition}\label{d:ov_regions}
For each pair of intervals $J_\varsigma$ and $J_{\varsigma+1}$ with 
$0\leq \varsigma \leq N'$ we define an ``overlapping region'' $K_\varsigma$, in the following way:
Let 
\begin{equation}\label{e:eta}
\eta_{q+1,j} := \lambda_{q+1}^{b\beta_0-(b-1)\beta_\infty - \beta_j}\, 
\end{equation}
for all $j=0,1,2\dots$, and let 
$$
j=j_q (\alpha_q (\varsigma)),\quad j'=j_q (\alpha_q (\varsigma+1)).
$$
\begin{itemize}
\item[(A1)] If $j \leq  j'$, then the region $K_\varsigma$ is a closed interval with left endpoint coinciding with the right endpoint of $J_\varsigma$ and length 
$$
|K_{\varsigma}|=\frac{\eta_{q+1, j}}{\mu_{q+1, j} }.
$$
\item[(A2)] If $j> j'$, then the region $K_\varsigma$ is a closed interval  with right endpoint coinciding with the left endpoint of $J_{\varsigma+1}$ and length 
$$
|K_{\varsigma}|=\frac{\eta_{q+1, j'}}{\mu_{q+1, j'} }.
$$
\end{itemize}
Next, we define the {\em non-overlapping regions} to be the closed segments
\begin{equation}\label{e:nonov_regions}
H_\varsigma := \overline{J_\varsigma\setminus (K_\varsigma\cup K_{\varsigma-1})}\, .
\end{equation}
\end{definition}

We next claim that the overlapping region $K_{\varsigma}$ constructed in Definition \ref{d:ov_regions} is indeed contained in the correct interval $J_{\varsigma}$ (or $J_{\varsigma+1}$ resp.). In fact we will prove a better estimate.

\begin{lemma}\label{l:overlapping-geometry}
Let $K_\varsigma$ be as in Definition \ref{d:ov_regions}. If (A1) holds, namely the left endpoint of $K_\varsigma$ is the right endpoint of $J_\varsigma$, set $J:= J_{\varsigma+1}$, otherwise set $J:= J_\varsigma$. In either case we have
\begin{equation}\label{e:overlapping-geometry}
|K_{\varsigma}|\leq \frac{1}{4}|J|\, 
\end{equation}
and in particular $K_\varsigma \subset J$.

Therefore it follows that each non-overlapping region is nonempty and satisfies
\begin{equation}\label{e:overlapping-geometry2}
|J_\varsigma| \geq |H_{\varsigma}|\geq \frac{1}{2} |J_\varsigma|\, .
\end{equation}
\end{lemma}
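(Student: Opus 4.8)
The plan is to estimate the length of $K_\varsigma$ against the length of the neighbouring interval $J$ in which it lives, by bounding $|K_\varsigma|$ from above and $|J|$ from below, and then deduce \eqref{e:overlapping-geometry2} as a simple consequence. First I would treat case (A1), so that $j := j_q(\alpha_q(\varsigma)) \leq j' := j_q(\alpha_q(\varsigma+1))$ and $J = J_{\varsigma+1}$, which is a subinterval of $I^{(q)}_{\alpha_q(\varsigma+1)}$ with $j_q(\alpha_q(\varsigma+1)) = j'$. By the construction of the $J_{\alpha,\alpha'}$ in Section \ref{ss:new_intervals}, every such subinterval has length at least $2\mu_{q+1,j'}^{-1}$ (this is clauses (2) and (3) in the subdivision of $I^{(q)}_\alpha$). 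On the other hand $|K_\varsigma| = \eta_{q+1,j}\,\mu_{q+1,j}^{-1}$. So it suffices to show
\[
\eta_{q+1,j}\,\mu_{q+1,j}^{-1} \leq \tfrac14 \cdot 2\,\mu_{q+1,j'}^{-1} = \tfrac12\,\mu_{q+1,j'}^{-1}\, ,
\]
i.e.\ $\eta_{q+1,j}\,\mu_{q+1,j'} \leq \tfrac12\,\mu_{q+1,j}$. Since $j \leq j'$ and $\mu_{q+1,\cdot}$ is (weakly) increasing in its second index while $\eta_{q+1,\cdot}$ is decreasing, the worst case is roughly $j' $ large and $j$ as small as possible; one plugs in the explicit formulas \eqref{e:mu} and \eqref{e:eta} in terms of $\lambda_{q+1}$ and the $\beta$'s. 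The key point is that $\eta_{q+1,j} = \lambda_{q+1}^{b\beta_0-(b-1)\beta_\infty-\beta_j} = \lambda_{q+1}^{\beta_{-1}-\beta_j}$ and $\mu_{q+1,j}=\lambda_{q+1}^{1-\beta_j}$ for $j\geq 2$ (with the modified exponent for $j\leq1$), so the quantity $\eta_{q+1,j}\mu_{q+1,j'}/\mu_{q+1,j}$ is a fixed negative power of $\lambda_{q+1}$ up to a $\lambda_{q+1}^{o(1)}$-type discrepancy coming from the $\lambda_q \in [\lambda_0^{b^q},2\lambda_0^{b^q}]$ slack; since $\lambda_0$ is assumed sufficiently large, this is $\leq \tfrac12$. (For instance when $j,j'\geq2$: $\eta_{q+1,j}\mu_{q+1,j'}/\mu_{q+1,j} = \lambda_{q+1}^{\beta_{-1}-\beta_j}\lambda_{q+1}^{1-\beta_{j'}}\lambda_{q+1}^{\beta_j-1} = \lambda_{q+1}^{\beta_{-1}-\beta_{j'}}$, and $\beta_{-1}<\beta_0\leq\beta_{j'}$ gives a strictly negative exponent.)

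Case (A2) is symmetric: now $j > j'$, $K_\varsigma$ abuts the left endpoint of $J_{\varsigma+1}$ and lies in $J = J_\varsigma \subset I^{(q)}_{\alpha_q(\varsigma)}$, which has length $\geq 2\mu_{q+1,j}^{-1}$, while $|K_\varsigma| = \eta_{q+1,j'}\mu_{q+1,j'}^{-1}$. So one needs $\eta_{q+1,j'}\mu_{q+1,j}/\mu_{q+1,j'} \leq \tfrac12$, and exactly the same computation with the roles of $j,j'$ exchanged applies, again reducing (in the generic case $j,j'\geq2$) to $\lambda_{q+1}^{\beta_{-1}-\beta_j}\leq\tfrac12$, with the cases where one index is $\leq1$ handled by plugging in the second line of \eqref{e:mu}. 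In both cases \eqref{e:overlapping-geometry} follows, and then $K_\varsigma\subset J$ is immediate because $K_\varsigma$ is an interval of length $\leq\tfrac14|J|$ sharing an endpoint with $J$.

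For the last assertion, recall $H_\varsigma = \overline{J_\varsigma\setminus(K_\varsigma\cup K_{\varsigma-1})}$. The upper bound $|J_\varsigma|\geq|H_\varsigma|$ is trivial. For the lower bound, note that of the two overlapping regions $K_{\varsigma-1},K_\varsigma$, at most one is contained in $J_\varsigma$: indeed by Definition \ref{d:ov_regions}, $K_\varsigma$ sits in $J_\varsigma$ only in case (A2) (it abuts the left endpoint of $J_{\varsigma+1}$, i.e.\ the right endpoint of $J_\varsigma$), and $K_{\varsigma-1}$ sits in $J_\varsigma$ only in case (A1) for the pair $(\varsigma-1,\varsigma)$ (it abuts the left endpoint of $J_\varsigma$); these two cases correspond to opposite orderings of $j_q$-values at the endpoints of $J_\varsigma$, so they cannot both occur. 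Hence $J_\varsigma$ loses at most one of the two overlapping regions, each of which has length $\leq\tfrac14|J_\varsigma|$ by \eqref{e:overlapping-geometry}, so $|H_\varsigma|\geq|J_\varsigma|-\tfrac14|J_\varsigma| = \tfrac34|J_\varsigma|\geq\tfrac12|J_\varsigma|$, and in particular $H_\varsigma\neq\emptyset$. The main obstacle I anticipate is purely bookkeeping: being careful about the boundary intervals $J_0, J_{N'+1}$ and about which endpoint of which interval each $K_\varsigma$ attaches to, so that one always compares $|K_\varsigma|$ with the correct $\mu$-parameter (the one attached to the interval actually containing $K_\varsigma$, not the neighbour), and verifying that the slack between $\lambda_0^{b^{q+1}}$ and $2\lambda_0^{b^{q+1}}$ never destroys the strict inequality — which it does not, since all exponents in play are strictly negative and $\lambda_0$ is taken large.
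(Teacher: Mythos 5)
Your overall strategy matches the paper's: write $|K_\varsigma|=\eta_{q+1,j}\mu_{q+1,j}^{-1}$ with $j=\min\{j_q(\alpha_q(\varsigma)),j_q(\alpha_q(\varsigma+1))\}$, bound the containing interval $J$ from below by $2\mu_{q+1,j'}^{-1}$, and compare. However, several substantive points are missing or wrong.

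First, a sign error: you assert that $\mu_{q+1,\cdot}$ is weakly \emph{increasing} in its second index. It is weakly \emph{decreasing}, since $\mu_{q+1,j}=\lambda_{q+1}^{1-\beta_j}$ for $j\geq2$ and $\beta_j$ is increasing (see \eqref{e:asc_mu} and the proof of \eqref{e:mu-ineq}). That decrease is precisely what makes the factor $\mu_{q+1,j'}/\mu_{q+1,j}\leq1$. Your subsequent explicit computation for $j,j'\geq2$ is nonetheless correct because the $\mu$'s cancel against the $\lambda_{q+1}^{\beta_j}$ in $\eta_{q+1,j}$, giving $\lambda_{q+1}^{\beta_{-1}-\beta_{j'}}$.

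Second, the sentence asserting that the cases with an index $\leq1$ are ``handled by plugging in the second line of \eqref{e:mu}'' hides a genuinely nontrivial step. When $j\leq1<j'$, showing $\mu_{q+1,j}\geq\mu_{q+1,j'}$ (equivalently, that the exponent you obtain is negative) reduces after simplification to $(b^2+2b+2)\beta_\infty\geq b+(b+2)\beta_0$, which does not follow from monotonicity of $\beta_j$ alone but requires the standing hypothesis \eqref{e:condition2}. This is exactly where the proof needs one of the numbered constraints on the parameters, and a correct proof must invoke it.

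Third, the boundary intervals $J_0=I^{(q)}_0$ and $J_{N'+1}=I^{(q)}_{N(q)+1}$ are not covered by your lower bound $|J|\geq2\mu_{q+1,j'}^{-1}$ — that bound is valid only for the $J_{\alpha,\alpha'}$ obtained by subdividing $I^{(q)}_\alpha$ with $\alpha\in\{1,\dots,N(q)\}$. You flag this as ``bookkeeping,'' but it is a missing case: one has to use the size bound $|I^{(q)}_0|,|I^{(q)}_{N(q)+1}|\geq2^{-q-2}$ from \eqref{e:spt_triple} together with $\eta_{q+1,0}/\mu_{q+1,0}\leq\tfrac12\lambda_{q+1}^{\beta_\infty-1}\leq2^{-q-3}$.

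Finally, your argument for \eqref{e:overlapping-geometry2} contains an incorrect claim: it is \emph{not} true that at most one of $K_{\varsigma-1},K_\varsigma$ can lie inside $J_\varsigma$. Both happen when $j_q(\alpha_q(\varsigma-1))\leq j_q(\alpha_q(\varsigma))>j_q(\alpha_q(\varsigma+1))$; the two conditions compare $j_q(\alpha_q(\varsigma))$ with two \emph{different} neighbours and are not mutually exclusive. The $\tfrac34$-bound you obtain from this false claim is therefore not justified; the correct statement is simply that $J_\varsigma$ can lose both $K$'s, each of length $\leq\tfrac14|J_\varsigma|$, yielding exactly the claimed $|H_\varsigma|\geq\tfrac12|J_\varsigma|$.
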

\begin{proof} It is obvious that \eqref{e:overlapping-geometry2} follows from \eqref{e:overlapping-geometry}. 
We start by assuming that the interval $J = J_{\varsigma'}$ (which will end up containing $K_\varsigma$) is neither $J_0$ nor $J_{N'+1}$. Observe therefore that $J_{\varsigma'} \subset I^{(q)}_{\alpha_q (\varsigma')}$ with $\alpha_q (\varsigma')\in \{1, \ldots , N_q\}$ and thus, if we define $j'=j_q(\alpha_q (\varsigma'))$ we have
\begin{equation}\label{e:size-of-J}
\frac{2}{\mu_{q+1,j'}}\leq |J|\leq \frac{4}{\mu_{q+1,j'}}\, .
\end{equation}
On the other hand, by the alternatives (A1) and (A2) in Definition \ref{d:ov_regions}, if $j = j_q (\alpha_q (\varsigma))$,
then $j'\geq j$ and
\begin{equation}\label{e:size-of-K}
|K|=\frac{\eta_{q+1,j}}{\mu_{q+1,j}}
\end{equation}
Thus,
$$
\frac{|K|}{|J|}\leq \frac{1}{2}\frac{\mu_{q+1,j'}}{\mu_{q+1,j}}\eta_{q+1,j}.
$$
Then \eqref{e:overlapping-geometry} will follow from 
\begin{equation}\label{e:mu-ineq}
\mu_{q+1,j}\geq \mu_{q+1,j'}\quad\textrm{ for all }j\leq j',
\end{equation}
and the inequality 
\begin{equation}\label{e:trivial_eta}
\eta_{q+1,j}\leq \eta_{q+1,0}=\lambda_{q+1}^{-(b-1)(\beta_\infty-\beta_0)}\leq \frac12.
\end{equation}
The latter follows easily from the fact that $j\mapsto \beta_j$ is increasing and from a sufficiently large choice of $\lambda_0$, whereas for \eqref{e:mu-ineq} we need to consider two cases: (a) $j'>j\geq 2$ and (b) $j'=2>j$. 

The case (a) is obvious from the definition of $\mu_{q+1,j}$ in \eqref{e:mu} since $j\mapsto \beta_j$ is increasing. For the case (b), it suffices to show it for $j=1$. By taking the logarithm we see that \eqref{e:mu-ineq} is equivalent to
\[
(1-\beta_0) \frac{b+1}{2b} + \frac{b-1}{2} \beta_\infty \geq 1 - \beta_2 = 1 - \frac{\beta_0}{b^2} - \left(1-\frac{1}{b^2}\right) \beta_\infty\, ,
\] 
which turns into
\[
 \left(\frac{b-1}{2} + \frac{b^2-1}{b^2}\right) \beta_\infty \geq \frac{b-1}{2b} +\frac{b^2 +b - 2}{2b^2} \beta_0\,.
\]
Factorizing $b-1$ from both sides we are left with the inequality
\[
(b^2+2 b+2) \beta_\infty \geq b + (b+2) \beta_0\, .
\]
Since $b^2 +2b+2 > 5$, the latter inequality is implied by \eqref{e:condition2}.

It remains to examine the case in which $J$ is either $J_0 = I^{(q)}_0 = [0, t_1]$ or $J_{N'+1} = I^{(q)}_{N(q)+1} = [t_2, 1]$. 
From our inductive hypothesis \eqref{e:spt_triple} we have $t_1\geq 2^{-q-2}$  and $t_2\leq 1-2^{-q-2}$.  
The only overlapping region which can be contained in $J_0$ is obviously $K_0$ and we must be in case (A2). Similarly, the only overlapping region which can be contained in $J_{N'+1}$ is $K_{N'}$.
Then \eqref{e:overlapping-geometry} follows from \eqref{e:mu-ineq} and \eqref{e:trivial_eta}:
\[\frac{\eta_{q+1,j}}{\mu_{q+1,j}}\leq \frac{1}{2}\lambda_{q+1}^{\beta_{\infty}-1}\leq 2^{-q-3},\]
where in the last inequality we assume $\lambda_0$ to be sufficiently large.
\end{proof}

The new collection of intervals $\{I^{(q+1)}_{\alpha}\}_{\alpha \in \{0, N (q+1) +1\}}$ is then given by the {\em overlapping regions} $K_\varsigma$ together with the {\em non-overlapping regions} $H_\varsigma$. The intervals $I^{(q+1)}_{\alpha}$ will be ordered in terms of the left endpoints, starting from $I^{(q+1)}_0= H_0 \subset I^{(q)}_0$ and ending with $I^{(q+1)}_{N (q+1)+1} = H_{N'+1} \subset I^{(q)}_{N (q)+1}$.
We next define the map $j_{q+1}$. 

\begin{definition}\label{e:define_j_q+1}
For $\alpha\in \{0,N(q+1)+1\}$ simply set $j_{q+1}(\alpha)=0$.  Now fix $\alpha\in \{1,\dots, N(q+1)\}$. If  $I^{(q+1)}_\alpha \subset I^{(q)}_{\alpha'}$ is a non-overlapping region, then set $j_{q+1} (\alpha) = j_{q} (\alpha')+1$. Whereas, if $I^{(q+1)}_\alpha$ is an overlapping region, then set $j_{q+1} (\alpha) =0$. 
\end{definition}

\begin{lemma}\label{l:interval_iter}
The new collection of intervals $\{I^{(q+1)}_{\alpha}\}$ satisfies the constraints of Section \ref{ss:time_intervals} and the left inclusion of \eqref{e:spt_triple}, namely
\begin{equation}\label{e:spt_triple2}
\bigcup_{1\leq \alpha \leq N(q+1)} I_\alpha^{(q+1)} \subset [2^{-q-3}, I -2^{-q-3}]\, .
\end{equation}
\end{lemma}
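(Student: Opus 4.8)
The plan is to verify, one at a time, each of the conditions listed in Sections~\ref{ss:time_intervals} and~\ref{ss:estimates} for the new collection $\{I^{(q+1)}_\alpha\}$, together with the support statement \eqref{e:spt_triple2}. Since the new intervals are, by construction, the non-overlapping regions $H_\varsigma$ and the overlapping regions $K_\varsigma$ taken in the order of their left endpoints, most of the combinatorial structure (the ordering, the fact that consecutive intervals meet at a single point, and the definition of $j_{q+1}$) is immediate from the construction in Section~\ref{ss:new_intervals} and Definition~\ref{e:define_j_q+1}. The real content is (i) the lower bound \eqref{e:interval_constraint} on the length of each new interval, (ii) the upper bound \eqref{e:regions_est} on the measure of the regions $V^{(q+1)}_j$, and (iii) the support inclusion \eqref{e:spt_triple2}.

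For the lower bound \eqref{e:interval_constraint}, I would split into two cases according to whether $I^{(q+1)}_\alpha$ is a non-overlapping region $H_\varsigma$ or an overlapping region $K_\varsigma$. If it is an overlapping region with associated index $j$, then $j_{q+1}(\alpha)=0$, and one must show $|K_\varsigma| = \eta_{q+1,j}/\mu_{q+1,j} \geq 4/\mu_{q+1,0}$; using the definition \eqref{e:eta} of $\eta$, \eqref{e:mu} for $\mu$, and the inequality $\mu_{q+1,j}\geq\mu_{q+1,0}$ is false in general — rather one wants $\mu_{q+1,0}\geq \mu_{q+1,j}$ does not hold, so one computes the exponents directly and checks the resulting inequality reduces to something implied by the standing assumptions \eqref{e:condition}--\eqref{e:condition2} plus ``$\lambda_0$ large''. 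If it is a non-overlapping region $H_\varsigma\subset J_\varsigma$, then by \eqref{e:overlapping-geometry2} we have $|H_\varsigma|\geq\tfrac12|J_\varsigma|$, and $|J_\varsigma|\geq 2\mu_{q+1,j_q(\alpha_q(\varsigma))}^{-1}$ by construction of the $J$'s, so $|H_\varsigma|\geq \mu_{q+1,j}^{-1}$ where $j=j_q(\alpha_q(\varsigma))$; but $j_{q+1}(\alpha)=j+1$, so one needs $\mu_{q+1,j}^{-1}\geq 4\mu_{q+1,j+1}^{-1}$, i.e.\ $\mu_{q+1,j+1}\geq 4\mu_{q+1,j}$, which again reduces to a comparison of the exponents in \eqref{e:mu} (the case $j=0$ being special because of the modified formula for $j\leq 1$) and should follow from $b>1$ and $\lambda_0$ large.

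For the measure bound \eqref{e:regions_est}, I would bound $|\bigcup_{i=0}^j V^{(q+1)}_i|$ by separately accounting for the contribution of overlapping regions (which all carry $j_{q+1}=0$) and of non-overlapping regions $H_\varsigma\subset I^{(q)}_{\alpha'}$ with $j_q(\alpha')=i-1\leq j-1$. The non-overlapping regions are contained in $\bigcup_{i'=0}^{j-1}V^{(q)}_{i'}$, whose measure is controlled by the inductive hypothesis \eqref{e:regions_est} at level $q$, giving $\leq \lambda_0\lambda_q^{\beta_{j-1}-\beta_\infty+\eps/4}$; one then uses the recursion \eqref{e:betas} (which says $b(\beta_j-\beta_\infty)=\beta_{j-1}-\beta_\infty$, equivalently $\lambda_{q+1}^{\beta_j-\beta_\infty}\approx\lambda_q^{\beta_{j-1}-\beta_\infty}$ modulo the factor-of-two slack in \eqref{e:lambda}) to rewrite this in terms of $\lambda_{q+1}$. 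The overlapping regions contribute $\sum_\varsigma |K_\varsigma|$; each $|K_\varsigma|\leq\tfrac14|J_{\varsigma'}|$ for the appropriate neighbouring $J$, and summing over all of them one gets a bound proportional to $\sum_\varsigma|J_\varsigma|\leq 1$ times $\max_\varsigma \eta_{q+1,j}\leq\eta_{q+1,0}=\lambda_{q+1}^{-(b-1)(\beta_\infty-\beta_0)}$ — actually one should argue more carefully, grouping the $K_\varsigma$ by the value of $j$ and using that the total length of intervals $J_\varsigma$ inside $V^{(q)}_i$ is itself controlled by \eqref{e:regions_est} at level $q$. The point is that the overlapping contribution is a $\lambda_{q+1}^{-\delta}$-smaller perturbation that is absorbed by the $\eps/4$ (vs.\ a smaller power) slack in the exponent, and by taking $\lambda_0$ large. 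Finally, \eqref{e:spt_triple2} follows because $I^{(q+1)}_0=H_0\subset I^{(q)}_0=[0,t_1]$ and the first genuinely ``active'' new interval starts at the right endpoint of $H_0$, which by \eqref{e:overlapping-geometry2} and the estimate at the end of the proof of Lemma~\ref{l:overlapping-geometry} (where $|K_0|\leq 2^{-q-3}$) is at least $t_1-|K_0|\geq 2^{-q-2}-2^{-q-3}=2^{-q-3}$; symmetrically at the right end.

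The main obstacle I expect is the measure estimate \eqref{e:regions_est}: one has to set up the bookkeeping so that the newly created non-overlapping regions (which push $j$ up by one, and therefore should be compared against $V^{(q)}_{j-1}$, not $V^{(q)}_j$) together with the overlapping regions (which reset $j$ to $0$, hence land in $V^{(q+1)}_0$ and must be counted against the $j=0$ budget $\lambda_0\lambda_{q+1}^{\beta_0-\beta_\infty+\eps/4}$) fit within the allotted measure, and this requires using the precise relation \eqref{e:betas}/\eqref{e:betas_again} between consecutive $\beta_j$ and the geometric growth \eqref{e:lambda} of $\lambda_q$ in exactly the right way, with all the $O(1)$ and $\lambda_0^{\pm 1}$ factors kept track of. The length lower bound is more routine — it is just the exponent arithmetic already rehearsed in the proof of Lemma~\ref{l:overlapping-geometry} — and the support statement is essentially immediate from that same lemma.
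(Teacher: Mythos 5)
Your overall plan — verify the length lower bound \eqref{e:interval_constraint}, the measure bound \eqref{e:regions_est}, and the support inclusion, splitting the first into overlapping vs.\ non-overlapping cases and building the second from the inductive hypothesis plus a separate estimate on $V^{(q+1)}_0$ — is the same as the paper's, and the support argument is correct. But there is a systematic off-by-one error in the index $q$ that produces false target inequalities in the length bound. The constraint \eqref{e:interval_constraint} to be verified at step $q+1$ reads $|I^{(q+1)}_\alpha|\geq 4/\mu_{q+2,\,j_{q+1}(\alpha)}$, with the parameter $\mu_{q+2,\cdot}$, not $\mu_{q+1,\cdot}$. For a non-overlapping region $H_\varsigma$ you write that one needs $\mu_{q+1,j+1}\geq 4\mu_{q+1,j}$: this is false, since by \eqref{e:asc_mu} the map $j\mapsto\mu_{q+1,j}$ is non-increasing, so $\mu_{q+1,j+1}\leq\mu_{q+1,j}$. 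The correct requirement is $\mu_{q+2,j+1}\geq 4\mu_{q+1,j}$ (the paper's \eqref{e:(b)}), rescued only by the jump from $\lambda_{q+1}$ to $\lambda_{q+2}$. Likewise for an overlapping region $K_\varsigma$ you write the target as $|K_\varsigma|\geq 4/\mu_{q+1,0}$, i.e.\ $\eta_{q+1,j}\,\mu_{q+1,0}/\mu_{q+1,j}\geq 4$; taking $j=0$ this says $\eta_{q+1,0}\geq 4$, whereas $\eta_{q+1,0}\leq\frac12$ by \eqref{e:trivial_eta}. The correct target is $\eta_{q+1,j}\,\mu_{q+2,0}/\mu_{q+1,j}\geq 4$, which is the paper's \eqref{e:(a)}.

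A similar slip appears in the measure bound: the inductive hypothesis \eqref{e:regions_est} at step $q$ gives a bound in terms of $\lambda_{q+1}$ (not $\lambda_q$ as you wrote), and what must be established is a bound in $\lambda_{q+2}$. The paper's formulation is cleaner than the one you sketch: by Definition \ref{e:define_j_q+1}, $V^{(q+1)}_{j+1}\subset V^{(q)}_j$, so $\bigcup_{i\leq j+1}V^{(q+1)}_i\subset V^{(q+1)}_0\cup\bigcup_{i\leq j}V^{(q)}_i$; one then only needs a direct estimate of $|V^{(q+1)}_0|=\sum_\varsigma|K_\varsigma|$ (done by summing $\eta_{q+1,j_q(\alpha_q(\varsigma))}|J_\varsigma|$ over $\varsigma$ and using the inductive bound, absorbing the factor $q+1$ into the $\eps/4$ slack once $\lambda_0$ is large) together with the inductive bound, rather than the intertwined accounting you describe.
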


\begin{proof} Observe that $I^{(q+1)}_0 = H_0 = [0, t_0] \subset J_0 = I_0^{(q)} = [0, t_1]$ and
\[
t_0 \stackrel{\eqref{e:overlapping-geometry2}}{\geq} \frac{t_1}{2} \stackrel{\eqref{e:spt_triple}}{\geq} 2^{-q-3}\, .
\]
An entirely analogous argument gives $I^{(q+1)}_{N (q+1)+1} = [t_3, 1]$ with $t_3 \leq 1 - 2^{-q-2}$. On the other hand by the very definition of our intervals, we have
\[
\bigcup_{1\leq \alpha \leq N(q + 1)} I_\alpha^{(q+1)} \subset [t_0, t_3]
\]
and thus \eqref{e:spt_triple2} follows at once.

\smallskip

Next we turn to the constraints of Section \ref{ss:time_intervals}, more precisely \eqref{e:interval_constraint}, \eqref{e:regions_increase} and \eqref{e:regions_est} for step $q+1$. The property \eqref{e:regions_increase} follows obviously from the definition of $j_{q+1}(\alpha)$. 

The constraint \eqref{e:interval_constraint} requires for all $\varsigma$ the inequalitites
\begin{equation*}
|K_{\varsigma}|\geq \frac{4}{\mu_{q+2,0}},\qquad |H_\varsigma|\geq \frac{4}{\mu_{q+2,j+1}}\,.
\end{equation*} 
where $j=j_q(\alpha_q(\varsigma))$.
In light of \eqref{e:overlapping-geometry}-\eqref{e:size-of-K} this will follow from the inequalities
\begin{align}
&\frac{\eta_{q+1,j}\mu_{q+2,0}}{\mu_{q+1,j}}\geq 4 \qquad &\textrm{for all $j\in \mathbb N$}\label{e:(a)}\\
&\frac{\mu_{q+2,j+1}}{\mu_{q+1,j}}\geq 4\qquad &\textrm{for all $j\in \mathbb N$.}\label{e:(b)}
\end{align}

Consider condition \eqref{e:(a)} and observe that for $j\geq 2$
\begin{equation}\label{e:eta/mu-ordering_1}
\frac{\eta_{q+1, j}}{\mu_{q+1,j}}=\lambda_{q+1}^{\beta_{-1}-1}\stackrel{\eqref{e:beta-1}}{=}\lambda_{q+1}^{b\beta_0-1-(b-1)\beta_{\infty}}\stackrel{\eqref{e:condition}}{\geq} \lambda_{q+2}^{-(1-\beta_0)\frac{(b+1)}{2b}}= \mu_{q+2,0}^{-1}\lambda_{q+2}^{\frac{(b+1)}{2}\beta_{\infty}},
\end{equation}
which implies \eqref{e:(a)}. Since $\eta_{q+1,1}<\eta_{q+1,0}$ we obviously obtain
\begin{equation}\label{e:eta/mu-ordering_2}
\frac{\eta_{q+1,1}}{\mu_{q+1,1}}\leq \frac{\eta_{q+1, 0}}{\mu_{q+1, 0}}.
\end{equation}
Then keeping \eqref{e:eta/mu-ordering_1} in mind, in order to conclude \eqref{e:(a)} it suffices to prove
\begin{equation}\label{e:eta/mu-ordering_3}\frac{\eta_{q+1, 2}}{\mu_{q+1,2}}\leq \frac{\eta_{q+1, 1}}{\mu_{q+1, 1}}\, .
\end{equation}
Using \eqref{e:beta-1} and \eqref{e:betas_again}, we have $b\beta_0=\beta_{-1}+(b-1)\beta_{\infty}$ and $\beta_1=\frac{\beta_0+(b-1)\beta_{\infty}}{b}$. Thus it follows that
\[\eta_{q+1,1}=\lambda_{q+1}^{b_{-1}-\frac{\beta_0}{b}-\frac{b-1}{b}\beta_{\infty}}~.\]
Hence taking logarithms, the  inequality \eqref{e:eta/mu-ordering_2} corresponds to
\[
\beta_{-1}-1\leq\beta_{-1}+\frac{(1+b)+(1-b)(1+\beta_0)+(-b^2+b+2)\beta_{\infty}}{2b} \, .
\]
Multiplying by $2b$ and factorizing by $b-1$, the inequality becomes
\[
(b-1)(1+\beta_0-(b+2)\beta_{\infty})\geq 0,
\]
which is implied by \eqref{e:condition}.

Now consider \eqref{e:(b)}. From \eqref{e:eta/mu-ordering_1}-\eqref{e:eta/mu-ordering_3} we deduce
\[\frac{\mu_{q+1,j+1}}{\mu_{q+1,j}}\geq \frac{\eta_{q+1,j+1}}{\eta_{q+1,j}}=\lambda_{q+1}^{\beta_{j}-\beta_{j+1}}\stackrel{\eqref{e:betas}}{=}\lambda_{q+1}^{(b-1)(\beta_{j+1}-\beta_\infty)}\geq \lambda_{q+1}^{-(b-1)\beta_\infty}
\]
and from \eqref{e:mu-ineq}
\[\frac{\mu_{q+2,j+1}}{\mu_{q+1,j+1}}\geq \lambda_{q+2}^{(b-1)(1-\beta_{\infty})}\geq \lambda_{q+1}^{(b-1)(1-\beta_{\infty})}~.\]
Combining the two inequalites yields
\[\frac{\mu_{q+2,j+1}}{\mu_{q+1,j}}\geq \lambda_{q+1}^{(b-1)(1-2\beta_{\infty})}~.\]
Therefore \eqref{e:(b)} follows as a consequence of \eqref{e:condition}. This concludes the verification of \eqref{e:interval_constraint}.

\bigskip

Concerning \eqref{e:regions_est}, we first prove the case $j=0$, which amounts to the estimate
\begin{equation}\label{e:V_q+2^0}
|V_0^{(q+1)}| \leq \lambda_0 \lambda_{q+2}^{\beta_0 - \beta_\infty + \sfrac{\eps}{4}}\, .
\end{equation}
we have
\begin{equation*}
\begin{split}
\left|V^{(q+1)}_0\right| &= \sum_{\varsigma}|K_{\varsigma}| \leq  \sum_{\varsigma}2\eta_{q+1, j_{q}(\alpha_q(\varsigma))}|J_{\varsigma}|\\
&\leq \sum_{j=0}^q2\eta_{q+1, j} |V^{(q)}_j| \\
& \leq 2\lambda_0\sum_{j=0}^q \lambda_{q+1}^{b\beta_0-(b-1)\beta_\infty- \beta_j} \lambda_{q+1}^{\beta_j - \beta_\infty + \eps/4}\\ 
&= 2(q+1)\lambda_0 \lambda_{q+1}^{-b(\beta_\infty-\beta_0)+\sfrac{\eps}{4}}\, .
\end{split}
\end{equation*}
So the inequality \eqref{e:V_q+2^0} follows if 
\[
2 (q+1) \lambda_{q+1}^{-b(\beta_\infty-\beta_0)+\sfrac{\eps}{4}} \leq
\lambda_{q+1}^{- b(\beta_\infty - \beta_0) + b \sfrac{\eps}{4}} = 
\lambda_{q+2}^{\beta_0 - \beta_\infty + \sfrac{\eps}{4}}\, .
\]
The latter inequality is however a consequence of 
\begin{equation}\label{e:logarithmic_gain}
2(q+1)\lambda_{q+1}^{-\eps(b-1)/4}\leq 1.
\end{equation}
Thus it suffices to choose $\lambda_0$ sufficiently large (depending on $b-1>0$ and $\eps>0$, but not on $q$).

Observe next that $V^{(q+1)}_{j+1}\subset V^{(q)}_j$ for all $j\geq 0$. Thus we have
\begin{align}
\left| \bigcup_{i=0}^{j+1} V_i^{(q+1)}\right| \leq & 
\left|V_0^{(q+1)}\right|+ \left|\bigcup_{i=0}^j V_i^{(q)}\right|\nonumber\\
\leq & \lambda_0 \left( 2(q+1)  \lambda_{q+1}^{-b(\beta_\infty-\beta_0)+\sfrac{\eps}{4}} + 
\lambda_{q+1}^{\beta_j - \beta_\infty + \sfrac{\eps}{4}}\right)\, .\label{e:partial_sums}
\end{align}
We first observe that we can impose $2(q+1)\lambda_{q+1}^{-\eps(b-1)/4}\leq \frac{1}{2}$ by choosing $\lambda_0$ yet larger. On the other hand we can also impose  
\[
\lambda_{q+2}^{\beta_{j+1}-\beta_\infty + \eps/4} \geq 2 \lambda_{q+1}^{\beta_j - \beta_\infty + \eps/4}\, ,
\]
which (again by choosing $\lambda_0$ sufficiently larger) is implied by
\[
\left(b (\beta_{j+1} - \beta_\infty) + b \frac{\eps}{4}\right)  - \left(\beta_j - \beta_\infty +\frac{\eps}{4}\right)
= \frac{(b-1) \eps}{4} > 0\, .
\]
Combining these inequalities with \eqref{e:partial_sums} we then achieve
\[
\left| \bigcup_{i=0}^{j+1} V_i^{(q+1)}\right| \leq \frac{1}{2} \lambda_{q+2}^{-(\beta_\infty-\beta_0) + \sfrac{\eps}{4}}
+ \frac{1}{2} \lambda_{q+2}^{- (\beta_\infty - \beta_j) + \sfrac{\eps}{4}}
\leq  \lambda_{q+2}^{- (\beta_\infty - \beta_j) + \sfrac{\eps}{4}}\, .
\]
\end{proof}

\subsection{Partition of unity in time and bounds on flows}

We subsequently define the cut-off functions $\chi_\varsigma$ on $[0,1]$ with the properties that
\begin{itemize}
\item $\sum \chi_\varsigma^2 = 1$;
\item $\chi_\varsigma$ is identically $1$ on $H_\varsigma$, its support is an interval and it is contained, for $\varsigma \in \{1, \ldots , N'\}$, in the interior of $K_{\varsigma-1} \cup H_\varsigma \cup K_\varsigma $; for $\varsigma =0$ $\chi_0$ is defined on $H_0 \cup K_0 = [0, t_0]\cup K_0$, identically $1$ on $[0, t_0]$ and $0$ in a neighborhood of the right endpoint of $K_0$; $\chi_{N'+1}$ is defined in a similar way;
\item For $\varsigma \in \{1, \ldots , N'\}$ on $K = K_\varsigma$ and $K = K_{\varsigma-1}$ we have the estimate 
\[
\|\partial_t^N \chi_\varsigma\|_{C^0(K)} \leq C(N) |K|^{-N}
\]
where $C(N)$ is a geometric constant. Similar estimates are valid, respectively, on $K_0$ and $K_{N'}$ for the
functions $\chi_0$ and $\chi_{N'+1}$.
\end{itemize}

\bigskip

We conclude this section with a proposition, which will be used later extensively to obtain the inductive estimates in Section \ref{ss:estimates} for the new triple $(v_{q+1},p_{q+1},\mathring{R}_{q+1})$, and in a sense serves to justify our choices of the parameters $\mu$ and $\eta$ above and the alternatives (A1) and (A2) in Definition \ref{d:ov_regions}.

\begin{proposition}\label{p:CFL}
Let $\chi_{\varsigma}$ be a cut-off function in our partition of unity on $[0,1]$ and let $j=j_q(\alpha_q(\varsigma))$. Then
the local estimates \eqref{e:velocity_est}-\eqref{e:D_tR_est} hold for all $t\in \supp\chi_\varsigma$. More precisely, with
$$
\delta_{q,(j-1)_+}=\lambda_q^{-2\beta_{(j-1)_+}},\qquad \delta_{q+1,j}=\lambda_{q+1}^{-2\beta_{j}}
$$
we have for all $t\in \supp\chi_\varsigma$
\begin{equation}\label{e:local-estimates}
\begin{split}
\lambda_q^{-2} \|v_q (t)\|_2 + \lambda_q^{-1} \|v_q (t)\|_1  &\leq M \delta_{q,(j-1)_+}^{\sfrac12}\,, \\
\lambda_q^{-2}\|p_q (t)\|_2 + \lambda_q^{-1}\|p_q (t) \|_1  &\leq M^2 \delta_{q,(j-1)_+}\,,\\
\lambda_q^{-2} \|\mathring{R}_q (t)\|_2 + \lambda_q^{-1} \|\mathring{R}_q (t)\|_1 + \|\mathring{R}_q (t)\|_0 &\leq \delta_{q+1,j}\,,\\
\|(\partial_t +v_q\cdot \nabla) \mathring{R}_q (t)\|_0 &\leq \delta_{q+1,j}\delta_{q,j-1}^{\sfrac12}\lambda_q\, .
\end{split}
\end{equation}
Furthermore, if $\varsigma \in \{1, \ldots , N'\}$ then for all $t\in \supp (\chi_\varsigma)$ we have
\begin{equation}\label{e:CFL}
\left(\sup_{t\in\supp\chi_\varsigma}\|v_{q}(t)\|_1\right)|\supp\chi_\varsigma|\leq 10M\lambda_{q+1}^{-\omega}\leq 1,
\end{equation}
and
$$
\omega=\frac{(b-1)(1-\beta_0+b\beta_{\infty})}{2b} 
$$
When $\zeta\in \{0, N'+1\}$ the estimate \eqref{e:CFL} holds with $|K_0|$, respectively $|K_{N'+1}|$ in place of $|\supp (\chi_\varsigma)|$.
\end{proposition}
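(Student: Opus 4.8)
The plan is to verify the estimates \eqref{e:local-estimates} interval-by-interval using the fact that $\supp\chi_\varsigma$ is a short interval around $H_\varsigma$, on which the index function $j_q$ is essentially controlled, and then deduce the CFL-type bound \eqref{e:CFL} from the length bounds on $\supp\chi_\varsigma$ together with the velocity estimate. First I would unwind the definitions: the support of $\chi_\varsigma$ is contained in $K_{\varsigma-1}\cup H_\varsigma\cup K_\varsigma$, and by Definition \ref{d:ov_regions} each overlapping region $K_{\varsigma'}$ sits inside one of the two adjacent intervals $J_{\varsigma'}$ or $J_{\varsigma'+1}$, each of which lies in some old interval $I^{(q)}_{\alpha}$ with an associated index. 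So $\supp\chi_\varsigma$ meets at most three of the $J$-intervals, whose indices are $j=j_q(\alpha_q(\varsigma))$ and the indices $j_q(\alpha_q(\varsigma\pm1))$ of the neighbors. The key observation of Definition \ref{d:ov_regions} (the alternatives (A1)/(A2)) is precisely that the overlapping region is always attached to the side with the \emph{smaller} index: in case (A1) ($j\le j'$) we put $K_\varsigma$ inside $J_\varsigma$ (index $j$), and in case (A2) ($j>j'$) we put $K_\varsigma$ inside $J_{\varsigma+1}$ (index $j'<j$). Consequently the largest index among all the old intervals that $\supp\chi_\varsigma$ can touch is exactly $j=j_q(\alpha_q(\varsigma))$. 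Since $i\mapsto\beta_i$ is monotone increasing, the remark following \eqref{e:D_tR_est}, namely that the local estimates \eqref{e:velocity_est}-\eqref{e:D_tR_est} hold on all of $\bigcup_{i\ge j}V^{(q)}_i$, now yields \eqref{e:local-estimates} for every $t\in\supp\chi_\varsigma$: the rewriting in terms of $\delta_{q,(j-1)_+}$ and $\delta_{q+1,j}$ is just notation, and for the advective derivative bound we use $\lambda_q^{1-\beta_{j-1}}\lambda_{q+1}^{-2\beta_j}=\delta_{q+1,j}\delta_{q,j-1}^{1/2}\lambda_q$.

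Next I would establish \eqref{e:CFL}. By the local velocity estimate just proved, $\sup_{t\in\supp\chi_\varsigma}\|v_q(t)\|_1\le M\lambda_q\delta_{q,(j-1)_+}^{1/2}=M\lambda_q\lambda_q^{-\beta_{(j-1)_+}}\le M\lambda_q$, but one needs the sharper bound with $\lambda_q^{-\beta_{(j-1)_+}}$ retained. Meanwhile, by construction $|\supp\chi_\varsigma|\le |K_{\varsigma-1}|+|H_\varsigma|+|K_\varsigma|$, and using \eqref{e:overlapping-geometry2} together with the fact that $|K_{\varsigma'}|\le\tfrac14|J|$ for the relevant $J$, this is at most a fixed multiple (say $\le 4$) of $|J_\varsigma|\le 4/\mu_{q+1,j}$; hence $|\supp\chi_\varsigma|\le C/\mu_{q+1,j}$ with a small absolute constant — chasing the constants from Section \ref{ss:new_intervals} one finds the bound $10/\mu_{q+1,j}$ is comfortable, since $|K_{\varsigma-1}|,|K_\varsigma|\le\tfrac14|J_\varsigma|$ and $|H_\varsigma|\le|J_\varsigma|\le 4/\mu_{q+1,j}$ give at most $6/\mu_{q+1,j}$, and the neighboring-interval $K$'s contribute a comparable amount via the monotonicity \eqref{e:mu-ineq}. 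Multiplying, $\left(\sup\|v_q(t)\|_1\right)|\supp\chi_\varsigma|\le 10M\lambda_q\lambda_q^{-\beta_{(j-1)_+}}/\mu_{q+1,j}$, and it remains to check that $\lambda_q^{1-\beta_{(j-1)_+}}/\mu_{q+1,j}\le\lambda_{q+1}^{-\omega}$ for all $j$, with $\omega=(b-1)(1-\beta_0+b\beta_\infty)/(2b)$. For $j\ge 2$ this reads $\lambda_q^{1-\beta_{j-1}}\le\lambda_{q+1}^{1-\beta_j-\omega}=\lambda_q^{b(1-\beta_j)-b\omega}$, i.e. $1-\beta_{j-1}\le b(1-\beta_j)-b\omega$; using $b(\beta_j-\beta_\infty)=\beta_{j-1}-\beta_\infty$ (so $b\beta_j=\beta_{j-1}+(b-1)\beta_\infty$) this becomes $b-1\ge b\omega+(b-1)\beta_\infty-\beta_{j-1}+b\beta_j-\ldots$, which after simplification is an inequality sharpest at the endpoint $\beta_{j-1}=\beta_0$ and reduces to the definition of $\omega$ (with the worst case $j=2$, $\beta_{j-1}=\beta_1$, or rather the limiting value $\beta_0$). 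For $j\le 1$ one uses $\mu_{q+1,j}=\lambda_{q+1}^{(1-\beta_0)(b+1)/(2b)+(b-1)\beta_\infty/2}$ and $\beta_{(j-1)_+}=\beta_0$, so the required inequality is $\lambda_q^{1-\beta_0}\le\lambda_{q+1}^{(1-\beta_0)(b+1)/(2b)+(b-1)\beta_\infty/2-\omega}=\lambda_q^{(1-\beta_0)(b+1)/2+b(b-1)\beta_\infty/2-b\omega}$, i.e. $1-\beta_0\le(1-\beta_0)(b+1)/2+b(b-1)\beta_\infty/2-b\omega$; rearranging, $b\omega\le(1-\beta_0)(b-1)/2+b(b-1)\beta_\infty/2=(b-1)\bigl((1-\beta_0)+b\beta_\infty\bigr)/2$, which is exactly $\omega=(b-1)(1-\beta_0+b\beta_\infty)/(2b)$ with equality. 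This is the algebraic heart of the proposition and the reason the exponents in \eqref{e:mu} and \eqref{e:eta} were chosen as they were. Finally, the bound $10M\lambda_{q+1}^{-\omega}\le 1$ follows from $\omega>0$ (a consequence of \eqref{e:condition}, which forces $1-\beta_0+b\beta_\infty>0$) and the choice of $\lambda_0$ sufficiently large depending on $M,b,\beta_0,\beta_\infty$.

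For the boundary cases $\varsigma\in\{0,N'+1\}$, $\chi_0$ is supported in $[0,t_0]\cup K_0$ with index $j_{q+1}=0$ on the relevant old interval $I^{(q)}_0$, which carries index $0$, so $\sup\|v_q(t)\|_1\le M\lambda_q^{1-\beta_0}$ by the global estimate \eqref{e:g-velocity_est}; replacing $|\supp\chi_0|$ by $|K_0|$ as in the statement and invoking the same inequality with $j=0$ (which is the $j\le1$ case just treated, noting that $v_q$ vanishes on $[0,t_0]$ anyway by \eqref{e:spt_triple} so only $K_0$ contributes) closes this case, and $\chi_{N'+1}$ is symmetric.

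The main obstacle is the careful bookkeeping in the first paragraph — confirming that \eqref{e:local-estimates} genuinely holds on the whole of $\supp\chi_\varsigma$ and not merely on $H_\varsigma$. This hinges entirely on the (A1)/(A2) dichotomy guaranteeing that overlapping regions are always glued onto the low-index side, so that the worst (largest) relevant index on $\supp\chi_\varsigma$ is $j_q(\alpha_q(\varsigma))$ itself; once that is in hand, the monotonicity of $\beta_i$ and the remark after \eqref{e:D_tR_est} do the rest. The computation in the second paragraph is the expected technical core but is essentially forced: the inequalities hold with equality at the relevant endpoints, which is precisely why the proposition "justifies" the parameter choices, and the only genuine input is the positivity $\omega>0$ plus the freedom to take $\lambda_0$ large.
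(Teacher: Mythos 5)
The overall strategy matches the paper (verify \eqref{e:local-estimates} by controlling the indices of the intervals that $\supp\chi_\varsigma$ meets, then combine with the interval-length bounds to get \eqref{e:CFL}), but the first paragraph contains a directional error in exactly the "key observation" you flag as the crux. In case (A1), where $j\le j'$, the left endpoint of $K_\varsigma$ is the \emph{right} endpoint of $J_\varsigma$, so $K_\varsigma$ sits inside $J_{\varsigma+1}$, not $J_\varsigma$; symmetrically in case (A2) ($j>j'$), $K_\varsigma$ sits inside $J_\varsigma$, not $J_{\varsigma+1}$ (this is spelled out in Lemma~\ref{l:overlapping-geometry}). In other words, the overlapping region is always contained in the neighbouring interval with the \emph{larger} index $\max(j,j')$, and consequently every piece of $\supp\chi_\varsigma$ lies in some $V^{(q)}_i$ with $i\ge j$ — the \emph{smallest} index touched is $j$, not the largest. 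This matters logically: you then invoke the remark that the local estimates hold on $\bigcup_{i\ge j}V^{(q)}_i$, but if the largest index were $j$ then $\supp\chi_\varsigma$ would lie in $\bigcup_{i\le j}V^{(q)}_i$, which is a different (and in general bigger) set, and the deduction would fail because on $V^{(q)}_i$ with $i<j$ the inductive bounds are strictly weaker than $\delta_{q,(j-1)_+}$ and $\delta_{q+1,j}$. Swapping "smaller" for "larger" and "largest" for "smallest" (and correcting the two containment claims accordingly) repairs the argument, and then it coincides with the paper's proof.

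The second paragraph is the right computation but the description is a bit off in the $j\ge 2$ case: after substituting $b\beta_j=\beta_{j-1}+(b-1)\beta_\infty$ the $\beta_{j-1}$-dependence cancels, so the inequality $\lambda_q^{1-\beta_{j-1}}/\mu_{q+1,j}\le\lambda_{q+1}^{-\omega}$ reduces to the $j$-independent condition $\omega\le (b-1)(1-\beta_\infty)/b$, which in turn amounts to $(b+2)\beta_\infty\le 1+\beta_0$; there is no "worst case $j=2$." The paper handles this by computing the CFL product exactly, getting $\lambda_q^{-(b-1)(1-\beta_\infty)}$ for $j\ge 2$ and $\lambda_q^{-(b-1)(1-\beta_0+b\beta_\infty)/2}=\lambda_{q+1}^{-\omega}$ for $j\le1$, and observing via \eqref{e:nash_osc} and \eqref{e:condition} that the former is smaller. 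Your $j\le 1$ computation and the treatment of the boundary indices $\varsigma\in\{0,N'+1\}$ are consistent with the paper's.
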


\begin{remark}
In the statement above we have introduced the parameters $\delta_{r,l}$ in order to aid better the comparison with \cite{BDS}. These parameters will be used to estimate the new triple $(v_{q+1},p_{q+1},\mathring{R}_{q+1})$, in an analogous way to what has been done in \cite{BDS}. However, it is important to note that here $\delta_{q,j-1}$ and $\delta_{q+1,j}$ are not uniform in time, but depend on the particular time interval $\supp\chi_\varsigma$. 
\end{remark}

\begin{proof}
Let $j=j_{q}(\alpha_q(\varsigma))$. Recall that, if $\varsigma \in \{1, \ldots N'\}$, then
\[
\supp\chi_\varsigma\subset K_{\varsigma-1} \cup H_\varsigma\cup K_{\varsigma}\quad\textrm{ and }\quad \supp\chi_\varsigma'\subset K_{\varsigma-1}\cup K_{\varsigma},
\]
where $H_\varsigma \subset J_\varsigma\subset V_j^{(q)}$. Set $j'=j_{q}(\alpha_q(\varsigma+1))$ and $j''=j_{q}(\alpha_q(\varsigma-1))$. By Definition \ref{d:ov_regions}, 
$$
K_\varsigma\subset \begin{cases} J_{\varsigma+1}\subset V_{j'}^{(q)}& \textrm{ if }j'\geq j\\ 
J_{\varsigma}\subset V_{j}^{(q)}& \textrm{ if }j'<j,\end{cases}
\quad\textrm{ and }\quad
K_{\varsigma-1}\subset \begin{cases} J_{\varsigma-1}\subset V_{j''}^{(q)}& \textrm{ if }j< j''\\ 
J_{\varsigma}\subset V_{j}^{(q)}& \textrm{ if }j\geq j'',\end{cases}
$$
Consequently, 
by the same remark as in Section \ref{sss:global} regarding the monotonicity of $j\mapsto \beta_j$, the local estimates
\eqref{e:velocity_est}-\eqref{e:D_tR_est} hold for all $t\in \supp\chi_\varsigma$. 

The cases $\varsigma = 0$ and $\varsigma = N'+1$ follow from the obvious adjustments.

\smallskip

Next we turn to the proof of \eqref{e:CFL}. Again we assume $\varsigma \in \{1, \ldots , N'\}$ and leave the obvious adjustments to the reader in the case of the ``endpoints'' $\varsigma \in \{0, N'+1\}$.

Recall that $|J_{\varsigma}|\leq 4\mu_{q+1,j}^{-1}$, and for $K\in \{K_{\varsigma}, K_{\varsigma-1}\}$ we have that 
either $K\subset J_{\varsigma}$ or $|K|=\eta_{q+1,j'}\mu_{q+1,j'}^{-1}$ for some $j'\geq j$. Consequently
$$
|\supp\chi_\varsigma|\leq \frac{4}{\mu_{q+1,j}}+2\max_{j'\geq j}\frac{\eta_{q+1,j'}}{\mu_{q+1,j'}}.
$$
Next recall \eqref{e:eta/mu-ordering_1}-\eqref{e:eta/mu-ordering_3}, which imply
\begin{equation}\label{e:etamu-monotone}
\frac{\eta_{q+1,j}}{\mu_{q+1,j}}\geq \frac{\eta_{q+1,j'}}{\mu_{q+1,j'}}\quad\textrm{ if }j'\geq j.
\end{equation}
We deduce then that
\begin{equation}\label{e:size_of_suppchi}
|\supp\chi_\varsigma|\leq \frac{4}{\mu_{q+1,j}}+2\frac{\eta_{q+1,j}}{\mu_{q+1,j}}\leq \frac{6}{\mu_{q+1,j}}.
\end{equation}
Therefore, using \eqref{e:velocity_est} we obtain
$$
\left(\sup_{t\in\supp\chi_\varsigma}\|v_{q}(t)\|_1\right)|\supp\chi_\varsigma|\leq \begin{cases}\ 10 M \lambda_{q}^{-(b-1)(1-\beta_{\infty})}& j\geq 2\\
10 M \lambda_{q}^{-(b-1)(1+b\beta_{\infty}-\beta_0)/2}& j=0,1.\end{cases}
$$
We observe that the second quantity on the right hand side bounds the first:
\begin{equation}\label{e:nash_osc}
\lambda_{q}^{-(b-1)(1-\beta_{\infty})}=\lambda_{q}^{-(b-1)(1+b\beta_{\infty}-\beta_0)/2}
\underbrace{\lambda_q^{-(b-1)(1-(2+b)\beta_{\infty}+\beta_0)/2}}_I.
\end{equation}
where $I\ll 1$ as a result of \eqref{e:condition}.
\end{proof}

\begin{remark}
The inequality \eqref{e:nash_osc}, implies the following inequality between the parameters, which will be used often
in the rest of the paper:
\begin{equation}\label{e:CFL2}
\frac{\delta_{q, i-1}^{\sfrac12} \lambda_q}{\delta_{q+1, i}^{\sfrac12}\lambda_{q+1}}\leq 2\frac{\delta_{q, (i-1)_+} \lambda_q}{\mu_{q+1,i}}\leq 4\lambda_{q+1}^{-\omega}\leq \lambda_{q+1}^{-(b-1)\beta_{\infty}-2\eps_0}\, .
\end{equation}
for any fixed $\eps_0>0$ satisfying
\begin{equation}\label{e:ell_constr}
\eps_0\leq\frac{(b-1)(1-3b(\beta_{\infty}+\beta_0))}{8b},
\end{equation}
where we assume  $\lambda_0$ to be sufficiently large depending on the choice of $\eps_0$. Observe that the right hand side of \eqref{e:ell_constr} is positive due to \eqref{e:condition}.
As a consequence of \eqref{e:CFL2} and \eqref{e:betas} we have the useful identity
\begin{equation}\label{e:index_inc}
\delta_{q+1,j}\lambda_{q+1}^{-\omega}\leq \delta_{q+2,j+1}\lambda_{q+1}^{-2\eps_0}.
\end{equation}
\end{remark}

\subsection{Smoothing the velocity and estimates on the regularized flow}

We fix a symmetric non-negative convolution kernel $\psi\in C^\infty_c (\R^3)$ and a small parameter $\ell$ given by
\begin{align}
 \ell &= \lambda_{q+1}^{\eps_0-1}\stackrel{\eqref{e:CFL2}}{\leq}(1+M)^{-1}\lambda_{q+1}^{\omega-1}\, . \label{e:ell_choice}
\end{align}
 for fixed $\eps_0>0$ satisfying \eqref{e:ell_constr} -- assuming $\lambda_0$ larger if need be. Then define, considering $v_q(\cdot,t)$ as a $2\pi$-periodic function on $\R^3$, $v_\ell:=v*\psi_\ell$ i.e.
$$
v_\ell(x,t):=\int_{\R^3}v_q(x-y)\psi_\ell(y)\,dy.
$$
We have the standard mollification estimates for all $t\in [0,1]$:
\begin{align}
\|v_\ell(t)-v_q(t)\|_0&\leq C\ell\|v_q(t)\|_1\label{e:v_ell_est_0}\\
\|v_\ell(t)\|_N&\leq C_N\ell^{1-N}\|v_q(t)\|_1\qquad N\geq 1,\label{e:v_ell_est_N}
\end{align}
where $C,C_N$ are universal constants.

\smallskip

Given $s\in [0,1]$ we define the flow $X_s(x,t)$ and inverse flow $\Phi_s(x,t)$ of the vectorfield $v_\ell$ starting at time $t=s$ in the usual way, so that
\begin{equation}\label{e:X-transport}
\left\{\begin{array}{l}
\partial_t X_s=v_\ell\left(X_s,t\right),\\ \\
X_s(x,s)=x,
\end{array}\right.
\end{equation}
and 
\begin{equation}\label{e:Phi-transport}
\left\{\begin{array}{l}
\partial_t \Phi_s + v_{\ell}\cdot  \nabla \Phi_s =0\\ \\
\Phi_s (x, s)=x\, .
\end{array}\right.
\end{equation}
Observe that, if $y\in (2\pi \Z)^3$, then $X_s(x, t) - X_s(x+y, t) \in (2\pi \Z)^3$, hence
$X_s(\cdot, t)$, and similarly $\Phi_s(\cdot,t)$, can be thought of as 
volume-preserving diffeomorphisms of $\T^3$ onto itself. 

We have the following standard lemma:
\begin{lemma}\label{l:X_est}
Let $s\in[0,1]$. For any $t\in [0,1]$ with 
$|t-s|\leq M^{-1} \lambda_q^{-1+\beta_0}$ we have 
\begin{equation}\label{e:X_est}
\|D^NX_s(t)\|_{0}\leq C_N  \ell^{1-N} \quad N\geq 1,
\end{equation}
where the constants $C_N$ depends only on $N$.
\end{lemma}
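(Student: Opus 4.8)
The plan is to reduce the statement to the standard stability estimates for ODE flows, where the only input is a bound on $\|v_\ell(t)\|_1$ uniform over the relevant time window. First I would observe that the constraint $|t-s|\leq M^{-1}\lambda_q^{-1+\beta_0}$ is designed precisely so that the flow stays ``short'' relative to the Lipschitz norm of $v_\ell$. Indeed, since $j\mapsto\beta_j$ is increasing, the smallest relevant H\"older exponent is $\beta_0$, so by the global estimate \eqref{e:g-velocity_est} we have $\|v_q(t)\|_1\leq M\lambda_q^{1-\beta_0}$ for every $t\in[0,1]$; combining this with the mollification estimate \eqref{e:v_ell_est_N} with $N=1$ gives $\|v_\ell(t)\|_1\leq CM\lambda_q^{1-\beta_0}$ for all $t$. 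Consequently, for $t$ in the stated window,
\begin{equation*}
\int_s^t \|v_\ell(\tau)\|_1\,d\tau \leq CM\lambda_q^{1-\beta_0}\,|t-s|\leq C\, ,
\end{equation*}
i.e. the time-integrated Lipschitz norm of the advecting field is bounded by a universal constant on the window.

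Next I would run the usual Gr\"onwall/Picard argument for the variational equation. Differentiating \eqref{e:X-transport} in $x$ gives $\partial_t DX_s = (Dv_\ell)(X_s,t)\,DX_s$ with $DX_s(x,s)=\Id$, so $\|DX_s(t)\|_0\leq \exp\bigl(\int_s^t\|Dv_\ell(\tau)\|_0\,d\tau\bigr)\leq \exp(C)=:C_1$, which is the case $N=1$ of \eqref{e:X_est} (note that for $N=1$ the factor $\ell^{1-N}=\ell^0=1$). For $N\geq 2$ one proceeds inductively: the $N$-th spatial derivative $D^N X_s$ satisfies a linear ODE of the form $\partial_t D^N X_s = (Dv_\ell)(X_s,t)\,D^N X_s + (\text{lower-order terms})$, where the inhomogeneity is a sum of products of $D^k v_\ell(X_s,t)$ (for $2\leq k\leq N$) with already-controlled factors $D^{m}X_s$, $m<N$, and in each such product the derivatives of the $X_s$-factors add up appropriately. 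Using \eqref{e:v_ell_est_N} we have $\|D^k v_\ell(t)\|_0\leq C_k\ell^{1-k}\|v_q(t)\|_1\leq C_k M\ell^{1-k}\lambda_q^{1-\beta_0}$, and by the definition $\ell=\lambda_{q+1}^{\eps_0-1}$ together with $\lambda_{q+1}\approx\lambda_q^b$ with $b>1$ we certainly have $\lambda_q^{1-\beta_0}\ell^{-1}\leq C\lambda_q^{1-\beta_0}\lambda_{q+1}^{1-\eps_0}$; more to the point, the time window has length $\lesssim\lambda_q^{-1+\beta_0}$, so each factor $\|D^k v_\ell\|_0$ integrated against $dt$ over the window contributes at most $C_k M\ell^{1-k}$. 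Feeding this into the inductive Gr\"onwall estimate and using that the homogeneous part contributes only the harmless constant $e^C$, one collects exactly the scaling $\ell^{1-N}$ for $\|D^N X_s(t)\|_0$, with a constant $C_N$ depending only on $N$ (and on $C$, $M$, i.e. on nothing new). This is entirely standard and is the content, e.g., of the analogous lemma in \cite{BDS}.

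I expect the only real bookkeeping obstacle to be the inductive step for $N\geq 2$: one must set up the combinatorics of the Fa\`a-di-Bruno type expansion for $D^N(v_\ell(X_s,t))$ carefully enough to see that every term carries the correct power of $\ell$, namely that a term involving $D^k v_\ell$ comes multiplied by products of $X_s$-derivatives whose orders sum to $N$ in a way compatible with the claimed bound $\ell^{1-N}$. The key accounting fact is that $\ell^{1-k}\cdot\prod(\ell^{1-m_i})=\ell^{(k - \#\text{factors}) + (N - \text{something})}$ works out to $\ell^{1-N}$ precisely because the lowest-order contribution ($k=1$) is the dominant one and each additional derivative costs exactly one factor of $\ell^{-1}$; all of this is forced by the scaling $\ell^{-1}\gg\lambda_q^{1-\beta_0}$ guaranteeing that derivatives of $v_\ell$ dominate derivatives of the flow. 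Apart from this, nothing new is needed: the choice of $\ell$ in \eqref{e:ell_choice}, the global velocity bound \eqref{e:g-velocity_est}, and the mollification estimates \eqref{e:v_ell_est_0}--\eqref{e:v_ell_est_N} are the only ingredients, and the proof is a routine transport estimate.
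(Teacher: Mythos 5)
Your proposal is correct and follows essentially the same route as the paper: the CFL restriction $|t-s|\sup_t\|Dv_q(t)\|_0\leq 1$, Gr\"onwall for $N=1$, and the Fa\`a-di-Bruno/chain-rule scaling $\ell^{1-k}\prod_i\ell^{1-m_i}=\ell^{1-N}$ for $N\geq 2$, with the time window absorbing the factor $\lambda_q^{1-\beta_0}$. The only cosmetic difference is that the paper invokes the packaged chain-rule estimate \eqref{e:chain1} to reduce $N\geq 2$ directly to the $N=1$ bound, whereas you run a full induction on $N$; both are fine.
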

\begin{proof}
Recall from \eqref{e:g-velocity_est} the uniform bound
$$
\sup_{t\in [0,1]}\|v_q(t)\|_1\leq M\lambda_q^{1-\beta_0}.
$$
Hence the restriction on $t$ in the statement of the lemma corresponds to the standard (CFL-)condition for the flow $X_s$:
$$
|t-s|\sup_t\|Dv_q(t)\|_0\leq  1.
$$
The estimate \eqref{e:X_est} for $N=1$ then follows from a standard application of Gronwall's inequality.
For the case $N\geq 2$ observe that, by the chain rule \eqref{e:chain1} in Proposition \ref{p:chain},
$$
\frac{\partial}{\partial t}\|D^NX_s(t)\|_0\leq C\Bigl[\|Dv_{\ell}\|_0\|D^NX_s(t)\|_0+\|D^Nv_{\ell}\|_0\|DX_s(t)\|_0^N\Bigr]
$$
Hence \eqref{e:X_est} follows from the case $N=1$ and from \eqref{e:v_ell_est_N}.
\end{proof}

Obviously the analogous estimates for the inverse flow $\Phi_s$ also hold. However, for the inverse flow we need 
more precise (local) estimates restricted to times in the support of each cutoff function $\chi_{\varsigma}$. More precisely, let 
$\varsigma \in \{0, N'+1\}$ and let $t_\varsigma$ be the center of the interval $\supp \chi_\varsigma$. We will consider the 
inverse flow
$$
\Phi_{\varsigma}(x,t):=\Phi_{t_{\varsigma}}(x,t)
$$
for times $t\in \supp\chi_\varsigma$.

We will frequently deal with the transport derivative with respect to the regularized flow $v_\ell$ of various expressions, and will henceforth use the notation
\[
D_t:=\partial_t+v_{\ell}\cdot\nabla\, .
\]

\begin{lemma}\label{l:Phi_est}
For every $t\in \supp\chi_{\varsigma}$ we have
\begin{align}
\|D\Phi_{\varsigma}(t)\|_0&\leq C\label{e:Phi1}\\
\|D\Phi_{\varsigma}(t)-\Id\|_0&
\leq C  M\lambda_{q+1}^{-\omega}\label{e:Phi2}\\
\|D^N\Phi_{\varsigma}(t)\|_0&
\leq C_N  M  \ell^{1-N}\lambda_{q+1}^{-\omega}\quad \forall N\geq 2\label{e:Phi3}\\
\|D_t D\Phi_\varsigma (t)\|_N &\leq C_N  M^2 \delta_{q, (j-1)_+}^{\sfrac{1}{2}} \lambda_q \lambda_{q+1}^{-\omega}\ell^{-N} \, .\label{e:D_tDPhi_est}
\end{align}
where the constant $C$ is universal and the constant $C_N$ depends only on $N$.
\end{lemma}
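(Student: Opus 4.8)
The plan is to prove the four estimates \eqref{e:Phi1}--\eqref{e:D_tDPhi_est} for $\Phi_\varsigma$ by first establishing the corresponding bounds for the forward flow $X_\varsigma := X_{t_\varsigma}$ on the time interval $\supp\chi_\varsigma$, then transferring them to the inverse flow via the identity $D\Phi_\varsigma(X_\varsigma(x,t),t)\cdot DX_\varsigma(x,t) = \Id$. The crucial preliminary point is that $\supp\chi_\varsigma$ is short: by \eqref{e:size_of_suppchi} we have $|\supp\chi_\varsigma|\le 6\mu_{q+1,j}^{-1}$, and using \eqref{e:velocity_est} (which holds on all of $\supp\chi_\varsigma$ by the first part of Proposition \ref{p:CFL}) together with \eqref{e:v_ell_est_N} for $N=1$ we get
\[
\Bigl(\sup_{\supp\chi_\varsigma}\|Dv_\ell(t)\|_0\Bigr)\,|\supp\chi_\varsigma|
\le C\Bigl(\sup_{\supp\chi_\varsigma}\|v_q(t)\|_1\Bigr)|\supp\chi_\varsigma|
\stackrel{\eqref{e:CFL}}{\le} C M\lambda_{q+1}^{-\omega}\, ,
\]
which is $\le 1$ for $\lambda_0$ large; in particular the hypothesis $|t-s|\le M^{-1}\lambda_q^{-1+\beta_0}$ of Lemma \ref{l:X_est} is satisfied on $\supp\chi_\varsigma$, so \eqref{e:X_est} applies there. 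This is the key structural input: the whole point of the definitions of $\mu$, $\eta$ and the alternatives (A1)/(A2) was to make $(\sup\|v_q\|_1)|\supp\chi_\varsigma|$ small, and that smallness is exactly what produces the $\lambda_{q+1}^{-\omega}$ gain in \eqref{e:Phi2}--\eqref{e:D_tDPhi_est}.

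Next I would run a Gronwall argument directly on $\Phi_\varsigma$. Differentiating \eqref{e:Phi-transport} in space gives $D_t(D\Phi_\varsigma) = -D v_\ell\, D\Phi_\varsigma$ (composition understood along the flow), and since $\Phi_\varsigma(\cdot,t_\varsigma)=\mathrm{id}$ we get, for $t\in\supp\chi_\varsigma$,
\[
\|D\Phi_\varsigma(t)-\Id\|_0 \le \exp\Bigl(\!\int\!\|Dv_\ell\|_0\Bigr)-1 \le C M \lambda_{q+1}^{-\omega}\, ,
\]
using the time-length estimate above; this yields both \eqref{e:Phi1} and \eqref{e:Phi2}. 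For the higher derivatives \eqref{e:Phi3} one differentiates $N$ times, uses the Leibniz/chain rule (Proposition \ref{p:chain}) to get $D_t\|D^N\Phi_\varsigma\|_0 \lesssim \|Dv_\ell\|_0\|D^N\Phi_\varsigma\|_0 + \|D^N v_\ell\|_0\|D\Phi_\varsigma\|_0^N + (\text{lower order})$, and then integrates: the inhomogeneous term contributes $|\supp\chi_\varsigma|\,\|D^Nv_\ell\|_0 \lesssim \mu_{q+1,j}^{-1}\ell^{1-N}\|v_q\|_1 = \ell^{1-N}(\sup\|v_q\|_1)|\supp\chi_\varsigma|/6 \lesssim M\ell^{1-N}\lambda_{q+1}^{-\omega}$ via \eqref{e:v_ell_est_N} and \eqref{e:CFL}. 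A bootstrap on $N$ (the lower-order terms are controlled by the already-proven estimates for smaller $N$ together with \eqref{e:Phi2}) closes \eqref{e:Phi3}.

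Finally, for \eqref{e:D_tDPhi_est} one observes from $D_t(D\Phi_\varsigma) = -(Dv_\ell\circ X)\, D\Phi_\varsigma$ — more precisely, since $D_t$ is the transport derivative, $D_t D\Phi_\varsigma = -(Dv_\ell)\,D\Phi_\varsigma$ evaluated appropriately — that one simply needs to estimate the $C^N$ norm of the product $Dv_\ell\cdot D\Phi_\varsigma$. By the product rule, $\|Dv_\ell\, D\Phi_\varsigma\|_N \lesssim \sum_{m\le N}\|Dv_\ell\|_m\|D\Phi_\varsigma\|_{N-m}$; the top term $m=0$ gives $\|Dv_\ell\|_0\|D\Phi_\varsigma\|_N \lesssim (\|v_q\|_1)(M\ell^{-N}\lambda_{q+1}^{-\omega})$ by \eqref{e:v_ell_est_N} ($N=1$) and \eqref{e:Phi3}, while the remaining terms are no larger after using \eqref{e:v_ell_est_N}, \eqref{e:Phi1}--\eqref{e:Phi3} and $\ell^{-1}\le\lambda_{q+1}$. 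Since $\|v_q(t)\|_1 \le M\delta_{q,(j-1)_+}^{1/2}\lambda_q$ on $\supp\chi_\varsigma$ by the first line of \eqref{e:local-estimates}, this gives exactly $\|D_t D\Phi_\varsigma\|_N \le C_N M^2\delta_{q,(j-1)_+}^{1/2}\lambda_q\lambda_{q+1}^{-\omega}\ell^{-N}$. The main obstacle is bookkeeping: making sure the $\lambda_{q+1}^{-\omega}$ factor is genuinely produced (rather than lost) in the higher-derivative estimates — i.e.\ that the inhomogeneous forcing term, not the homogeneous Gronwall term, dominates — which is precisely where the shortness of $\supp\chi_\varsigma$ from \eqref{e:CFL}/\eqref{e:size_of_suppchi} must be invoked at every step, and treating the endpoint cases $\varsigma\in\{0,N'+1\}$ with $|K_0|$, $|K_{N'+1}|$ in place of $|\supp\chi_\varsigma|$ by the analogous reasoning.
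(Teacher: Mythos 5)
Your argument for \eqref{e:Phi1}--\eqref{e:Phi3} is essentially the paper's: apply Proposition~\ref{p:transport_derivatives} to the transport equation for $\Phi_\varsigma$, using the local CFL bound \eqref{e:CFL} to conclude $(t-t_\varsigma)[v_\ell]_1\lesssim M\lambda_{q+1}^{-\omega}$ on $\supp\chi_\varsigma$ (and treating $\varsigma\in\{0,N'+1\}$ via $|K_0|$, $|K_{N'}|$ as you note). That part is fine.

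For \eqref{e:D_tDPhi_est} there is a concrete gap. The correct identity is $D_t D\Phi_\varsigma=-D\Phi_\varsigma\,Dv_\ell$ (equivalently $=-(D\Phi_\varsigma-\Id)Dv_\ell-Dv_\ell$). In your Leibniz expansion $\sum_{m\le N}\|Dv_\ell\|_m\|D\Phi_\varsigma\|_{N-m}$, the endpoint term $m=N$ is $\|Dv_\ell\|_N\|D\Phi_\varsigma\|_0\lesssim\|v_q\|_1\ell^{-N}\cdot C$, and this carries \emph{no} factor $\lambda_{q+1}^{-\omega}$, because \eqref{e:Phi1} gives only $\|D\Phi_\varsigma\|_0\le C$ --- the gain lives in $D\Phi_\varsigma-\Id$, not in $D\Phi_\varsigma$. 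So your claim that ``the remaining terms are no larger'' fails precisely at $m=N$ (and already at $N=0$: since $D\Phi_\varsigma(\cdot,t_\varsigma)=\Id$, one has $D_tD\Phi_\varsigma(\cdot,t_\varsigma)=-Dv_\ell(\cdot,t_\varsigma)$, whose $C^0$-norm is of size $\|v_q(t_\varsigma)\|_1$ with no extra smallness, so the estimate \eqref{e:D_tDPhi_est} cannot in general hold with the $\lambda_{q+1}^{-\omega}$ factor). You should also be aware that the paper's own proof at this point relies on the asserted identity $D_tD\Phi_\varsigma=Dv_\ell(D\Phi_\varsigma-\Id)$ in \eqref{e:D_tDPhi}, in which the obstructive $-Dv_\ell$ term is absent; but this identity does not follow from $\partial_t\Phi_\varsigma+v_\ell\cdot\nabla\Phi_\varsigma=0$, which gives $D_tD\Phi_\varsigma=-D\Phi_\varsigma Dv_\ell=-(D\Phi_\varsigma-\Id)Dv_\ell-Dv_\ell$. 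The honest estimate that your approach does prove, and which suffices for the later applications (e.g.\ Lemma~\ref{l:ugly_lemma2}, where the needed gain is instead supplied by the smallness of $\lambda_q\ell$), is $\|D_tD\Phi_\varsigma(t)\|_N\le C_N M^2\delta_{q,(j-1)_+}^{1/2}\lambda_q\,\ell^{-N}$, i.e.\ \eqref{e:D_tDPhi_est} with the $\lambda_{q+1}^{-\omega}$ dropped.
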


\begin{proof} We first treat the main case $\varsigma \in \{1, \ldots , N'\}$.
The estimates \eqref{e:Phi1}, \eqref{e:Phi2} and \eqref{e:Phi3} follow analogously to those in Lemma \ref{l:X_est} using Proposition \ref{p:transport_derivatives} and 
the local (CFL-)condition \eqref{e:CFL} in Proposition \ref{p:CFL}. 

Next we observe that
\begin{equation}\label{e:D_tDPhi}
D_t D\Phi_\varsigma (x, t) = D_t (D\Phi_\varsigma (x,t) - \Id) = Dv_\ell (x, t) (D\Phi_\varsigma (x,t)- \Id)
\end{equation}
and thus, using \eqref{e:Phi2} we obtain
\[
\|D_t D\Phi_\varsigma (t)\|_N \leq C M \|v_q(t)\|_1\lambda_{q+1}^{-\omega}\ell^{-N} \, ,
\]
from which \eqref{e:D_tDPhi_est} easily follows (using Proposition \ref{p:CFL}).

We now come to $\varsigma \in \{0, N'+1\}$. Fix for instance $\varsigma =0$. In this case the vector field $v_\ell$ vanish identically on $H_0$. It thus suffices to apply the estimate \eqref{e:CFL} with $|K_0|$ replacing $|\supp (\chi_\varsigma)|$. 
\end{proof}

\subsection{Smoothing of the Reynolds stress} 

We now wish to define a tensor $R_{\varsigma}$ which will be obtained from a suitable approximation of $\mathring{R}_q$ and the addition of a tensor $\rho_{\varsigma}\Id$ for some carefully chosen $\rho_\varsigma$. As before, let
$j=j_q(\alpha_q(\varsigma))$. 

\begin{definition}\label{d:regularized_Reynolds}
First of all, we define the mollification of $\mathring{R}_q(\cdot,t)$ as we did with $v_q$, i.e.
$$
\mathring{R}_\ell(x,t):=\int_{\R^3}\mathring{R}_q(x-y,t)\psi_\ell(y)\,dy,
$$
where we treat $\mathring{R}_q(\cdot,t)$ as a $2\pi$-periodic tensor on $\R^3$. Then $\mathring{R}_\ell(\cdot,t)$ is also $2\pi$-periodic, so that we can think of it as a tensor on $\T^3$.  

Next we distinguish three cases:
\begin{itemize}
\item[(a)] $\varsigma \in \{1, \ldots , N'\}$ and $j=0$;
\item[(b)] $\varsigma \in \{1, \ldots , N'\}$ and $j\geq 1$;
\item[(c)] $\varsigma \in \{0, N'+1\}$.
\end{itemize}
In the first case we will use an approximation procedure borrowed from the paper \cite{Isett} to define $R_{\varsigma}$, whereas for the second case we will employ the approximation procedure from \cite{BDS}.

\begin{itemize}
\item[(a)] We extend first of all $\mathring{R}_{q}(\cdot,t)$ (and $\mathring{R}_{\ell}(\cdot,t)$) by zero to all $t\in \R$ (recalling \eqref{e:spt_triple}) and then define
\begin{equation}\label{e:mollify_along_the_flow}
\mathring{R}_\varsigma (x,t) = \int_{-\tau}^\tau \mathring{R}_\ell (X_t (x, t+s), t+s)\, \varrho_\tau (s)\, ds\, ,
\end{equation}
where $\varrho\in C^{\infty}_c(\R)$ is a symmetric nonnegative convolution kernel supported in the interval $]-1,1[$ and 
\begin{equation}\label{e:tau_choice}
\tau=\lambda_{q+1}^{-1+\beta_0}.
\end{equation}
\item[(b)] We set 
\begin{equation}\label{e:R-transport}
\mathring{R}_\varsigma (x, t)= \mathring{R}_\ell (\Phi_{\varsigma}(x,t), t);
\end{equation}
\item[(c)] We simply define $\mathring{R}_\varsigma \equiv 0$.
\end{itemize}
\end{definition}

We conclude this section by listing a number of estimates related to the approximation of $\mathring R_{q}$ in \eqref{e:R-transport} and \eqref{e:mollify_along_the_flow}. Similar estimates can be found in \cite{BDS,Isett}.

\begin{lemma}\label{l:regularized_R_q}
Assume $t\in \supp \chi_\varsigma$ and set $j:= j_q (\alpha_q (\varsigma))$.
Then the following estimates are satisfied, provided $\lambda_0$ is sufficiently large (depending only on $M$):
\begin{align}
\|\mathring{R}_\varsigma (t)\|_0&\leq \delta_{q+1,j}\label{e:R_mol_0}\\
\|\mathring R_\varsigma (t)\|_N&\leq  \bar C \delta_{q+1,j} \lambda_q \ell^{1-N} \qquad\qquad \mbox{for $N\geq 1$} \label{e:R_mol}\\
\|D_t\mathring R_\varsigma (t)\|_N&\leq  C \delta_{q+1,j}\delta_{q,j-1}^{\sfrac12}\lambda_q\ell^{-N}\qquad\qquad \mbox{for $N\geq 0$} \label{e:R_Dt}\\
\|D_t^2\mathring R_\varsigma (t)\|_N&\leq    C \delta_{q+1,j}^{\sfrac32}\delta_{q,j-1}^{\sfrac12}\lambda_q\lambda_{q+1}\ell^{-N}
\qquad\qquad \mbox{for $N\geq 0$} \label{e:R_Dt2}\\ 
\|(\mathring R_{q}-\mathring R_\varsigma) (t)\|_0& \leq  C \delta_{q+2,j} \lambda_{q+1}^{-\eps_0}\, , \label{e:R-R_moll}
\end{align}
where the constant $\bar C$ depends only on $N$ and $C$ depends only on $N$ and $M$.
\end{lemma}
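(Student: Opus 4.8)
The plan is to prove each of \eqref{e:R_mol_0}--\eqref{e:R-R_moll} by treating separately the three cases (a), (b), (c) of Definition \ref{d:regularized_Reynolds}. The common starting point is Proposition \ref{p:CFL}: the local estimates \eqref{e:local-estimates} on $(v_q,p_q,\mathring R_q)$ hold for every $t\in\supp\chi_\varsigma$; moreover in case (a), where $j=0$, the global estimates \eqref{e:g-velocity_est}--\eqref{e:g-D_tR_est} give them for \emph{all} $t$, hence in particular for the times $t+s$ with $|s|\le\tau$ entering \eqref{e:mollify_along_the_flow} (one checks that $\tau=\lambda_{q+1}^{-1+\beta_0}\le M^{-1}\lambda_q^{-1+\beta_0}$ for $\lambda_0$ large, so that Lemma \ref{l:X_est} applies on this window). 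Case (c) is immediate: since $\mathring R_\varsigma\equiv0$, \eqref{e:R_mol_0}--\eqref{e:R_Dt2} read $0\le(\cdot)$, while \eqref{e:R-R_moll} reduces to a bound on $\|\mathring R_q(t)\|_0$ for $t\in\supp\chi_\varsigma$, where $\supp\chi_\varsigma\subset H_\varsigma\cup K_\varsigma$ lies, by the left inclusion in \eqref{e:spt_triple} and the geometry of the boundary intervals (Lemma \ref{l:overlapping-geometry}), in the region where $\mathring R_q$ is controlled well below $\delta_{q+2,j}\lambda_{q+1}^{-\eps_0}$. The zeroth-order bound \eqref{e:R_mol_0} then uses only that $\psi_\ell,\varrho_\tau$ are nonnegative of unit mass and that $X_t(\cdot,t+s)$ and $\Phi_\varsigma(\cdot,t)$ are volume-preserving diffeomorphisms of $\T^3$: in all three cases $\mathring R_\varsigma(\cdot,t)$ is an average of values of $\mathring R_\ell=\mathring R_q\ast\psi_\ell$, so $\|\mathring R_\varsigma(t)\|_0\le\sup\|\mathring R_q\|_0\le\delta_{q+1,j}$.

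For the spatial estimates \eqref{e:R_mol} I differentiate in $x$ and apply the chain rule \eqref{e:chain1} of Proposition \ref{p:chain}. In case (b), $\|D^N\mathring R_\varsigma\|_0\lesssim\|D\mathring R_\ell\|_0\,\|D^N\Phi_\varsigma\|_0+\|D^N\mathring R_\ell\|_0\,\|D\Phi_\varsigma\|_0^N$, and one inserts the mollification bounds $\|D^N\mathring R_\ell(t)\|_0\le C_N\ell^{1-N}\|\mathring R_q(t)\|_1\le C_N\ell^{1-N}\lambda_q\delta_{q+1,j}$ for $N\ge1$ (and $\|\mathring R_\ell\|_0\le\delta_{q+1,j}$ for $N=0$) together with the flow bounds \eqref{e:Phi1}--\eqref{e:Phi3} of Lemma \ref{l:Phi_est}; the factor $M\lambda_{q+1}^{-\omega}\le1$ appearing there is harmless for $\lambda_0$ large and is absorbed. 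Case (a) is the same computation with $X_t(\cdot,t+s)$ in place of $\Phi_\varsigma$, using Lemma \ref{l:X_est}, followed by integration against $\varrho_\tau$.

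The heart of the matter is the transport-derivative estimates \eqref{e:R_Dt}--\eqref{e:R_Dt2}, and it is precisely here that the two distinct smoothing procedures play their roles. In case (a) I exploit the observation of \cite{Isett} that smoothing \emph{along the flow} commutes with $D_t=\partial_t+v_\ell\cdot\nabla$: since $s\mapsto\mathring R_\ell(X_t(x,t+s),t+s)$ is the restriction of $\mathring R_\ell$ to the $v_\ell$-trajectory through $(x,t)$, differentiating \eqref{e:mollify_along_the_flow} yields $D_t\mathring R_\varsigma(x,t)=\int(D_t\mathring R_\ell)(X_t(x,t+s),t+s)\,\varrho_\tau(s)\,ds$, and a further such step (or integration by parts against $\varrho_\tau$, which produces the factor $\tau^{-1}=\lambda_{q+1}^{1-\beta_0}=\delta_{q+1,0}^{\sfrac12}\lambda_{q+1}$ matching the gain between \eqref{e:R_Dt} and \eqref{e:R_Dt2}) handles $D_t^2$. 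It then remains to bound $D_t\mathring R_\ell$ and its $x$-derivatives: writing $\partial_t\mathring R_q=(\partial_t+v_q\cdot\nabla)\mathring R_q-v_q\cdot\nabla\mathring R_q$, putting the transport term in divergence form via $\div v_q=0$, mollifying, and invoking a standard commutator estimate in the spirit of \cite{ConstantinETiti} (error $\lesssim\ell^2\|v_q\|_1\|\mathring R_q\|_2$), one reorganizes $D_t\mathring R_\ell$ so that the advective-derivative bound \eqref{e:D_tR_est}/\eqref{e:g-D_tR_est} together with the choice \eqref{e:ell_choice} of $\ell$ (so that $\ell\lambda_q\le1$) delivers \eqref{e:R_Dt}; applying the same circle of ideas once more, now also using the equation for $D_tv_\ell$ (controlled through $\nabla p_\ell$ plus a commutator), gives \eqref{e:R_Dt2}. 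In case (b), the transport structure $D_t\Phi_\varsigma=0$ forces $D_t\mathring R_\varsigma=(\partial_t\mathring R_\ell)\circ\Phi_\varsigma$, to which one applies the same commutator analysis of $\partial_t\mathring R_\ell$, passing $x$-derivatives through $\Phi_\varsigma$ by Lemma \ref{l:Phi_est} and using \eqref{e:D_tDPhi_est} for $D_t^2$. Finally \eqref{e:R-R_moll} is obtained by splitting $\mathring R_q-\mathring R_\varsigma=(\mathring R_q-\mathring R_\ell)+(\mathring R_\ell-\mathring R_\varsigma)$: the first summand is the pure spatial mollification error $\le C\ell^2\|\mathring R_q\|_2\le C\ell^2\lambda_q^2\delta_{q+1,j}$ (vanishing first moments of $\psi$), and the second, the error from transporting/averaging along the flow, is $\le\tau\sup_{|s|\le\tau}\|D_t\mathring R_\ell\|_0$ plus a spatial-displacement term $\lesssim\|\mathring R_\ell\|_1\,\|X_t(\cdot,t+s)-\mathrm{id}\|_0$ in case (a), and $\lesssim\|\mathring R_\ell\|_1\,\|\Phi_\varsigma-\mathrm{id}\|_0$ in case (b), the displacements being estimated through the CFL bound \eqref{e:CFL}; comparing exponents via \eqref{e:ell_choice}, \eqref{e:ell_constr}, \eqref{e:CFL2} and \eqref{e:index_inc} — which is exactly what those parameter constraints are designed to make work — bounds each summand by $\tfrac12 C\delta_{q+2,j}\lambda_{q+1}^{-\eps_0}$.

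The hard part is the bookkeeping in \eqref{e:R_Dt}--\eqref{e:R_Dt2}. One must check that every commutator error and, for the second material derivative, every term in which $D_t$ falls on the temporal kernel $\varrho_\tau$ (case (a)) or on $D\Phi_\varsigma$ (case (b)) reproduces the advective-derivative bound \eqref{e:D_tR_est} with exactly the right powers of $\lambda_q,\lambda_{q+1}$ and of the $\delta$'s; in particular the ``large'' contribution $v_q\cdot\nabla\mathring R_q$ to $\partial_t\mathring R_q$ must be absorbed by reorganizing it against the transport by $v_\ell$ and the flow $\Phi_\varsigma$ rather than estimated crudely. This is the step that consumes the constraints \eqref{e:condition}, \eqref{e:condition2}, \eqref{e:ell_constr} and the definitions \eqref{e:mu}, \eqref{e:eta} of $\mu_{q+1,j}$ and $\eta_{q+1,j}$, and it is where using \emph{both} the smoothing device of \cite{Isett} (for $j=0$) and that of \cite{BDS} (for $j\ge1$) turns out to be necessary.
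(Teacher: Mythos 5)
Your treatment of case (c) (trivial endpoints) and of case (a) ($j=0$, the Isett-style smoothing along the flow) follows the paper's argument: the identity $D_t\mathring R_\varsigma(x,t)=\int(D_t\mathring R_\ell)(X_t(x,t+s),t+s)\varrho_\tau(s)\,ds$, the decomposition $D_t\mathring R_\ell=((\partial_t+v_q\cdot\nabla)\mathring R_q)*\psi_\ell + \div T$ with $T$ the Constantin--E--Titi commutator term, and the integration by parts against $\varrho_\tau$ producing the $\tau^{-1}$-gain all match the paper's case (c). However, your handling of case (b) ($j\geq1$, the BDS-style smoothing via $\Phi_\varsigma$) contains a genuine gap that would make both \eqref{e:R_Dt}--\eqref{e:R_Dt2} and \eqref{e:R-R_moll} unprovable by the route you propose.

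You write $D_t\mathring R_\varsigma=(\partial_t\mathring R_\ell)\circ\Phi_\varsigma$ and propose to control $\partial_t\mathring R_\ell$ by the same commutator analysis as in case (a). This cannot work. Writing $\partial_t\mathring R_\ell = D_t\mathring R_\ell - v_\ell\cdot\nabla\mathring R_\ell$, the commutator device only makes $D_t\mathring R_\ell$ small; it does nothing to the transport term, for which $\|v_\ell\cdot\nabla\mathring R_\ell\|_0\lesssim M\,\delta_{q+1,j}\lambda_q$, exceeding the target $\delta_{q+1,j}\delta_{q,j-1}^{\sfrac12}\lambda_q$ in \eqref{e:R_Dt} by the unbounded factor $M\lambda_q^{\beta_{j-1}}$. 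The point you miss is that the definition \eqref{e:R-transport} must be read with the \emph{frozen} time slice: $\mathring R_\varsigma(x,t)=\mathring R_\ell(\Phi_\varsigma(x,t),t_\varsigma)$, i.e.\ $\mathring R_\varsigma$ is the solution of $D_t f=0$ with datum $\mathring R_\ell(\cdot,t_\varsigma)$ at $t=t_\varsigma$. (The displayed formula prints a $t$ in the second slot, but the paper's immediate claim "according to our definitions $D_t\mathring R_\varsigma=0$'' and every downstream estimate in this case unambiguously require the frozen-time reading; with your reading the stated identity $D_t\mathring R_\varsigma=0$ would be false.) With the frozen-time definition, $D_t\mathring R_\varsigma\equiv 0$, and \eqref{e:R_Dt}--\eqref{e:R_Dt2} are trivial in case (b) — this is precisely why two different smoothing devices are needed: transport along the flow kills $D_t$ exactly, but requires the local CFL control, whereas the Isett averaging is used where that local control is unavailable. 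Correspondingly, your bound for $\mathring R_\ell-\mathring R_\varsigma$ in case (b) as a pure spatial-displacement error $\lesssim\|\mathring R_\ell\|_1\|\Phi_\varsigma-\mathrm{id}\|_0\approx M\,\delta_{q+1,j}\lambda_q\mu_{q+1,j}^{-1}$ lacks the crucial extra factor $\delta_{q,j-1}^{\sfrac12}$ and does not close under \eqref{e:CFL2}, \eqref{e:index_inc}; the paper instead uses the maximum principle for the transport equation together with $\|D_t\mathring R_q\|_0\leq\|(\partial_t+v_q\cdot\nabla)\mathring R_q\|_0+\|v_q-v_\ell\|_0\|\mathring R_q\|_1\lesssim\delta_{q+1,j}\delta_{q,j-1}^{\sfrac12}\lambda_q$, exploiting $D_t\mathring R_\varsigma=0$ so that $D_t(\mathring R_q-\mathring R_\varsigma)=D_t\mathring R_q$. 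A minor further remark: in case (a), no separate spatial-displacement term appears in the bound for $\mathring R_\ell-\mathring R_\varsigma$; differentiating $s\mapsto\mathring R_\ell(X_t(x,t+s),t+s)$ produces $D_t\mathring R_\ell$ exactly, so the whole error is $\leq\tau\sup\|D_t\mathring R_\ell\|_0$ as in \eqref{e:using_tau}.
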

\begin{proof} 
We have three cases to examine:
\begin{itemize}
\item[(a)] $\varsigma \in \{0 , N'+1\}$;
\item[(b)] $j>0$ and $\varsigma \in \{1, \ldots , N'\}$;
\item[(c)] $j=0$ and $\varsigma \in \{1, \ldots , N'\}$.
\end{itemize}

\medskip
 {\bf Case (a)} Obviously all estimates are trivial except \eqref{e:R-R_moll}. To fix ideas let us now consider $\varsigma = 0$.
It follows that $\mathring{R}_\varsigma (t) = \mathring{R}_q (t) =0$ as long as $t\in I^{(q)}_0$. Now observe that, by construction, $\supp (\chi_0) \subset  I^{(q)}_0 \cup K_0$, that $|K_0| \leq \eta_{q+1,0} \mu_{q+1, 0}^{-1}$ and that the left endpoint of $K_0$ lies necessarily in $I^{(q)}_0$. Thus, for $t\in K_\varsigma$ we can use the global estimate \eqref{e:g-D_tR_est} to conclude
\begin{align}
\|\mathring{R}_\varsigma (t) - \mathring{R}_q (t)\|_0 
\leq & |K_0| \sup_t \|(\partial_t + v_q \cdot \nabla) \mathring{R}_q (t)\|_0 \leq |K_0| \delta_{q+1,0} \delta_{q,-1}^{\sfrac{1}{2}} \lambda_q\nonumber\\
\leq & \frac{\eta_{q+1,0}}{\mu_{q+1,0}} \delta_{q+1,0} \delta_{q,-1}^{\sfrac{1}{2}} \lambda_q \, .  \label{e:trivial_case}
\end{align}
Recall $b\beta_0=\beta_{-1}+(b-1)\beta_{\infty}$, then calculating we have
\[\eta_{q+1,0}\delta_{q,-1}^{\sfrac12}\leq 2\lambda_{q,0}^{-(b^2-2b+1)\beta_{\infty}+(b^2-2b)\beta_0}
=\lambda_{q,0}^{-(b-1)^2(\beta_{\infty}-\beta_0)-\beta_0}\leq\lambda_{q,0}^{-\beta_0}
\]
Hence \eqref{e:R-R_moll} follows as a consequence of \eqref{e:CFL2} and \eqref{e:index_inc}.

An analogous argument proves the same estimate for $\varsigma = N'+1$.

\medskip

{\bf Case (b)} Observe that according to our definitions $D_t \mathring{R}_\varsigma = 0$ and thus the estimates \eqref{e:R_Dt} and \eqref{e:R_Dt2} are obvious. 

Let us consider \eqref{e:R-R_moll}.  Since $D_t\mathring{R}_\varsigma=0$, we have
\begin{align*}
\|D_t(\mathring{R}_q(t)-\mathring{R}_{\varsigma})(t)\|_{0}&  = \|D_t\mathring{R}_q(t)\|_{0}\\
& \leq \|(\partial_t+v_q\cdot \nabla)\mathring{R}_q(t)\|_0+\|v_q(t)-v_{\ell}(t)\|_0\|\mathring{R}_q(t)\|_1\\
&\leq C\delta_{q+1,j}\delta_{q,j-1}^{\sfrac12}\lambda_q+C\delta_{q+1,j}\delta_{q,j-1}^{\sfrac12}\lambda_q^2\ell\\
&\leq C\delta_{q+1,j}\delta_{q,j-1}^{\sfrac12}\lambda_q\,.
\end{align*}
On the other hand, we recall that $\mathring{R}_\varsigma (x, t_\varsigma) = \mathring{R}_{\ell} (\Phi_{\varsigma}(x,t_{\varsigma}),t_{\varsigma})=\mathring{R}_{\ell} (x,t_{\varsigma})$ and thus applying Proposition \ref{p:transport_derivatives} yields\begin{equation*}
\|\mathring{R}_\varsigma (t) -  \mathring{R}_q (t)\|_0 \leq \|\mathring{R}_q (t_\varsigma)-\mathring{R}_\ell(t_\varsigma)\|_0 +C |t-t_\varsigma|\delta_{q+1,j}\delta_{q,j-1}^{\sfrac12}\lambda_q\,.
\end{equation*}
Using \eqref{e:size_of_suppchi} and \eqref{e:CFL2} (recall that $j >0$ and thus $j-1 = (j-1)_+$), we conclude
\begin{align*}
\|\mathring{R}_\varsigma (t) -  \mathring{R}_q (t)\|_0&\leq  C \delta_{q+1,j} \lambda_q \ell+C \mu_{q+1, j}^{-1} \delta_{q+1,j}\delta_{q,j-1}^{\sfrac12}\lambda_q\\
&\stackrel{\eqref{e:CFL2}}{\leq} C\delta_{q+1,j} \lambda_{q+1}^{-\omega+\eps_0} \stackrel{\eqref{e:index_inc}}{\leq}
C\delta_{q+2,j+1} \lambda_{q+1}^{-\eps_0}\, .
\end{align*}
Next recall that $\|\mathring{R}_\ell (t)\|_0 \leq \|\mathring{R}_q (t)\|_0$ and $\|\mathring{R}_\ell (t)\|_N \leq \|\psi_\ell\|_{N-1} \|\mathring{R}_q\|_1$. Obviously, since $\mathring{R}_\varsigma$ solves a transport equation, we have
$\|\mathring{R}_\varsigma (t)\|_0 \leq \|\mathring{R}_\ell (t_\varsigma)\|_0 \leq \delta_{q+1,j}$, from which \eqref{e:R_mol_0} easily follows. 

Applying Proposition \ref{p:transport_derivatives} we get
\[
\|\mathring{R}_\varsigma (t)\|_1 \leq \|\mathring{R}_\ell (t_\varsigma)\|_1 e^{S |t-t_\varsigma|}\, 
\]
where 
\[
S = \sup_{t\in \supp (\chi_\varsigma)} \|v_\ell (t)\|_1\, .
\]
Once again applying \eqref{e:CFL} we conclude $|t-t_\varsigma| S \leq C \lambda_{q+1}^{-\omega}$, where the latter constant $C$ depends only on $M$. Choosing $\lambda_0$ sufficiently large we then can assume 
$e^{|t-t_\varsigma |S}\leq 2$ and
conclude \eqref{e:R_mol} with $N=1$. For larger $N$ we apply again Proposition \ref{p:transport_derivatives} and the argument above to conclude 
\begin{align}
\|\mathring{R}_\varsigma (t)\|_N &\leq 2 \|\mathring{R}_\ell (t_\varsigma)\|_N + 2 C_N |t-t_\varsigma| \sup_{t\in \supp (\chi_\varsigma)} \|v_\ell (t)\|_N \|\mathring{R}_\ell (t_\varsigma)\|_1\nonumber\\
&\leq 2 C_N \delta_{q+1, j} \lambda_q \ell^{1-N} + 2 C_N |t-t_\varsigma| \ell^{1-N} \sup_{t\in \supp (\chi_\varsigma)} \|v_q (t)\|_1 \delta_{q+1, j} \lambda_q\, ,
\end{align}
where $C_N$ is a constant which depends only on $N$. Again applying \eqref{e:CFL}, the estimate in \eqref{e:R_mol} follows.

\medskip

 {\bf Case (c)}. Set $j=0$. In this case we will use the {\em global} estimates (cf.~Section \ref{sss:global}): for all $t\in[0,1]$ we have
\begin{align*}
&\|\mathring{R}_\ell(t)\|_0 \leq \delta_{q+1, 0}\\
& \|\mathring{R}_\ell(t)\|_N \leq C_N \delta_{q+1,0} \lambda_q \ell^{1-N} \qquad \forall N \geq 1\, ,
\end{align*}
where the constant $C_N$ depends only on $N$.

Hence \eqref{e:R_mol_0} is obvious. For $N\geq 1$ we use \eqref{e:chain1}:
\begin{align}
\|D^N ( \mathring{R}_\ell (X_t (\cdot, t+s), t+s))\|_0 \leq &C_N \|D^N X_t (\cdot, t+s)\|_0 \delta_{q+1,0} \lambda_q\nonumber\\
& + C_N \|D X_t (\cdot, t+s)\|^N_0 \delta_{q+1,0} \lambda_q \ell^{1-N}\, ,\label{e:compos_flow}
\end{align}
where again the constant $C_N$ depends only upon $N$.

Since $|s|\leq \tau=\lambda_{q+1}^{-1+\beta_0}\leq \lambda_q^{-1+\beta_0}$, from Lemma \ref{l:X_est} we deduce
\[
\|D^N X_t (\cdot, t+s)\|_0 \leq C_N\ell^{1-N}\qquad\textrm{ for all $N\geq 1$}\, .
\]
Inserting in \eqref{e:compos_flow} we conclude
\[
\|D^N ( \mathring{R}_\ell (X_t (\cdot, t+s), t+s))\|_0 \leq C \delta_{q+1,0} \lambda_q \ell^{1-N} \qquad \forall N\geq 1\, .
\]
Hence differentiating \eqref{e:mollify_along_the_flow} we achieve \eqref{e:R_mol}.

\medskip

In order to prove \eqref{e:R_Dt}-\eqref{e:R_Dt2} we use Lemma 18.2 of \cite{Isett} to deduce
\begin{align}
D_t \mathring{R}_\varsigma(x,t) & = \int (D_t \mathring{R}_\ell) (X_t (x, t+s), t+s)\, \varrho_\tau (s)\, ds \label{e:Phil_1}\\
D^2_t \mathring{R}_\varsigma(x,t) &= \int (D_t^2 \mathring{R}_\ell) (X_t (x, t+s), t+s)\, \varrho_\tau (s)\, ds\nonumber\\
&= \int \frac{d}{ds} [(D_t \mathring{R}_\ell) (X_t (x, t+s), t+s)]\, \varrho_\tau (s)\, ds\nonumber\\
&= -\tau^{-1} \int (D_t \mathring{R}_\ell) (X_t (x, t+s), t+s)\, (\varrho')_\tau (s)\, ds\, . \label{e:Phil_2}
\end{align}
We therefore conclude, arguing as in \eqref{e:compos_flow}
\begin{align}
\|D_t \mathring{R}_\varsigma(t) \|_N &\leq C \sup_s
\left(\|D_t \mathring{R}_\ell(s)\|_N + C \|D_t \mathring{R}_\ell (s)\|_0 \ell^{1-N}\right)
\label{e:Phil_3}\\
\|D^2_t \mathring{R}_\varsigma (t)\|_N &\leq C \tau^{-1} \sup_s
\left(\|D_t \mathring{R}_\ell(s)\|_N + C \|D_t \mathring{R}_\ell (s)\|_0 \ell^{1-N}\right)\, .\label{e:Phil_4}
\end{align}
Observe that, for any $s$, we have
\begin{align*}
D_t \mathring{R}_\ell (s) = &(\partial_t + v_q* \psi_\ell (s) \cdot \nabla) (\mathring{R}_q (s) * \psi_\ell)\nonumber\\
= &((\partial_t + v_q (s) \cdot\nabla )\mathring{R}_q (s)) * \psi_\ell\nonumber\\
 &+ [(v_q *\psi_\ell (s) \cdot \nabla) (\mathring{R}_q (s) *\psi_\ell)  - ((v_q (s)\cdot \nabla) \mathring{R}_q (s))*\psi_\ell.
\end{align*}
Since $v_q$ is divergence free, the components of the vector function in the last line can be written as
\[
{\rm div} \big(\underbrace{(v_q (s)*\psi_\ell) \otimes (\mathring{R}_q (s)* \psi_\ell) - (v_q\otimes \mathring{R}_q) (s)*\psi_\ell}_{T (s)}\big)
\]
Thus, using Proposition \ref{p:CET} and \eqref{e:g-D_tR_est} we reach
\begin{align}
\|D_t \mathring{R}_\ell (s)\|_N &\leq C \ell^{-N} \|(\partial_t + v_q (s)\cdot \nabla) \mathring{R}_q (s)\|_0 + \|T (s)\|_{N+1}\nonumber\\
&\leq C \ell^{-N} \|(\partial_t + v_q (s)\cdot \nabla) \mathring{R}_q (s)\|_0  + C \ell^{1-N} \|\mathring{R}_q (s)\|_1 \|v_q (s)\|_1\nonumber\\
&\leq C \delta_{q+1,0} \delta_{q, -1}^{\sfrac{1}{2}} \lambda_q \ell^{-N} + C \delta_{q+1, 0} \delta_{q, -1}^{\sfrac{1}{2}} \lambda_q^2 \ell^{1-N}\nonumber\\
&\leq C \delta_{q+1,0} \delta_{q, -1}^{\sfrac{1}{2}} \lambda_q \ell^{-N}\label{e:Phil_5}
\end{align}
Plugging \eqref{e:Phil_5} into \eqref{e:Phil_3} we immediately conclude \eqref{e:R_Dt}. Plugging \eqref{e:Phil_5} into \eqref{e:Phil_4}
we instead reach
\[
\|D^2_t \mathring{R}_\varsigma (t)\|_N \leq  C \tau^{-1} \delta_{q+1,0} \delta_{q, -1}^{\sfrac{1}{2}} \lambda_q \ell^{-N}
\]
and since $\tau^{-1} = \lambda_{q+1}^{1-\beta_0} = \lambda_{q+1} \delta_{q+1,0}^{\sfrac{1}{2}}$ we get \eqref{e:R_Dt2}.
Finally, we estimate
\[
|\mathring{R}_\varsigma (x,t) - \mathring{R}_\ell (x,t)| \leq \sup_{|s|\leq \tau} \max_x |\mathring{R}_\ell (X_t (x, t+s), t+s) - \mathring{R}_\ell (x,t)|\, .
\]
Using that $X_t (x,t)=x$ and differentiating in $s$ the map $\mathring{R}_\ell (X_t (x, t+s), t+s)$ we obtain

\begin{equation}\label{e:using_tau}
\|\mathring{R}_\varsigma (\cdot ,t) - \mathring{R}_\ell (\cdot,t)\|_0 \leq \tau \|D_t \mathring{R}_\ell\|_0
\leq C \lambda_{q+1}^{\beta_0-1} \delta_{q+1,0} \delta_{q,-1}^{\sfrac{1}{2}} \lambda_q\, .
\end{equation}
 Plugging \eqref{e:CFL2} and \eqref{e:index_inc}, we reach \eqref{e:R-R_moll}.
\end{proof}

\subsection{Definition of $v_{q+1}$} 
 
In this section we define the new velocity by prescribing the perturbation
$$
w_{q+1} =v_{q+1}-v_q.
$$
We start by defining, for each $\varsigma\in \{1, \ldots , N'\}$,
$$
R_\varsigma(x,t) = \rho_\varsigma \Id - \mathring{R}_\varsigma(x,t) ,
$$
where 
\begin{equation}\label{e:rho}
\rho_\varsigma = 4 r_0^{-1}  \lambda_{q+1}^{-2\beta_j}\, 
\end{equation}
and
\begin{itemize}
\item $r_0$ is the constant of the geometric Lemma \ref{l:split};
\item $j = j_q (\alpha_q (\varsigma))$.
\end{itemize}
Then we apply Lemma \ref{l:split} with $N=2$, denoting by $\Lambda^e$ and $\Lambda^o$ the corresponding families of frequencies in $\Z^3$, and we set $\Lambda := \Lambda^o$ + $\Lambda^e$. For each $k\in \Lambda$ and each $\varsigma$ we then set
\begin{align}
a_{k\varsigma}(x,t)&:=\sqrt{\rho_\varsigma}\gamma_k \left(\frac{R_\varsigma(x,t)}{\rho_\varsigma}\right),\label{e:a_kl}\\
w_{k\varsigma}(x,t)& := a_{k\varsigma}(x,t)\,B_ke^{i\lambda_{q+1}k\cdot \Phi_\varsigma (x,t)}.\label{e:w_kl}
\end{align}
We observe that the $a_{k\varsigma}$ are well defined. Indeed, thanks to \eqref{e:R_mol_0}
$$
\left|\frac{R_\varsigma(x,t)}{\rho_\varsigma}-\Id\right|=\frac{|\mathring{R}_\varsigma(x,t)|}{\rho_\varsigma}\leq \frac{r_0}{4}.
$$
The ``principal part'' of the perturbation $w$ consists of the map
\begin{align}\label{e:def_wo}
w_o (x,t) := \sum_{\textrm{$\varsigma$ odd}, k\in \Lambda^o} \chi_\varsigma (t)w_{k\varsigma} (x,t) +
\sum_{\textrm{$\varsigma$ even}, k\in \Lambda^e} \chi_\varsigma (t)w_{k\varsigma} (x,t)\, ,
\end{align}
where the sums are taken over the indices $\varsigma \in \{1, \ldots , N'\}$. We therefore agree upon the convention that $w_{k\varsigma} \equiv 0$, $\rho_\varsigma =0$, $R_\varsigma \equiv 0$ and $a_{k\varsigma} \equiv 0$ when $\varsigma \in \{0, N'+1\}$.

From now on, in order to make our notation simpler, we agree that the pairs of indices 
$(k,\varsigma)$ which enter in our summations satisfy always 
the following condition: $k\in \Lambda^e$ when $\varsigma$ is even and $k\in \Lambda^o$ when $\varsigma$ is odd.

It will be useful to introduce the ``phase" 
\begin{equation}\label{e:phi_kl}
\phi_{k\varsigma}(x,t)=e^{i\lambda_{q+1}k\cdot[\Phi_\varsigma (x,t)-x]},
\end{equation}
with which we obviously have
\[
\phi_{k\varsigma}\cdot e^{i\lambda_{q+1}k\cdot x}=e^{i\lambda_{q+1}k\cdot\Phi_\varsigma}.
\]
The corrector $w_c$ is then defined in such a way that $w_{q+1} := w_o+w_c$ is divergence free:
\begin{align}
w_c&:= \sum_{(k,\varsigma)} \frac{\chi_\varsigma}{\lambda_{q+1}}\curl\left(ia_{k\varsigma}\phi_{k\varsigma}\frac{k\times B_k}{|k|^2}\right) e^{i\lambda_{q+1}k \cdot x}\nonumber\\
&=\sum_{(k,\varsigma)}\chi_\varsigma\Bigl(\frac{i}{\lambda_{q+1}}\nabla a_{k\varsigma}-a_{k\varsigma}(D\Phi_{\varsigma}-\Id)k\Bigr)\times\frac{k\times B_k}{|k|^2}e^{i\lambda_{q+1}k\cdot\Phi_\varsigma}\label{e:corrector}
\end{align}

\begin{remark} To see that $w_{q+1} =w_o+w_c$ is divergence-free, just note that, since $k\cdot B_k=0$, we have
$k\times (k\times B_k)=-|k|^2B_k$ and hence $w_{q+1}$ can be written as
\begin{equation}\label{e:w_compact_form}
w = \frac{1}{\lambda_{q+1}} \sum_{(k,\varsigma)} \chi_\varsigma \,\curl \left( i a_{k\varsigma}\,\phi_{k\varsigma}\,\frac{k\times B_k}{|k|^2} e^{i\lambda_{q+1}k \cdot x}\right)\, .
\end{equation}
\end{remark}

For future reference it is useful to introduce the notation
\begin{equation}\label{e:L_kl}
L_{k\varsigma}:=a_{k\varsigma}B_k+\Bigl(\frac{i}{\lambda_{q+1}}\nabla a_{k\varsigma}-a_{k\varsigma}(D\Phi_{\varsigma}-\Id)k\Bigr)\times\frac{k\times B_k}{|k|^2},
\end{equation}
so that the perturbation $w_{q+1}$ can be written as
\begin{equation}\label{e:w_Lform}
w_{q+1}=\sum_{(k,\varsigma)}\chi_\varsigma\,L_{k\varsigma}\,e^{i\lambda_{q+1}k\cdot\Phi_\varsigma}\,.
\end{equation}

\bigskip

\subsection{The pressure $p_{q+1}$ and the Reynolds stress $\mathring{R}_{q+1}$}\label{ss:R_decomposed}
We set 
\begin{equation}\label{e:R_decomposition}
\mathring{R}_{q+1} = R^0+R^1+R^2+R^3+R^4\, ,
\end{equation}
where
\begin{align}
R^0 &= \mathcal R \left(\partial_t w_{q+1}v_\ell\cdot \nabla  w_{q+1}+w_{q+1}\cdot\nabla v_\ell\right)\label{e:R^0_def}\\
R^1 &=\mathcal R \div \Big(w_o \otimes w_o- \sum_\varsigma \chi_\varsigma^2 R_\varsigma
-\textstyle{\frac{|w_o|^2}{2}}\Id\Big)\label{e:R^1_def}\\
R^2 &=w_o\otimes w_c+w_c\otimes w_o+w_c\otimes w_c - \textstyle{\frac{|w_c|^2 + 2\langle w_o, w_c\rangle}{3}}  \Id\label{e:R^2_def}\\
R^3 &=  w_{q+1}\otimes ( v_q - v_\ell) + ( v_q-v_\ell)\otimes  w_{q+1}
 - \textstyle{\frac{2 \langle ( v_q-v_{\ell}),  w_{q+1}\rangle}{3}} \Id\label{e:R^3_def}\\
R^4&=\mathring{R}_q- \sum_\varsigma \chi_\varsigma^2 \mathring{R}_\varsigma \label{e:R^4_def}
\end{align}
Observe that $\mathring{R}_{q+1}$ is indeed a traceless symmetric tensor. The corresponding form of the new pressure will then be
\begin{equation}\label{e:def_p_1}
p_{q+1} =p_q-\frac{|w_o|^2}{2} - \frac{1}{3} |w_c|^2 - \frac{2}{3} \langle w_o, w_c\rangle - \frac{2}{3} \langle v_q -v_\ell, w\rangle \,.
\end{equation}

Observe that $\sum_\varsigma \chi_\varsigma^2 \tr R_\varsigma$ is a function of time only. Since also 
$\sum_\varsigma \chi_\varsigma^2 = 1$, it is then straightforward to check that 
\begin{align*}
& \div\mathring{R}_{q+1} - \nabla p_{q+1}\\ 
=& \partial_t w + \div (v_q\otimes w + w \otimes v_q + w\otimes w) + \div \mathring{R}_q - \nabla p_q\nonumber\\
=&\partial_t w + \div (v_q\otimes w + w\otimes v_q + w \otimes w) + \partial_t v_q + \div (v_q\otimes v_q)\nonumber\\
= &\partial_t v_{q+1} + \div v_{q+1}\otimes v_{q+1}\, .
\end{align*}

The following lemma, same as in \cite{BDS}, will play a key role:
\begin{lemma}\label{l:doublesum}
The following identity holds:
\begin{equation}\label{e:doublesum}
w_o\otimes w_o = \sum_\varsigma \chi_\varsigma^2 R_\varsigma + \sum_{(k,\varsigma), (k',\varsigma'), k\neq - k'} \chi_\varsigma \chi_{\varsigma'} w_{k\varsigma} \otimes w_{k'\varsigma'}\, .
\end{equation}
\end{lemma}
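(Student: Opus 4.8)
The plan is to expand $w_o\otimes w_o$ directly from \eqref{e:def_wo} and separate out the resonant interactions. By bilinearity, with the standing convention that in each pair $(k,\varsigma)$ the frequency $k$ lies in $\Lambda^o$ or in $\Lambda^e$ according to the parity of $\varsigma$,
\[
w_o\otimes w_o \;=\; \sum_{(k,\varsigma),(k',\varsigma')}\chi_\varsigma\chi_{\varsigma'}\,w_{k\varsigma}\otimes w_{k'\varsigma'}\, .
\]
Split this sum according to whether $k+k'=0$ or not: the contribution of the terms with $k\neq -k'$ is by definition the second sum on the right-hand side of \eqref{e:doublesum}, so it remains only to identify the resonant part, where $k'=-k$, with $\sum_\varsigma\chi_\varsigma^2 R_\varsigma$. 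To that end one first observes that only the diagonal $\varsigma=\varsigma'$ survives: since $\Lambda^o$ and $\Lambda^e$ are disjoint and each is invariant under $k\mapsto -k$, the constraint $k'=-k$ forces $k$ and $k'$ to lie in the same family, hence (by the parity convention) $\varsigma$ and $\varsigma'$ to have the same parity; on the other hand the cut-offs are constructed so that $\supp\chi_\varsigma\cap\supp\chi_{\varsigma'}=\emptyset$ whenever $|\varsigma-\varsigma'|\geq 2$ (neighbouring cut-offs overlap only inside the regions $K_\varsigma$). Same parity together with $|\varsigma-\varsigma'|\leq 1$ forces $\varsigma=\varsigma'$, so the resonant part collapses to $\sum_\varsigma\chi_\varsigma^2\sum_k w_{k\varsigma}\otimes w_{-k,\varsigma}$, the inner sum running over $\Lambda^o$ or $\Lambda^e$ according to the parity of $\varsigma$.

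It then remains to prove, for each fixed $\varsigma$, the pointwise identity $\sum_k w_{k\varsigma}\otimes w_{-k,\varsigma}=R_\varsigma$, which is where Proposition \ref{p:Beltrami} and Lemma \ref{l:split} come into play. In each summand the two exponentials cancel, $e^{i\lambda_{q+1}k\cdot\Phi_\varsigma}e^{-i\lambda_{q+1}k\cdot\Phi_\varsigma}=1$; moreover $a_{-k,\varsigma}=a_{k\varsigma}$ since $\gamma_{-k}=\gamma_k$ by Lemma \ref{l:split}(a), and $B_{-k}=\overline{B_k}$ by the definition of $B_k$ in Proposition \ref{p:Beltrami}. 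Hence $\sum_k w_{k\varsigma}\otimes w_{-k,\varsigma}=\sum_k a_{k\varsigma}^2\,B_k\otimes\overline{B_k}$. Freezing $(x,t)$ and applying Proposition \ref{p:Beltrami} to the Beltrami field $W(\xi):=\sum_k a_{k\varsigma}(x,t)B_k e^{ik\cdot\xi}$ — whose coefficients are real and satisfy $\overline{a_{k\varsigma}}=a_{-k,\varsigma}$ — we get, expanding $\fint_{\T^3}(W\otimes W)$ by orthogonality on the one hand and invoking \eqref{e:av_of_Bel} on the other,
\[
\sum_k a_{k\varsigma}^2\,B_k\otimes\overline{B_k}\;=\;\fint_{\T^3}W\otimes W\,d\xi\;=\;\tfrac12\sum_k a_{k\varsigma}^2\Big(\Id-\tfrac{k}{|k|}\otimes\tfrac{k}{|k|}\Big)\, .
\]

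To finish, recall from \eqref{e:a_kl} that $a_{k\varsigma}^2=\rho_\varsigma\big(\gamma_k(R_\varsigma/\rho_\varsigma)\big)^2$ and that $R_\varsigma/\rho_\varsigma\in B_{r_0}(\Id)$ — indeed $|R_\varsigma/\rho_\varsigma-\Id|=|\mathring{R}_\varsigma|/\rho_\varsigma\leq r_0/4$, the bound already verified just after \eqref{e:w_kl}. Then the geometric identity \eqref{e:split}, applied with $\Lambda_j$ taken to be $\Lambda^o$ or $\Lambda^e$ according to the parity of $\varsigma$ and with $R=R_\varsigma/\rho_\varsigma$, rewrites the last right-hand side as $\rho_\varsigma\cdot(R_\varsigma/\rho_\varsigma)=R_\varsigma$, completing the proof. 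I expect the only genuinely delicate point to be the parity-versus-support bookkeeping of the first paragraph: it is precisely the reason the perturbation \eqref{e:def_wo} alternates $\Lambda^o$ on odd intervals with $\Lambda^e$ on even ones, which is what makes the spatial oscillations cancel off the diagonal $\varsigma=\varsigma'$; the remaining algebra is exactly the cancellation already behind \eqref{e:Bequation} and \eqref{e:av_of_Bel}.
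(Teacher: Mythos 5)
Your proof is correct and proceeds along exactly the route one expects (and which the paper, deferring to \cite{BDS}, takes): expand the double sum, observe that the non-resonant part $k\neq -k'$ is kept as is, identify the resonant part using the parity convention and the disjointness/reflection-symmetry of $\Lambda^o,\Lambda^e$ to collapse it onto the diagonal $\varsigma=\varsigma'$, and then apply the Beltrami identity \eqref{e:av_of_Bel} together with the geometric identity \eqref{e:split} to recover $R_\varsigma$. Every step checks out: the exponentials cancel because both factors involve the same $\Phi_\varsigma$, $a_{-k,\varsigma}=a_{k\varsigma}$ from $\gamma_{-k}=\gamma_k$, $B_{-k}=\overline{B_k}$ from the definition, the condition $R_\varsigma/\rho_\varsigma\in B_{r_0}(\Id)$ is the one already verified in Section \ref{s:new_triple}, and the support bookkeeping (neighboring cut-offs overlap only in one $K_\varsigma$, so same parity plus $|\varsigma-\varsigma'|\leq 1$ forces $\varsigma=\varsigma'$) is exactly as you describe. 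This is essentially the same argument as the one referenced; there is no genuinely different route being taken here, just a clean write-up of it.
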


\begin{remark}\label{r:triviality}
Observe also that, by our treatment of the endpoints $\varsigma \in \{0, N'+1\}$ we have that 
$\mathring{R}_\varsigma = R_\varsigma = 0$ and $w_{k\varsigma}=0$ for such values of $\varsigma$. In addition we already know that $(v_q, p_q, \mathring{R}_q) \equiv 0$ on $I^{(q)}_0 \cup I^{(q)}_{N(q)+1} \supset I^{(q+1)}_0 \cup I^{(q+1)}_{N(q+1)}$. We therefore also conclude
easily the following important lemma. 
\end{remark}

\begin{lemma}\label{l:compact_support}
The new triple $(v_{q+1}. \mathring{R}_{q+1}, p_{q+1})$ is supported in
\[
\bigcup_{\alpha =1}^{N (q+1)} I^{(q+1)}_\alpha
\]
which in turn is supported, by \eqref{e:spt_triple2} in $[2^{-q-3}, 1- 2^{-q-3}]$. Namely, \eqref{e:spt_triple}
holds with $q+1$ in place of $q$.
\end{lemma}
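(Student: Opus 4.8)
The plan is to check, block by block, that the temporal support of every ingredient entering the definitions of $v_{q+1}$, $p_{q+1}$ and $\mathring R_{q+1}$ lies in $\bigcup_{\alpha=1}^{N(q+1)} I^{(q+1)}_\alpha$; once this is established, the inclusion into $[2^{-q-3},1-2^{-q-3}]$, i.e. \eqref{e:spt_triple} with $q+1$ in place of $q$, follows at once from \eqref{e:spt_triple2}. Two observations make the verification essentially mechanical. First, the anti-divergence operator $\RR$ of Definition \ref{d:reyn_op} and the mollification $f\mapsto f*\psi_\ell$ act only on the space variable, so for each fixed $t$ they send a function vanishing at that time to a function vanishing at that time; in particular neither enlarges the support in time. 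Second, from the construction of Section \ref{ss:new_intervals} the ``active'' family $\{I^{(q+1)}_\alpha:\ 1\le\alpha\le N(q+1)\}$ is exactly the collection of all overlapping regions $K_\varsigma$ ($0\le\varsigma\le N'$) together with all non-overlapping regions $H_\varsigma$ ($1\le\varsigma\le N'$), while $I^{(q+1)}_0=H_0$ and $I^{(q+1)}_{N(q+1)+1}=H_{N'+1}$ are the endpoint intervals.

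First I would dispose of the ``old'' pieces. Keeping the notation of the proofs of Lemmas \ref{l:overlapping-geometry} and \ref{l:interval_iter}, write $I^{(q)}_0=[0,t_1]$, $I^{(q)}_{N(q)+1}=[t_2,1]$, $H_0=[0,t_0]$ and $H_{N'+1}=[t_3,1]$; since $H_0\subset I^{(q)}_0$ and $H_{N'+1}\subset I^{(q)}_{N(q)+1}$ one has $t_0\le t_1$ and $t_3\ge t_2$, whence
\[
\supp_t(v_q,p_q,\mathring R_q)\ \subset\ \bigcup_{1\le\alpha\le N(q)}I^{(q)}_\alpha=[t_1,t_2]\ \subset\ [t_0,t_3]=\bigcup_{1\le\alpha\le N(q+1)}I^{(q+1)}_\alpha
\]
by the inductive hypothesis \eqref{e:spt_triple}. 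By the first observation the same inclusion holds for $v_\ell=v_q*\psi_\ell$ and for $\mathring R_\ell$, and therefore for $v_q-v_\ell$.

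Next I would handle the perturbation. Both sums defining $w_o$ and $w_c$ run over $\varsigma\in\{1,\dots,N'\}$ and carry a factor $\chi_\varsigma$, so $\supp_t w_{q+1}\subset\bigcup_{\varsigma=1}^{N'}\supp\chi_\varsigma$, and for such $\varsigma$ the cut-off $\chi_\varsigma$ is supported in $K_{\varsigma-1}\cup H_\varsigma\cup K_\varsigma$, a union of three members of the active family; hence $\supp_t w_{q+1}\subset\bigcup_{\alpha=1}^{N(q+1)}I^{(q+1)}_\alpha$, and together with $v_{q+1}=v_q+w_{q+1}$ this settles $v_{q+1}$. For the pressure, \eqref{e:def_p_1} presents $p_{q+1}-p_q$ as a sum of pointwise products, each carrying a factor $w_o$, $w_c$ or $w$, so its support in time lies in $\bigcup_\varsigma\supp\chi_\varsigma$; combined with the bound for $p_q$ this settles $p_{q+1}$. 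Finally, for $\mathring R_{q+1}$ I would go term by term through \eqref{e:R_decomposition}--\eqref{e:R^4_def}: $R^0$ is $\RR$ applied to an expression whose support in time is contained in $\supp_t w_{q+1}$, while $R^2$ and $R^3$ are pointwise sums of products, each summand of which carries a factor among $w_{q+1},w_o,w_c$, so all three are supported in $\bigcup_\varsigma\supp\chi_\varsigma$ (using the first observation for $R^0$); $R^1=\RR\div\big(w_o\otimes w_o-\sum_\varsigma\chi_\varsigma^2 R_\varsigma-\tfrac12|w_o|^2\Id\big)$ is, again since $\RR\div$ is spatial, supported where $w_o$ or some $\chi_\varsigma$ with $R_\varsigma\not\equiv0$ (so $\varsigma\in\{1,\dots,N'\}$, as $R_\varsigma=\rho_\varsigma\Id-\mathring R_\varsigma=0$ otherwise) fails to vanish; and $R^4=\mathring R_q-\sum_\varsigma\chi_\varsigma^2\mathring R_\varsigma$ is supported in $\supp_t\mathring R_q\cup\bigcup_{\varsigma=1}^{N'}\supp\chi_\varsigma$, using $\mathring R_\varsigma\equiv0$ for $\varsigma\in\{0,N'+1\}$. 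Collecting all these inclusions and invoking \eqref{e:spt_triple2} proves the Lemma.

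There is no analytic content here --- the whole statement reduces to bookkeeping --- and the only place that asks for attention is the treatment of the endpoint indices: one has to use systematically that $(v_q,p_q,\mathring R_q)$, $w_{k\varsigma}$, $\rho_\varsigma$, $R_\varsigma$ and $\mathring R_\varsigma$ all vanish for $\varsigma\in\{0,N'+1\}$ (Remark \ref{r:triviality}), and that $H_0,H_{N'+1}$ become the new endpoint intervals $I^{(q+1)}_0,I^{(q+1)}_{N(q+1)+1}$ rather than elements of the active family.
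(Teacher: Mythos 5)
Your proposal is correct and follows essentially the same route the paper takes: the paper does not give a detailed proof, but points out in Remark \ref{r:triviality} exactly the three facts you use (that $w_{k\varsigma}$, $R_\varsigma$, $\mathring R_\varsigma$ vanish for $\varsigma\in\{0,N'+1\}$; that $(v_q,p_q,\mathring R_q)$ vanishes on $I^{(q)}_0\cup I^{(q)}_{N(q)+1}$; and the inclusion $I^{(q)}_0\cup I^{(q)}_{N(q)+1}\supset I^{(q+1)}_0\cup I^{(q+1)}_{N(q+1)+1}$) and then declares the lemma to follow ``easily.'' Your term-by-term verification of \eqref{e:R^0_def}--\eqref{e:R^4_def} and \eqref{e:def_p_1}, together with the observations that $\RR$ and spatial mollification do not enlarge temporal supports and that $\supp\chi_\varsigma\subset K_{\varsigma-1}\cup H_\varsigma\cup K_\varsigma$ for $1\le\varsigma\le N'$ consists of members of the active family, is precisely the bookkeeping the paper leaves to the reader.
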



\section{Orderings among the parameters}\label{s:parameters}

Our choice of the parameters respect certain natural orderings which will be extremely useful in the sequel. In fact most
of them have already been proved and used in the previous sections: we collect them in the next lemma for the
reader's convenience.

\begin{lemma}\label{l:parameters}
According to our choice of the parameters $\beta_j$ and $b$ we have the following orderings:
\begin{itemize}
\item[(a)] The parameters $\delta_{q,i}$ decrease both in the index $i$ and in the index $q$. namely
\begin{equation}\label{e:asc_delta}
\delta_{q,i}\geq \delta_{q+1,i}\geq \delta_{q+1, i+1}\, ;
\end{equation}
\item[(b)] The parameters $\mu_{q+1,i}$ decrease in the index $i$, namely
\begin{equation}\label{e:asc_mu}
\mu_{q+1,i}\geq \mu_{q+1,i+1}\, ;
\end{equation}
\item[(c)] The ratios $\frac{\mu_{q+1,i}}{\eta_{q+1,i}}$ are increasing in $i$ and indeed we have
\begin{equation}\label{e:more-scaling2}
\frac{\mu_{q+1,0}}{\eta_{q+1,0}} \leq \frac{\mu_{q+1,1}}{\eta_{q+1,1}}
\leq \frac{\mu_{q+1,2}}{\eta_{q+1,2}} = \frac{\mu_{q+1,i+1}}{\eta_{q+1,i+1}}
= \delta_{q+1,-1}^{\sfrac12}\lambda_{q+1}
\end{equation}
for any $i\geq 1$; 
\item[(d)] Finally, the parameter $\delta_{q,i}^{\sfrac{1}{2}} \lambda_q$ satisfies the inequality
\begin{equation}\label{e:asc_|v|_1}
\delta_{q,i}^{\sfrac{1}{2}} \lambda_q \leq \delta_{q+1,i+1}^{\sfrac{1}{2}} \lambda_{q+1}
\end{equation}
\end{itemize}
\end{lemma}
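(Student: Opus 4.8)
The statement to prove is Lemma~\ref{l:parameters}, which collects four orderings among the parameters $\delta_{q,i}$, $\mu_{q+1,i}$, $\eta_{q+1,i}$, and $\delta_{q,i}^{1/2}\lambda_q$. My plan is to prove each of (a)--(d) by taking logarithms and reducing to linear inequalities in $\beta_0$, $\beta_\infty$, $b$ which follow from the standing assumptions \eqref{e:condition}, \eqref{e:condition2} (recorded in Proposition~\ref{p:inductive_step}), together with the monotonicity of $j\mapsto\beta_j$ established by \eqref{e:betas_again}.

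\begin{proof}
\textbf{Part (a).} Since $\delta_{q,i}=\lambda_q^{-2\beta_i}$ (with the convention $\delta_{q,i}=\lambda_q^{-2\beta_i}$ for all $i\geq -1$), the inequality $\delta_{q,i}\geq\delta_{q+1,i}$ is equivalent to $\lambda_q^{-2\beta_i}\geq\lambda_{q+1}^{-2\beta_i}$, which holds since $\lambda_{q+1}\geq\lambda_q$ and $\beta_i>0$ (recall $\beta_{-1}>0$ is assumed). The inequality $\delta_{q+1,i}\geq\delta_{q+1,i+1}$ is equivalent to $\beta_i\leq\beta_{i+1}$, which is precisely the monotonicity of $j\mapsto\beta_j$ visible from \eqref{e:betas_again} (recall $\beta_0<\beta_\infty$).

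\textbf{Part (b).} For $i\geq 2$ we have $\mu_{q+1,i}=\lambda_{q+1}^{1-\beta_i}$ by \eqref{e:mu}, so $\mu_{q+1,i}\geq\mu_{q+1,i+1}$ follows again from $\beta_i\leq\beta_{i+1}$. For $i\leq 1$ the inequality $\mu_{q+1,0}\geq\mu_{q+1,1}$ is an equality by the definition \eqref{e:mu}, while $\mu_{q+1,1}\geq\mu_{q+1,2}$ is exactly \eqref{e:mu-ineq} for the pair $j=1$, $j'=2$, which was established in the proof of Lemma~\ref{l:overlapping-geometry} as a consequence of \eqref{e:condition2}.

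\textbf{Part (c).} From \eqref{e:mu} and \eqref{e:eta} one computes, for $i\geq 2$,
\begin{equation*}
\frac{\mu_{q+1,i}}{\eta_{q+1,i}}=\frac{\lambda_{q+1}^{1-\beta_i}}{\lambda_{q+1}^{b\beta_0-(b-1)\beta_\infty-\beta_i}}=\lambda_{q+1}^{1-b\beta_0+(b-1)\beta_\infty}=\lambda_{q+1}^{1-\beta_{-1}}=\delta_{q+1,-1}^{\sfrac12}\lambda_{q+1},
\end{equation*}
using \eqref{e:beta-1}; this is independent of $i$, giving the claimed chain of equalities for $i\geq 1$. It remains to check the two inequalities on the left. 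The identity \eqref{e:eta/mu-ordering_1} (established in the proof of Lemma~\ref{l:interval_iter}) shows $\tfrac{\eta_{q+1,j}}{\mu_{q+1,j}}=\lambda_{q+1}^{\beta_{-1}-1}$ for $j\geq 2$; combined with \eqref{e:eta/mu-ordering_2} and \eqref{e:eta/mu-ordering_3} this yields precisely the monotonicity \eqref{e:etamu-monotone}, i.e. $\tfrac{\eta_{q+1,0}}{\mu_{q+1,0}}\geq\tfrac{\eta_{q+1,1}}{\mu_{q+1,1}}\geq\tfrac{\eta_{q+1,2}}{\mu_{q+1,2}}$, which is the reciprocal of the asserted ordering $\tfrac{\mu_{q+1,0}}{\eta_{q+1,0}}\leq\tfrac{\mu_{q+1,1}}{\eta_{q+1,1}}\leq\tfrac{\mu_{q+1,2}}{\eta_{q+1,2}}$.

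\textbf{Part (d).} We must show $\delta_{q,i}^{\sfrac12}\lambda_q\leq\delta_{q+1,i+1}^{\sfrac12}\lambda_{q+1}$, i.e. $\lambda_q^{1-\beta_i}\leq\lambda_{q+1}^{1-\beta_{i+1}}$. Using $\lambda_{q+1}\geq\lambda_q^b$ (from \eqref{e:lambda}), it suffices to prove $1-\beta_i\leq b(1-\beta_{i+1})$, that is $b\beta_{i+1}-\beta_i\leq b-1$. By the recursion \eqref{e:betas} we have $b\beta_{i+1}=\beta_i+(b-1)\beta_\infty$, so the left side equals $(b-1)\beta_\infty$, and the inequality reduces to $(b-1)\beta_\infty\leq b-1$, i.e. $\beta_\infty\leq 1$, which holds since $\beta_\infty<\tfrac13$. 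This completes the proof.
\end{proof}

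Among these, part (c) is the one requiring the most care, since it is the only one whose proof genuinely invokes the earlier auxiliary computations \eqref{e:eta/mu-ordering_1}--\eqref{e:eta/mu-ordering_3} and the condition \eqref{e:condition2} through \eqref{e:etamu-monotone}; parts (a), (b) (except the single $j=1\!\to\!2$ step), and (d) are immediate from logarithms and the monotonicity of the $\beta_j$. There is no substantial obstacle here — the lemma is a bookkeeping statement, and the main point is simply to assemble inequalities that were, as the text notes, ``already proved and used in the previous sections.''
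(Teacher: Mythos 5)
Your proof is correct and follows essentially the same route as the paper's, which simply records that (a) is immediate from the monotonicity of $j\mapsto\beta_j$ and $q\mapsto\lambda_q$, (b) and (c) were already established in the proofs of Lemmas~\ref{l:overlapping-geometry} and \ref{l:interval_iter} (via \eqref{e:mu-ineq} and \eqref{e:eta/mu-ordering_1}--\eqref{e:eta/mu-ordering_3}), and (d) reduces to $\beta_\infty\leq 1$ after taking logarithms. One small imprecision: you assert $\lambda_{q+1}\geq\lambda_q^b$ from \eqref{e:lambda}, but \eqref{e:lambda} only gives $\lambda_{q+1}\geq\lambda_0^{b^{q+1}}\geq 2^{-b}\lambda_q^b$; since the exponent gap $(b-1)(1-\beta_\infty)$ is strictly positive and $\lambda_0$ may be taken large, the factor $2^{-b}$ is harmless, but strictly speaking the reduction to $\beta_\infty\leq 1$ should be $\beta_\infty<1$ with $\lambda_0$ large, which is of course what \eqref{e:choice_of_betainfty} provides.
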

\begin{proof}
 {\bf (a) } The inequality \eqref{e:asc_delta} is obvious because $j\mapsto \beta_j$ and $q\mapsto \lambda_q$ are both increasing. 

\medskip

{\bf (b) } The inequality \eqref{e:asc_mu} has been already proved in Lemma \ref{l:overlapping-geometry}, cf.\ \eqref{e:mu-ineq}.

\medskip

{\bf (c) } The inequality \eqref{e:more-scaling2} was proved in Lemma \eqref{l:interval_iter}, cf.\ \eqref{e:eta/mu-ordering_1}-\eqref{e:eta/mu-ordering_3}.

\medskip

{\bf (d) } The inequality \eqref{e:asc_|v|_1} is equivalent to $1-\beta_i \leq b - b \beta_{i+1}$ which, given the definition of the betas, is equivalent to $1 \geq \beta_\infty$.
\end{proof}

Next, some further inequalities will help simplifying several estimates
\begin{lemma}\label{l:parameters2}
For $\eps_0$ satisfying \eqref{e:ell_constr}, we  have 
\begin{equation}\label{e:CFL3}
\frac{\delta_{q, i-1}^{\sfrac12} \lambda_q}{\delta_{q+1, i}^{\sfrac12}\lambda_{q+1}}\leq2\frac{\delta_{q, (i-1)_+}^{\sfrac12} \lambda_q}{\mu_{q+1,i}}\leq 4\lambda_{q+1}^{-\omega}\leq \frac{\delta_{q+2,i+1}}{\delta_{q+1,i}\lambda_{q+1}^{2\eps_0}}\, .
\end{equation}
and
\begin{equation}\label{e:delta_lambda/mu}
\delta_{q,i-1}^{\sfrac{1}{2}} \lambda_{q} \leq \mu_{q+1,i}\, .
\end{equation}

\end{lemma}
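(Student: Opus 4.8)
The plan is to read off both displays from estimates already in hand; the ingredients are the recursion \eqref{e:betas}, the standing constraints \eqref{e:condition}, \eqref{e:condition2} and \eqref{e:ell_constr}, and the inequalities collected around Proposition \ref{p:CFL}. This is a consolidation step, so no part of it is genuinely hard.

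I would dispatch \eqref{e:CFL3} first. Its first two inequalities are exactly the content of \eqref{e:CFL2} (equivalently, they are visible inside the proof of Proposition \ref{p:CFL}, cf.\ \eqref{e:size_of_suppchi} and \eqref{e:nash_osc}), once two elementary facts are recorded: $\delta_{q,i-1}^{\sfrac12}=\delta_{q,(i-1)_+}^{\sfrac12}$ whenever $i\geq1$, and $\mu_{q+1,i}\leq\delta_{q+1,i}^{\sfrac12}\lambda_{q+1}$ -- with equality for $i\geq2$ straight from \eqref{e:mu}, and with the two exponents correctly ordered for $i\leq1$, which is exactly where \eqref{e:condition} enters (the $i=0$ endpoint additionally produces the factor $\delta_{q,-1}^{\sfrac12}/\delta_{q,0}^{\sfrac12}=\lambda_q^{(b-1)(\beta_\infty-\beta_0)}$, again absorbed by \eqref{e:condition} for $\lambda_0$ large). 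The last inequality of \eqref{e:CFL3} I would obtain by rewriting \eqref{e:index_inc}, i.e.\ $\delta_{q+1,i}\lambda_{q+1}^{-\omega}\leq\delta_{q+2,i+1}\lambda_{q+1}^{-2\eps_0}$, in the form $\lambda_{q+1}^{-\omega}\leq\frac{\delta_{q+2,i+1}}{\delta_{q+1,i}\lambda_{q+1}^{2\eps_0}}$; the spare factor $4$ is harmless because \eqref{e:condition} and \eqref{e:ell_constr} force the two sides to differ in their $\lambda_{q+1}$-exponent by a fixed positive amount (concretely, $\omega-2(b-1)\beta_\infty-2\eps_0>0$), so the inequality survives once $\lambda_0$ is taken large.

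Then I would treat \eqref{e:delta_lambda/mu}. For $i\geq1$ it drops out of the chain \eqref{e:CFL3} just obtained: its first inequality reads $\frac{\delta_{q,i-1}^{\sfrac12}\lambda_q}{\delta_{q+1,i}^{\sfrac12}\lambda_{q+1}}\leq 2\frac{\delta_{q,(i-1)_+}^{\sfrac12}\lambda_q}{\mu_{q+1,i}}$, and combining with $2\frac{\delta_{q,(i-1)_+}^{\sfrac12}\lambda_q}{\mu_{q+1,i}}\leq 4\lambda_{q+1}^{-\omega}\leq1$ (the last step is \eqref{e:CFL}, for $\lambda_0$ large) yields $\delta_{q,(i-1)_+}^{\sfrac12}\lambda_q\leq\mu_{q+1,i}$, which is the claim when $i\geq1$. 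The only case needing a separate look is $i=0$, where $\delta_{q,-1}$ enters; there I would compare $\lambda_{q+1}$-exponents directly, using $\lambda_{q+1}\geq 2^{-b}\lambda_q^{b}$ together with the definitions \eqref{e:mu}, \eqref{e:beta-1} and the ranges of the parameters, and verify that the exponent of $\mu_{q+1,0}$ exceeds that of $\delta_{q,-1}^{\sfrac12}\lambda_q$ by a positive margin, so the inequality again holds for $\lambda_0$ large.

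The one point that repays attention is the regime $i\leq1$: there $\mu_{q+1,j}$ is given by the second branch of \eqref{e:mu} rather than by $\lambda_{q+1}^{1-\beta_j}$, so the clean identities valid for $i\geq2$ must be replaced by comparisons of exponents, and it is precisely these comparisons that \eqref{e:condition} (and, for the monotonicity of $\mu$ underpinning them, \eqref{e:condition2}, already exploited in Lemma \ref{l:overlapping-geometry}) is there to guarantee. Otherwise the proof is pure parameter arithmetic with no obstacle.
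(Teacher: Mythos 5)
Your proposal is correct and follows essentially the same route as the paper's (very terse) proof: \eqref{e:CFL3} is indeed just \eqref{e:CFL2} combined with \eqref{e:index_inc} (with the implicit correction that the middle term of \eqref{e:CFL2} should carry the same $\sfrac12$-power as in \eqref{e:CFL3}), and \eqref{e:delta_lambda/mu} follows from the chain \eqref{e:CFL3} for $i\geq1$ plus an exponent comparison at $i=0$, which is precisely the parameter arithmetic you carry out. Your attribution of "$4\lambda_{q+1}^{-\omega}\leq1$" to \eqref{e:CFL} is a harmless mis-citation (it is the tail of \eqref{e:CFL2}, or simply $\omega>0$ with $\lambda_0$ large), and your exponent checks for the $i\leq1$ branch of $\mu_{q+1,i}$ match what the paper's constraints guarantee.
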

\begin{proof}
Note that \eqref{e:CFL3} is just a restating of \eqref{e:CFL2} and \eqref{e:index_inc}.
The proof of \eqref{e:delta_lambda/mu} follows from \eqref{e:CFL3} and the inequality $\delta^{\sfrac12}_{q,-1}\delta^{-\sfrac12}_{q,0}\leq \delta^{\sfrac12}_{q,-1}\delta^{-\sfrac12}_{q+1,0}$ implied by \eqref{e:asc_delta}.
\end{proof}


\section{Perturbation estimates}\label{s:perturbation}

\subsection{Preliminaries}
Using the same arguments as in Lemma 3.1 of \cite{BDS}, we easily obtain the following estimates on the components of the perturbation $w$.
\begin{lemma}\label{l:ugly_lemma}
Assume $t\in \supp \chi_\varsigma$ and set $j:= j_q (\alpha_q (\varsigma))$.
Then the following estimates are satisfied:
\begin{align}
&\|a_{k\varsigma} (t)\|_0 + \|L_{k\varsigma} (t)\|_0 \leq \bar{C} \delta^{\sfrac{1}{2}}_{q+1,j}\label{e:a+L_C0}\\
&\|a_{k\varsigma} (t)\|_N\leq \bar{C} \delta_{q+1,j}^{\sfrac{1}{2}} \lambda_q \ell^{1-N} \qquad \mbox{for $N\geq 1$}\label{e:a_N}\\
&\|L_{k\varsigma} (t)\|_N \leq \bar{C} \delta_{q+1,j}^{\sfrac{1}{2}}\ell^{-N}\qquad \mbox{for $N\geq 1$}\label{e:L_N}\\
&\|\phi_{k\varsigma} (t)\|_N \leq \bar{C} M \lambda_{q+1}^{1-\omega} \ell^{1-N}\leq  \bar{C}\ell^{-N} \qquad \mbox{for $N\geq 1$},\label{e:phi_N}
\end{align}
where the constants $\bar{C}$ depend only on $N$.
\end{lemma}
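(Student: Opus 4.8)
The plan is to prove the four estimates \eqref{e:a+L_C0}--\eqref{e:phi_N} in sequence, treating $a_{k\varsigma}$, $\phi_{k\varsigma}$ and $L_{k\varsigma}$ one after the other, and in each case reducing to the Faà di Bruno--type chain rule estimates of Proposition \ref{p:chain} together with the preliminary bounds already established in Lemma \ref{l:regularized_R_q} and Lemma \ref{l:Phi_est}.

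\textbf{Step 1: the amplitudes $a_{k\varsigma}$.} Recall that $a_{k\varsigma} = \sqrt{\rho_\varsigma}\,\gamma_k(R_\varsigma/\rho_\varsigma)$ with $R_\varsigma = \rho_\varsigma\Id - \mathring R_\varsigma$, so $R_\varsigma/\rho_\varsigma = \Id - \mathring R_\varsigma/\rho_\varsigma$ takes values in $B_{r_0/4}(\Id) \subset B_{r_0}(\Id)$ by the computation following \eqref{e:w_kl}, hence $\gamma_k$ is being evaluated on a compact set on which all its derivatives are bounded by a geometric constant. For $N=0$ this gives $\|a_{k\varsigma}(t)\|_0 \leq \bar C \sqrt{\rho_\varsigma} = \bar C \delta_{q+1,j}^{\sfrac12}$ (absorbing the factor $4r_0^{-1}$ from \eqref{e:rho} into $\bar C$), which is the $a_{k\varsigma}$ part of \eqref{e:a+L_C0}. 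For $N\geq 1$ I apply the chain rule \eqref{e:chain2} of Proposition \ref{p:chain}: writing $f = \gamma_k$ and $g = R_\varsigma/\rho_\varsigma$, the highest-order term is $\|Df\|_0\|g\|_N$ and the lower-order terms involve $\|D^i f\|_0$ times products $\prod \|g\|_{N_l}$ with $\sum N_l = N$; since $\|D g\|_{N-1} = \rho_\varsigma^{-1}\|\mathring R_\varsigma\|_N \leq \rho_\varsigma^{-1}\bar C\delta_{q+1,j}\lambda_q\ell^{1-N}$ by \eqref{e:R_mol} and $\rho_\varsigma^{-1}\delta_{q+1,j} \leq C$, the dominant contribution is $\leq \bar C \lambda_q \ell^{1-N}$, and all lower-order products are smaller because $\lambda_q\ell = \lambda_q\lambda_{q+1}^{\eps_0-1} \ll 1$. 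Multiplying by $\sqrt{\rho_\varsigma} = \delta_{q+1,j}^{\sfrac12}$ yields \eqref{e:a_N}.

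\textbf{Step 2: the phases $\phi_{k\varsigma}$.} Here $\phi_{k\varsigma} = e^{i\lambda_{q+1}k\cdot(\Phi_\varsigma - x)}$, so with $h := \lambda_{q+1}k\cdot(\Phi_\varsigma(x,t)-x)$ we apply \eqref{e:chain2} again with outer function $z\mapsto e^{iz}$, all of whose derivatives have modulus $1$. The relevant input is $\|D h\|_{N-1} = \lambda_{q+1}|k|\,\|D\Phi_\varsigma - \Id\|_{N-1}$; for $N=1$ this is $\leq C\lambda_{q+1}\cdot CM\lambda_{q+1}^{-\omega}$ by \eqref{e:Phi2}, and for $N\geq 2$ it is $\leq C_N\lambda_{q+1}\cdot C_N M\ell^{1-N}\lambda_{q+1}^{-\omega}$ by \eqref{e:Phi3}. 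The top-order term dominates (again because $\lambda_{q+1}^{1-\omega}\ell = \lambda_{q+1}^{\eps_0-\omega} \ll 1$, which needs $\omega > \eps_0$; this holds for $\lambda_0$ large in view of \eqref{e:ell_constr}), giving $\|\phi_{k\varsigma}(t)\|_N \leq \bar C M\lambda_{q+1}^{1-\omega}\ell^{1-N}$, and finally $\lambda_{q+1}^{1-\omega}\ell \leq 1$ converts this to the cruder bound $\bar C\ell^{-N}$ of \eqref{e:phi_N}.

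\textbf{Step 3: the vectors $L_{k\varsigma}$.} From \eqref{e:L_kl} we have $L_{k\varsigma} = a_{k\varsigma}B_k + \big(\tfrac{i}{\lambda_{q+1}}\nabla a_{k\varsigma} - a_{k\varsigma}(D\Phi_\varsigma - \Id)k\big)\times\tfrac{k\times B_k}{|k|^2}$, where $B_k$, $k$ have geometric size. For $N=0$: the first term is $\leq \bar C\delta_{q+1,j}^{\sfrac12}$ by Step 1; in the bracket, $\lambda_{q+1}^{-1}\|\nabla a_{k\varsigma}\|_0 \leq \lambda_{q+1}^{-1}\bar C\delta_{q+1,j}^{\sfrac12}\lambda_q \leq \bar C\delta_{q+1,j}^{\sfrac12}$ since $\lambda_q \leq \lambda_{q+1}$, and $\|a_{k\varsigma}\|_0\|D\Phi_\varsigma - \Id\|_0 \leq \bar C\delta_{q+1,j}^{\sfrac12}\cdot CM\lambda_{q+1}^{-\omega} \leq \bar C\delta_{q+1,j}^{\sfrac12}$ by \eqref{e:Phi2}, completing \eqref{e:a+L_C0}. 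For $N\geq 1$ I estimate each product by the Leibniz rule \eqref{e:leibniz} distributing derivatives: the $a_{k\varsigma}B_k$ term contributes $\leq \bar C\delta_{q+1,j}^{\sfrac12}\lambda_q\ell^{1-N} \leq \bar C\delta_{q+1,j}^{\sfrac12}\ell^{-N}$ by \eqref{e:a_N} (using $\lambda_q\ell \leq 1$); $\lambda_{q+1}^{-1}\|\nabla a_{k\varsigma}\|_N \leq \lambda_{q+1}^{-1}\bar C\delta_{q+1,j}^{\sfrac12}\lambda_q\ell^{-N} \leq \bar C\delta_{q+1,j}^{\sfrac12}\ell^{-N}$; and for the last term, $\|a_{k\varsigma}(D\Phi_\varsigma - \Id)k\|_N$ splits, by Leibniz, into terms $\|a_{k\varsigma}\|_{N_1}\|D\Phi_\varsigma - \Id\|_{N_2}$ with $N_1 + N_2 = N$, each of which is $\leq \bar C\delta_{q+1,j}^{\sfrac12}\lambda_q\ell^{1-N_1}\cdot C M\ell^{-N_2}\lambda_{q+1}^{-\omega}$ (using \eqref{e:a+L_C0}/\eqref{e:a_N} and \eqref{e:Phi2}/\eqref{e:Phi3}), hence $\leq \bar C\delta_{q+1,j}^{\sfrac12}\lambda_{q+1}^{-\omega}\lambda_q\ell^{1-N} \cdot \ell^{-1}\cdot\ell \leq \bar C\delta_{q+1,j}^{\sfrac12}\ell^{-N}$ after using $\lambda_q\ell\lambda_{q+1}^{-\omega} \leq 1$. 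This yields \eqref{e:L_N}.

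The main obstacle is purely bookkeeping rather than conceptual: one must keep track of the fact that all these quantities are time-dependent through the interval-dependent parameter $j = j_q(\alpha_q(\varsigma))$, so every application of \eqref{e:R_mol} and of Lemma \ref{l:Phi_est} is only valid on $\supp\chi_\varsigma$, where the local estimates of Proposition \ref{p:CFL} hold; and one must verify repeatedly that the ``error'' factors $\lambda_q\ell$, $\lambda_{q+1}^{1-\omega}\ell = \lambda_{q+1}^{\eps_0-\omega}$ and $\lambda_q\ell\lambda_{q+1}^{-\omega}$ are all at most $1$, which is exactly what the constraints \eqref{e:ell_constr} on $\eps_0$ and the definition \eqref{e:ell_choice} of $\ell$ are designed to guarantee once $\lambda_0$ is taken large. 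Since these are the same manipulations as in Lemma 3.1 of \cite{BDS}, I would simply cite that lemma for the structure and only indicate which inputs (namely \eqref{e:R_mol}, \eqref{e:Phi2}, \eqref{e:Phi3}) replace their time-uniform analogues.
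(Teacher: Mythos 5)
Your proposal is essentially correct and follows the same route as the paper's own proof: apply the chain rule of Proposition~\ref{p:chain} to $a_{k\varsigma}=\sqrt{\rho_\varsigma}\,\gamma_k(R_\varsigma/\rho_\varsigma)$ and to $\phi_{k\varsigma}=e^{i\lambda_{q+1}k\cdot(\Phi_\varsigma-x)}$, then apply the product rule to each piece of $L_{k\varsigma}$ from \eqref{e:L_kl}, feeding in \eqref{e:R_mol} from Lemma~\ref{l:regularized_R_q} and \eqref{e:Phi2}--\eqref{e:Phi3} from Lemma~\ref{l:Phi_est}, and finally absorbing the ``error'' factors $\lambda_q\ell$, $M\lambda_{q+1}^{1-\omega}\ell$ into the constants thanks to \eqref{e:ell_choice} and \eqref{e:ell_constr}. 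The only defects are cosmetic: you cite \eqref{e:chain2} and \eqref{e:leibniz}, neither of which exists in the paper; the intended tools are \eqref{e:chain0} (or \eqref{e:chain1}, which the paper actually invokes for $\phi_{k\varsigma}$) and the product inequality \eqref{e:Holderproduct}. (The paper also uses only the two ``endpoint'' terms $\|f\|_N\|g\|_0+\|f\|_0\|g\|_N$ of \eqref{e:Holderproduct}, whereas your Leibniz split over all $N_1+N_2=N$ is harmless but more than needed.) With the references corrected, the argument matches the paper's.
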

It should be remarked that, compared to Lemma 3.1 in  \cite{BDS} the inequality 
\[
\ell\leq  (1+M)^{-1}\lambda_{q+1}^{\omega-1},
\]
was used in order to simplify the statement of \eqref{e:phi_N}.

\begin{proof} First observe that, since $\phi_{k\varsigma}(x,t) = e^{i \lambda_{q+1}(\Phi_{k\varsigma}(x,t) - x) \cdot k}$, for $N\geq 1$ 
we can use \eqref{e:chain1} to estimate
\[
\|\phi_{k\varsigma} (t)\|_N \leq C \left(\lambda_{q+1}\|D\Phi_{k\varsigma}(t) - \Id\|_{N-1} +\lambda_{q+1}^{N} \|D\Phi_{k\varsigma} (t) - \Id\|_0^N\right)\, .
\]
Thus the estimates \eqref{e:phi_N} are a direct consequence of Lemma \ref{l:Phi_est}. 

Next observe that from \eqref{e:chain0} and \eqref{e:R_mol} we obtain
\begin{align}
\|a_{k\varsigma} (t)\|_N &\leq C \delta_{q+1, j}^{-\sfrac{1}{2}} \|R_\varsigma (t)\|_N
\leq C \delta_{q+1, j}^{\sfrac{1}{2}} \lambda_q\ell^{1-N}\, ,
\end{align}
where the constant $C$ depends only on $N$. This proves \eqref{e:a_N}

Finally, differentiating \eqref{e:L_kl} and using \eqref{e:Holderproduct}, 
\begin{align}
\|L_{k\varsigma}\|_N \leq &\bar{C}  \|a_{k\varsigma} (t)\|_N+C\lambda_{q+1}^{-1}\|a_{k\varsigma} (t)\|_{N+1}+\nonumber\\
& + \bar{C}  \left(\|a_{k\varsigma} (t)\|_N \|D\Phi_\varsigma (t) -\Id\|_0 + \|a_{kl}\|_0 \|D\Phi_\varsigma (t) - \Id\|_N\right)\, \nonumber\\
\leq& \bar{C} \delta_{q+1, j}^{\sfrac{1}{2}} \lambda_q \ell^{1-N} + \bar{C} \lambda_{q+1}^{-1}  \delta_{q+1, j}^{\sfrac{1}{2}} \lambda_q \ell^{-N} + \bar{C} \delta_{q+1, j}^{\sfrac{1}{2}} M \lambda_{q+1}^{-\omega} \ell^{-N}\, .
\end{align}
To achieve estimate \eqref{e:L_N} is then enough to assume $\lambda_0 \geq M^{\sfrac{1}{\omega}}$ and to
apply $\lambda_{q+1} \geq \ell^{-1} \geq \lambda_q$.
\end{proof}

The next technical lemma deals with a number of helpful material derivative estimates.  The proof follows similar arguments to that of Lemma 3.1 from \cite{BDS}, taking advantage of sharper second order estimates on $(v_q,p_q,\mathring R_q)$ (cf.\ \cite{Buckmaster,Isett}).
\begin{lemma}\label{l:ugly_lemma2}
Assume $t\in \supp (\chi_\varsigma)$ and set $j:= j_q (\alpha_q (\varsigma))$.
Then the following estimates are satisfied:
\begin{align}
&\|D_t v_\ell (t)\|_0\leq C \delta_{q,(j-1)_+}\lambda_q\label{e:D_tv_ell_0}\\
& \|D_t v_\ell (t)\|_N\leq C \delta_{q,(j-1)_+}\lambda_q^2\ell^{1-N}\label{e:D_tv_ell_N}\qquad N\geq 1\\
& \|D_t a_{k\varsigma} (t)\|_N \leq C \delta_{q+1,j}^{\sfrac{1}{2}} \delta_{q,j-1}^{\sfrac{1}{2}} \lambda_q \ell^{-N}\label{e:D_ta}\\
& \|D_t^2 a_{k\varsigma} (t)\|_N \leq C \delta_{q+1,j} \delta^{\sfrac{1}{2}}_{q,j-1} \lambda_q\lambda_{q+1} \ell^{-N}\label{e:D^2_ta}\\
& \|D_t L_{k\varsigma} (t)\|_N \leq C \delta_{q+1,j}^{\sfrac{1}{2}} \delta_{q,j-1}^{\sfrac{1}{2}} \lambda_q \ell^{-N}\label{e:D_tL}\\
&\|D_t^2 L_{k\varsigma} (t)\|_N \leq C \delta_{q+1,j} \delta^{\sfrac{1}{2}}_{q,j-1} \lambda_q \lambda_{q+1} \ell^{-N} \, ,\label{D_t^2L}
\end{align}
where the constants only depend on $N$ and $M$.
\end{lemma}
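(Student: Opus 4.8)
The plan is to reduce each bound to differentiating one explicit formula and inserting estimates already established; the only genuinely analytic inputs are the commutator inequality of Proposition~\ref{p:CET}, the transport estimates of Proposition~\ref{p:transport_derivatives}, and the chain/product estimates \eqref{e:chain0}--\eqref{e:chain1}, \eqref{e:Holderproduct}. First I would treat $D_t v_\ell$, with $D_t=\partial_t+v_\ell\cdot\nabla$: convolving \eqref{e:euler_reynolds} with $\psi_\ell$ and using $\div v_q=\div v_\ell=0$ gives
\[
D_t v_\ell = \div\mathring R_\ell - \nabla p_\ell + \div\big(v_\ell\otimes v_\ell-(v_q\otimes v_q)*\psi_\ell\big)\, .
\]
For $N\geq 1$ the first two terms are controlled by mollifying \eqref{e:R_est} and \eqref{e:pressure_est} (so that $\|\mathring R_\ell\|_{N+1}\leq C\ell^{1-N}\|\mathring R_q\|_2$ and $\|p_\ell\|_{N+1}\leq C\ell^{1-N}\|p_q\|_2$), and for $N=0$ directly by $\|\mathring R_q\|_1$ and $\|p_q\|_1$; the last term is bounded by $C\ell^{1-N}\|v_q\|_1^2$ via Proposition~\ref{p:CET}. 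Substituting $\|v_q\|_k\leq M\delta_{q,(j-1)_+}^{\sfrac12}\lambda_q^k$, $\|p_q\|_k\leq M^2\delta_{q,(j-1)_+}\lambda_q^k$, $\|\mathring R_q\|_k\leq\delta_{q+1,j}\lambda_q^k$ for $k=1,2$, together with $\delta_{q+1,j}\leq\delta_{q,(j-1)_+}$ and $\ell^{-1}\geq\lambda_q$, yields \eqref{e:D_tv_ell_0}--\eqref{e:D_tv_ell_N}.

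Next, the bounds on $a_{k\varsigma}$. Since $\gamma_k$ and $\rho_\varsigma\simeq\delta_{q+1,j}$ do not depend on $t$, from \eqref{e:a_kl} and $R_\varsigma=\rho_\varsigma\Id-\mathring R_\varsigma$ one gets $D_t a_{k\varsigma}=-\rho_\varsigma^{-\sfrac12}D\gamma_k\!\left(\tfrac{R_\varsigma}{\rho_\varsigma}\right):D_t\mathring R_\varsigma$ and, similarly, $D_t^2 a_{k\varsigma}=\rho_\varsigma^{-\sfrac32}D^2\gamma_k\!\left(\tfrac{R_\varsigma}{\rho_\varsigma}\right):\big(D_t\mathring R_\varsigma\otimes D_t\mathring R_\varsigma\big)-\rho_\varsigma^{-\sfrac12}D\gamma_k\!\left(\tfrac{R_\varsigma}{\rho_\varsigma}\right):D_t^2\mathring R_\varsigma$. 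For $\varsigma\in\{0,N'+1\}$ these vanish, and for $\varsigma\in\{1,\dots,N'\}$ with $j\geq1$ one has $D_t\mathring R_\varsigma=0$, so \eqref{e:D_ta}--\eqref{e:D^2_ta} are trivial; for $j=0$ one inserts \eqref{e:R_mol}, \eqref{e:R_Dt}, \eqref{e:R_Dt2} into \eqref{e:chain0}--\eqref{e:chain1} (using $\|v_\ell\|_m\leq C\ell^{1-m}\|v_q\|_1$ to absorb the commutator $[D^N,D_t]$ that appears for $N\geq1$). The terms carrying $D_t\mathring R_\varsigma$ and $D_t^2\mathring R_\varsigma$ reproduce exactly the right-hand sides of \eqref{e:D_ta} and \eqref{e:D^2_ta}, while the quadratic remainder $\rho_\varsigma^{-\sfrac32}(D_t\mathring R_\varsigma)^{2}$ comes with a spare factor $\delta_{q,-1}^{\sfrac12}\lambda_q\big/\big(\delta_{q+1,0}^{\sfrac12}\lambda_{q+1}\big)\leq 4\lambda_{q+1}^{-\omega}\ll1$ by \eqref{e:CFL2} and is absorbed.

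For $L_{k\varsigma}$ I would differentiate the explicit formula \eqref{e:L_kl}: $D_t L_{k\varsigma}$ is a linear combination of $D_t a_{k\varsigma}$, of $D_t\nabla a_{k\varsigma}=\nabla(D_t a_{k\varsigma})-(Dv_\ell)^{T}\nabla a_{k\varsigma}$, and of $a_{k\varsigma}\,D_t(D\Phi_\varsigma)=a_{k\varsigma}\,Dv_\ell\,(D\Phi_\varsigma-\Id)$ (by \eqref{e:D_tDPhi}), all estimated from the previous paragraph together with \eqref{e:a_N}, \eqref{e:D_tDPhi_est} and \eqref{e:Holderproduct}. A second application of $D_t$ brings in, additionally, $D_t^2(D\Phi_\varsigma)=(D_t Dv_\ell)(D\Phi_\varsigma-\Id)+(Dv_\ell)^{2}(D\Phi_\varsigma-\Id)$ with $D_t Dv_\ell=D(D_t v_\ell)-(Dv_\ell)^2$ controlled via \eqref{e:D_tv_ell_N}, and $D_t^2 a_{k\varsigma}$ from the previous paragraph. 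In each case the ``principal'' term reproduces the claimed bound and every remaining term carries a spare factor --- $\lambda_q/\lambda_{q+1}$, $\lambda_{q+1}^{-\omega}$, or $\ell\lambda_q$, all $\ll1$ --- so \eqref{e:D_tL}--\eqref{D_t^2L} follow.

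None of the individual estimates is hard; the real work, and the step most likely to hide an error, is the bookkeeping of the many product terms occurring in the second material derivatives and the verification that in each of them the accumulated ``error'' factors combine to stay below $1$. This is precisely where the parameter ordering \eqref{e:CFL2}, and through it the standing conditions \eqref{e:condition}--\eqref{e:condition2}, comes into play.
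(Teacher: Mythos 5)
Your proposal is correct and follows essentially the same route as the paper: the bound on $D_t v_\ell$ via the mollified Euler--Reynolds system and Proposition~\ref{p:CET}, the bounds on $D_t a_{k\varsigma}$ and $D_t^2 a_{k\varsigma}$ via the chain rule on \eqref{e:a_kl} combined with Lemma~\ref{l:regularized_R_q}, and the bounds on $L_{k\varsigma}$ via differentiating \eqref{e:L_kl} and using \eqref{e:D_tDPhi} together with the parameter orderings to absorb the lower-order products. The explicit case split on $j=0$ versus $j\geq 1$ that you introduce for $D_t a_{k\varsigma}$ is already built into the statement of Lemma~\ref{l:regularized_R_q} (since $D_t\mathring R_\varsigma=0$ in case $j\geq 1$), so it yields the same estimates without being formally separated in the paper's proof.
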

\begin{proof} Recall that
$$
(\partial_t+v_q\cdot\nabla)v_q+\nabla p_q=\div \mathring{R}_q.
$$
We write
\[
D_t v_\ell = {\rm div}\, \mathring{R}_q * \psi_\ell - \nabla p_q * \psi_\ell + {\rm div} (v_q*\psi_\ell \otimes v_q * \psi_\ell - (v_q\otimes v_q) * \psi_\ell)\, 
\]
and apply Proposition \ref{p:CET} to estimate
\begin{equation*}
\|{\rm div} (v_q(t)*\psi_\ell \otimes v_q(t) * \psi_\ell - (v_q\otimes v_q)(t) * \psi_\ell)\|_N\leq \norm{v_q(t)}_1^2 \ell^{1-N}\, .
\end{equation*}
Recall also that, by standard convolution estimates (and Proposition \ref{p:CFL})
\begin{align*}
\| \nabla p_q (t) * \psi_\ell\|_N &\leq C M^2 \delta_{q, (j-1)_+} \lambda_q ^2 \ell^{1-N} \qquad \qquad &\forall N\geq 1\\
\|{\rm div} \mathring{R}_q (t) * \psi_\ell\|_N &\leq C \delta_{q+1,j} \lambda_q^2 \ell^{1 - N} &\forall N\geq 1\, .
\end{align*}
We therefore conclude
\begin{equation}
\|D_t v_\ell (t)\|_N\leq C \delta_{q,(j-1)_+}\lambda_q^2\ell^{1-N}\quad \mbox{when $N\geq 1$},
\end{equation}
whereas
\[
\|D_t v_\ell (t)\|_0 \leq C \delta_{q,(j-1)_+} \lambda_q\, .
\]
Next, note that
\begin{equation}\label{e:compute_Dt_a}
D_t a_{k\varsigma} = \rho_{\varsigma}^{-\sfrac{1}{2}} D \gamma_k \left(\frac{\mathring{R}_\varsigma}{\rho_\varsigma}\right) D_t \mathring{R}_\varsigma\, .
\end{equation}
By Lemma \ref{l:regularized_R_q}, \eqref{e:Holderproduct} and \eqref{e:chain0}, we have
\begin{align*}
\|D_t a_{k\varsigma} (t)\|_N \leq & C \delta_{q+1,j}^{-\sfrac{1}{2}} \|D_t \mathring{R}_\varsigma(t)\|_N 
+ C \delta_{q+1,j}^{-\sfrac{3}{2}} \|\mathring{R}_\varsigma\|_N \|D_t \mathring{R}_\varsigma (t)\|_0\\
\leq &\delta_{q+1,j}^{\sfrac{1}{2}} \delta_{q,j-1}^{\sfrac{1}{2}} \lambda_q \ell^{-N}\, ,
\end{align*}
from which \eqref{e:D_ta} follows.  Taking a further material derivative of both sides in \eqref{e:compute_Dt_a} we achieve
\[
D^2_t a_{k\varsigma} = \rho_{k\varsigma}^{-\sfrac{3}{2}} D^2 \gamma_k \left(\frac{\mathring{R}_\varsigma}{\rho_\varsigma}\right)
D_t \mathring{R}_\varsigma D_t \mathring{R}_\varsigma +  \rho_{\varsigma}^{-\sfrac{1}{2}} D \gamma_k \left(\frac{\mathring{R}_\varsigma}{\rho_\varsigma}\right) D^2_t \mathring{R}_\varsigma\, .
\]
Applying again Lemma \ref{l:regularized_R_q}, \eqref{e:chain0} and \eqref{e:Holderproduct} we then get
\begin{align*}
\|D^2_t a_{k\varsigma}(t)\|_N \leq & C \delta_{q+1,j}^{-\sfrac{3}{2}} \|D_t \mathring{R}_\varsigma (t)\|_0\|D_t \mathring{R}_\varsigma (t)\|_N+ C \delta_{q+1}^{-\sfrac{1}{2}} \|D^2_t \mathring{R}_\varsigma (t)\|_N\\
& + C \delta_{q+1,j}^{-\sfrac{5}{2}} \|\mathring{R}_\varsigma (t)\|_N \|D_t \mathring{R}_\varsigma (t)\|_0^2\\
& + C \delta_{q+1,i}^{-\sfrac{3}{2}}  \|\mathring{R}_\varsigma (t)\|_N \|D^2_t \mathring{R}_\varsigma (t)\|_0\\
\leq & \delta_{q+1,j} \delta_{q,j-1}^{\sfrac12} \lambda_q \lambda_{q+1} \ell^{-N}\, ,
\end{align*}
where we have used \eqref{e:asc_|v|_1}.

We now proceed to the estimates involving $L_{k\varsigma}$. First we have
\begin{equation}\label{e:D_tDa}
D_t \nabla a_{k\varsigma}  =  - D v_\ell \nabla a_{k\varsigma}+\nabla D_t a_{k\varsigma}
\end{equation}
and thus, using Lemma \ref{l:ugly_lemma}, \eqref{e:D_ta} and \eqref{e:Holderproduct}, 
\begin{equation}\label{e:D_tDa_est}
\|D_t \nabla a_{k\varsigma} (t)\|_N \leq C \delta_{q+1,j}^{\sfrac{1}{2}}\delta_{q,j-1}^{\sfrac12} \lambda_q \ell^{-N-1} \, .
\end{equation}
Differentiating the formula \eqref{e:L_kl} defining $L_{k\varsigma}$ we conclude
\begin{align*}
\|D_t L_{k\varsigma} (t)\|_N \leq & C \|D_t a_{k\varsigma} (t)\|_N + C \lambda_{q+1}^{-1} \|D_t \nabla a_{k\varsigma} (t)\|_{N}\\
& + \|D_t a_{k\varsigma} (t)\|_N \|D\Phi_\varsigma (t) - \Id\|_0\\
&+ C \|D_t a_{k\varsigma} (t)\|_0 \|D\Phi_\varsigma (t) - \Id\|_N\\
& + C \|a_{k\varsigma} (t)\|_N \|D_t D \Phi_\varsigma (t)\|_0 + C \|a_{k\varsigma} (t)\|_0 \|D_t D \Phi_\varsigma (t)\|_N\\
\leq & C \delta_{q+1,j}^{\sfrac{1}{2}} \delta_{q,j-1}^{\sfrac{1}{2}} \lambda_q \ell^{-N}\, ,
\end{align*}
where we have used Lemma \ref{l:Phi_est} repeatedly.

Differentiating further \eqref{e:D_tDPhi} we get
\begin{align*}
D^2_t D\Phi_\varsigma &= D_t Dv_\ell (D\Phi_\varsigma- \Id) + D v_\ell D_t D\Phi\nonumber\\
 &= (D D_t v_\ell - Dv_\ell Dv_\ell) (D\Phi_\varsigma- \Id) + Dv_\ell (D D_t \Phi - Dv_\ell (D\Phi- \Id))\, .
\end{align*}
Hence using \eqref{e:D_tv_ell_N} and Lemma \ref{l:Phi_est} we also get
\begin{align*}
\|D_t^2 D\Phi_\varsigma (t)\|_N \leq & C\|D_t v_\ell (t)\|_{N+1} \|D\Phi_\varsigma (t) - \Id\|_0\\
&+ C \|D_t v_\ell (t)\|_1 \|D\Phi (t) - \Id\|_N\\
&+C \|Dv_\ell (t)\|_N \|D v_\ell (t)\|_0 \|D\Phi_\varsigma (t) - \Id\|_0\\
& + C \|Dv_\ell (t)\|_0^2 \|D\Phi_\varsigma (t)- \Id\|_N\\
& + C \|Dv_\ell (t)\|_0 \|D_t D\Phi_\varsigma (t)\|_N\\
& + C \|Dv_\ell (t)\|_N \|D_t D\Phi_\varsigma (t)\|_0\\
\leq& C\delta_{q,(j-1)_+}\lambda_q^2 \lambda_{q+1}^{-\omega}\ell^{-N} \, .
\end{align*}
Next, differentiating further \eqref{e:D_tDa} we get
\begin{align*}
D^2_t \nabla a_{k\varsigma} = & - Dv_\ell D_t \nabla a_{k\varsigma} - D_t D v_\ell \nabla a_{k\varsigma} + D_t \nabla D_t a_{k\varsigma}\\
= & - Dv_\ell \nabla D_t a_{k\varsigma} + Dv_\ell Dv_\ell \nabla a_{k\varsigma} - D D_t v_\ell \nabla a_{k\varsigma} + Dv_\ell Dv_\ell \nabla a_{k\varsigma}\\
 &+ \nabla D^2_t a_{k\varsigma} - Dv_\ell \nabla D_t a_{k\varsigma}\, .
\end{align*}
Thus, using \eqref{e:Holderproduct},
\begin{align*}
\|D_t^2 \nabla a_{k\varsigma} (t)\|_N \leq & C \|Dv_\ell (t)\|_0 \|D_t a_{k\varsigma} (t)\|_{N+1} + C \|Dv_\ell (t)\|_N \|D_t a_{k\varsigma} (t)\|_1\\
&+C \|D v_\ell (t)\|_{N}\|D v_\ell (t)\|_{0}\|a_{k\varsigma}(t)\|_{0}
+C \|D v_\ell (t)\|_{0}^2\norm{a_{k\varsigma}(t)}_{N}\\
&+C \|D_t v_\ell (t)\|_{N+1}\|a_{k\varsigma}(t)\|_1+C \|D_t v_\ell (t)\|_1 \|a_{k\varsigma}(t)\|_{N+1}\\
&+ C \norm{D_t^2 a_{k\varsigma}(t)}_{N+1}
\\
\leq& C \delta_{q+1,j} \delta_{q,j-1}^{\sfrac12} \lambda_q \lambda_{q+1} \ell^{-N-1} \, .
\end{align*}
We finally take two material derivatives of \eqref{e:L_kl} to conclude the estimate
\begin{align*}
\|D^2_t L_{k\varsigma} (t)\|_N \leq & C \|D^2_t a_{k\varsigma} (t)\|_N + C \lambda_{q+1}^{-1} \|D^2_t \nabla a_{k\varsigma} (t)\|_{N}\\
& + \|D^2_t a_{k\varsigma} (t)\|_N \|D\Phi_\varsigma (t) - \Id\|_0\\
&+ C \|D^2_t a_{k\varsigma} (t)\|_0 \|D\Phi_\varsigma (t) - \Id\|_N\\
& + C \|a_{k\varsigma} (t)\|_N \|D^2_t D\Phi_\varsigma (t)\|_0 + C \|a_{k\varsigma} (t)\|_0 \|D^2_t D\Phi_\varsigma (t)\|_N\\
& + C\|D_t a_{k\varsigma} (t)\|_N \|D_t D\Phi_\varsigma (t)\|_0\\
& + C \|D_t a_{k\varsigma} (t)\|_0 \|D_t D\Phi_\varsigma (t)\|_N\\
\leq & C \delta_{q+1, j} \delta_{q, j-1}^{\sfrac12} \lambda_q \lambda_{q+1} \ell^{-N} + C \delta_{q+1, j}^{\sfrac{1}{2}} \delta_{q, j-1} \lambda_q^2 \lambda_{q+1}^{-\omega} \ell^{-N} \\
\leq & C \delta_{q+1, j} \delta_{q, j-1}^{\sfrac12} \lambda_q \lambda_{q+1} \ell^{-N}\, .
\end{align*}
\end{proof}

\subsection{Estimates on $w_o$ and $w_c$}
In the estimates above whether a time was in a non-overlapping or overlapping region played no role.  In the two lemmata below this distinction will play an important role, in particular we obtain better estimates on the non-overlapping regions than on the overlapping ones.  

\begin{lemma}\label{l:ugly_lemma3}
Assume $t\in \supp (\chi_\varsigma)$ and set 
\[
j = \left\{
\begin{array}{ll}
j_q (\alpha_q (\varsigma)) & \mbox{if $t\in H_\varsigma$}\\ \\
\min \{j_q (\alpha_q (\varsigma)), j_q (\alpha_q (\varsigma +1))\}\qquad &
\mbox{if $t\in K_\varsigma$}\\ \\
\min \{j_q (\alpha_q (\varsigma)), j_q (\alpha_q (\varsigma -1))\}\qquad &
\mbox{if $t\in K_{\varsigma -1}$\, .}
\end{array}\right.
\]
Then we have
\begin{align}
&\|w_c (t)\|_N \leq  \bar{C} \delta_{q+2, j+1}\delta_{q+1, j}^{-\sfrac{1}{2}} \lambda_{q+1}^{N-\eps_0}\label{e:w_c_est}\\
&\|w_o (t)\|_N \leq \bar{C} \delta_{q+1, j}^{\sfrac{1}{2}} \lambda_{q+1}^N \label{e:w_o_est}\\
&\lambda_{q+2}^{-1}\|v_{q+1} (t)\|_2+\lambda_{q+1}^{-1}\|v_{q+1} (t)\|_1 + \|w_{q+1} (t)\|_0 \leq M \delta_{q+1, j}^{\sfrac{1}{2}} \label{e:final_v_est}\\
&\lambda_{q+1}^{-2} \|p_{q+1} (t)\|_2 + \lambda_{q+1}^{-1}\|p_{q+1} (t)\|_1 + \|(p_{q+1} - p_q) (t)\|_0 \leq M^2 \delta_{q+1, j}\, ,\label{e:final_p_est}
\end{align}
where $\bar{C}$ is a constant which depends only on $N$.
\end{lemma}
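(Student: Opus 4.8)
The plan is to expand each of $w_o$, $w_c$, $v_{q+1}=v_q+w_o+w_c$ and $p_{q+1}$ into its defining summands and estimate term by term, using Lemma \ref{l:ugly_lemma} together with the flow bounds of Lemma \ref{l:Phi_est}, and then to reduce everything to the parameter orderings collected in Section \ref{s:parameters}. The one structurally new point compared to \cite{BDS} is the choice of the index $j$ on the overlapping regions. At a fixed $t\in\supp\chi_\varsigma$ at most two cutoffs $\chi_{\varsigma''}$ are active: only $\chi_\varsigma$ if $t\in H_\varsigma$, while $\chi_\varsigma$ together with $\chi_{\varsigma+1}$ if $t\in K_\varsigma$ and $\chi_\varsigma$ together with $\chi_{\varsigma-1}$ if $t\in K_{\varsigma-1}$ (the supports $K_{\mu-1}\cup H_\mu\cup K_\mu$ keep non-adjacent cutoffs from overlapping). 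Since $i\mapsto\delta_{q+1,i}$ is decreasing (Lemma \ref{l:parameters}(a)) and $j$ is defined as the minimum of the $j_q(\alpha_q(\varsigma''))$ over the active cutoffs, each summand $\chi_{\varsigma''}(t)w_{k\varsigma''}(t)$, resp. $\chi_{\varsigma''}(t)L_{k\varsigma''}(t)$, automatically obeys the bounds of Lemma \ref{l:ugly_lemma} with $\delta_{q+1,j}$ in place of its own $\delta_{q+1,j_q(\alpha_q(\varsigma''))}$, and the number of active summands is bounded by a geometric constant ($2\,\#\Lambda$).

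First I would prove \eqref{e:w_o_est}: writing $w_o=\sum\chi_{\varsigma''}a_{k\varsigma''}B_k\phi_{k\varsigma''}e^{i\lambda_{q+1}k\cdot x}$ and distributing the $N$ spatial derivatives by the product rule, a derivative on $e^{i\lambda_{q+1}k\cdot x}$ costs $\lambda_{q+1}$, one on $\phi_{k\varsigma''}$ costs at most $\ell^{-1}\leq\lambda_{q+1}$ by \eqref{e:phi_N}, and one on $a_{k\varsigma''}$ costs at most $\lambda_q\ell^{-1}\leq\lambda_{q+1}$ by \eqref{e:a_N}; hence $\|w_o(t)\|_N\lesssim\delta_{q+1,j}^{1/2}\lambda_{q+1}^N$. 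Next \eqref{e:w_c_est}: by \eqref{e:corrector} each summand of $w_c$ carries the amplitude $\frac{i}{\lambda_{q+1}}\nabla a_{k\varsigma''}-a_{k\varsigma''}(D\Phi_{\varsigma''}-\Id)k$, whose $C^0$ norm is $\lesssim\delta_{q+1,j}^{1/2}\bigl(\lambda_q\lambda_{q+1}^{-1}+M\lambda_{q+1}^{-\omega}\bigr)\lesssim\delta_{q+1,j}^{1/2}\lambda_{q+1}^{-\omega}$ via \eqref{e:a_N} and \eqref{e:Phi2}, and whose higher norms pick up at most an extra $\ell^{-N}\leq\lambda_{q+1}^N$ (using \eqref{e:Phi3}). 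Invoking \eqref{e:index_inc} in the form $\delta_{q+1,j}^{1/2}\lambda_{q+1}^{-\omega}\leq\delta_{q+2,j+1}\delta_{q+1,j}^{-1/2}\lambda_{q+1}^{-2\eps_0}\leq\delta_{q+2,j+1}\delta_{q+1,j}^{-1/2}\lambda_{q+1}^{-\eps_0}$ and again spreading $N$ derivatives at cost $\leq\lambda_{q+1}$ each yields \eqref{e:w_c_est}.

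Then I would assemble \eqref{e:final_v_est} and \eqref{e:final_p_est}. For the velocity, $v_{q+1}=v_q+w_o+w_c$: the $v_q$ part is controlled by Proposition \ref{p:CFL}, valid on $\supp\chi_\varsigma$ with index $j_q(\alpha_q(\varsigma))\geq j$ (hence also with $j$, by monotonicity of $\delta$), and crucially the passage from $\delta_{q,(j-1)_+}^{1/2}\lambda_q$ to $\delta_{q+1,j}^{1/2}\lambda_{q+1}$ carries the gain $\lambda_{q+1}^{-\omega}$ recorded in \eqref{e:CFL3}, so that $\lambda_{q+1}^{-1}\|v_q(t)\|_1\leq 4M\lambda_{q+1}^{-\omega}\delta_{q+1,j}^{1/2}\leq\tfrac12 M\delta_{q+1,j}^{1/2}$ for $\lambda_0$ large, and the $\|v_q(t)\|_2$--term is handled identically with an extra $(\lambda_q/\lambda_{q+1})^2$ to spare (here one uses \eqref{e:betas}). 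The perturbation contributions give, by \eqref{e:w_o_est}--\eqref{e:w_c_est} and $\delta_{q+2,j+1}\delta_{q+1,j}^{-1/2}\leq\delta_{q+1,j}^{1/2}$ (Lemma \ref{l:parameters}(a)), that $\|w_{q+1}(t)\|_0$, $\lambda_{q+1}^{-1}\|w_{q+1}(t)\|_1$ and $\lambda_{q+1}^{-2}\|w_{q+1}(t)\|_2$ are all $\leq\bar C\delta_{q+1,j}^{1/2}$; summing, the total is $\leq(\tfrac12M+\bar C)\delta_{q+1,j}^{1/2}$, and it is here that the geometric constant $M$ is finally fixed, say
\begin{equation}\label{e:determines_M}
M\geq 2\bar C\,,
\end{equation}
so that $\tfrac12M+\bar C\leq M$. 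Estimate \eqref{e:final_p_est} is entirely parallel: $p_{q+1}-p_q$ in \eqref{e:def_p_1} is a finite sum of the quadratic terms $|w_o|^2$, $|w_c|^2$, $\langle w_o,w_c\rangle$, $\langle v_q-v_\ell,w_{q+1}\rangle$, each bounded by the product rule from \eqref{e:w_o_est}--\eqref{e:w_c_est} and from $\|v_q-v_\ell\|_0\leq C\ell\|v_q\|_1\lesssim M\delta_{q+1,j}^{1/2}\lambda_{q+1}^{\eps_0-\omega}$ (using \eqref{e:ell_choice} and \eqref{e:CFL3}), all residual factors being $\leq 1$ since $\eps_0<\omega$; the $\|p_q\|_1,\|p_q\|_2$ terms are absorbed exactly as for $v_q$ and one takes $M^2\geq 2\bar C$ as well. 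The endpoint cases $\varsigma\in\{0,N'+1\}$ are immediate, since there $w_{q+1}\equiv 0$ and $j=0$, so the claims reduce to \eqref{e:g-velocity_est}--\eqref{e:g-pressure_est}.

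The main obstacle is organizational rather than analytic: one must keep straight that the inductive bounds on $(v_q,p_q)$ come attached to the cutoff index $j_q(\alpha_q(\varsigma))$ while the output is phrased with the ``$\min$'' index $j$, and, at each reduction, identify which of the orderings \eqref{e:asc_delta}, \eqref{e:CFL3}, \eqref{e:index_inc} (ultimately resting on \eqref{e:betas} and the conditions \eqref{e:condition}--\eqref{e:condition2}) is the one available and supplies a genuine power--of--$\lambda_{q+1}$ gain --- it is precisely that gain which lets the geometric constant $M$ close the estimates.
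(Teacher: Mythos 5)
Your proposal is correct and follows essentially the same route as the paper's own proof: expand $w_o$ and $w_c$ at a fixed time into the at most two active $\chi_{\varsigma''}$-summands, observe that the ``min'' index $j$ together with the monotonicity $\eqref{e:asc_delta}$ lets one apply Lemma~\ref{l:ugly_lemma} with $\delta_{q+1,j}$ uniformly across the active summands, control the number of summands by $2\,\#\Lambda$, and then absorb the $v_q$ (resp.\ $p_q$) contribution via the CFL gain $\eqref{e:CFL3}$ and choose $M$ large enough to close the $C^0$ estimate on the perturbation. The only cosmetic differences are bookkeeping: the paper records the constraint on $M$ as $M\geq 4(|\Lambda_e|+|\Lambda_o|)\bar C$ so that each of $w_o$, $w_c$ contributes at most $\tfrac{M}{4}\delta_{q+1,j}^{1/2}$, while you fold the count into $\bar C$ and ask for $M\geq 2\bar C$; and your extra constraint ``$M^2\geq 2\bar C$'' for the pressure is superfluous, since once $M\geq 2\bar C$ is imposed the quadratic terms in $\eqref{e:def_p_1}$ are automatically $O(M^2\delta_{q+1,j})$ with small constant, exactly as in the paper's $\frac{3M^2}{32}$-type bound.
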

\begin{proof} Let $t\in \supp (\chi_\varsigma$). We then have
\begin{align}
w_o (x, t) =& \sum_k \underbrace{\chi_{\varsigma} (s) a_{k\varsigma} (x, t) \phi_{k\varsigma} (x, t) e^{i\lambda_{q+1} k\cdot x}}_{\Sigma_{k\varsigma}}\nonumber\\
& + \sum_k \underbrace{\chi_{\varsigma'} (t) a_{k\varsigma'} (x, s) \phi_{k\varsigma'} (x, t) e^{i\lambda_{q+1} k\cdot x}}_{\Sigma_{k\varsigma'}}\label{e:few_summands}
\end{align}
where 
\begin{itemize}
\item $\varsigma' = \varsigma+1$ in case $t\in K_\varsigma$;
\item $\varsigma' = \varsigma-1$ in case $t\in K_{\varsigma-1}$;
\item the second sum is in fact absent in case $t\in H_\varsigma$.
\end{itemize}
In particular $j\leq j_q (\alpha (\varsigma')), j_q (\alpha (\varsigma))$ and, by \eqref{e:asc_delta}, we easily conclude from
Lemma \ref{l:ugly_lemma} that
\begin{align}
&\|a_{k\varsigma'} (t)\|_0 + \|L_{k\varsigma'} (t)\|_0 + \|a_{k\varsigma} (t)\|_0 + \|L_{k\varsigma} (t)\|_0 \leq \bar{C} \delta^{\sfrac{1}{2}}_{q+1,j}\\
&\|a_{k\varsigma'} (t)\|_N+\|a_{k\varsigma} (t)\|_N\leq \bar{C} \delta_{q+1,j}^{\sfrac{1}{2}} \lambda_q \ell^{1-N} \qquad \mbox{for $N\geq 1$}\\
&\|L_{k\varsigma'} (t)\|_N+\|L_{k\varsigma} (t)\|_N \leq \bar{C} \delta_{q+1,j}^{\sfrac{1}{2}}\ell^{-N}\qquad \mbox{for $N\geq 1$}\\
&\|\phi_{k\varsigma'} (t)\|_N+\|\phi_{k\varsigma} (t)\|_N \leq \bar{C}  \ell^{-N} \qquad \mbox{for $N\geq 1$}\, ,
\end{align}
where $\bar{C}$ is a constant which depends only on $N$.

Therefore, for each summand in \eqref{e:few_summands} we have
\begin{align*}
\|\Sigma(t)\|_N \leq \bar{C} \delta_{q+1, j}^{\sfrac{1}{2}} \lambda_{q+1}^N + \bar{C} \delta_{q+1, j}^{\sfrac{1}{2}} \lambda_q \ell^{1-N}
+  \bar{C} \delta_{q+1, j}^{\sfrac{1}{2}} \ell^{N} \leq\bar{C} \delta_{q+1, j}^{\sfrac{1}{2}} \lambda_{q+1}^N\, ,
\end{align*}
where we used  $\lambda_q\leq\ell^{-1}\leq \lambda_{q+1}$ and the constant $\bar{C}$ depends only on $N$.

Observe that the number of summands in \eqref{e:few_summands} is at most $|\Lambda_e|+ |\Lambda_o|$, a number depending on Lemma \ref{l:split}, which is applied with $N=2$. Therefore, imposing 
$M\geq 4 (|\Lambda_e| +|\Lambda_o|) \bar{C}$, we achieve
\begin{equation}\label{e:determines_M}
\lambda_{q+1}^{-2} \|w_o (t)\|_2 + \lambda_{q+1}^{-1} \|w_o (t)\|_1 + \|w_o (t)\|_0 \leq \frac{M}{4} \delta_{q+1, j}\, .
\end{equation}

The estimate \eqref{e:w_c_est} follows from entirely analogous arguments. Indeed
\begin{align}
&w_c (x, s)\nonumber\\ 
=& \sum_k \underbrace{\chi_{\varsigma} (s) \Bigl(\frac{i}{\lambda_{q+1}}\nabla a_{k\varsigma}-a_{k\varsigma}(D\Phi_{\varsigma}-\Id)k\Bigr)\times\frac{k\times B_k}{|k|^2} \phi_{k\varsigma} (x, s) e^{i\lambda_{q+1} k\cdot x}}_{\Sigma'_{k\varsigma}}\nonumber\\
& + \sum_k \underbrace{\chi_{\varsigma'} (s) \Bigl(\frac{i}{\lambda_{q+1}}\nabla a_{k\varsigma'}-a_{k\varsigma'}(D\Phi_{\varsigma'}-\Id)k\Bigr)\times\frac{k\times B_k}{|k|^2} \phi_{k\varsigma'} (x, s) e^{i\lambda_{q+1} k\cdot x}}_{\Sigma'_{k\varsigma'}}\nonumber\, 
\end{align}
and arguing as above we conclude (using also Lemma \ref{l:Phi_est})
\begin{align}
\|\Sigma' (t)\|_N \leq &\bar{C} \delta_{q+1,j}^{\sfrac{1}{2}}\lambda_q \lambda_{q+1}^{N-1} +
C \|a (t)\|_N \|D\Phi (t)-\Id\|_0\nonumber\\
& + C \|a(t)\|_0 \|D\Phi (t)- \Id\|_N + C\|a(t)\|_0 \|D\Phi - \Id\|_0 \|\phi\|_N\nonumber\\
\leq &  \bar{C} \delta_{q+1,j}^{\sfrac{1}{2}}\lambda_q \lambda_{q+1}^{N-1} + \bar{C} M \delta_{q+1,j}^{\sfrac{1}{2}}  \lambda_{q+1}^{-\omega} \left(\lambda_q \ell^{1-N} + \ell^{-N} + \lambda_{q+1}^N\right)\nonumber\\
\leq& \bar{C}\delta_{q+1,j}^{\sfrac{1}{2}} \lambda_{q+1}^N \left(\frac{\lambda_q}{\lambda_{q+1}} + M  \lambda_{q+1}^{-\omega}\right)\nonumber\\ 
\stackrel{\eqref{e:CFL3}}{\leq} &\bar{C}  \delta_{q+2,j+1}\delta_{q+1,j}^{-\sfrac{1}{2}} \lambda_{q+1}^{N-\eps_0} \, ,
\end{align}
where in the last inequality we have assumed $\lambda_0^{\eps_0}>M$. This proves \eqref{e:w_c_est}.

Thus, assuming $\lambda_0$ is chosen sufficiently large, we obviously have
\[
\lambda_{q+1}^{-2} \|w_c (t)\|_2 + \lambda_{q+1}^{-1} \|w_c (t)\|_1 + \|w_c (t)\|_0 \leq \frac{M}{4} \delta_{q+1,j}^{\sfrac{1}{2}}\, .
\]
Next observe that $v_{q+1} - v_q = w_{q+1} = w_c + w_o$ and thus using \eqref{e:local-estimates}
\begin{align*}
&\lambda_{q+1}^{-2} \|v_{q+1} (t)\|_2+\lambda_{q+1}^{-1} \|v_{q+1} (t)\|_1 + \|v_{q+1}- v_q (t)\|_0\\
\leq &M \frac{\lambda_q}{\lambda_{q+1}} \delta^{\sfrac{1}{2}}_{q, (j-1)_+} + \frac{M}{2} \delta_{q+1, j}^{\sfrac{1}{2}}\, .
\end{align*}
On the other hand, by \eqref{e:CFL3},
\[
\frac{\lambda_q}{\lambda_{q+1}}\delta_{q, (j-1)_+}^{\sfrac{1}{2}} \leq \delta_{q+1, j}^{\sfrac{1}{2}} \lambda_{q+1}^{-\omega}
\]
and so, having $\lambda_0$ sufficiently large ensures \eqref{e:final_v_est}. 

Next, using \eqref{e:def_p_1}, we easily achieve
\begin{align}
\|p_{q+1}(t) - p_q (t)\|_0\leq & \frac{1}{2} \|w_o (t)\|_0^2 + \|w_c (t)\| \left(\frac{1}{3}\|w_c (t)\|
+ \frac{2}{3}\|w_o (t)\|_0\right)\nonumber\\
& + \frac{2}{3} \|w_{q+1} (t)\|_0 \|v-v_\ell (t)\|\nonumber\\
\leq &\frac{M^2 \delta_{q+1,j}}{32} + \frac{M^2 \delta_{q+1,j}}{16} + \frac{2 M \delta_{q+1,j}^{\sfrac{1}{2}}}{3} \bar{C} M \delta_{q,(j-1)_+}^{\sfrac{1}{2}} \lambda_q \ell \nonumber\\
\leq &\frac{3M^2}{32} \delta_{q+1,j} + \bar{C} M^2 \delta_{q+1, j}^{\sfrac{1}{2}}\delta_{q, (j-1)_+}^{\sfrac{1}{2}} \lambda_{q+1}^{-\omega}\, ,\label{e:p_est}
\end{align}
where the constant $\bar{C}$ is universal. Thus, again assuming $\lambda_0$ is large enough compared to $M$, we conclude
\[
\|p_{q+1} (t) - p_q(t)\|_0 \leq \frac{M^2}{8} \delta_{q+1,j}\, .
\]
The analogous estimates for $\lambda_{q+1}^{-1} \|p_{q+1} (t)\|_1 + \lambda_{q+1}^{-2} \|p_{q+1} (t)\|_2$ are left to the reader.
\end{proof}

\subsection{Estimates on $D_t w_o$ and $D_t w_c$}
Finally we list material derivative estimates of the principal perturbation $w_o$ and the corrector $w_c$.

\begin{lemma}\label{l:ugly_lemma4}
Assume $t\in \supp (\chi_\varsigma)$ and set 
\[
j = \left\{
\begin{array}{ll}
j_q (\alpha_q (\varsigma)) & \mbox{if $t\in H_\varsigma$}\\ \\
\min \{j_q (\alpha_q (\varsigma)), j_q (\alpha_q (\varsigma +1))\}\qquad &
\mbox{if $t\in K_\varsigma$}\\ \\
\min \{j_q (\alpha_q (\varsigma)), j_q (\alpha_q (\varsigma -1))\}\qquad &
\mbox{if $t\in K_{\varsigma -1}$\, .}
\end{array}\right.
\]
If $t\in H_\varsigma$ then
\begin{equation}\label{e:D_t_w's_good}
\|D_t w_o(t)\|_N + \|D_t w_c(t)\|_N \leq C \delta_{q+1, j}^{\sfrac{1}{2}} \delta_{q, j-1}^{\sfrac{1}{2}} \lambda_q \lambda_{q+1}^N\, .
\end{equation}
If instead $t$ belongs to $K_{\varsigma-1} \cup K_{\varsigma}$, then
\begin{equation}\label{e:D_t_w's_bad}
\|D_t w_o(t)\|_N + \|D_t w_c(t)\|_N \leq C \frac{\mu_{q+1, j}}{\eta_{q+1,j}}\, \delta_{q+1, i}^{\sfrac{1}{2}} \lambda_{q+1}^N\, .
\end{equation}
In both cases the constant $C$ depends only on $N$ and $M$.
\end{lemma}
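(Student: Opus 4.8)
The plan rests on one structural observation: the material derivative $D_t=\partial_t+v_\ell\cdot\nabla$ annihilates the phase $e^{i\lambda_{q+1}k\cdot\Phi_\varsigma}$, since $\Phi_\varsigma$ solves the transport equation \eqref{e:Phi-transport} and hence $D_t(k\cdot\Phi_\varsigma)=0$. As the cut-offs $\chi_\varsigma$ depend only on $t$, applying $D_t$ to $w_o$ (written via \eqref{e:def_wo}) and to $w_c$ (written via \eqref{e:corrector}) produces only terms $(\partial_t\chi_\varsigma)\,(\text{amplitude})\,e^{i\lambda_{q+1}k\cdot\Phi_\varsigma}$ and $\chi_\varsigma\,(D_t\,\text{amplitude})\,e^{i\lambda_{q+1}k\cdot\Phi_\varsigma}$, where the amplitude is $a_{k\varsigma}B_k$ for $w_o$ and $\tilde L_{k\varsigma}:=L_{k\varsigma}-a_{k\varsigma}B_k$ for $w_c$ (equivalently one can start from $w_{q+1}=\sum_{(k,\varsigma)}\chi_\varsigma L_{k\varsigma}e^{i\lambda_{q+1}k\cdot\Phi_\varsigma}$, i.e.\ \eqref{e:w_Lform}, and split $L_{k\varsigma}$ via \eqref{e:L_kl}). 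So the first step is to record these two identities. Then each summand is estimated in $\|\cdot\|_N$ by writing $e^{i\lambda_{q+1}k\cdot\Phi_\varsigma}=\phi_{k\varsigma}e^{i\lambda_{q+1}k\cdot x}$, using the product rule \eqref{e:Holderproduct} and chain rule (Proposition \ref{p:chain}), inserting $\|\phi_{k\varsigma}\|_{N'}\leq\bar C\ell^{-N'}$ from Lemma \ref{l:ugly_lemma}, and exploiting $\lambda_q\leq\ell^{-1}\leq\lambda_{q+1}$, so that every spatial derivative costs at most $\lambda_{q+1}$. Since only boundedly many pairs $(k,\varsigma)$ are active at a fixed time (Lemma \ref{l:split} with $N=2$), it suffices to bound one summand.

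Next I would treat $t\in H_\varsigma$, which is the easy case: there $\chi_\varsigma\equiv 1$ near $t$, so $\partial_t\chi_\varsigma=0$ and no neighbouring cut-off contributes, leaving only $\chi_\varsigma\,(D_t\,\text{amplitude})\,e^{i\lambda_{q+1}k\cdot\Phi_\varsigma}$. Invoking \eqref{e:D_ta} for $\|D_t a_{k\varsigma}\|_N$ and \eqref{e:D_ta}--\eqref{e:D_tL} for $\|D_t\tilde L_{k\varsigma}\|_N=\|D_tL_{k\varsigma}-(D_t a_{k\varsigma})B_k\|_N$ — both of size $\delta_{q+1,j}^{\sfrac12}\delta_{q,j-1}^{\sfrac12}\lambda_q\ell^{-N}$ with $j=j_q(\alpha_q(\varsigma))$ — and carrying out the product/chain-rule count (which supplies the extra $\lambda_{q+1}^N$), one obtains \eqref{e:D_t_w's_good} for both $D_tw_o$ and $D_tw_c$.

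The case $t\in K_\varsigma$ (with $K_{\varsigma-1}$ symmetric, $\varsigma-1$ replacing $\varsigma+1$) is where the gain is lost. Here exactly $\chi_\varsigma$ and $\chi_{\varsigma+1}$ are active; putting $j_0=j_q(\alpha_q(\varsigma))$, $j_1=j_q(\alpha_q(\varsigma+1))$ and $j=\min\{j_0,j_1\}$, the alternatives (A1)--(A2) of Definition \ref{d:ov_regions} give $|K_\varsigma|=\eta_{q+1,j}\mu_{q+1,j}^{-1}$, hence $\|\partial_t\chi_\varsigma\|_{C^0},\|\partial_t\chi_{\varsigma+1}\|_{C^0}\leq C\,\mu_{q+1,j}\eta_{q+1,j}^{-1}$. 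The leading term is $(\partial_t\chi)$ times the $0$-th order part $a_{k\varsigma}B_k$ of the $w_o$-amplitude: with $\|a_{k\varsigma}\|_0,\|a_{k,\varsigma+1}\|_0\leq\bar C\delta_{q+1,j}^{\sfrac12}$ (from \eqref{e:a+L_C0} and the monotonicity \eqref{e:asc_delta}) this gives precisely $C\,\tfrac{\mu_{q+1,j}}{\eta_{q+1,j}}\,\delta_{q+1,j}^{\sfrac12}\,\lambda_{q+1}^N$, the right-hand side of \eqref{e:D_t_w's_bad}. The analogous $(\partial_t\chi)\tilde L_{k\varsigma}$ contributions to $D_tw_c$ are even smaller, by a factor $\sim M\lambda_{q+1}^{-\omega}$, since $\|\tilde L_{k\varsigma}\|_{N'}\lesssim M\delta_{q+1,j}^{\sfrac12}\lambda_{q+1}^{-\omega}\ell^{-N'}$ (Lemma \ref{l:ugly_lemma} together with Lemma \ref{l:Phi_est}). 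Finally the surviving $\chi\,(D_t\,\text{amplitude})$ terms obey the $H_\varsigma$ bound with $j_0$ (resp.\ $j_1$) in place of $j$, and these are absorbed into \eqref{e:D_t_w's_bad}: by \eqref{e:delta_lambda/mu}, the monotonicity \eqref{e:asc_mu} of $\mu_{q+1,\cdot}$ and \eqref{e:trivial_eta} one has $\delta_{q,j_0-1}^{\sfrac12}\lambda_q\leq\mu_{q+1,j_0}\leq\mu_{q+1,j}\leq\mu_{q+1,j}\eta_{q+1,j}^{-1}$, while $\delta_{q+1,j_0}^{\sfrac12}\leq\delta_{q+1,j}^{\sfrac12}$ by \eqref{e:asc_delta} (and likewise for $j_1$). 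Summing the boundedly many summands yields \eqref{e:D_t_w's_bad}.

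I expect the analytic part to be entirely routine once Lemmas \ref{l:ugly_lemma} and \ref{l:ugly_lemma2} are in hand; the only delicate points are bookkeeping — correctly identifying which cut-offs are active on $H_\varsigma$ versus $K_\varsigma$, matching the index $j$ across the two overlapping cut-offs in the $K_\varsigma$ case (this is exactly why $j$ is defined as a minimum there), and checking via the parameter orderings of Section \ref{s:parameters} that on the overlapping regions the ``good'' contributions are dominated by the $\mu_{q+1,j}/\eta_{q+1,j}$ term; the $j=0$ endpoint, where $\delta_{q,-1}$ enters the amplitude estimates, must also be tracked.
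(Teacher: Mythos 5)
Your proof is correct and follows essentially the same route as the paper: use that $D_t$ annihilates the phase $e^{i\lambda_{q+1}k\cdot\Phi_\varsigma}$ so only $\chi'$ and $D_t(\text{amplitude})$ terms survive, bound $\|D_t a_{k\varsigma}\|_N$, $\|D_t L_{k\varsigma}\|_N$ via Lemma \ref{l:ugly_lemma2}, and on the overlapping region bound $|\chi'|\leq C\mu_{q+1,j}/\eta_{q+1,j}$ and absorb the remaining $\chi(D_t\text{amplitude})$ contribution using the parameter ordering $\delta_{q,j-1}^{1/2}\lambda_q\leq\mu_{q+1,j}/\eta_{q+1,j}$ (from \eqref{e:delta_lambda/mu} and \eqref{e:trivial_eta}). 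The only cosmetic difference is that the paper first upgrades both sets of amplitude bounds to the index $j=\min\{j_0,j_1\}$ via \eqref{e:asc_delta} and then does one comparison, whereas you keep $j_0,j_1$ separate and dominate each; and for $w_c$ the paper estimates $D_t w_{q+1}$ directly (via $L_{k\varsigma}$) and subtracts, which you flag as the equivalent alternative.
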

\begin{proof} First assume $t$ belongs to the non-overlapping region.  
 We observe that, under such assumption, we have $\chi_\varsigma (t) =1$ and $\chi_{\varsigma'} (t) =0$ for any $\varsigma' \neq \varsigma$. Therefore
\[
w_o (x, t) = \sum_k w_{k\varsigma} (x,t) = \sum_k a_{k\varsigma} (x,t) e^{i\lambda_{q+1} k\cdot \Phi_\varsigma (x,t)}\, 
\]
and we have
\[
D_t w_o (x,t) = \sum_k D_t a_{k\varsigma} (x,t) e^{i\lambda_{q+1} k\cdot \Phi_\varsigma (x,t)} = \sum_k D_t a_{k\varsigma} (x,t) \phi_{k\varsigma} e^{i\lambda_{q+1} k\cdot x}\, .
\]
We thus can estimate
\begin{align*}
\|D_t w_o (t)\|_N \leq &\sum_k \|a_{k\varsigma} (t)\|_0 \lambda_{q+1}^N\\
& +C\sum_k \left(\|D_t a_{k\varsigma} (t)\|_N  + \|D_t a_{k\varsigma} (t)\|_0 \left( \lambda_{q+1}^N + \|\phi_{k\varsigma}(t)\|_N\right)\right)\\
\leq &C \sum_k \delta_{q+1, j}^{\sfrac{1}{2}} \delta_{q, j-1}^{\sfrac{1}{2}}\lambda_q \left(\lambda_{q+1}^N + \ell^{-N} + M \lambda_{q+1}^{-\omega} \ell^{1-N}\right)\\ 
\leq & C\delta_{q+1, j}^{\sfrac{1}{2}} \delta_{q, j-1}^{\sfrac{1}{2}} \lambda_q \lambda_{q+1}^N\, ,
\end{align*}
where we have used the estimates \eqref{e:phi_N} and \eqref{e:D_ta}.
Next, consider that
\[
w_{q+1} (x,t) = \sum_k L_{k\varsigma} (x,t) \phi_{k\varsigma} (x,t) e^{i\lambda_{q+1} k\cdot x}\, ,
\]
and thus 
\[
D_t w_{q+1} = \sum_k D_t L_{k\varsigma} (x,t) \phi_{k\varsigma} (x,t) e^{i\lambda_{q+1} k\cdot x}\, .
\]
We can argue as above and use this time \eqref{e:D_tL} for $D_t L_{k\varsigma}$ (which amounts to the same estimate used for $D_t a_{k\varsigma}$) to conclude
\[
\|w_{q+1} (t)\|_N \leq C \delta_{q+1,j}^{\sfrac{1}{2}} \delta_{q,j-1}^{\sfrac{1}{2}} \lambda_q \lambda_{q+1}^N\, .
\] 
Since $w_c = w_{q+1} - w_o$, \eqref{e:D_t_w's_good} follows.

\medskip

Now assume $t$ belongs to the overlapping region. In this case there are two functions in the partition of unity which are not vanishing,
namely the functions $\chi_\varsigma$ itself and another one, $\chi_{\varsigma'}$ where either $\varsigma' = \varsigma-1$ or $\varsigma' = \varsigma+1$. More precisely
\begin{align*}
w_o (x, t) =& \sum_k \chi_{\varsigma} (t) a_{k\varsigma} (x, t) \phi_{k\varsigma} (x, t) e^{i\lambda_{q+1} k\cdot x}\nonumber\\
& + \sum_k \chi_{\varsigma'} (t) a_{k\varsigma'} (x, s) \phi_{k\varsigma'} (x, t) e^{i\lambda_{q+1} k\cdot x}\, .
\end{align*}
Moreover, $\chi'_\varsigma$ and $\chi'_{\varsigma'}$ do not vanish. So, this time we have
\begin{align*}
D_t w_o (x,t) =& \sum_k \chi_{\varsigma}' (t) a_{k\varsigma} (x, t) \phi_{k\varsigma} (x, t) e^{i\lambda_{q+1} k\cdot x}\nonumber\\
& + \sum_k \chi_{\varsigma'}' (t) a_{k\varsigma'} (x, s) \phi_{k\varsigma'} (x, t) e^{i\lambda_{q+1} k\cdot x}\nonumber\\
&+\sum_k \chi_{\varsigma} (t) D_t a_{k\varsigma} (x, t) \phi_{k\varsigma} (x, t) e^{i\lambda_{q+1} k\cdot x}\nonumber\\
& + \sum_k \chi_{\varsigma'} (t) D_t a_{k\varsigma'} (x, s) \phi_{k\varsigma'} (x, t) e^{i\lambda_{q+1} k\cdot x}\, .
\end{align*}
Now, arguing as in Lemma \ref{l:ugly_lemma4} we know that $j= \min \{j_q (\alpha_q (\varsigma)), j_q (\alpha_q (\varsigma'))\}$. We can therefore conclude
\begin{align*}
\|a_{k\varsigma} (t)\|_0 + \|a_{k\varsigma'} (t)\|_0 &\leq C \delta_{q+1,j}^{\sfrac{1}{2}}\\
\|a_{k\varsigma} (t)\|_N + \|a_{k\varsigma'} (t)\|_N &\leq C \delta_{q+1,j}^{\sfrac{1}{2}} \lambda_q \ell^{1-N}\qquad N\geq 1\\
\|D_t a_{k\varsigma} (t)\|_N + \|D_t a_{k\varsigma'} (t)\|_N &\leq C \delta_{q+1,j}^{\sfrac{1}{2}} \delta_{q,j-1}^{\sfrac{1}{2}} \lambda_q \ell^{-N}\, .
\end{align*}
Thus, applying the same arguments as in the case of $t\in H_\varsigma$, we conclude
\[
\|D_t w_o (t)\|_N \leq C \delta_{q+1,j}^{\sfrac{1}{2}} \lambda_{q+1}^{N} \left( \delta_{q, j-1}^{\sfrac{1}{2}} \lambda_q + |\chi'_\varsigma (t)| + |\chi'_{\varsigma'} (t)|\right)\, .
\]
Next observe that
\begin{itemize}
\item $|\chi'_\varsigma (t)| = |\chi'_{\varsigma'} (t)| \leq C |K_\varsigma|^{-1}$ when $t\in K_\varsigma$;
\item $|\chi'_\varsigma (t)| = |\chi'_{\varsigma'} (t)| \leq C |K_{\varsigma-1}|^{-1}$ when $t\in K_{\varsigma'}$.
\end{itemize}
However, according to our choice,
\[
|K_\varsigma| = \frac{\eta_{q+1,i}}{\mu_{q+1,i}} 
\]
where $i = \min \{j_q (\alpha_q (\varsigma)), j_q (\alpha_q (\varsigma+1))\} = j$ and, similarly,
\[
|K_{\varsigma-1}| = \frac{\eta_{q+1,i}}{\mu_{q+1,i}} 
\]
where $i = \min \{j_q (\alpha_q (\varsigma-1)), j_q (\alpha_q (\varsigma))\} = j$. 

Thus,
\[
\|D_t w_o (t)\|_N \leq C \delta_{q+1,j}^{\sfrac{1}{2}} \lambda_{q+1}^{N} \left( \delta_{q, j-1}^{\sfrac{1}{2}} \lambda_q
+ \frac{\mu_{q+1,j}}{\eta_{q+1,j}}\right)\, .
\]
Observe however that by \eqref{e:CFL3} and \eqref{e:delta_lambda/mu} we have $\delta_{q,j-1}^{\sfrac{1}{2}} \lambda_q  \leq \frac{\mu_{q+1,j}}{\eta_{q+1,j}}$.

We thus conclude
\[
\|D_t w_o (t)\|_N \leq C \delta_{q+1,j}^{\sfrac{1}{2}}\frac{\mu_{q+1,j}}{\eta_{q+1,j}} \lambda_{q+1}^{-N}\, .
\]

We can use the same argument on
\begin{align*}
D_t w_{q+1} (x,t) =& \sum_k \chi_{\varsigma}' (t) L_{k\varsigma} (x, t) \phi_{k\varsigma} (x, t) e^{i\lambda_{q+1} k\cdot x}\nonumber\\
& + \sum_k \chi_{\varsigma'}' (t) L_{k\varsigma'} (x, s) \phi_{k\varsigma'} (x, t) e^{i\lambda_{q+1} k\cdot x}\nonumber\\
&+\sum_k \chi_{\varsigma} (t) D_t L_{k\varsigma} (x, t) \phi_{k\varsigma} (x, t) e^{i\lambda_{q+1} k\cdot x}\nonumber\\
& + \sum_k \chi_{\varsigma'} (t) D_t L_{k\varsigma'} (x, s) \phi_{k\varsigma'} (x, t) e^{i\lambda_{q+1} k\cdot x}\, .
\end{align*}
Using the estimates 
\begin{align*}
\|L_{k\varsigma} (t)\|_N + \|L_{k\varsigma'} (t)\|_N &\leq C \delta_{q+1,j}^{\sfrac{1}{2}}\ell^{-N}\\
\|D_t L_{k\varsigma} (t)\|_N + \|D_t L_{k\varsigma'} (t)\|_N &\leq C \delta_{q+1,j}^{\sfrac{1}{2}} \delta_{q,j-1}^{\sfrac{1}{2}} \ell^{-N}\, 
\end{align*}
we achieve the very same estimate
\[
\|D_t w_{q+1} (t)\|_N \leq C \delta_{q+1,j}^{\sfrac{1}{2}}\frac{\mu_{q+1,j}}{\eta_{q+1,j}} \lambda_{q+1}^{N}\, .
\]
Since $w_c = w_{q+1} - w_o$, this concludes the proof of  \eqref{e:D_t_w's_bad}.
\end{proof}

\section{Proof of Proposition \ref{p:inductive_step}: Reynolds stress estimates}

In order to complete the proof of Proposition \ref{p:inductive_step} it remains to estimate the new Reynolds stress $\mathring R_{q+1}$.

\begin{proposition}\label{p:R}
Assume $t\in \supp (\chi_\varsigma)$ and set
\[
i = \left\{
\begin{array}{ll}
j_q (\alpha_q (\varsigma)) & \mbox{if $t\in H_\varsigma$}\\ \\
\min \{j_q (\alpha_q (\varsigma)), j_q (\alpha_q (\varsigma +1))\}\qquad &
\mbox{if $t\in K_\varsigma$}\\ \\
\min \{j_q (\alpha_q (\varsigma)), j_q (\alpha_q (\varsigma -1))\}\qquad &
\mbox{if $t\in K_{\varsigma -1}$\, .}
\end{array}\right.
\]
If $t\in K_{\varsigma -1} \cup K_\varsigma$, namely $t\in V^{(q+1)}_0$, then
\begin{align}
&\|\mathring{R}_{q+1}(t)\|_0+\frac{1}{\lambda_{q+1}}\|\mathring{R}_{q+1} (t)\|_1 + \frac{1}{\lambda_{q+1}^2}
\|\mathring{R}_{q+1} (t)\|_2
\leq  C \delta_{q+2,0}\lambda_{q+1}^{-\eps_0} \label{e:allR_bad}\\
&\|D_t \mathring{R}_{q+1}(t)\|_0\leq C  \delta_{q+2,0} \delta_{q+1, -1}^{\sfrac{1}{2}}\ell^{-1}\, ,\label{e:Dt_R_all_bad}
\end{align}
where the constant $C$ depends only on $M$. In particular, if $\lambda_0$ is sufficiently large depending only on $\eps_0$ and $M$, then
\begin{align}
&\lambda_{q+1}^{-2} \|\mathring{R}_{q+1} (t)\|_2 + \lambda_{q+1}^{-1} \|\mathring{R}_{q+1} (t)\|_1 + \|\mathring{R}_{q+1} (t)\|_0\leq \delta_{q+2,0}\,,
\label{e:g-R_est-2}\\
&\|(\partial_t +v_{q+1} \cdot \nabla) \mathring{R}_q (t)\|_0 \leq \delta_{q+2,0} \delta_{q+1, -1}^{\sfrac{1}{2}}
\lambda_{q+1}\label{e:g-D_tR_est-2}
\end{align}
(which, given our definition of $\delta_{r,l}$ correspond to \eqref{e:g-R_est}-\eqref{e:g-D_tR_est} at step $q+1$).

If $t\in H_{\varsigma}$, namely $t\in V^{q+1}_{i+1}$, then we have
\begin{align}
&\|\mathring{R}_{q+1}(t)\|_0+\frac{1}{\lambda_{q+1}}\|\mathring{R}_{q+1}(t)\|_1 + \frac{1}{\lambda_{q+1}^2} \|\mathring{R}_{q+1} (t)\|_2 \leq C \delta_{q+2,i+1} \lambda_{q+1}^{- \eps_0}\label{e:allR_good}\\
&\|D_t \mathring{R}_{q+1}(t)\|_0\leq  C \delta_{q+2,i+1} \delta_{q+1,i}^{\sfrac{1}{2}}\ell^{-1}\, ,\label{e:Dt_R_all_good}
\end{align}
where the constant $C$ depends only on $M$. In particular, if $\lambda_0$ is sufficiently large depending only on $\eps_0$ and $M$,  then
\begin{align}
&\lambda_{q+1}^{-2} \|\mathring{R}_q+1 (t)\|_2 \lambda_{q+1}^{-1} \|\mathring{R}_{q+1} (t)\|_1 + \|\mathring{R}_{q+1} (t)\|_0 \leq \delta_{q+2,i+1}\,,
\label{e:R_est-2}\\
&\|(\partial_t +v_{q+1}\cdot \nabla) \mathring{R}_{q+1} (t)\|_0 \leq \delta_{q+2,i+1} \delta_{q+1, i}^{\sfrac{1}{2}}
\lambda_{q+1}\label{e:D_tR_est-2}
\end{align}
(which, given our definition of  the map $j_{q+1}$, correspond to \eqref{e:R_est}-\eqref{e:D_tR_est} for the step $q+1$).
\end{proposition}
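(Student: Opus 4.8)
The strategy is the standard one for this type of convex-integration scheme: decompose $\mathring R_{q+1}$ according to \eqref{e:R_decomposition} and estimate each of the five pieces $R^0,\dots,R^4$ separately, both in $C^0$ (together with the rescaled $C^1$ and $C^2$ norms) and in the advective derivative $D_t=\partial_t+v_\ell\cdot\nabla$. The essential point is that the time-localized information already assembled — Lemmas \ref{l:ugly_lemma}, \ref{l:ugly_lemma2}, \ref{l:ugly_lemma3} and \ref{l:ugly_lemma4}, together with the estimates on $\mathring R_\varsigma$ from Lemma \ref{l:regularized_R_q} — feeds in with the index $j$ (resp.\ $i$) dictated by whether $t$ lies in a non-overlapping region $H_\varsigma$ or an overlapping region $K_{\varsigma-1}\cup K_\varsigma$. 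In the overlapping case one only has the weaker bound \eqref{e:D_t_w's_bad} for $D_tw_o,D_tw_c$, featuring the large factor $\mu_{q+1,j}/\eta_{q+1,j}$, and the whole point of the choice \eqref{e:eta} of $\eta_{q+1,j}$ is that this loss is still absorbed into $\lambda_{q+1}^{-\eps_0}\delta_{q+2,0}$; in the non-overlapping case the gain is genuine and lands in $\delta_{q+2,i+1}\lambda_{q+1}^{-\eps_0}$.

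\emph{Term by term.} For $R^4=\mathring R_q-\sum_\varsigma\chi_\varsigma^2\mathring R_\varsigma$ one uses $\sum\chi_\varsigma^2=1$ to write it as $\sum_\varsigma\chi_\varsigma^2(\mathring R_q-\mathring R_\varsigma)$ and invokes \eqref{e:R-R_moll} (plus \eqref{e:R_Dt}); this is where the special mollification-along-the-flow of case (a) in Definition \ref{d:regularized_Reynolds} pays off. For $R^1=\mathcal R\div(w_o\otimes w_o-\sum_\varsigma\chi_\varsigma^2 R_\varsigma-\tfrac{|w_o|^2}{2}\Id)$ one applies the key identity \eqref{e:doublesum} of Lemma \ref{l:doublesum} to reduce to the "off-diagonal" sum over $k\ne-k'$, then uses the stationary-phase/$\mathcal R\div$ estimate (the analogue of the commutator estimate in \cite{BDS}, applied with the phase $\lambda_{q+1}(k\cdot\Phi_\varsigma+k'\cdot\Phi_{\varsigma'})$ which has gradient bounded below by $c\lambda_{q+1}|k+k'|$ thanks to \eqref{e:Phi2}); this produces a gain of $\ell/\lambda_{q+1}$-type which, combined with \eqref{e:w_o_est} and \eqref{e:a_N}, gives the desired bound with the correct $\delta$-index. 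For $R^0=\mathcal R(\partial_t w_{q+1}+v_\ell\cdot\nabla w_{q+1}+w_{q+1}\cdot\nabla v_\ell)=\mathcal R(D_tw_{q+1})+\mathcal R(w_{q+1}\cdot\nabla v_\ell)$ one plugs in the $D_tw_o,D_tw_c$ estimates from Lemma \ref{l:ugly_lemma4} (this is precisely the term where the dichotomy $H_\varsigma$ vs.\ $K_\varsigma$ matters most) together with \eqref{e:D_tv_ell_0}--\eqref{e:D_tv_ell_N} and \eqref{e:w_o_est}--\eqref{e:w_c_est}, again using $\|\mathcal R f\|_0\lesssim$ a low-frequency bound via the oscillatory phase. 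The algebraic terms $R^2$ (quadratic in correctors) and $R^3$ (interaction of $w_{q+1}$ with $v_q-v_\ell$) are estimated directly, without $\mathcal R\div$: $R^2$ is small because $w_c$ carries the extra factor $\delta_{q+2,j+1}\delta_{q+1,j}^{-1/2}\lambda_{q+1}^{-\eps_0}$ from \eqref{e:w_c_est}, and $R^3$ is small because $\|v_q-v_\ell\|_0\lesssim\ell\|v_q\|_1\lesssim\delta_{q,(j-1)_+}^{1/2}\lambda_q\ell$ by \eqref{e:v_ell_est_0} and Proposition \ref{p:CFL}, and $\lambda_q\ell=\lambda_{q+1}^{\eps_0-1}\lambda_q\ll\lambda_{q+1}^{-\omega}$ by \eqref{e:CFL2}.

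\emph{Assembling and the $D_t$ estimates.} Summing the five contributions gives \eqref{e:allR_bad} and \eqref{e:allR_good} for $\|\mathring R_{q+1}\|_0$ and, after differentiating and re-running the same scheme (noting $\|\mathcal R f\|_N\lesssim\|\mathcal R f\|_0\lambda_{q+1}^N$ plus genuine $\lambda_{q+1}^N$-losses from the phases), the rescaled $C^1,C^2$ parts; the factor $\lambda_{q+1}^{-\eps_0}$ then makes \eqref{e:g-R_est-2} and \eqref{e:R_est-2} follow once $\lambda_0$ is large. For the advective-derivative bounds \eqref{e:Dt_R_all_bad} and \eqref{e:Dt_R_all_good} one writes $(\partial_t+v_{q+1}\cdot\nabla)=D_t+(w_{q+1}\cdot\nabla)$, so the new error is $D_t\mathring R_{q+1}+w_{q+1}\cdot\nabla\mathring R_{q+1}$; the first summand is handled by commuting $D_t$ through each of $R^0,\dots,R^4$ — using that $D_t$ commutes with $\mathcal R\div$ only up to lower-order terms involving $Dv_\ell$, and using $D_t^2a_{k\varsigma},D_t^2L_{k\varsigma}$ from Lemma \ref{l:ugly_lemma2} and $D_t^2\mathring R_\varsigma$ from \eqref{e:R_Dt2} — while the second is controlled by \eqref{e:final_v_est} and the $C^1$ bound on $\mathring R_{q+1}$. \textbf{The main obstacle} I expect is the bookkeeping of indices in the overlapping regions: one must verify in every term that the "bad" factor $\mu_{q+1,j}/\eta_{q+1,j}=\delta_{q+1,-1}^{1/2}\lambda_{q+1}$ (cf.\ \eqref{e:more-scaling2}), the oscillation losses $\lambda_{q+1}^N$, and the gains $\ell/\lambda_{q+1}$ or $\lambda_q\ell$ combine to land \emph{inside} $\delta_{q+2,0}\lambda_{q+1}^{-\eps_0}$ rather than merely $\delta_{q+2,0}$ — this is exactly where inequalities \eqref{e:condition}, \eqref{e:condition2}, \eqref{e:CFL2}, \eqref{e:index_inc} and the definition \eqref{e:eta} of $\eta$ are forced, and where using the sharper second-order $D_t^2$-estimates (rather than the cruder ones of \cite{BDS}) is indispensable.
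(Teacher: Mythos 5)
Your proposal follows essentially the same route as the paper: the $R^0,\dots,R^4$ decomposition from \eqref{e:R_decomposition}, Lemma~\ref{l:doublesum} plus the cancellation from Lemma~\ref{l:BkBk'} for $R^1$, the stationary-phase gain from Proposition~\ref{p:stat_phase}, the commutator identity $D_t\mathcal R=[v_\ell,\mathcal R]\nabla+\mathcal R D_t$ with Proposition~\ref{p:commutator} for the advective derivatives, \eqref{e:R-R_moll} for $R^4$, and the crucial $H_\varsigma$ vs.\ $K_{\varsigma-1}\cup K_\varsigma$ dichotomy governing whether the index lands on $\delta_{q+2,i+1}$ or $\delta_{q+2,0}$, with the overlapping loss absorbed by \eqref{e:ov_final}.

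One imprecision worth flagging: for $R^0$ you propose ``plugging in the $D_tw_o,D_tw_c$ estimates from Lemma~\ref{l:ugly_lemma4},'' but those give $\|D_tw\|_N\lesssim A\lambda_{q+1}^N$, which already carries the full $\lambda_{q+1}^N$ frequency loss and so cannot be fed into Proposition~\ref{p:stat_phase} to gain $\lambda_{q+1}^{-1}$. What the paper actually does is peel off the explicit amplitude $\Omega_{k\varsigma}=(\chi_\varsigma' L_{k\varsigma}+\chi_\varsigma D_tL_{k\varsigma}+\chi_\varsigma(L_{k\varsigma}\cdot\nabla)v_\ell)\phi_{k\varsigma}$ and estimate it in $C^N$ with the slower growth rate $\ell^{-N}$ using Lemmas~\ref{l:ugly_lemma} and \ref{l:ugly_lemma2} (not Lemma~\ref{l:ugly_lemma4}, which is used only for $D_tR^2$); only then does stationary phase deliver the $\lambda_{q+1}^{-1}$. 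Your closing sentence shows you know the oscillatory structure must be exploited, so this is a matter of bookkeeping rather than a missing idea, but as written the cited lemma is the wrong tool for this term. You also implicitly rely on the extra ordering proved in Lemma~\ref{l:parameters3} (the inequality $2\delta_{q+2,0}\lambda_{q+1}^{-2\eps_0}\geq\delta_{q+1,i}^{\sfrac12}\mu_{q+1,i}/(\lambda_{q+1}\eta_{q+1,i})$) to close the overlapping case; this is where your ``must verify in every term'' warning becomes a concrete extra lemma rather than a consequence of the orderings already recorded in Lemmas~\ref{l:parameters} and \ref{l:parameters2}.
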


\subsection{Preliminaries}
The proof will follow closely the arguments given in \cite{BDS} to prove Proposition 5.1 therein. A first important remark is that the estimates of Proposition \ref{p:CFL}, Lemma \ref{l:X_est}, Lemma \ref{l:Phi_est}, Lemma \ref{l:regularized_R_q}, Lemma \ref{l:ugly_lemma} and Lemma \ref{l:ugly_lemma2} can all be used for the analogous various quantities appearing
in our computations below with $i$ in place of $j$. The reason is that $i\leq j$ and thus the corresponding right hand sides can only become larger when we replace $j$ with $i$. The estimates of Lemma \ref{l:ugly_lemma3} and Lemma \ref{l:ugly_lemma4} can also be applied, this time because the index $j$ appearing in them {\em equals} the index $i$ defined above.

\medskip

First we show the estimates \eqref{e:g-D_tR_est-2} and \eqref{e:D_tR_est-2} follow as a consequence of \eqref{e:Dt_R_all_bad} and \eqref{e:Dt_R_all_good}. Note the decomposition
\[
\partial_t + v_{q+1} \cdot \nabla = D_t + w_{q+1}\cdot \nabla + (v_q - v_q*\psi_\ell) \cdot\nabla\, .
\] 
We thus estimate
\begin{align}
&\|(\partial_t + v_{q+1} \cdot \nabla) \mathring{R}_{q+1} (t)\|_0\nonumber\\ 
\leq &\|D_t \mathring{R}_{q+1} (t)\|_0 + (\|w_{q+1} (t)\|_0 + C\|v_q (t)\|_1 \ell) \|\mathring{R}_{q+1} (t)\|_1\nonumber\\
\leq & \|D_t \mathring{R}_{q+1} (t)\|_0 + C  \delta_{q, i-1}^{\sfrac{1}{2}} \lambda_q \|\mathring{R}_{q+1} (t)\|_1\label{e:intermediate10}
\end{align}
 Since $ \delta_{q, i-1}^{\sfrac{1}{2}} \lambda_q\leq \delta_{q, i}^{\sfrac{1}{2}} \lambda_{q+1}\leq  \delta_{q, -1}^{\sfrac{1}{2}} \lambda_{q+1}$ and $\ell^{-1} = \lambda_{q+1}^{1-\eps_0}$ we conclude  \eqref{e:D_tR_est-2} and \eqref{e:g-D_tR_est-2} from \eqref{e:Dt_R_all_bad} and \eqref{e:Dt_R_all_good} respectively, provided $\lambda_0$ is chosen large enough.

In order to  derive  \eqref{e:allR_bad}, \eqref{e:Dt_R_all_bad},\eqref{e:allR_good} and \eqref{e:Dt_R_all_good}, we will make heavy use of the parameter orderings stated in Section \ref{s:parameters}. For the particular estimates  \eqref{e:allR_bad} and \eqref{e:Dt_R_all_bad}, we will require one additional ordering which is stated in the lemma below.
\begin{lemma}\label{l:parameters3}
For $\eps_0$ satisfying \eqref{e:ell_constr}, depending only upon $b, \beta_\infty$ and $\beta_0$, then
\begin{align}
&2\delta_{q+2, 0} \lambda_{q+1}^{-2\eps_0} \geq \frac{\delta_{q+1,i}^{\sfrac{1}{2}}}{\lambda_{q+1}} \frac{\mu_{q+1,i}}{\eta_{q+1,i}}\, .
\label{e:ov_final}
\end{align}
\end{lemma}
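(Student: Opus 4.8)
The plan is to reduce \eqref{e:ov_final} to an inequality purely among the exponents of $\lambda_{q+1}$ (and $\lambda_{q+2}$, which we will rewrite as $\lambda_{q+1}^b$ using \eqref{e:lambda}, at the cost of a harmless constant absorbed by the factor $2$ and the largeness of $\lambda_0$), and then check that this exponent inequality follows from the structural constraints \eqref{e:condition}, \eqref{e:condition2} and the definition \eqref{e:ell_constr} of $\eps_0$. First I would recall from \eqref{e:more-scaling2} in Lemma \ref{l:parameters} that $\frac{\mu_{q+1,i}}{\eta_{q+1,i}} \leq \delta_{q+1,-1}^{\sfrac12}\lambda_{q+1}$ for all $i\geq 0$ (with equality for $i\geq 1$ and the two smaller values for $i=0,1$), so that the right-hand side of \eqref{e:ov_final} is bounded by $\delta_{q+1,i}^{\sfrac12}\delta_{q+1,-1}^{\sfrac12}$. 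Since $i\mapsto \delta_{q+1,i}$ is decreasing by \eqref{e:asc_delta}, we have $\delta_{q+1,i}^{\sfrac12}\leq \delta_{q+1,0}^{\sfrac12}$, hence it suffices to prove the $i$-independent bound
\[
2\,\delta_{q+2,0}\,\lambda_{q+1}^{-2\eps_0} \geq \delta_{q+1,0}^{\sfrac12}\,\delta_{q+1,-1}^{\sfrac12}\, .
\]

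Next I would plug in the definitions. Using $\delta_{r,l} = \lambda_r^{-2\beta_l}$, the left side has $\lambda_{q+1}$-exponent (after writing $\lambda_{q+2}=\lambda_{q+1}^b$ up to constants) equal to $-2b\beta_0 - 2\eps_0$, while the right side has exponent $-\beta_0 - \beta_{-1}$. Recalling \eqref{e:beta-1}, namely $\beta_{-1} = b\beta_0 + (1-b)\beta_\infty$, the right-side exponent is $-\beta_0 - b\beta_0 - (1-b)\beta_\infty = -(1+b)\beta_0 + (b-1)\beta_\infty$. So the desired inequality becomes, at the level of exponents of $\lambda_{q+1}$ (larger exponent on the left since $\lambda_{q+1}>1$ and we want the left quantity $\geq$ right quantity, i.e. we need the left exponent to be $\geq$ the right exponent minus a logarithmic loss which is absorbed by choosing $\lambda_0$ large),
\[
-2b\beta_0 - 2\eps_0 \;\geq\; -(1+b)\beta_0 + (b-1)\beta_\infty\, ,
\]
equivalently $(1+b)\beta_0 - 2b\beta_0 \geq (b-1)\beta_\infty + 2\eps_0$, i.e. $(1-b)\beta_0 \geq (b-1)\beta_\infty + 2\eps_0$, i.e.
\[
2\eps_0 \;\leq\; (b-1)(-\beta_0 - \beta_\infty) = -(b-1)(\beta_0+\beta_\infty)\, .
\]
This last inequality is \emph{false} as written since the right side is negative, which signals that I have the direction of one of the monotonicities reversed; the resolution is that one must \emph{not} bound $\frac{\mu_{q+1,i}}{\eta_{q+1,i}}$ by its value at $i=\infty$ but rather keep the $\beta_i$-dependence and exploit that $\delta_{q+2,0}$ on the left carries the index $0$ while the right-hand product carries indices tied to $i$ through $\delta_{q+1,i}$; more precisely, combining $\frac{\mu_{q+1,i}}{\eta_{q+1,i}} \leq \delta_{q+1,-1}^{\sfrac12}\lambda_{q+1}$ with $\delta_{q+1,i}^{\sfrac12}$ and then using \eqref{e:CFL3} (which gives $\delta_{q,i-1}^{\sfrac12}\lambda_q \leq \delta_{q+1,i}^{\sfrac12}\lambda_{q+1}\lambda_{q+1}^{-(b-1)\beta_\infty - 2\eps_0}$) together with \eqref{e:asc_|v|_1} to trade one more index, one arrives at a clean power of $\lambda_{q+1}$ that, by the recursion \eqref{e:betas}, is exactly $\delta_{q+2,1}\lambda_{q+1}^{-2\eps_0}$-type, and then $\delta_{q+2,1}\leq \delta_{q+2,0}$ closes it with a factor of $2$ to spare (as in \eqref{e:index_inc}).

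The main obstacle, and the step I would spend the most care on, is getting the bookkeeping of indices exactly right: one must track that the ``$0$'' appearing in $\delta_{q+2,0}$ on the left of \eqref{e:ov_final} is genuinely weaker (larger) than whatever index-$i$ quantity appears after all the substitutions, and that the two non-generic values $i=0,1$ of $\mu_{q+1,i}/\eta_{q+1,i}$ from \eqref{e:more-scaling2} do not spoil the estimate — but by part (c) of Lemma \ref{l:parameters} those two values are the \emph{smallest}, so they only help. Once the exponent inequality is isolated it reduces to \eqref{e:condition} (equivalently to the positivity of the right side of \eqref{e:ell_constr}, which bounds $\eps_0$), and the factor $2$ on the left absorbs any bounded multiplicative constant coming from \eqref{e:lambda} provided $\lambda_0$ is chosen large depending only on $b,\beta_0,\beta_\infty$, exactly as in the hypothesis of the lemma. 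I would therefore present the proof as: (i) quote \eqref{e:more-scaling2} to bound $\mu_{q+1,i}/\eta_{q+1,i}$; (ii) quote \eqref{e:asc_delta} and \eqref{e:asc_|v|_1} to reduce to a monotone-in-$i$ statement; (iii) substitute the definitions, apply the recursion \eqref{e:betas} and the constraint \eqref{e:ell_constr}, and absorb the constant into the largeness of $\lambda_0$.
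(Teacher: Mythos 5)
Your reduction in the first half of the proposal is wrong, and you correctly notice this, but the ``resolution'' you sketch afterward is self-contradictory and does not actually close the gap.

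Concretely: after bounding $\frac{\mu_{q+1,i}}{\eta_{q+1,i}} \leq \delta_{q+1,-1}^{\sfrac12}\lambda_{q+1}$ and then $\delta_{q+1,i}^{\sfrac12}\leq\delta_{q+1,0}^{\sfrac12}$ you arrive at needing $2\delta_{q+2,0}\lambda_{q+1}^{-2\eps_0}\geq \delta_{q+1,0}^{\sfrac12}\delta_{q+1,-1}^{\sfrac12}$, which as you compute requires $0\geq (b-1)(\beta_0+\beta_\infty)+2\eps_0$ --- false. The reason is that you bound the two factors $\delta_{q+1,i}^{\sfrac12}$ and $\mu_{q+1,i}/\eta_{q+1,i}$ \emph{separately}, and these two factors are extremized at opposite ends: the ratio $\mu_{q+1,i}/\eta_{q+1,i}$ is increasing in $i$ (so maximal as $i\to\infty$), while $\delta_{q+1,i}^{\sfrac12}$ is decreasing (maximal at $i=0$). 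Replacing the product by the product of the separate maxima is genuinely lossy, and the loss is precisely the positive quantity $\frac{b-1}{2b}\left(1-\beta_0-b\beta_\infty\right)$ in the exponent, which is exactly the margin needed. Your proposed fix (``keep the $\beta_i$-dependence... use \eqref{e:CFL3}...'') opens by saying not to use the bound $\mu_{q+1,i}/\eta_{q+1,i}\leq \delta_{q+1,-1}^{\sfrac12}\lambda_{q+1}$, and in the very same sentence applies that bound again; the subsequent chain through \eqref{e:CFL3} and \eqref{e:asc_|v|_1} is not spelled out and cannot recover the lost margin, since the deficit is already baked in.

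The key structural observation, which you never isolate, is that $\delta_{q+1,i}^{\sfrac12}/\eta_{q+1,i}$ is \emph{exactly independent of $i$}: the $\beta_i$ in the exponent of $\delta_{q+1,i}^{\sfrac12}$ and in the exponent of $\eta_{q+1,i}$ cancel, leaving the $i$-independent quantity $\lambda_{q+1}^{(b-1)\beta_\infty-b\beta_0}$. After this cancellation the only $i$-dependence on the right of \eqref{e:ov_final} sits in $\mu_{q+1,i}$, which by \eqref{e:asc_mu} is maximal at $i=0$, so the worst case is $i=0$. One then inserts the \emph{special} exponent of $\mu_{q+1,0}$ from the $j\leq 1$ branch of \eqref{e:mu} (not the generic $\lambda_{q+1}^{1-\beta_j}$ formula) and the resulting exponent inequality reduces, after factoring $b-1$, to $(b-1)\bigl(1-(2b+1)\beta_0-3b\beta_\infty\bigr)\geq 4b\eps_0$, which is implied by \eqref{e:ell_constr}. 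Your plan skips this cancellation and uses the generic bound for $\mu/\eta$, which is precisely what makes the computation fail.
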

\begin{proof}
First observe that
\[
\log_{\lambda_{q+1}} \left(\frac{\delta_{q+1,i}^{\sfrac{1}{2}}}{\eta_{q+1,i}}\right) = (b-1) \beta_\infty - b \beta_0\, ,
\]
i.e. it is independent of $i$. By \eqref{e:asc_mu} it then suffices to prove \eqref{e:ov_final} when $i=0$. Noting by definition $2\delta_{q+2,0}\geq \lambda_{q+1}^{-2b\beta_0}$ and taking logarithms, it is sufficient to show:
\[
- 2 b \beta_0 -  2\eps_0 \geq (b-1) \beta_\infty - b \beta_0 + \frac{b+1}{2b} (1-\beta_0) + \frac{b-1}{2} \beta_\infty - 1\, .
\]
The latter inequality can be rewritten as
\[
\frac{b-1}{2b} \geq \frac{2b^2 - b -1}{2b} \beta_0 + 3 \frac{b-1}{2} \beta_\infty + 2\eps_0\, ,
\]
which is equivalent to
\[
(b-1) (1 - 3 b \beta_\infty - (2b+1) \beta_0) \geq  4b \eps_0\, ,
\]
which is implied by \eqref{e:ell_constr}.
\end{proof}

\subsection{Estimates on $R^0$.}  By direct calculation we have
\begin{align*}
&\partial_t w_{q+1} (t)+(v_\ell\cdot \nabla) w_{q+1} (t) +(w_{q+1}\cdot \nabla) v_\ell (t)\\
= &\sum_{(k,\varsigma)}
\left(\chi_{\varsigma}' (t) L_{k\varsigma} (t) +\chi_{\varsigma} (t) D_tL_{k\varsigma} (t) +\chi_{\varsigma} (L_{k\varsigma} \cdot \nabla) v_\ell (t) \right)e^{ik\cdot\Phi_\varsigma}\\
=&\sum_{(k,\varsigma)}\Omega_{k\varsigma} (t) e^{i\lambda_{q+1}k\cdot x}\, ,
\end{align*}
where we write 
\[
\Omega_{k\varsigma} (t) := \left(\chi_{\varsigma}' (t) L_{k\varsigma} (t) +\chi_{\varsigma} (t) D_tL_{k\varsigma} (t) + \chi_{\varsigma} (t) (L_{k\varsigma}\cdot \nabla) v_\ell (t) \right) \phi_{k\varsigma} (t)\, .
\]

We now must distinguish two cases:
\begin{itemize}
\item[(O)] In the overlapping case $t\in K_{\varsigma-1}\cup K_\varsigma$ we have
\begin{equation}\label{e:chi'_bound}
|\chi'_\varsigma (t)| \leq C \frac{\mu_{q+1,i}}{\eta_{q+1,i}}\, .
\end{equation}
\item[(NO)] In the non-overlapping case $t\in H_\varsigma$ we have $\chi'_\varsigma (t) = 0$.
\end{itemize}

{\bf Case (O).} Applying Lemmas \ref{l:ugly_lemma} and \ref{l:ugly_lemma2} we obtain 
\begin{align}
\|\Omega_{k\varsigma} (t)\|_N\leq &C \frac{\mu_{q+1, i}}{\eta_{q+1, i}} \|L_{k\varsigma} (t)\|_N\nonumber\\
&+ \|D_t L_{k\varsigma} (t)\|_N + \|L_{k\varsigma} (t)\|_N \|v_\ell (t)\|_1 + \|L_{k\varsigma} (t)\|_0 \|v_\ell (t)\|_{N+1}\nonumber\\
& + C \|\phi_{k\varsigma} (t)\|_N \left(\|L_{k\varsigma} (t)\|_0 \left(\frac{\mu_{q+1, i}}{\eta_{q+1, i}} +
 \|v_\ell (t)\|_1\right) + \|D_t L_{k\varsigma} (t)\|_0\right)\nonumber\\
&\leq C \delta_{q+1,i}^{\sfrac{1}{2}} \ell^{-N} \left(\frac{\mu_{q+1, i}}{\eta_{q+1, i}} + \delta_{q,i-1}^{\sfrac{1}{2}} \lambda_q\right)\nonumber\\
&\stackrel{\eqref{e:CFL3}\&\eqref{e:ov_final}}{\leq}  C \delta_{q+2,0} \lambda_{q+1}^{-\eps_0}\ell^{-N-1}\, .\label{e:omega_over}
\end{align}

\medskip

{\bf Case (NO).} Similar computations yield
\begin{align}
\|\Omega_{k\varsigma} (t)\|_N\leq & C \|D_t L_{k\varsigma} (t)\|_N + \|L_{k\varsigma} (t)\|_N \|v_\ell (t)\|_1 + \|L_{k\varsigma} (t)\|_0 \|v_\ell (t)\|_{N+1}\nonumber\\
& + C \|\phi_{k\varsigma} (t)\|_N \left(\|L_{k\varsigma} (t)\|_0  \|v_\ell (t)\|_1 + \|D_t L_{k\varsigma} (t)\|_0\right)\nonumber\\
\leq & C \delta_{q+1, i}^{\sfrac{1}{2}}\delta_{q,i-1}^{\sfrac{1}{2}} \lambda_q \ell^{-N}\stackrel{\eqref{e:CFL3}}{\leq}
 C \delta_{q+2,i+1} \lambda_{q+1}^{-\eps_0} \ell^{-N-1}\, .\label{e:omega_nonover}
\end{align}

\medskip

By \eqref{e:Holderinterpolation2} we get the estimates analogous to \eqref{e:omega_over} and \eqref{e:omega_nonover} where $N$ is replaced by any
positive real number. 

We can now estimate
\begin{align*}
\|R^0 (t)\|_0 &\leq \sum_{k,\varsigma} \left\|\mathcal{R} \left(\Omega_{k\varsigma} (t) e^{i\lambda_{q+1} k\cdot x}\right)\right\|_0\\
\|R^0 (t)\|_2 \leq &\lambda_{q+1}^2 \sum_{k,\varsigma} \left\|\mathcal{R} \left(\Omega_{k\varsigma} (t) e^{i\lambda_{q+1} k\cdot x}\right)\right\|_0\\
& + \lambda_{q+1} \sum_{k,\varsigma} \left\|\mathcal{R} \left(D \Omega_{k\varsigma} (t) e^{i\lambda_{q+1} k\cdot x}\right)\right\|_0\nonumber\\
& + \sum_{k,\varsigma} \left\|\mathcal{R} \left(D^2 \Omega_{k\varsigma} (t) e^{i\lambda_{q+1} k\cdot x}\right)\right\|_0\, .
\end{align*}
Recalling that $\ell^{-1} \leq \lambda_{q+1}$,
we can now apply Lemma \ref{p:stat_phase} with $\alpha =  \eps_0$ and some $m$ (to be chosen in a moment) to conclude
\begin{align*}
\lambda_{q+1}^{-2}\|R^0 (t)\|_2 + \|R^0 (t)\|_0 & \leq C \delta_{q+2,0} \lambda_{q+1}^{-\eps_0}\left(1 + \lambda_{q+1} \left(\frac{\ell^{-1}}{\lambda_{q+1}}\right)^m\right)\\
\lambda_{q+1}^{-2}\|R^0 (t)\|_2 + \|R^0 (t)\|_0 & \leq C \delta_{q+2,i+1} \lambda_{q+1}^{-\eps_0}
\left(1 + \lambda_{q+1} \left(\frac{\ell^{-1}}{\lambda_{q+1}}\right)^m\right)
\end{align*}
respectively in the overlapping and non-overlapping case. Recalling \eqref{e:ell_choice}
it suffices to choose $m \eps_0 \geq 1$ to conclude that $\lambda_{q+1}^{-2}\|R^0 (t)\|_2 + \|R^0 (t)\|_0$ can be bounded by
the right hand sides of \eqref{e:allR_bad} and \eqref{e:allR_good} in the corresponding regions of time.

\subsection{Estimates on $D_t R^0$.} Again, by a direct calculation we have
\begin{align*}
D_t \left(\partial_tw_{q+1}+v_\ell\cdot \nabla w_{q+1}+w_{q+1}\cdot \nabla v_\ell\right) = &\sum_{(k,\varsigma)} D_t (\phi_{k\varsigma}^{-1} \Omega_{k\varsigma}) e^{k\cdot\Phi_\varsigma}\\ 
=:&\sum_{(k,\varsigma)} \Omega_{kl}' e^{i\lambda_{q+1}k\cdot x}
\end{align*}
where
\begin{align}
\Omega_{k\varsigma}' (t) = & \sum_{(k,\varsigma)}  \Bigl(\chi_{\varsigma}'' (t) L_{k\varsigma}+\chi_{\varsigma}' (t) \Bigl(2 D_tL_{k\varsigma} (t)+(L_{k\varsigma}\cdot\nabla) v_\ell (t)\bigr)\nonumber\\  
& \qquad+ \chi_{\varsigma} (t) \Bigl(D_t^2L_{k\varsigma} (t) + (D_tL_{k\varsigma}\cdot\nabla) v_\ell (t) + 
(L_{k\varsigma} \cdot\nabla) D_tv_\ell (t)\nonumber\\
&\qquad\qquad\qquad - ( (L_{k\varsigma}\cdot\nabla) v_\ell) \cdot \nabla v_\ell (t)\Bigr)\Bigr) \phi_{k\varsigma} (t)\, .\label{e:split_Dt_R0}
\end{align}
As above we distinguish the two cases (O) and (NO).

\medskip

{\bf Case (O).} In addition to \eqref{e:chi'_bound} we need 
\begin{equation}\label{e:chi''_bound}
|\chi''_\varsigma (t)|\leq C \left(\frac{\mu_{q+1,i}}{\eta_{q+1,i}}\right)^2\stackrel{\eqref{e:more-scaling2}}{\leq}
C \frac{\delta_{q+1,-1}^{\sfrac12}\lambda_{q+1}\mu_{q+1,i}}{\eta_{q+1,i}}\, .
\end{equation}

Applying the product rule \eqref{e:Holderproduct} we obtain
\begin{align}
& \|\Omega'_{k\varsigma}(t)\|_N\nonumber\\
\leq &C\left(\frac{\mu_{q+1,i}}{\eta_{q+1,i}}\right)^2 \left(\|L_{k\varsigma}\|_N + \|L_{k\varsigma}\|_0 \|\phi_{k\varsigma}\|_N\right)\nonumber\\ 
&+ C\, \frac{\mu_{q+1,i}}{\eta_{q+1,i}} \Bigl(\|D_t L_{k\varsigma} (t)\|_N + \|D_t L_{k\varsigma} (t)\|_0 \|\phi_{k\varsigma}\|_N + \|L_{k\varsigma} (t)\|_N \|v_\ell (t)\|_1\nonumber\\
&\qquad\qquad\qquad + \|L_{k\varsigma}\|_0 \|v_\ell\|_{N+1} +  \|L_{k\varsigma} (t)\|_0 \|v_\ell (t)\|_1 \|\phi_{k\varsigma} (t)\|_N \Bigr)\nonumber\\
& +\underbrace{C \|D^2_t L_{k\varsigma} (t)\|_N + \|D^2_t L_{k\varsigma} (t)\|_0 \|\phi_{k\varsigma}\|_N}_{I_1}\nonumber\\
& +\underbrace{\bigl(\|D_t L_{k\varsigma} (t)\|_N \|v_\ell (t)\|_1 + \|D_t L_{k\varsigma} (t)\|_0 \|v_\ell (t)\|_{N+1}}_{I_2}\nonumber\\
&\qquad\qquad + \underbrace{\|D_t L_{k\varsigma} (t)\|_0 \|v_\ell (t)\|_{1} \|\phi_{k\varsigma} (t)\|_N}_{I_3}\bigr)\nonumber\\
& +\underbrace{C \|L_{k\varsigma} (t)\|_N \|v_\ell (t)\|_1^2 + \|L_{k\varsigma}\|_0 \|v_\ell\|_1 \|v_\ell\|_{N+1} + \|L_{k\varsigma}\|_0 \|v_\ell\|_1^2 \|\phi_{k\varsigma}\|_N}_{I_4}\, . \label{e:omega3}
\end{align}
Using Lemmas \ref{l:ugly_lemma} and \ref{l:ugly_lemma2} we then get
\begin{align}
I_1+ I_2+I_3+I_4 \leq& C \delta_{q+1, i} \delta_{q, i-1}^{\sfrac{1}{2}} \lambda_q \lambda_{q+1} \ell^{-N} + C \delta_{q+2, i}^{\sfrac{1}{2}} \delta_{q, i-1} \lambda_q^2 \ell^{-N}\nonumber\\
 \stackrel{\eqref{e:CFL3}}{\leq} & C \delta_{q+1, i+1}\delta_{q+1, i}^{\sfrac{1}{2}}  \ell^{-N-2}\, \label{e:omega4}
\end{align}
and thus
\begin{align}
\|\Omega'_{k\varsigma} (t)\|_N \leq & C\frac{\delta_{q+1,-1}^{\sfrac12}\delta_{q+1, i}^{\sfrac{1}{2}}\lambda_{q+1}\mu_{q+1,i}}{\eta_{q+1,i}}  \ell^{-N}
+  C \delta_{q+1, i+1}\delta_{q+1, i}^{\sfrac{1}{2}}  \ell^{-N-2} \nonumber\\
\stackrel{\eqref{e:CFL3}\&\eqref{e:ov_final}}{\leq} & C\delta_{q+1,-1}^{\sfrac12}\delta_{q+2,0} \delta_{q+1, i}^{\sfrac{1}{2}} \ell^{-N-2}
 \qquad
\forall t\in K_{\varsigma-1}\cup K_\varsigma\,  .\label{e:omega5}
\end{align}

\medskip

{\bf Case (NO).} When $t\in H_\varsigma$ then the terms multiplied by $\chi'$ and $\chi''$ vanish identically in the expression of $\Omega'$. We therefore can use the same estimates of \eqref{e:omega3} and \eqref{e:omega4} to conclude
\begin{equation}\label{e:omega7}
\|\Omega'_{k\varsigma}(t)\|_N\leq \sum_{i=1}^4 I_i \leq  C \delta_{q+2, i+1}\delta_{q+1, i}^{\sfrac{1}{2}}  \ell^{-N-2}
\qquad \forall t\in H_\varsigma\, .
\end{equation}

\medskip

Next, observe that we can write
\begin{align*}
&D_tR^0=\Bigl([D_t,\mathcal{R}]+\mathcal{R}D_t\Bigr)(\partial_tw+v_\ell\cdot \nabla w+w\cdot \nabla v_\ell)\\
=&\Bigl([v_\ell,\mathcal{R}]\nabla+\mathcal{R}D_t\Bigr)(\partial_tw+v_\ell\cdot \nabla w+w\cdot \nabla v_\ell)\\
=&\sum_{(k,\varsigma)} \Bigl(\underbrace{[v_\ell,\mathcal{R}](\nabla\Omega_{k\varsigma}e^{i\lambda_{q+1}k\cdot x})}_{A_1}+ \underbrace{i\lambda_{q+1}[v_\ell\cdot k,\mathcal{R}](\Omega_{k\varsigma}e^{i\lambda_{q+1}k\cdot x})}_{A_2}\\
&\qquad\qquad+ \underbrace{\mathcal{R}(\Omega'_{k\varsigma}e^{i\lambda_{q+1}k\cdot x})}_{A_3}\Bigr)\, .
\end{align*}
We can now apply Proposition \ref{p:commutator} to $A_1$ and $A_2$ with $\alpha = \eps_0$:
conclude
\begin{align*}
& \|A_1 (t)\|_0 \leq \bar{C} \lambda_{q+1}^{\eps_0} \|v_\ell (t)\|_1 \|\Omega (t)\|_1\nonumber\\
& + C \lambda_{q+1}^{\eps_0-m} 
C \left(\|v_\ell (t)\|_{1+\eps_0} \|\Omega (t)\|_{m+\eps_0} + \|v_\ell (t)\|_{m+\eps_0} \|\Omega (t)\|_{1+\eps_0}\right)\\
& \|A_2 (t)\|_0 \leq \bar{C} \lambda_{q+1}^{\eps_0-1} \|v_\ell (t)\|_1 \|\Omega (t)\|_0\nonumber\\ 
&+ C \lambda_{q+1}^{1+\eps_0-m}\left(\|v_\ell (t)\|_{1+\eps_0} \|\Omega (t)\|_{m-1+\eps_0}
+ \|v_\ell (t)\|_{m+\eps_0} \|\Omega (t)\|_{\eps_0}\right)\, .
\end{align*}
Using \eqref{e:omega_over} and choosing $m$ such that $(m+\eps_0) \eps_0 \geq 1 + \eps_0$ we then conclude
\begin{align}
&\|A_1 (t)\|_0 + \|A_2 (t)\|_0\nonumber\\ 
\leq & C  \delta_{q, (i-1)_+}^{\sfrac{1}{2}}\delta_{q+2,0} \lambda_q \lambda_{q+1}^{-\eps_0} \left(1 + \lambda_{q+1}^{\eps_0+1} \left(\frac{\ell^{-1}}{\lambda_{q+1}}\right)^{m+\eps_0}\right)\nonumber\\
\stackrel{\eqref{e:asc_|v|_1}}{\leq}& C    \delta_{q, -1}^{\sfrac{1}{2}}\delta_{q+2,0} \ell^{-1} 
\left(1 + \lambda_{q+1}^{\eps_0+1} \left(\frac{\ell^{-1}}{\lambda_{q+1}}\right)^{m+\eps_0}\right)\nonumber\\
\leq & C   \delta_{q, -1}^{\sfrac{1}{2}}\delta_{q+2,0} \ell^{-1}\qquad \forall t\in K_{\varsigma-1}\cup K_\varsigma
\label{e:A1+A2_over}
\end{align}
and
\begin{align}
&\|A_1 (t)\|_0 + \|A_2 (t)\|_0 \nonumber\\
\leq & C \delta_{q+1,i}^{\sfrac{1}{2}}   \delta_{q+2,i+1} \ell^{-1} \left(1+ \lambda_{q+1}^{\eps_0+1}\left(\frac{\ell^{-1}}{\lambda_{q+1}}\right)^{m+\eps_0}\right)\nonumber\\
\leq & C \delta_{q+1,i}^{\sfrac{1}{2}}  \delta_{q+2,i+1} \ell^{-1}
\qquad\qquad \forall t\in H_\varsigma\, .\label{e:A1+A2_nonover}
\end{align}

\medskip

Next we apply Proposition \ref{p:stat_phase} to $A_3$ with $\alpha = \frac{\eps_0}{2}$ and $m\eps_0\geq 1$. 
Using \eqref{e:omega5} we conclude
\begin{align}
\|A_3 (t)\|_0 \leq & C \delta_{q+1,-1}^{\sfrac12}\delta_{q+2,0} \delta_{q+1, i}^{\sfrac{1}{2}}\ell^{-1} \left(1 + \lambda_{q+1} \left(\frac{\ell^{-1}}{\lambda_{q+1}}\right)^m\right)\nonumber\\
\leq & C  \delta_{q+1,-1}^{\sfrac12}\delta_{q+2,0} \delta_{q+1, i}^{\sfrac{1}{2}}\ell^{-1}\qquad
\forall t\in K_{\varsigma-1}\cup K_\varsigma\label{e:A3_over}\, .
\end{align}
Using \eqref{e:omega7} 
\begin{align}
\|A_3 (t)\|_0 \leq &  C \delta_{q+1,i}^{\sfrac{1}{2}}  \delta_{q+2,i+1} \ell^{-1} \left(1 + \lambda_{q+1} \left(\frac{\ell^{-1}}{\lambda_{q+1}}\right)^m\right)\nonumber\\
\leq &  C \delta_{q+1,i}^{\sfrac{1}{2}}  \delta_{q+2,i+1} \ell^{-1}
\qquad \forall t\in H_\varsigma\label{e:A3_nonover}\, .
\end{align}
Using \eqref{e:split_Dt_R0}, \eqref{e:A1+A2_over} and \eqref{e:A3_over} we conclude that $\|D_t R^0 (t)\|_0$ is bounded by the right hand side of \eqref{e:Dt_R_all_bad} when $t\in K_{\varsigma-1}\cup K_\varsigma$. Using  \eqref{e:split_Dt_R0}, \eqref{e:A1+A2_nonover} and \eqref{e:A3_nonover} we conclude that $\|D_t R^0 (t)\|_0$ is bounded by the right hand side of \eqref{e:Dt_R_all_good} when $t\in H_\varsigma$.

\subsection{Estimates on $R^1$} 
We recall the argument from \cite{BDS}. Using Lemma \ref{l:doublesum} we have
\begin{align*}
\div&\left(w_o\otimes w_o - \sum_\varsigma \chi_\varsigma^2\mathring{R}_\varsigma-\frac{|w_o|^2}{2}\Id\right)=\\
=&\mathop{\sum_{(k,\varsigma),(k',\varsigma')}}_{k+k'\neq 0}\chi_\varsigma\chi_{\varsigma'}\div\left(w_{k\varsigma}\otimes w_{k'\varsigma'} - \frac{w_{k\varsigma}\cdot w_{k'\varsigma'}}{2} \Id\right) = I + II
\end{align*}
where, setting $f_{k\varsigma k'\varsigma'}:=\chi_\varsigma\chi_{\varsigma'}a_{k\varsigma}a_{k'\varsigma'}\phi_{k\varsigma}\phi_{k'\varsigma'}$,
\begin{align*}
I=&\mathop{\sum_{(k,\varsigma),(k',\varsigma')}}_{k+k'\neq 0}\left(B_k\otimes B_{k'}-\tfrac{1}{2}(B_k\cdot B_{k'})\Id\right)\nabla f_{k\varsigma k'\varsigma'}e^{i\lambda_{q+1}(k+k')\cdot x}\\
II=&i\lambda_{q+1}\mathop{\sum_{(k,\varsigma),(k',\varsigma')}}_{k+k'\neq 0}f_{k\varsigma k'\varsigma'}\left(B_k\otimes B_{k'}-\tfrac{1}{2}(B_k\cdot B_{k'})\Id\right)(k+k')e^{i\lambda_{q+1}(k+k')\cdot x}\, .
\end{align*}
Concerning $II$, recall that the summation is over all $\varsigma$ and all $k\in\Lambda^e$ if $\varsigma$ is even and all $k\in\Lambda^o$ if $\varsigma$ is odd. Furthermore, both $\Lambda^e,\Lambda^o\subset\bar{\lambda}S^2\cap\Z^3$ satisfy the conditions of Lemma \ref{l:split}. Therefore we may symmetrize the summand in $II$ in $k$ and $k'$. On the other hand, recall from Lemma \ref{l:BkBk'} that 
$$
(B_k\otimes B_{k'}+B_{k'}\otimes B_k)(k+k')=(B_k\cdot B_{k'})(k+k').
$$
From this we deduce that $II=0$. We thus have the decomposition
\begin{align*}
\div&\left(w_o\otimes w_o - \sum_l \chi_{\varsigma}^2\mathring{R}_{\varsigma}-\frac{|w_o|^2}{2}\Id\right)=\\
=&\mathop{\sum_{(k,\varsigma),(k',\varsigma')}}_{k+k'\neq 0}\left(B_k\otimes B_{k'}-\tfrac{1}{2}(B_k\cdot B_{k'})\Id\right)\nabla f_{k\varsigma k\varsigma '}e^{i\lambda_{q+1}(k+k')\cdot x}\, .
\end{align*}
From the product rule \eqref{e:Holderproduct}, Lemma \ref{l:ugly_lemma} and \eqref{e:CFL3} we have 
\begin{equation}\label{e:f_N}
\norm{f_{k\varsigma k'\varsigma'}}_N\leq  C \delta_{q+2,i+1} \lambda_{q+1}^{-\eps_0} \ell^{-N} \qquad \forall N\geq 1
\end{equation}
in both the overlapping and non-overlapping regions (note that $i$ changes in the two regions, though, so to bound the
``worst'' term between $a_{k\varsigma}$ and $a_{k'\varsigma'}$). Applying Proposition \ref{p:stat_phase}(ii) with $\alpha = \eps_0$ and $m \eps_0 \geq 1$ we achieve
\begin{align}
\lambda_{q+1}^{-2} \|R^1 (t)\|_2 + \|R^1 (t)\|_0 \leq &  C \delta_{q+2,i+1} \lambda_{q+1}^{-\eps_0} 
\left(1 + \lambda_{q+1} \left(\frac{\ell^{-1}}{\lambda_{q+1}}\right)^m\right)\nonumber\\
\leq &  C \delta_{q+2,i+1} \lambda_{q+1}^{-\eps_0} \, \nonumber
\end{align}
in both the overlapping and non-overlapping regions. 

\subsection{Estimates on $D_tR^1$}
As we did for the estimate for $D_tR^0$, we make use of the identity
$D_t\mathcal{R}=[v_{\ell},\mathcal{R}]\nabla+\mathcal{R}D_t$ in order to write
\begin{align}
 &D_tR^1=\mathop{\sum_{(k,l),(k',l')}}_{k+k'\neq 0}\Bigg(\underbrace{[v_\ell,\mathcal{R}]\left(\nabla U_{k\varsigma k'\varsigma'}e^{i\lambda_{q+1}(k+k')\cdot x}\right)}_{T_1}\nonumber\\
&\qquad\qquad\qquad\qquad+\underbrace{i\lambda_{q+1}[v_\ell\cdot (k+k'),\mathcal{R}]\left(U_{k\varsigma k'\varsigma'}e^{i\lambda_{q+1}(k+k')\cdot x}\right)}_{T_2}\nonumber\\
&\qquad\qquad\qquad\qquad+\underbrace{\mathcal{R}\left(U'_{k\varsigma k'\varsigma'}e^{i\lambda_{q+1}(k+k')\cdot x}\right)}_{T_3}\Bigg)\, , \label{e:another_ugly_expression}
\end{align}
where we have set 
$U_{k\varsigma k'\varsigma'} = \left(B_k\otimes B_{k'}-\tfrac{1}{2}(B_k\cdot B_{k'})\Id\right)\nabla f_{k\varsigma k'\varsigma'}$ and
\begin{align*}
D_t\div\bigl(w_o\otimes w_o - \sum_l \chi_{\varsigma}^2\mathring{R}_{\varsigma}- \frac{|w_o|^2}{2}\Id\bigr)&=\mathop{\sum_{(k,\varsigma),(k',\varsigma')}}_{k+k'\neq 0}U'_{k\varsigma k'\varsigma'}e^{i\lambda_{q+1}(k+k')\cdot x}.
\end{align*}

Clearly we can use \eqref{e:f_N} to estimate
\begin{equation}\label{e:U_N}
\|U_{k\varsigma k'\varsigma'} (t)\|_N \leq \|f_{k\varsigma k'\varsigma'}\|_{N+1} \leq C \delta_{q+2,i+1} \lambda_{q+1}^{-\eps_0} \ell^{-N-1}\, ,
\end{equation}
where such estimate is valid in both the overlapping and non-overlapping regions.

In order to estimate $U'_{k\varsigma k'\varsigma'}$, we note the identity
\begin{align*}
\nabla &f_{k\varsigma k'\varsigma'}\,e^{i\lambda_{q+1}(k+k')\cdot x}=\chi_{\varsigma}\chi_{\varsigma'}\left(a_{k\varsigma}\nabla a_{k'\varsigma'}+a_{k'\varsigma'}\nabla a_{k\varsigma}\right)e^{i\lambda_{q+1}(k\cdot\Phi_\varsigma+k'\cdot\Phi_{\varsigma'})}
+\\
&+i\lambda_{q+1}\chi_{\varsigma}\chi_{\varsigma'}a_{k\varsigma}a_{k'\varsigma'}\left((D\Phi_\varsigma-\Id)k+(D\Phi_{\varsigma'}-\Id)k'\right)e^{i\lambda_{q+1}(k\cdot\Phi_\varsigma+k'\cdot\Phi_{\varsigma'})}\\
&=U''_{k\varsigma k'\varsigma'}e^{i\lambda_{q+1}(k\cdot\Phi_\varsigma+k'\cdot\Phi_{\varsigma'})}
\end{align*}
Thus we conclude
\begin{align}\label{e:ugly_comp1}
U'_{k\varsigma k'\varsigma'}(t) = \phi_{k\varsigma}(t)\phi_{k'\varsigma'}(t)D_t U''_{k\varsigma k'\varsigma'}(t)
\end{align}
and
\begin{align}
D_tU''_{k\varsigma k'\varsigma'}(t) &=\underbrace{(\chi_\varsigma\chi_\varsigma)'\left(a_{k\varsigma}\nabla a_{k'\varsigma'}+a_{k'\varsigma'}\nabla a_{k\varsigma}\right)}_{\Sigma^1_{k\varsigma k'\varsigma'}}\nonumber\\
&+\underbrace{i\lambda_{q+1}(\chi_\varsigma\chi_{\varsigma'})'a_{k\varsigma}a_{k'\varsigma'}\left((D\Phi_\varsigma-\Id)k+(D\Phi_{\varsigma'}-\Id)k'\right)}_{\Sigma^2_{k\varsigma k'\varsigma'}}\nonumber\\
&+\underbrace{\chi_\varsigma\chi_{\varsigma'}D_t \left(a_{k\varsigma} \nabla a_{k'\varsigma'}+a_{k'\varsigma'} \nabla a_{k\varsigma}\right)}_{\Sigma^3_{k\varsigma k'\varsigma'}}\nonumber\\
&+\underbrace{i\lambda_{q+1}\chi_\varsigma\chi_{\varsigma'} D_t (a_{k\varsigma}a_{k'\varsigma'})\left((D\Phi_\varsigma-\Id)k+(D\Phi_{\varsigma'}-\Id)k'\right)}_{\Sigma^4_{k\varsigma k'\varsigma'}}\nonumber\\
&+\underbrace{i\lambda_{q+1}\chi_\varsigma\chi_{\varsigma'} a_{k\varsigma}a_{k'\varsigma'} \left(D_t D\Phi_\varsigma k + D_t D\Phi_{\varsigma'} k'\right)}_{\Sigma^5_{k\varsigma k'\varsigma'}}\, .\label{e:ugly_comp2}
\end{align}

\medskip

{\bf Case (NO).} When $t\in H_\varsigma$, $\Sigma^1 = \Sigma^2 = 0$. As for the remaining terms, we got the following estimates (and observe that
they hold for both the non-overlapping and the overlapping case!). First, recall that
\[
D_t \nabla a_{k\varsigma} = \nabla D_t a_{k\varsigma} - Dv_\ell^T \nabla a_{k\varsigma}\, .
\]
Therefore we can use Lemma \ref{l:ugly_lemma} and Lemma \ref{l:ugly_lemma2} to bound
\begin{align}
&\|\Sigma^3_{k\varsigma k'\varsigma'} (t)\|_N\nonumber\\ 
\leq & C\|D_t a_{k\varsigma} (t)\|_N \|a_{k'\varsigma'} (t)\|_1 + C \|D_t a_{k\varsigma} (t)\|_0 \|a_{k'\varsigma'} (t)\|_{N+1}\nonumber\\
& + C\|D_t a_{k'\varsigma'} (t)\|_N \|a_{k\varsigma} (t)\|_1 + C \|D_t a_{k'\varsigma'} (t)\|_0 \|a_{k\varsigma} (t)\|_{N+1}\nonumber\\
&+ C\|a_{k\varsigma} (t)\|_N \|D_t a_{k'\varsigma'} (t)\|_1 + C\|a_{k\varsigma} (t)\|_0 \|D_t a_{k'\varsigma'} (t)\|_{N+1}\nonumber\\
&+ C\|a_{k'\varsigma'} (t)\|_N \|D_t a_{k\varsigma} (t)\|_1 + C\|a_{k'\varsigma}'\|_0 \|D_t a_{k\varsigma} (t)\|_{N+1}\nonumber\\
& + C \|D v_\ell (t)\|_{N} \left(\|a_{k\varsigma} (t)\|_0 \|a_{k'\varsigma'} (t)\|_1 + \|a_{k\varsigma} (t)\|_1 \|a_{k'\varsigma'} (t)\|_0\right)\nonumber\\
&+C \|Dv_\ell (t)\|_0 \bigl(\|a_{k\varsigma} (t)\|_0 \|a_{k'\varsigma'} (t)\|_{N+1}+ \|a_{k\varsigma} (t)\|_N \|a_{k'\varsigma'} (t)\|_1\nonumber\\
&\qquad\qquad + \|a_{k'\varsigma'} (t)\|_0 \|a_{k\varsigma} (t)\|_{N+1}+ \|a_{k'\varsigma'} (t)\|_N \|a_{k\varsigma} (t)\|_1\bigr)\nonumber\\
\stackrel{\eqref{e:CFL3}}{\leq} & C \delta_{q+1, i}^{\sfrac{1}{2}}\delta_{q+2, i+1} \ell^{-N-2}\, .\label{e:Sigma3}
\end{align}
Similarly we can use Lemma \ref{l:Phi_est}, Lemma \ref{l:ugly_lemma} and Lemma \ref{l:ugly_lemma2} to bound
\begin{align}
&\|\Sigma^4_{k\varsigma k'\varsigma'} (t)\|_N\nonumber\\ 
\leq & C \lambda_{q+1}^{1-\omega} \Bigl(\|D_t (a_{k\varsigma} a_{k'\varsigma'}) (t)\|_0
+ \ell^{-N}  \|D_t (a_{k\varsigma} a_{k'\varsigma'}) (t)\|_N\Bigr)\nonumber\\
\leq & C \lambda_{q+1}^{1-\omega} \Bigl( \|D_t a_{k\varsigma} (t)\|_0 \|a_{k'\varsigma'} (t)\|_N + \|D_t a_{k\varsigma} (t)\|_N \|a_{k'\varsigma'} (t)\|_0
\nonumber\\
&\qquad\qquad + \|D_t a_{k'\varsigma'} (t)\|_0 \|a_{k\varsigma} (t)\|_N + \|D_t a_{k'\varsigma'} (t)\|_N \|a_{k\varsigma} (t)\|_0\Bigr)\nonumber\\
& + C \lambda_{q+1}^{1-\omega} \ell^{-N} \Bigl( \|D_t a_{k'\varsigma'} (t)\|_0 \|a_{k\varsigma} (t)\|_0 + \|D_t a_{k\varsigma} (t)\|_0 \|a_{k'\varsigma'} (t)\|_0\Bigr)\nonumber\\
\leq &C \lambda_{q+1}^{1-\omega} \ell^{-N} \delta_{q+1,i} \delta_{q, i-1}^{\sfrac{1}{2}} 
\lambda_q\stackrel{\eqref{e:CFL3}}{\leq} C \delta_{q+1, i}^{\sfrac{1}{2}}\delta_{q+2, i+1} \ell^{-N-2}\, .\label{e:Sigma4}
\end{align}
Next, we use Lemma \ref{l:Phi_est} and Lemma \ref{l:ugly_lemma} to bound
\begin{align}
\|\Sigma^5_{k\varsigma k'\varsigma'} (t)\|_N \leq & \lambda_{q+1} \delta_{q+1, i} \left(\|D_t D\Phi_\varsigma\|_N + \|D_t D\Phi_{\varsigma'}\|_N \right)\nonumber\\
& + \lambda_{q+1} \delta_{q+1, i} \ell^{-N} \left(\|D_t D\Phi_\varsigma\|_0 + \|D_t D\Phi_{\varsigma'}\|_0 \right)\nonumber\\
\leq & C \delta_{q+1,i} \delta_{q,i-1}^{\sfrac{1}{2}} \lambda_q \ell^{-N} \lambda_{q+1}^{1-\omega}
\stackrel{\eqref{e:CFL3}}{\leq} C \delta_{q+1, i}^{\sfrac{1}{2}}\delta_{q+2, i+1} 
 \ell^{-N-2}\, .\label{e:Sigma5}
\end{align}
So we finally reach
\begin{equation}\label{e:Sigma3-4-5}
\|\Sigma^3_{k\varsigma k'\varsigma'} (t)\|_N+\|\Sigma^4_{k\varsigma k'\varsigma'} (t)\|_N +\|\Sigma^5_{k\varsigma k'\varsigma'} (t)\|_N
\leq C \delta_{q+1, i}^{\sfrac{1}{2}}\delta_{q+2, i+1} \ell^{-N-2}\, ,
\end{equation}
which is valid for times in both the overlapping and non-overlapping regions.

Thus, for the non-overlapping region we conclude
\begin{align}
\|U'_{k\varsigma k'\varsigma'} (t)\|_N\leq & C \delta_{q+1, i}^{\sfrac{1}{2}}\delta_{q+2, i+1}  \ell^{-N-2}
 + \delta_{q+1, i}^{\sfrac{1}{2}}\delta_{q+2, i+1}  \ell^{-N} \|\phi_{k\varsigma} (t) \phi_{k'\varsigma'} (t)\|_N\nonumber\\
\stackrel{\eqref{e:CFL3}}{\leq} & C \delta_{q+1, i}^{\sfrac{1}{2}}\delta_{q+2, i+1}  \ell^{-N-2} \qquad\qquad\qquad\qquad \forall t\in H_\varsigma\, . \label{e:U'_nonov}
\end{align}

\medskip

{\bf Case (O).} We use \eqref{e:chi'_bound} and Lemma \ref{l:ugly_lemma} to bound
\begin{align}
\|\Sigma^1_{k\varsigma k'\varsigma'} (t)\|_N \leq & C \frac{\mu_{q+1,i}}{\eta_{q+1,i}} \Bigl(\|a_{k\varsigma} (t)\|_N \|a_{k'\varsigma'} (t)\|_1 + 
\|a_{k\varsigma} (t)\|_0 \|a_{k'\varsigma'} (t)\|_{N+1}\nonumber\\
&\qquad\qquad + \|a_{k\varsigma} (t)\|_{N+1} \|a_{k'\varsigma'} (t)\|_0 +  \|a_{k\varsigma} (t)\|_{1} \|a_{k'\varsigma'} (t)\|_N \Bigr)\nonumber\\
\leq & C\frac{\mu_{q+1,i}}{\eta_{q+1,i}} \delta_{q+1,i} \lambda_q \ell^{-N}\\
\stackrel{\eqref{e:ov_final}}{\leq}& C\delta_{q+1,i}^{\sfrac12}\delta_{q+2,0}\lambda_q\lambda_{q+1}^{-\eps_0} \ell^{-N-1}\\
\stackrel{\eqref{e:asc_|v|_1}}{\leq} & C\delta_{q+1,0}^{\sfrac12}\delta_{q+2,0} \ell^{-N-2} \, .\label{e:Sigma_1}
\end{align}
Similarly we use Lemma \ref{l:Phi_est}, Lemma \ref{l:ugly_lemma} and \eqref{e:chi'_bound} to get
\begin{align}
\|\Sigma^2_{k\varsigma k'\varsigma'} (t)\|_N \leq & C \lambda_{q+1}^{1-\omega} \frac{\mu_{q+1,i}}{\eta_{q+1,i}} \left(\|a_{k\varsigma} (t) a_{k'\varsigma'} (t)\|_N + \ell^{-N} \|a_{k\varsigma} (t) a_{k'\varsigma'} (t)\|_0\right)\nonumber\\
\leq & C \frac{\mu_{q+1,i}}{\eta_{q+1,i}} \delta_{q+1, i}\lambda_{q+1}^{1-\omega} \ell^{-N}\\
\stackrel{\eqref{e:CFL3}\&\eqref{e:more-scaling2}}{\leq} & C\delta_{q+1,0}\delta_{q+2,i+1}\ell^{-N-2}\\
\stackrel{\eqref{e:asc_delta}}{\leq} & C\delta_{q+1,0}^{\sfrac12}\delta_{q+2,0} \ell^{-N-2}\, .
\end{align}
Since \eqref{e:Sigma3-4-5} is valid for $t\in K_{\varsigma-1}\cup K_\varsigma$, we conclude
\begin{align}
\|D_t U''_{k\varsigma k'\varsigma'} (t)\|_N \leq & C\delta_{q+1,0}^{\sfrac12}\delta_{q+2,0} \ell^{-N-2} ,
\end{align}
where we used \eqref{e:asc_delta}.
Thus, we finally reach
\begin{align}
\|U'_{k\varsigma k'\varsigma'} (t)\|_N \leq & C\delta_{q+1,0}^{\sfrac12}\delta_{q+2,0} \ell^{-N-2}\qquad \forall t\in K_{\varsigma-1}\cup K_\varsigma\, .
\label{e:U'_ov}
\end{align}

\medskip

We can now apply Proposition \ref{p:commutator} to the terms $T_1$ and $T_2$ in \eqref{e:another_ugly_expression} and Proposition \ref{p:stat_phase} to the term in $T_3$. In both cases we apply them with $\alpha = \eps_0$ and $m$ large enough. For $t\in H_\varsigma$ we then get:
\begin{align}
\|D_t R^1 (t)\|_0 \leq &  C  \delta_{q,(i-1)_+}^{\sfrac12}\delta_{q+2,i+1} \lambda_q\lambda_{q+1}^{-2\eps_0}  \left(1 + \lambda_{q+1}^{\eps_0+1} \left(\frac{\ell^{-1}}{\lambda_{q+1}}\right)^m\right)\nonumber\\
&+ C \delta_{q,(i-1)_+}^{\sfrac12}\delta_{q+2,i+1} \lambda_q \lambda_{q+1}^{-\eps_0}  \left(1 + \lambda_{q+1}^{\eps_0+1} \left(\frac{\ell^{-1}}{\lambda_{q+1}}\right)^m\right)\nonumber\\
&+ C \delta_{q+1, i}^{\sfrac{1}{2}}\delta_{q+2, i+1}  \ell^{-1} \left(1 + \lambda_{q+1} \left(\frac{\ell^{-1}}{\lambda_{q+1}}\right)^m\right)\nonumber\\
 \stackrel{\eqref{e:CFL3}}{\leq} & C \delta_{q+1, i}^{\sfrac{1}{2}}\delta_{q+2, i+1}  \ell^{-1}
\left(1 + \lambda_{q+1} \left(\frac{\ell^{-1}}{\lambda_{q+1}}\right)^m\right)\nonumber\\
\leq & C \delta_{q+1, i}^{\sfrac{1}{2}}\delta_{q+2, i+1}  \ell^{-1}\, ,\label{e:D_tR^2_nonov}
\end{align}
under the assumption that $m\eps_0 \geq 1 + \eps_0$. Clearly the right hand side of \eqref{e:D_tR^2_nonov} is bounded by the right hand side of \eqref{e:Dt_R_all_good}.

As for $t\in K_{\varsigma-1}\cup K_\varsigma$, we instead get
\begin{align}
\|D_t R^1 (t)\|_0 \leq & C  \delta_{q,(i-1)_+}^{\sfrac12}\delta_{q+2,i+1} \lambda_q\lambda_{q+1}^{-2\eps_0}  \left(1 + \lambda_{q+1}^{\eps_0+1} \left(\frac{\ell^{-1}}{\lambda_{q+1}}\right)^m\right)\nonumber\\
&+ C \delta_{q,(i-1)_+}^{\sfrac12}\delta_{q+2,i+1} \lambda_q \lambda_{q+1}^{-\eps_0}  \left(1 + \lambda_{q+1}^{\eps_0+1} \left(\frac{\ell^{-1}}{\lambda_{q+1}}\right)^m\right)\nonumber\\&+ C\delta_{q+1,0}^{\sfrac12}\delta_{q+2,0} \ell^{-1} \left(1+\left(\frac{\ell^{-1}}{\lambda_{q+1}}\right)^m\right)\nonumber\\
\stackrel{\eqref{e:CFL3}\&\eqref{e:ov_final}}{\leq} & C\delta_{q+1,0}^{\sfrac12}\delta_{q+2,0} \ell^{-1}\, . \label{e:D_tR^2_ov}
\end{align}
The right hand side in \eqref{e:D_tR^2_ov} is obviously bounded by the right hand side of \eqref{e:Dt_R_all_bad}. 

\subsection{Estimates on $R^2$ and $D_t R^2$} 
Using Lemma \ref{l:ugly_lemma3}, \eqref{e:CFL3} and product estimates we have
\begin{align*}
\|R^2(t)\|_N &\leq C(\|w_c(t)\|_N\|w_c(t)\|_0 + \|w_o(t)\|_N \|w_c(t)\|_0+\\
&\qquad\|w_o(t)\|_0 \|w_c(t)\|_N)\\
&\leq C \delta_{q+2,i+1} \lambda_{q+1}^{N-\eps_0}\, .
\end{align*}
In fact this estimate holds in both the overlapping and non-overlapping region. By \eqref{e:asc_delta} the right hand side is bounded by both \eqref{e:allR_good} and \eqref{e:allR_bad}.

Next, assume that $t\in H_\varsigma$. Then with the Lemmas \ref{l:ugly_lemma3} and \ref{l:ugly_lemma4} we achieve
\begin{align*}
\norm{D_t R^2(t)}_0&\leq C\norm{D_t w_c(t)}_0(\norm{w_o(t)}_0+\norm{w_c(t)}_0) + C\|D_t w_o(t)\|_0 \|w_c(t)\|_0\\&
\leq C \delta_{q+1,i} \delta_{q,i-1}^{\sfrac{1}{2}}\lambda_q \stackrel{\eqref{e:CFL3}}{\leq}
 C \delta_{q+2,i+1} \lambda_{q+1}^{N-2\eps_0}\, .
\end{align*}
The latter is obviously bounded by the right hand side of \eqref{e:Dt_R_all_good}.

Assume now $t\in K_{\varsigma-1}\cup K_\varsigma$. Again with the Lemmas \ref{l:ugly_lemma3} and \ref{l:ugly_lemma4} we achieve
\begin{align*}
\norm{D_t R^2(t)}_0&\leq C\norm{D_t w_c(t)}_0(\norm{w_o(t)}_0+\norm{w_c(t)}_0) + C\|D_t w_o(t)\|_0 \|w_c(t)\|_0\\
&\leq C \frac{\mu_{q+1,i}}{\eta_{q+1,i}} \delta_{q+1,i}\stackrel{\eqref{e:ov_final}}{\leq} \delta_{q+1}^{\sfrac 12}\delta_{q+2,0}\lambda_{q+1}^{-\eps_0}\ell^{-1} \, ,
\end{align*}
which is obviously less than the right hand side of \eqref{e:Dt_R_all_bad}.

\subsection{Estimates on $R^3$ and $D_t R^3$} Using again Lemma \ref{l:ugly_lemma} and obvious estimates on the convolution we establish
\begin{equation}
\|R^3 (t)\|_0 \leq \|w (t)\|_0 \|v_q - v_\ell\|_0 \leq C \delta_{q+1,i}^{\sfrac{1}{2}} \delta^{\sfrac{1}{2}}_{q,(i-1)_+} \lambda_q \ell
\stackrel{\eqref{e:CFL3}}{\leq} C \delta_{q+2,i+1} \lambda_{q+1}^{-\eps_0}
\end{equation}
and
\begin{align}
\|R^3 (t)\|_2 \leq &\|w (t)\|_2 \|v_q - v_\ell\|_0 + \|w (t)\|_0 \|v_q - v_\ell\|_2\nonumber\\ 
\leq &C \delta_{q+1,i}^{\sfrac{1}{2}} \delta_{q,(i-1)_+} \lambda_q
\left( \lambda_{q+1}^2 \ell + \lambda_q\right)\nonumber\\
\leq & C \delta_{q+1,i}^{\sfrac{1}{2}} \delta_{q,(i-1)_+}^{\sfrac{1}{2}} \lambda_q \lambda_{q+1}^2 \ell
\stackrel{\eqref{e:CFL3}}{\leq} C \delta_{q+2,i+1} \lambda_{q+1}^{2-\eps_0}\, .
\end{align}
Thus we conclude
\begin{align}
\lambda_{q+1}^{-2} \|R^3 (t)\|_2 + \|R^3 (t)\|_0 \leq C  \delta_{q+2,i+1} \lambda_{q+1}^{-\eps_0} \, .
\end{align}
Again by \eqref{e:asc_delta} the latter is bounded by both the right hand sides of \eqref{e:allR_good} and \eqref{e:allR_bad}.

Next we estimate
\begin{align}
\|D_t R^3(t)\|_0 &\leq \underbrace{\|v_q-v_\ell\|_0 \|D_t w\|_0}_{S_1} + \underbrace{\|D_t v_q  - D_t v_\ell\|_0 \|w\|_0}_{S_2}\label{e:DtR_3}
\end{align}
For $t\in H_\varsigma$ we have
\begin{align}
S_1 \leq & C \delta_{q, i-1}^{\sfrac{1}{2}} \lambda_q \ell \delta_{q+1, i}^{\sfrac{1}{2}} \delta_{q, i-1}^{\sfrac{1}{2}} \lambda_q
\stackrel{\eqref{e:CFL3}}{\leq} C \delta_{q+1, i}^{\sfrac{1}{2}} \delta_{q+2,i+1} \ell_{q+1}^{-1}
\label{e:good_good}
\end{align}
For $t\in K_{\varsigma-1} \cup K_\varsigma$ we instead have
\begin{align}
S_1 \leq & C \delta_{q, i-1}^{\sfrac{1}{2}} \lambda_q \ell \delta_{q+1, i}^{\sfrac{1}{2}} \frac{\mu_{q+1,i}}{\eta_{q+1,i}}
  \stackrel{\eqref{e:ov_final}}{\leq} C \delta_{ q, i-1}^{\sfrac{1}{2}} \delta_{q+2,0} \lambda_q \ell \lambda_{q+1}\label{e:good_bad}.
\end{align}
Concerning $S_2$, we first write
\begin{align}
\|D_t v_q - D_t v_\ell\|_0 \leq & \underbrace{\|(v_\ell \cdot \nabla) v_q - (v_q \cdot \nabla v_q)\|_0}_{S_{21}}\nonumber\\
&+ \underbrace{\|{\rm div}\, ( v_q \otimes v_q) * \psi_\ell - {\rm div}\, (v_\ell \otimes v_\ell)\|_0}_{S_{22}}\nonumber\\
& + \underbrace{\|(\partial_t v_q + (v_q \cdot \nabla) v_q) - (\partial_t v_q + (v_q \cdot \nabla) v_q) * \psi_\ell\|_0}_{S_{23}}\, .\nonumber
\end{align}
We subsequently estimate
\begin{equation}
S_{21} \leq \|v_\ell - v_q\|_0 \|v_q\|_1 \leq C \delta_{q, i-1} \lambda_q^2 \ell
\end{equation}
and, using Proposition \ref{p:CET},
\begin{equation}
S_{22} \leq C \|v_q\|_1^2 \ell \leq C\delta_{q, i-1} \lambda_q^2 \ell\, .
\end{equation}
As for $S_{23}$ we first observe that
\begin{equation}
S_{23} \leq C \ell \|\partial_t v_q + {\rm div}\, (v_q\otimes v_q)\|_1\, .
\end{equation}
We then use the equation \eqref{e:euler_reynolds} to achieve
\begin{align}
\|D (\partial_t v_q + {\rm div}\, (v_q\otimes v_q))\|_1 \leq & C \|p_q\|_2 + C \|\mathring{R}_q\|_2
\leq  C \delta_{q, i-1} \lambda_q^2\, .
\end{align}
Summarizing we conclude
\begin{equation}
\|D_t v_q - D_t v_\ell\|_0 \leq C \delta_{q,i-1} \lambda_1^2 \ell\, .
\end{equation}
Thus, we finally conclude
\begin{align}
S_2 \leq & C \delta_{q,i-1} \lambda_q^2 \ell \delta_{q+1,i}^{\sfrac{1}{2}} \stackrel{\eqref{e:CFL3}}
\leq C \delta_{q+1, i}^{\sfrac{1}{2}} \delta_{q+2,i+1} \ell^{-1}\, .\label{e:bad}
\end{align}
We then use \eqref{e:good_good} and \eqref{e:bad} to achieve
\begin{equation}
\|D_t R^3 (t)\|_0 \leq C \delta_{q+1, i}^{\sfrac{1}{2}} \delta_{q+2,i+1} \ell^{-1} \qquad \qquad \forall t\in H_\varsigma\, ,
\end{equation}
whereas we use \eqref{e:good_bad} and \eqref{e:bad} to conclude
\begin{equation}\label{e:final_Dt_R3}
\|D_t R^3 (t)\|_0 \leq  C \delta_{q+1, i}^{\sfrac{1}{2}} \delta_{q+2,i+1} \ell^{-1} + C \delta_{ q, i-1}^{\sfrac{1}{2}} \delta_{q+2,0} \lambda_q\ell \lambda_{q+1} \quad \forall t\in K_{\varsigma-1} \cup K_\varsigma\, .
\end{equation}
By \eqref{e:asc_delta} the first summand in the right hand side of \eqref{e:final_Dt_R3} is bounded by the right
hand side of \eqref{e:Dt_R_all_bad}. As for the second summand, we use \eqref{e:CFL3} and \eqref{e:asc_delta} to bound
\begin{align*}
\delta^{\sfrac{1}{2}}_{q,i-1} \lambda_q \ell \lambda_{q+1} 
\stackrel{\eqref{e:CFL3}}{\leq}& \delta_{q+1, i}^{\sfrac{1}{2}} \lambda_{q+1}^{1-2\eps_0} \ell 
\lambda_{q+1}
= \delta_{q+1,i}^{\sfrac{1}{2}} \ell^{-1} \stackrel{\eqref{e:asc_delta}}{\leq} \delta_{q+1, -1}^{\sfrac{1}{2}} \ell^{-1}\, .
\end{align*}

\subsection{Estimates on $R^4$ and $D_t R^4$} 
By Lemma \ref{l:regularized_R_q} we have
\[
\|R^4\|\leq \|\mathring{R}_\varsigma (t) -\mathring{R}_q (t)\|_0 \leq C \delta_{q+1,j} \lambda_q \ell\stackrel{\eqref{e:CFL3}}{\leq}  C \delta_{q+2,i+1} \lambda_{q+1}^{-\eps_0}
\]
On the other hand, using also \eqref{e:R_est}, we can estimate
\begin{align}
\lambda_{q+1}^{-2} \|R^4 (t)\|_2 \leq & \lambda_{q+1}^{-2} \left(\|\mathring{R}_q\|_2 + \|\mathring{R}_\varsigma (t)\|_0\right)\nonumber\\
&\leq \delta_{q+1, i} \left(\frac{\lambda_q^2}{\lambda_{q+1}^2} + \frac{\lambda_q \ell^{-1}}{\lambda_{q+1}^2}\right)
\leq \delta_{q+1, i} \lambda_q \ell\nonumber\\
&\stackrel{\eqref{e:CFL3}}{\leq} C \delta_{q+2,i+1} \lambda_{q+1}^{-\eps_0}
\end{align}
Finally we can use \eqref{e:D_tR_est} and Lemma \ref{l:regularized_R_q} to conclude
\begin{align}
\|D_t R^4 (t)\|_0 \leq& C \delta_{q+1,i} \delta_{q,i-1}^{\sfrac{1}{2}} \lambda_q
\stackrel{\eqref{e:CFL3}}{\leq} C  \delta_{q+1,i}^{\sfrac{1}{2}}\delta_{q+2,i+1} \ell^{-1} \, .
\end{align}
As in the previous steps we use  \eqref{e:asc_delta} to conclude the proof.

\section{Proof of Theorem \ref{t:main}}

\subsection{Choice of global parameters}
Without loss of generality we assume $\eps <  \frac13$.
We start by choosing our parameters in order to satisfy the hypothesis of Proposition \ref{p:inductive_step}. First choose $\beta_\infty$ and $\beta_{-1}$ such that
\begin{equation}\label{e:choice_of_betainfty}
\frac13-\frac{\eps}{4}< \frac{1}{3} - \frac{\eps}{16} =: \beta_\infty<\frac{1}{3},
\end{equation}
and
\begin{equation}\label{e:choice_of_betaminus1}
\beta_{-1}:=\frac{\eps}{16}.
\end{equation}
By definition, we have $b\beta_0=\beta_{-1}+(b-1)\beta_{\infty}$ and hence we have 
\[
1 - 3b (\beta_\infty+\beta_0) = 1 - 3 (\beta_\infty+\beta_{-1}) - 4(b-1) \beta_\infty  > \frac{3\eps}{8} - \frac{4(b-1)}{3} 
\]
and
\[
5\beta_\infty-3b\beta_0-b=5\beta_\infty-3\beta_{-1}-3(b-1)\beta_{\infty}-b=\frac{8}{3}-\frac{\eps}{2}+\frac{3(b-1)\eps}{16}>
\frac{5}{2}-2b\]
Hence choosing $b>1$ (depending on $\eps$) close enough to $1$ we obtain \eqref{e:condition} and \eqref{e:condition2}.

\subsection{The initial triple  $(v_0,p_0,\mathring R_0)$}

We now define the initial triple which will begin the iteration scheme. Our proof follows closely the construction provided in \cite{Buckmaster}

First let $\nu_0: \mathbb R \to \mathbb R$ be a smooth non-negative function, compactly supported on the interval $[\sfrac38,\sfrac58]$ and identically equal to $1$ on $[-\sfrac7{16},\sfrac9{16}]$. The initial velocity $v_0$ is then defined to be the divergence-free vector field 
\begin{equation*}
v_0 (t,x) := \lambda_0^{-\beta_0}\nu_0(t)(\cos(\lambda_0 x_3),\sin(\lambda_0 x_3),0),
\end{equation*}
where here we use the notation $x=(x_1,x_2,x_3)$. The initial pressure $p_0$ will be set to be identically zero.  Then if we define
\[
\mathring R_0 =  \lambda_0^{-\beta_0-1} \nu'_0(t)
 \begin{pmatrix}
  0 & 0 & \sin(\lambda_0 x_3) \\
  0 & 0 & -\cos(\lambda_0 x_3) \\
  \sin(\lambda_0 x_3) & -\cos(\lambda_0 x_3)  & 0
 \end{pmatrix},
\]
we obtain from a direct calculation
\[
\partial_t v_0+\div (v_0\otimes v_0)+\nabla p_0= \div \mathring R_0.
\]
Hence the triple $(v_0,p_0,\mathring R_0)$ is a solution to the Euler-Reynolds system \eqref{e:euler_reynolds}. 
Next, we set $I^{(0)}_0=[0,\sfrac38]$, $I^{(0)}_1=[\sfrac38,\sfrac58]$, $I^{(0)}_2=[\sfrac58,1]$, $N (0)=1$ and 
$j_0 (0) = j_0 (1) = j_0 (2) = 0$. 
We check that,
upon choosing $\lambda_0$ sufficiently large, all the requirements in Section \ref{ss:time_intervals} and \ref{ss:estimates} hold.

First, \eqref{e:regions_increase} is obvious. Next, since the smallest of the three intervals, $I^{(0)}_1$, has length $\frac{1}{4}$, \eqref{e:interval_constraint} becomes
\[
\frac{1}{16} \geq \lambda_0^{- (1-\beta_0) (b+1)/2 - b \beta_\infty (b-1)/2}\, ,
\]
and since the exponent in the latter power is negative, it just suffices to choose $\lambda_0$ sufficiently large.
Observe next that $V^{0}_0 = [\frac{3}{8},\frac{5}{8}]$ and thus \eqref{e:regions_est} is equivalent to
\[
\frac{1}{4} \leq \lambda_0^{1- b (\beta_\infty - \beta_0 + \sfrac{\eps}{4})}\, .
\]
Since
\[
1 - b \left( \beta_\infty - \beta_0 + \frac{\eps}{4}\right) \geq 1 - \frac{5}{4} \left(\frac{1}{3} + \frac{1}{4}\right) > 0\, ,
\] 
again \eqref{e:regions_est} is satisfied provided $\lambda_0$ is sufficiently large.
This concludes the verification of the conditions in Section \ref{ss:time_intervals}.

We next come to the conditions required in Section \ref{ss:estimates}. First observe that, since $V^{0}_j = \emptyset$ for $j>0$, these requirements amounts to:
\begin{itemize}
\item[(a)] The estimates \eqref{e:velocity_est}, \eqref{e:pressure_est}, \eqref{e:R_est} and \eqref{e:D_tR_est} with $q=0$ and $j=0$, which thus are equivalent to \eqref{e:g-velocity_est}, \eqref{e:g-pressure_est}, \eqref{e:g-R_est} and \eqref{e:g-D_tR_est} (with $q=0$);
\item[(b)] The requirement \eqref{e:spt_triple} with $q=0$;
\item[(c)] The estimates \eqref{e:C0_v_global} and \eqref{e:C_p_global} with $q=0$.
\end{itemize}
(b) is obvious. Observe also that, since $p_0=0$, \eqref{e:pressure_est} and \eqref{e:C_p_global} are also trivially satisfied. Next observe that
\[
\|v\|_0 + \lambda_0^{-1} \|v\|_1 + \lambda_0^{-2} \|v\|_2 \leq 3 \lambda_0^{-\beta_0}\, .
\]
Assuming, without loss of generality, that $M\geq 3$, \eqref{e:g-velocity_est} and \eqref{e:C0_v_global} are then
fullfilled.

It remains only to check \eqref{e:g-R_est} and \eqref{e:g-D_tR_est}.  First observe that
\[\|\mathring R_0\|_0+\lambda_0^{-1}\|\mathring R_0\|_1 + \lambda_0^{-2} \|\mathring{R}_0\|_2
\leq C \lambda_0^{-\beta_0-1}\, ,\]
where the constant depends only on the function $\nu_0$. Thus \eqref{e:g-R_est} is satisfied as soon
\[
C \lambda_0^{-\beta_0-1} \leq \lambda_1^{-2\beta_0} = \lambda_0^{-2\beta_0 b}
\]
Since $1 + \beta_0 - 2 \beta_0 b > 0$, again this follows upon choosing $\lambda_0$ sufficiently large.
Finally, 
\[
\| (\partial_t + v_0 \cdot \nabla) \mathring{R}_0\|_0 \leq C \lambda_0^{-\beta_0}\, ,
\]
where again the constant depends only upon $\nu_0$. Thus, \eqref{e:g-D_tR_est} needs only
\[
C \leq \lambda_0^{1- \beta_{-1} + \beta_0 - 2 b \beta_0}\, .
\]
It is again easy to see that the exponent
\[
1 - \beta_{-1} + \beta_0 - 2b \beta_0
\]
is positive, thus concluding the proof that the triple $(v_0, p_0, \mathring{R}_0)$ satisfies all the requirements to
start the iteration the scheme.

\subsection{Convergence to a nontrivial solution to the Euler equations}

We now apply Proposition \ref{p:inductive_step} to obtain a sequence of triples $(v_q,p_q,\mathring R_q)$.  From  \eqref{e:C_p_global}-\eqref{e:velocity_est} and interpolation we see that the sequence converges to a pair $(v,p)$ of continuous functions, with compact temporal support and which solve the Euler equations (in fact we have $v\in C^{\beta_0}$ and $p\in C^{2\beta_0}$).  Moreover, using \eqref{e:regions_est} and \eqref{e:betas}
\begin{align*}
\int_0^1 [v(\cdot,t)]_{\sfrac13-\eps}~dt&\leq \sum_q \int_0^1 [w_q(\cdot,t)]_{\sfrac13-\eps}~dt\\
&\leq \sum_q \int_0^1 \norm{w_q(\cdot,t)}_{0}^{1-\sfrac13+\eps}\norm{w_q(\cdot,t)}_{1}^{\sfrac13-\eps}~dt\\
&\leq \sum_{q=0}^{\infty}\sum_{j=0}^q \abs{V_j}\lambda_q^{\sfrac13-\eps-\beta_{j-1}}\\
&\leq \lambda_0\sum_{q=0}^{\infty}\sum_{j=0}^q\lambda_{q+1}^{\beta_{j}-\beta_{\infty}+\eps/4} \lambda_q^{\sfrac13-\eps-\beta_{j-1}}\\
&= \lambda_0\sum_{q=0}^{\infty}(q+1)\lambda_q^{\frac13-\beta_{\infty}+b\eps/4-\eps}\\
&\leq \lambda_0\sum_{q=0}^{\infty}(q+1)\lambda_q^{-\eps/4} \leq \lambda_0 \sum_{q=0}^\infty (q+1) \lambda_0^{-\eps b^q/4}\,,\\
\end{align*}
where in the last line we have used \eqref{e:choice_of_betainfty} and $b\leq \frac{5}{4}$. The final sum is obviously finite, since $b>1$.
An analogous calculation yields $p\in L^1_tC_x^{\sfrac23-2\eps}$. 

We only need to show that the solution is nontrivial. Fix then any $x\in \mathbb T^3$. Due to our estimates we have
\begin{align}
|v (x, \sfrac{1}{2})| \geq & |v_0 (x, \sfrac{1}{2})| - \sum_{i=1}^\infty \|v_{i} - v_{i-1}\|_0
\geq \lambda_0^{-\beta_0} - M \sum_{i=1}^\infty \lambda_0^{- \beta_0 b^i}\, .
\end{align}
However the constant $M$ is only geometric, whereas $\beta_0>0$ and $b>1$ are some fixed parameters.
Thus choosing $\lambda_0$ sufficiently large we obviously get $|v (x, 1/2)|>0$.

\appendix

\section{H\"older spaces}\label{s:hoelder}

In the following $m=0,1,2,\dots$, $\alpha\in (0,1)$, and $\beta$ is a multi-index. We introduce the usual (spatial) 
H\"older norms as follows.
First of all, the supremum norm is denoted by $\|f\|_0:=\sup_{\T^3\times [0,1]}|f|$. We define the H\"older seminorms 
as
\begin{equation*}
\begin{split}
[f]_{m}&=\max_{|\beta|=m}\|D^{\beta}f\|_0\, ,\\
[f]_{m+\alpha} &= \max_{|\beta|=m}\sup_{x\neq y, t}\frac{|D^{\beta}f(x, t)-D^{\beta}f(y, t)|}{|x-y|^{\alpha}}\, ,
\end{split}
\end{equation*}
where $D^\beta$ are {\em space derivatives} only.
The H\"older norms are then given by
\begin{eqnarray*}
\|f\|_{m}&=&\sum_{j=0}^m[f]_j\\
\|f\|_{m+\alpha}&=&\|f\|_m+[f]_{m+\alpha}.
\end{eqnarray*}
Moreover, we will write $[f (t)]_\alpha$ and $\|f (t)\|_\alpha$ when the time $t$ is fixed and the
norms are computed for the restriction of $f$ to the $t$-time slice.

Recall the following elementary inequalities:
\begin{equation}\label{e:Holderinterpolation}
[f]_{s}\leq C\bigl(\varepsilon^{r-s}[f]_{r}+\varepsilon^{-s}\|f\|_0\bigr)
\end{equation}
for $r\geq s\geq 0$, $\eps>0$, and 
\begin{equation}\label{e:Holderproduct}
[fg]_{r}\leq C\bigl([f]_r\|g\|_0+\|f\|_0[g]_r\bigr)
\end{equation}
for any $r\geq 0$. From \eqref{e:Holderinterpolation} with $\eps=\|f\|_0^{\frac1r}[f]_r^{-\frac1r}$ we obtain the 
standard interpolation inequalities
\begin{equation}\label{e:Holderinterpolation2}
[f]_{s}\leq C\|f\|_0^{1-\frac{s}{r}}[f]_{r}^{\frac{s}{r}}.
\end{equation}

Next we collect two classical estimates on the H\"older norms of compositions. These are also standard, for instance
in applications of the Nash-Moser iteration technique.

\begin{proposition}\label{p:chain}
Let $\Psi: \Omega \to \mathbb R$ and $u: \R^n \to \Omega$ be two smooth functions, with $\Omega\subset \R^N$. 
Then, for every $m\in \mathbb N \setminus \{0\}$ there is a constant $C$ (depending only on $m$,
$N$ and $n$) such that
\begin{align}
\left[\Psi\circ u\right]_m &\leq C ([\Psi]_1 \|Du\|_{m-1}+\|D\Psi\|_{m-1} \|u\|_0^{m-1} \|u\|_m)\label{e:chain0}\\
\left[\Psi\circ u\right]_m &\leq C ([\Psi]_1 \|Du\|_{m-1}+\|D\Psi\|_{m-1} [u]_1^{m} )\, .
\label{e:chain1}
\end{align} 
\end{proposition}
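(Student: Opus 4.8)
\textbf{Plan of proof for Proposition \ref{p:chain}.}
The plan is to prove the two composition estimates \eqref{e:chain0} and \eqref{e:chain1} simultaneously by the usual Fa\`a di Bruno / Leibniz bookkeeping, exactly as in the Nash--Moser literature. First I would record that, for a multi-index $\beta$ with $|\beta|=m\geq 1$, the chain rule gives
\[
D^\beta(\Psi\circ u) = \sum_{r=1}^{m}\ (D^r\Psi)\circ u \ \cdot\ \sum_{\substack{\beta_1+\dots+\beta_r=\beta\\ |\beta_i|\geq 1}} c_{\beta_1,\dots,\beta_r}\, D^{\beta_1}u\cdots D^{\beta_r}u\, ,
\]
with combinatorial constants $c_{\beta_1,\dots,\beta_r}$ depending only on $m$, $N$, $n$. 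Taking $\|\cdot\|_0$ of this identity, the whole estimate reduces to bounding, for each $r$ and each partition of $\beta$ into $r$ pieces of sizes $m_1,\dots,m_r$ with $m_1+\dots+m_r=m$, the product $\|D^r\Psi\circ u\|_0\,\|u\|_{m_1}\cdots\|u\|_{m_r}$.

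The key step is an interpolation argument for the factor $\|u\|_{m_1}\cdots\|u\|_{m_r}$. Each $\|D^{m_i}u\|_0$ with $1\le m_i\le m$ can be interpolated between $\|Du\|_0$ (equivalently $[u]_1$, up to adding lower order terms) and $\|D^m u\|_0\le\|Du\|_{m-1}$ using \eqref{e:Holderinterpolation2}; since $m_1+\dots+m_r=m$ and there are $r\ge 1$ factors, the exponents of $\|Du\|_{m-1}$ add up to exactly $1$ while the exponents of $\|Du\|_0$ add up to exactly $r-1$. Hence
\[
\|u\|_{m_1}\cdots\|u\|_{m_r}\ \le\ C\,\|Du\|_{m-1}\,\|Du\|_0^{\,r-1}\ \le\ C\,\|Du\|_{m-1}\,[u]_1^{\,m-1}
\]
after majorising $r-1\le m-1$ and $\|Du\|_0\le[u]_1$ (for $r\ge 2$; the case $r=1$ is immediate, giving the term $[\Psi]_1\|Du\|_{m-1}$ in both inequalities). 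For the factor $\|D^r\Psi\circ u\|_0$ one simply uses $\|D^r\Psi\circ u\|_0\le\|D^r\Psi\|_0\le\|D\Psi\|_{m-1}$ when $r\ge 2$, and $\|D\Psi\circ u\|_0\le[\Psi]_1$ when $r=1$. Collecting the $r=1$ contribution and the $r\ge 2$ contributions separately yields \eqref{e:chain1} directly; for \eqref{e:chain0} one instead keeps $\|Du\|_0^{\,r-1}$ bounded by $\|u\|_0^{\,m-1}$ via $\|Du\|_0\le C\|u\|_0^{1-1/m}\|Du\|_{m-1}^{1/m}\le\dots$, or more simply observes that the interpolation can be arranged so that the low-order factor is $\|u\|_0$ rather than $[u]_1$; summing over the finitely many partitions (their number depends only on $m$) absorbs everything into the stated constant $C=C(m,N,n)$.

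The only point requiring a little care — and the one I expect to be the mildest of obstacles — is the transition between seminorms and full norms in the interpolation step: \eqref{e:Holderinterpolation2} is stated for seminorms $[f]_s$, whereas the products above naturally involve the norms $\|u\|_{m_i}=\sum_{j\le m_i}[u]_j$. This is handled by first bounding each $\|u\|_{m_i}$ by a constant times $[u]_{m_i}+[u]_1$ (all intermediate seminorms being interpolated away), then applying \eqref{e:Holderinterpolation2} to $[u]_{m_i}$ with $r=m$, $s=m_i$. Everything else is a finite, purely combinatorial summation with constants depending only on $m$, $N$ and $n$, and no genuine analytic difficulty arises.
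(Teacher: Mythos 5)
The paper states Proposition~\ref{p:chain} without proof (it is quoted as a classical Nash--Moser estimate), so I am judging your argument on its own merits. The Fa\`a di Bruno decomposition is the right starting point, but the interpolation bookkeeping in the key step is wrong in a way that cannot be repaired by massaging constants. If you interpolate $\|D^{m_i}u\|_0 = [Du]_{m_i-1}$ between $\|Du\|_0$ and $[Du]_{m-1}$ via \eqref{e:Holderinterpolation2}, the exponent on $[Du]_{m-1}$ is $(m_i-1)/(m-1)$, and over a partition $m_1+\dots+m_r=m$ with each $m_i\geq1$ these exponents sum to $(m-r)/(m-1)$, which equals $1$ only when $r=1$ and is strictly less than $1$ for every $r\geq2$. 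The bookkeeping you actually wrote (top exponents summing to $1$, bottom exponents summing to $r-1$) is what you get for interpolation between $\|u\|_0$ and $[u]_m$, where the exponent is $m_i/m$ rather than $(m_i-1)/(m-1)$ -- that is the right scaling for \eqref{e:chain0}, not \eqref{e:chain1}. Independently, the step ``majorise $[u]_1^{r-1}\leq[u]_1^{m-1}$'' is false whenever $[u]_1<1$. With these two slips corrected, interpolating only on $u$ and using the crude bound $\|D^r\Psi\|_0\leq\|D\Psi\|_{m-1}$ for $r\geq2$ leaves you, after Young's inequality, with an irreducible cross term proportional to $\|D\Psi\|_{m-1}[u]_m$, which is not dominated by either term on the right-hand side of \eqref{e:chain1}; the inequality as stated does not follow this way.

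The missing idea is that you must interpolate \emph{simultaneously} on $\Psi$ and on $u$, with complementary exponents. For fixed $r\in\{1,\dots,m\}$ and a partition $m_1+\dots+m_r=m$, \eqref{e:Holderinterpolation2} applied to $D\Psi$ gives $\|D^r\Psi\|_0\leq C\,[\Psi]_1^{(m-r)/(m-1)}\|D^m\Psi\|_0^{(r-1)/(m-1)}$, while the correct interpolation on $u$ gives $\prod_i\|D^{m_i}u\|_0\leq C\,[u]_1^{m(r-1)/(m-1)}[u]_m^{(m-r)/(m-1)}$. Multiplying and regrouping,
\[
\|D^r\Psi\|_0\prod_i\|D^{m_i}u\|_0\leq C\left([\Psi]_1[u]_m\right)^{\frac{m-r}{m-1}}\left(\|D^m\Psi\|_0[u]_1^m\right)^{\frac{r-1}{m-1}},
\]
and since the two exponents sum to $1$, Young's inequality bounds this by $C\left([\Psi]_1[u]_m+\|D\Psi\|_{m-1}[u]_1^m\right)$, which after summing over the finitely many $(r, m_1,\dots,m_r)$ gives \eqref{e:chain1}. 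The proof of \eqref{e:chain0} is the analogous computation with the $u$-factors interpolated between $\|u\|_0$ and $[u]_m$ (so the exponent on $[u]_m$ is exactly $1$ and the low end is $\|u\|_0^{r-1}$), writing $\|u\|_0^{r-1}=\left(\|u\|_0^{m-1}\right)^{(r-1)/(m-1)}$ and pairing it with the same $\Psi$-interpolation.
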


\section{Estimates for transport equations}\label{s:transport_equation}

In this section we recall some well known results regarding smooth solutions of
the \emph{transport equation}:
\begin{equation}\label{e:transport}
\left\{\begin{array}{l}
\partial_t f + v\cdot  \nabla f =g,\\ 
f|_{t_0}=f_0,
\end{array}\right.
\end{equation}
where $v=v(t,x)$ is a given smooth vector field. 
We denote the advective derivative $\partial_t+v\cdot \nabla$ by $D_t$. We will consider solutions
on the entire space $\R^3$ and treat solutions on the torus simply as periodic solution in $\R^3$.

\begin{proposition}\label{p:transport_derivatives}
Assume $t>t_0$. Any solution $f$ of \eqref{e:transport} satisfies
\begin{align}
\|f (t)\|_0 &\leq \|f_0\|_0 + \int_{t_0}^t \|g (\tau)\|_0\, d\tau\,,\label{e:max_prin}\\
[f(t)]_1 &\leq [f_0]_1e^{(t-t_0)[v]_1} + \int_{t_0}^t e^{(t-\tau)[v]_1} [g (\tau)]_1\, d\tau\,,\label{e:trans_est_0}
\end{align}
and, more generally, for any $N\geq 2$ there exists a constant $C=C_N$ so that
\begin{align}
[f (t)]_N & \leq \Bigl([ f_0]_N + C(t-t_0)[v]_N[f_0]_1\Bigr)e^{C(t-t_0)[v]_1}+\nonumber\\
&\qquad +\int_{t_0}^t e^{C(t-\tau)[v]_1}\Bigl([g (\tau)]_N + (t-\tau) [v ]_N [g (\tau)]_{1}\Bigr)\,d\tau.
\label{e:trans_est_1}
\end{align}
Define $\Phi (t, \cdot)$ to be the inverse of the flux $X$ of $v$ starting at time $t_0$ as the identity
(i.e. $\frac{d}{dt} X = v (X,t)$ and $X (x, t_0 )=x$). Under the same assumptions as above:
\begin{align}
\norm{D\Phi (t) -\Id}_0&\leq e^{(t-t_0)[v]_1}-1\,,  \label{e:Dphi_near_id}\\
[\Phi (t)]_N &\leq C(t-t_0)[v]_Ne^{C(t-t_0)[v]_1} \qquad \forall N \geq 2\, .\label{e:Dphi_N}
\end{align}
\end{proposition}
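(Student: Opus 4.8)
\textbf{Plan of proof for Proposition \ref{p:transport_derivatives}.} The statement collects the standard commutator/Gronwall estimates for the transport equation \eqref{e:transport}, so the strategy is to work along the characteristics $X(x,t)$ and to differentiate the relations $f(X(x,t),t)=f_0(x)+\int_{t_0}^t g(X(x,\tau),\tau)\,d\tau$ in $x$. First I would establish \eqref{e:max_prin}: the maximum principle follows immediately because $\frac{d}{dt}f(X(x,t),t)=g(X(x,t),t)$, and integrating in $t$ and taking suprema over $x$ gives the claim (note that $X(\cdot,t)$ is a bijection of $\R^3$). For \eqref{e:trans_est_0} I would differentiate the transport equation once in space to obtain $D_t(\nabla f)=\nabla g-(\nabla v)^T\nabla f$, bound $\|\nabla v\|_0=[v]_1$, and then apply \eqref{e:max_prin} to the vector $\nabla f$, treating $-(\nabla v)^T\nabla f+\nabla g$ as the source; a Gronwall argument absorbs the $[v]_1[f]_1$ term and produces the exponential factor $e^{(t-t_0)[v]_1}$.

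For the higher-order estimate \eqref{e:trans_est_1} I would iterate: differentiating $N$ times, $D_t(D^N f)$ equals $D^N g$ minus a sum of terms of the form $D^a v\cdot D^b f$ with $a+b=N+1$, $a\geq 1$. The top-order terms are $(\nabla v)^T D^N f$ (which, as in the $N=1$ case, feeds the Gronwall loop) and $D^N v\cdot \nabla f$ (which, after using \eqref{e:trans_est_0} to control $[f(\tau)]_1$ by $[f_0]_1 e^{(\tau-t_0)[v]_1}$, contributes the terms $(t-t_0)[v]_N[f_0]_1 e^{C(t-t_0)[v]_1}$ and the corresponding source term). The intermediate terms $D^a v\cdot D^b f$ with $2\le a\le N-1$ are handled by interpolation \eqref{e:Holderinterpolation2} (or more precisely Leibniz plus the interpolation inequalities that bound $[v]_a[f]_b$ by a geometric mean of $[v]_1,[v]_N,[f]_1,[f]_N$), so that after Young's inequality they too get absorbed into the $e^{C(t-t_0)[v]_1}$ Gronwall factor at the cost of enlarging the constant $C_N$. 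Feeding the resulting differential inequality into the integral form of Gronwall's lemma yields \eqref{e:trans_est_1}.

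Finally, the flow estimates \eqref{e:Dphi_near_id}–\eqref{e:Dphi_N} are special cases of the above applied to $f=\Phi$: since $\Phi$ solves $D_t\Phi=0$ with $\Phi(t_0)=\mathrm{id}$, we have $D_t(D\Phi-\Id)=-(\nabla v)^T D\Phi=-(\nabla v)^T(D\Phi-\Id)-(\nabla v)^T$, so $\|D\Phi(t)-\Id\|_0\le\int_{t_0}^t[v]_1(1+\|D\Phi(\tau)-\Id\|_0)\,d\tau$ and Gronwall gives \eqref{e:Dphi_near_id}. For \eqref{e:Dphi_N} one differentiates $D_t\Phi=0$ $N$ times, uses $[\Phi]_1\le C$ (a consequence of \eqref{e:Dphi_near_id} once $(t-t_0)[v]_1$ is bounded, which we may assume, else the estimate is vacuous), and applies \eqref{e:trans_est_1} with $f_0=\mathrm{id}$, $g=0$, noting $[\mathrm{id}]_N=0$ for $N\ge2$ and $[\mathrm{id}]_1=1$; the term $(t-t_0)[v]_N[f_0]_1e^{C(t-t_0)[v]_1}$ is exactly the claimed bound.

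\textbf{Main obstacle.} None of the steps is genuinely hard; the only bookkeeping subtlety is the treatment of the intermediate-order terms $D^a v\cdot D^b f$ in \eqref{e:trans_est_1}, where one must be careful that the interpolation exponents are chosen so that \emph{after} applying Young's inequality every term either reproduces the Gronwall loop (coefficient $C[v]_1$ on $[f]_N$) or lands in the explicitly-written source/initial-data terms, with no leftover contribution of the wrong scaling. Getting the constants $C_N$ to depend only on $N$ (and the dimension) and not on $v$ or $f$ is the point requiring the interpolation inequalities \eqref{e:Holderinterpolation2} rather than a naive Leibniz bound.
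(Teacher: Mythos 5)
The paper does not supply a proof of Proposition \ref{p:transport_derivatives}: it is stated in Appendix B as a ``well known'' fact (it appears, with proofs, in the earlier papers in this series, e.g.\ the appendix of \cite{BDS}). Your reconstruction is correct and is the standard argument. Integrating along characteristics gives \eqref{e:max_prin}; differentiating the equation once and applying Gronwall gives \eqref{e:trans_est_0}; the $N$-th order estimate follows from Leibniz and the interpolation inequality \eqref{e:Holderinterpolation2} exactly as you describe -- since $a+b=N+1$ with $1\le a\le N$, one has $[v]_a[f]_b\le C\big([v]_1[f]_N+[v]_N[f]_1\big)$ by Young, so the former term feeds Gronwall and the latter, after substituting \eqref{e:trans_est_0} for $[f(\tau)]_1$, reproduces precisely the $(t-t_0)[v]_N[f_0]_1$ and $(t-\tau)[v]_N[g(\tau)]_1$ terms of \eqref{e:trans_est_1}. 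Your derivations of \eqref{e:Dphi_near_id} (via $u=1+\|D\Phi-\Id\|_0$ and Gronwall) and of \eqref{e:Dphi_N} (as the special case $f=\Phi$, $f_0=\mathrm{id}$, $g=0$, with $[\mathrm{id}]_N=0$ for $N\ge2$ and $[\mathrm{id}]_1=1$) are also correct, and the aside about boundedness of $(t-t_0)[v]_1$ is unnecessary since \eqref{e:trans_est_1} already carries the exponential factor.
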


\section{Constantin-E-Titi commutator estimate}\label{s:CET}

Finally, we recall the quadratic commutator estimate from \cite{ConstantinETiti} 
(cf. also with \cite[Lemma 1]{CDSz}):

\begin{proposition}\label{p:CET}
Let $f,g\in C^{\infty}(\T^3\times\T)$ and $\psi$ a standard radial smooth and compactly supported kernel. For any $r\geq 0$ we have the estimate
\[
\Bigl\|(f*\psi_\ell)( g*\psi_\ell)-(fg)*\psi_\ell\Bigr\|_r\leq C\ell^{2-r}\|f\|_1\|g\|_1\, ,
\]
where the constant $C$ depends only on $r$.
\end{proposition}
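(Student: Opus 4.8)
The plan is to deduce everything from the Constantin–E–Titi commutator identity. Writing $f_\ell:=f*\psi_\ell$ and using $\int\psi_\ell=1$ together with Fubini, one verifies the pointwise identity
\[
f_\ell(x)\,g_\ell(x)-(fg)_\ell(x)=-\frac12\iint\psi_\ell(y)\,\psi_\ell(z)\,\bigl(f(x-y)-f(x-z)\bigr)\bigl(g(x-y)-g(x-z)\bigr)\,dy\,dz
\]
(equivalently $f_\ell g_\ell-(fg)_\ell=(f_\ell-f)(g_\ell-g)-\int\psi_\ell(y)\,\delta_yf\,\delta_yg\,dy$ with $\delta_yf(x):=f(x-y)-f(x)$; either form serves). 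For $r=0$ the estimate is then immediate: $\psi_\ell$ is supported in a ball of radius $C\ell$, so on the domain of integration $|y-z|\le C\ell$, hence $|f(x-y)-f(x-z)|\le[f]_1|y-z|\le C\ell\|f\|_1$ and likewise for $g$, and since $\psi_\ell$ has unit mass this yields the bound $C\ell^2\|f\|_1\|g\|_1$.

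For an integer $r=N\ge1$ I would apply $D^\beta$ with $|\beta|=N$ to the identity and expand by Leibniz, so that each summand carries a factor $\bigl((D^\gamma f)(x-y)-(D^\gamma f)(x-z)\bigr)\bigl((D^{\beta-\gamma}g)(x-y)-(D^{\beta-\gamma}g)(x-z)\bigr)$. Whenever $|\gamma|\ge1$ (resp.\ $|\beta-\gamma|\ge1$) I would trade the $x$-derivatives on $f$ (resp.\ $g$) for $y$- and $z$-derivatives via $(D^\gamma f)(x-y)=(-1)^{|\gamma|}D^\gamma_y[f(x-y)]$ and integrate by parts in $y$ and $z$ to move all such derivatives onto the kernels $\psi_\ell(y)\psi_\ell(z)$ (and the polynomial weights produced along the way). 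Using the scaling $\|D^\mu\psi_\ell\|_{L^1}\le C_\mu\ell^{-|\mu|}$, the vanishing moments $\int D^\mu\psi=0$ for $|\mu|\ge1$ (which allows reinserting a difference $f(x-y)-f(x-z)$ in front of any derivative of $\psi_\ell$), and the support bound $|y-z|\le C\ell$, a careful accounting shows that each resulting term is a product of $\|f\|_1$, $\|g\|_1$, at most $N$ factors $\ell^{-1}$ coming from derivatives on $\psi_\ell$, and two retained factors of $\ell$; altogether $\|f_\ell g_\ell-(fg)_\ell\|_N\le C\ell^{2-N}\|f\|_1\|g\|_1$.

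Finally, for non-integer $r=N+\alpha$ with $N\in\mathbb N$ and $\alpha\in(0,1)$, I would interpolate between the integer cases $N$ and $N+1$: applying \eqref{e:Holderinterpolation} to the $N$-th order derivatives of $F:=f_\ell g_\ell-(fg)_\ell$ gives $[F]_{N+\alpha}\le C\,\|F\|_N^{1-\alpha}\|F\|_{N+1}^{\alpha}\le C\,(\ell^{2-N})^{1-\alpha}(\ell^{1-N})^{\alpha}\|f\|_1\|g\|_1=C\ell^{2-r}\|f\|_1\|g\|_1$, and since $\ell\le1$ the lower-order piece $\|F\|_N$ is dominated by the same quantity. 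The genuinely delicate point — where I expect to spend the most care — is the integration-by-parts bookkeeping of the second paragraph: one must organise the repeated integrations by parts so that no uncontrolled high-order derivative of $f$ or $g$ is ever produced (only zeroth- and first-order differences should remain on the $f$ and $g$ slots, since the right-hand side only sees $\|f\|_1\|g\|_1$) while exactly two powers of $\ell$ are preserved and at most $N$ are lost.
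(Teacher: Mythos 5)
The paper does not prove Proposition~\ref{p:CET}; it merely recalls it with a citation to \cite{ConstantinETiti} and \cite{CDSz}, so there is no ``paper proof'' to compare against. Your pointwise CET identity is correct, your $r=0$ argument is complete and correct, and the interpolation step reducing non\-integer $r$ to integer $r$ is fine. The gap is in the integer case $N\ge 1$, and it is a real one.

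The issue is with the \emph{order} in which you distribute the derivatives. You first expand $D^\beta$ by Leibniz over $\gamma$, producing factors $D^\gamma f(x-y)-D^\gamma f(x-z)$ and $D^{\beta-\gamma}g(x-y)-D^{\beta-\gamma}g(x-z)$, and then you convert $D_x^\gamma$ to $D_y^\gamma$ (resp.\ $D_z^\gamma$) and integrate by parts. But each factor has $y$ in one slot and $z$ in the other, and the $g$-bracket also carries $y$- and $z$-dependence; when you integrate by parts in $y$, the Leibniz rule sends some of those $y$-derivatives onto $g(x-y)$ rather than onto $\psi_\ell(y)$. Concretely, already for $N=2$ and the extreme term $\gamma=\beta$, after two integrations by parts you meet the term
\[
\iint D_iD_j\psi_\ell(y)\,\psi_\ell(z)\,\bigl[g(x-y)-g(x-z)\bigr]\,f(x-y)\,dy\,dz\,,
\]
which has two derivatives on the kernel but only \emph{one} retained factor of $\ell$ (from the $g$-difference; the bare $f(x-y)$ carries none), giving $\ell^{1-N}\|f\|_1\|g\|_1$, not $\ell^{2-N}\|f\|_1\|g\|_1$. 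You also meet $\iint\psi_\ell(y)\psi_\ell(z)\,D_iD_jg(x-y)\,f(x-y)\,dy\,dz=(fD_iD_jg)_\ell$, which involves $\|g\|_2$. Neither term individually obeys the bound you assert, so the claim that ``each resulting term'' has two retained $\ell$'s and at most $N$ lost is false for this route; the offending pieces cancel only after you recombine across different $\gamma$, and you never carry out that cancellation.

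The clean way to achieve literally what you want --- all $N$ derivatives landing on the kernels, leaving untouched $f$- and $g$-differences --- is to \emph{not} use Leibniz over $f$ and $g$. Instead, with $\Phi(x,y,z):=[f(x-y)-f(x-z)][g(x-y)-g(x-z)]$ one has the exact identity $D_{x_k}\Phi=-(D_{y_k}+D_{z_k})\Phi$, hence $D_x^\beta\Phi=(-1)^{N}(D_y+D_z)^\beta\Phi=(-1)^N\sum_{\mu\le\beta}\binom{\beta}{\mu}D_y^\mu D_z^{\beta-\mu}\Phi$. Integrating by parts $|\mu|$ times in $y$ and $|\beta-\mu|$ times in $z$ then gives
\[
D^\beta F(x)=-\tfrac12\sum_{\mu\le\beta}\binom{\beta}{\mu}\iint D^\mu\psi_\ell(y)\,D^{\beta-\mu}\psi_\ell(z)\,\Phi(x,y,z)\,dy\,dz\,,
\]
and now every term has \emph{both} $\ell$-factors from $|\Phi|\le C\ell^2\|f\|_1\|g\|_1$ and exactly $\ell^{-N}$ from $\|D^\mu\psi_\ell\|_{L^1}\|D^{\beta-\mu}\psi_\ell\|_{L^1}$; no vanishing-moment reinsertion is needed. (The alternative repair, staying with the Leibniz split and performing the explicit recombination so that the $\|f\|_2,\|g\|_2$ terms and the under-scaled pieces cancel, also works but is considerably messier --- it is essentially the computation you labeled ``delicate'' but did not perform.) With this substitution your proof is complete.
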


\section{Schauder Estimates}
\label{s:schauder}

We recall here the following consequences of the classical Schauder estimates  (cf.\ 
\cite[Proposition 5.1]{DS3}). 

\begin{proposition}\label{p:GT}
For any $\alpha\in (0,1)$ and any $m\in \N$ there exists a constant $C (\alpha, m)$ with the following properties.
If $\phi, \psi: \T^3\to \R$ are the unique solutions of
\[
\left\{\begin{array}{l}
\Delta \phi = f\\ \\
\fint \phi =0
\end{array}\right.
\qquad\qquad 
\left\{\begin{array}{l}
\Delta \psi = {\rm div}\, F\\ \\
\fint \psi =0
\end{array}\right. \, ,
\]
then
\begin{equation}\label{e:GT_laplace}
\|\phi\|_{m+2+\alpha} \leq C (m, \alpha) \|f\|_{m, \alpha}
\quad\mbox{and}\quad \|\psi\|_{m+1+\alpha} \leq C (m, \alpha) \|F\|_{m, \alpha}\, .
\end{equation}
Moreover we have the estimates
\begin{eqnarray}
&&\|\mathcal{R} v\|_{m+1+\alpha} \leq C (m,\alpha) \|v\|_{m+\alpha}\label{e:Schauder_R}\\
&&\|\mathcal{R} ({\rm div}\, A)\|_{m+\alpha}\leq C(m,\alpha) \|A\|_{m+\alpha}\label{e:Schauder_Rdiv}
\end{eqnarray}
\end{proposition}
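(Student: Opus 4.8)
The final statement in the excerpt is Proposition \ref{p:GT}, the Schauder-type estimates. Here is how I would approach proving it.

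\textbf{Overall strategy.} The plan is to reduce everything to the classical interior Schauder estimates for the Laplacian, combined with the fact that on the torus the relevant operators are pseudodifferential of the correct order with convolution kernels that are smooth away from the origin and have the appropriate homogeneity. The key point is that all three operators appearing — $f\mapsto\phi$, $F\mapsto\psi$, and $v\mapsto\mathcal R v$ — are built from the Newtonian potential $\Delta^{-1}$ and its composition with at most one derivative (for $\psi$), or one derivative on one side and one derivative inside (giving the $+1$ gain in the $\mathcal{R}({\rm div}\,A)$ estimate). Since $\mathcal R$ is a first-order smoothing operator (Definition \ref{d:reyn_op}), $\mathcal R v = \Delta^{-1}(\text{first-order operators in } v) + \ldots$, and each summand is of the desired type.

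\textbf{Step 1: the scalar Poisson equation.} First I would prove $\|\phi\|_{m+2+\alpha}\le C\|f\|_{m,\alpha}$. Write $\phi = \Delta^{-1}(f - \fint f)$ on $\T^3$. Differentiating the equation $m$ times reduces to the case $m=0$ for $D^\beta\phi$ with $|\beta|=m$, which solves $\Delta(D^\beta\phi)=D^\beta f$, so it suffices to bound $\|D^2\phi\|_\alpha$ in terms of $\|f\|_\alpha$. This is exactly the classical Calderón–Zygmund / Schauder estimate; on the torus one obtains it either by the Fourier-multiplier representation (the second-derivative kernel is a Calderón–Zygmund kernel plus a bounded piece) or by localizing to balls and invoking the standard $\R^3$ interior Schauder estimate together with a covering/compactness argument to control lower-order terms. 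I would cite \cite[Proposition 5.1]{DS3} and the classical references for this rather than redo it.

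\textbf{Step 2: the divergence-data equation and $\mathcal R$.} For $\psi$ with $\Delta\psi = {\rm div}\,F$, write $\psi = \Delta^{-1}{\rm div}\,F$; here $\nabla\psi = \nabla\Delta^{-1}{\rm div}\,F$ is a zero-order operator (bounded on Hölder spaces by the same Calderón–Zygmund theory), so one derivative of $\psi$ already costs nothing beyond $\|F\|_{m,\alpha}$, giving $\|\psi\|_{m+1+\alpha}\le C\|F\|_{m,\alpha}$. The estimates \eqref{e:Schauder_R} and \eqref{e:Schauder_Rdiv} then follow by inspecting the explicit formula in Definition \ref{d:reyn_op}: $\mathcal R v$ is a linear combination of $\nabla\P u$, $\nabla u$ and $(\div u)\Id$ where $\Delta u = v - \fint v$. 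Each of $\nabla u$, $\nabla\P u$, $\div u$ is (derivative)$\circ\Delta^{-1}$ applied to $v$ (the Leray projection $\P$ being itself a zero-order Calderón–Zygmund operator, bounded on $C^{m+\alpha}$), so $\mathcal R v$ gains exactly one derivative: $\|\mathcal R v\|_{m+1+\alpha}\le C\|v\|_{m+\alpha}$. For \eqref{e:Schauder_Rdiv} one applies this with $v = {\rm div}\,A$ and notes that one of the derivatives in ``(derivative)$\circ\Delta^{-1}\circ{\rm div}$'' can be paired with the divergence to produce a zero-order operator, so no net regularity is lost: $\|\mathcal R({\rm div}\,A)\|_{m+\alpha}\le C\|A\|_{m+\alpha}$.

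\textbf{Main obstacle.} The only genuinely nontrivial input is the borderline Schauder estimate of Step 1 (second derivatives of the Newtonian potential are bounded on $C^\alpha$ but \emph{not} on $C^0$), together with the boundedness on Hölder spaces of the zero-order Calderón–Zygmund operators $\P$ and $\nabla\otimes\nabla\,\Delta^{-1}$. These are entirely classical, so in the paper I would simply invoke them (as is done via the reference to \cite[Proposition 5.1]{DS3}); the ``proof'' is really just the bookkeeping of Step 2, checking that each piece of $\mathcal R$ and of $\mathcal R\circ{\rm div}$ has the claimed order. No surprises are expected, which is why the statement is relegated to an appendix.
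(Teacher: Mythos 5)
Your proposal is correct, and it takes essentially the standard route: the paper does not actually write out a proof but only cites \cite[Proposition 5.1]{DS3}, whose argument is exactly your decomposition of $\mathcal{R}$ into first-derivative pieces of $u=\Delta^{-1}(v-\fint v)$, the boundedness of $\nabla^2\Delta^{-1}$ and of the Leray projection on $C^{m+\alpha}$, and (for \eqref{e:Schauder_Rdiv}) the pairing of one gradient with the divergence to form a zero-order Calder\'on--Zygmund operator. Your bookkeeping is accurate and nothing is missing.
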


\section{Stationary phase lemma}\label{s:stationary}

We recall here the following simple facts. For completeness we include the proof given in \cite{DS3}.

\begin{proposition}\label{p:stat_phase}
(i) Let $k\in\Z^3\setminus\{0\}$ and $\lambda\geq 1$ be fixed. 
For any $a\in C^{\infty}(\T^3)$ and $m\in\N$ we have
\begin{equation}\label{e:average}
\left|\int_{\T^3}a(x)e^{i\lambda k\cdot x}\,dx\right|\leq \frac{[a]_m}{\lambda^m}.
\end{equation}

(ii) Let $k\in\Z^3\setminus\{0\}$ be fixed. For a smooth vector field $a\in C^{\infty}(\T^3;\R^3)$ let 
$F(x):=a(x)e^{i\lambda k\cdot x}$. Then we have
\begin{equation}\label{e:R(F)}
\|\RR(F)\|_{\alpha}\leq \frac{C}{\lambda^{1-\alpha}}\|a\|_0+\frac{C}{\lambda^{m-\alpha}}[a]_m+\frac{C}{\lambda^m}[a]_{m+\alpha},
\end{equation}
where $C=C(\alpha,m)$.
\end{proposition}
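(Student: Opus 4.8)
The final statement to prove is Proposition~\ref{p:stat_phase}, the stationary phase lemma.

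\textbf{Proof plan for part (i).}
The plan is to integrate by parts repeatedly against the oscillatory factor $e^{i\lambda k\cdot x}$. First I would fix a coordinate direction $j$ with $k_j\neq 0$ (such a direction exists since $k\neq 0$) and write $e^{i\lambda k\cdot x} = (i\lambda k_j)^{-1}\partial_{x_j} e^{i\lambda k\cdot x}$. Integrating by parts once over the torus (no boundary terms, by periodicity) moves a derivative onto $a$ and produces a factor $(i\lambda k_j)^{-1}$. Iterating this $m$ times along the same direction yields
\[
\int_{\T^3} a(x) e^{i\lambda k\cdot x}\,dx = \frac{(-1)^m}{(i\lambda k_j)^m}\int_{\T^3} \partial_{x_j}^m a(x)\, e^{i\lambda k\cdot x}\,dx\, .
\]
Taking absolute values, using $|k_j|\geq 1$ (integer, nonzero) and $|e^{i\lambda k\cdot x}|=1$, and bounding $\|\partial_{x_j}^m a\|_0\leq [a]_m$ gives \eqref{e:average}. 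This part is routine and I expect no obstacle.

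\textbf{Proof plan for part (ii).}
Here I would combine the Schauder estimate \eqref{e:Schauder_R} for the operator $\RR$ with a careful splitting of $F(x)=a(x)e^{i\lambda k\cdot x}$ to extract the gain in $\lambda$. The key is that $\RR$ essentially acts like $\mathrm{div}^{-1}$ (Lemma~\ref{l:reyn}), so one wants to take advantage of the fact that $F$, while not small in $C^0$, is nearly ``mean zero on fine scales'' because of the oscillation. Concretely I would first record from part (i) that $\left|\fint_{\T^3} F\right| \leq \lambda^{-m}[a]_m$, so one may replace $F$ by $F - \fint F$ at the cost of a term already bounded by the right-hand side of \eqref{e:R(F)}. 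For the mean-zero part, the plan is to write $e^{i\lambda k\cdot x} = \mathrm{div}\!\left(\frac{k}{i\lambda|k|^2}e^{i\lambda k\cdot x}\right)$ (valid since $k\neq0$), so that
\[
a(x)e^{i\lambda k\cdot x} = \mathrm{div}\Big(\tfrac{a(x)k}{i\lambda|k|^2}e^{i\lambda k\cdot x}\Big) - \tfrac{(\nabla a(x)\cdot k)k}{i\lambda|k|^2}e^{i\lambda k\cdot x}\, .
\]
Applying \eqref{e:Schauder_Rdiv} to the first (divergence) piece gives a bound $\tfrac{C}{\lambda}\|a\, e^{i\lambda k\cdot\,}\|_{\alpha}\leq \tfrac{C}{\lambda}(\|a\|_0\lambda^\alpha + [a]_\alpha)$, which after interpolation contributes the $\lambda^{\alpha-1}\|a\|_0$ and lower-order terms in \eqref{e:R(F)}. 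The second piece has the same structure as $F$ but with $a$ replaced by (a constant multiple of) $\nabla a\cdot k$ and an extra factor $\lambda^{-1}$; iterating this reduction $m$ times peels off $m$ powers of $\lambda^{-1}$ while raising the number of derivatives on $a$, and a final application of \eqref{e:Schauder_R} to the remaining term $\lambda^{-m}(\text{polynomial in }k)\,(\partial^m a)\,e^{i\lambda k\cdot x}$ yields $\tfrac{C}{\lambda^{m-\alpha}}[a]_m + \tfrac{C}{\lambda^m}[a]_{m+\alpha}$. Collecting the three types of contributions gives \eqref{e:R(F)}.

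\textbf{Main obstacle.}
The routine parts are the integrations by parts; the one place requiring care is bookkeeping the H\"older norms of products like $a(x)e^{i\lambda k\cdot x}$ in the iterated reduction for part (ii): each Schauder step produces a $C^{\alpha}$-norm of an oscillatory product, and one must consistently use $\|e^{i\lambda k\cdot x}\|_\alpha\leq C\lambda^\alpha$ together with the Leibniz/interpolation inequalities \eqref{e:Holderproduct}, \eqref{e:Holderinterpolation2} so that the powers of $\lambda$ line up exactly as in \eqref{e:R(F)} and no spurious factor of $\lambda^\alpha$ is lost. I expect this accounting — rather than any genuinely hard analysis — to be the only delicate point, and since the proof is quoted from \cite{DS3} one may simply reproduce it.
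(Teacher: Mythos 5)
Your plan for part (i) is exactly the argument used in \cite{DS3} (pick a direction $j$ with $k_j\neq 0$, integrate by parts $m$ times against $\partial_{x_j}$, use $|k_j|\ge 1$), and your plan for part (ii) — subtract the mean via part (i), peel off inverse powers of $\lambda$ by repeatedly writing $a\,e^{i\lambda k\cdot x}=\div(\,\cdot\,)+\text{remainder}$, apply the Schauder estimates \eqref{e:Schauder_R}, \eqref{e:Schauder_Rdiv}, and clean up the intermediate seminorms with interpolation \eqref{e:Holderinterpolation2} and Young (using $\lambda\geq 1$) — is likewise the route taken there, so the proposal is essentially the paper's proof. One small correction: since $a$ is vector-valued, the divergence identity should read $a\,e^{i\lambda k\cdot x}=\div\bigl(\frac{a\otimes k}{i\lambda|k|^2}e^{i\lambda k\cdot x}\bigr)-\frac{(k\cdot\nabla)a}{i\lambda|k|^2}e^{i\lambda k\cdot x}$ (you wrote an extra factor of $k$ in the remainder), but this does not affect the argument.
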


\section{One further commutator estimate}\label{s:commutator}

\begin{proposition}\label{p:commutator}
Let $k\in\Z^3\setminus\{0\}$ be fixed. For any smooth vector field $a\in C^\infty  (\T^3;\R^3)$ and any smooth function $b$, if we set $F(x):=a(x)e^{i\lambda k\cdot x}$, we then have
\begin{align}
&\|[b, \mathcal{R}] (F)\|_\alpha \leq  \bar{C} \lambda^{\alpha-2} \|b\|_1 \|a\|_0
+ C \lambda^{\alpha-m} \left(\|a\|_{m-1+\alpha} \|b\|_{1+\alpha} + \|a\|_\alpha
\|b\|_{m+\alpha}\right)\label{e:main_est_commutator}
\end{align}
where $C=C(\alpha,m)$ and $\bar{C}$ is a universal constant.
\end{proposition}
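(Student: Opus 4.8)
The plan is to prove Proposition~\ref{p:commutator} by reducing it to the already-available Schauder estimate \eqref{e:Schauder_R} for $\mathcal{R}$ together with the commutator structure of $\mathcal{R}$ as a (order $-1$) pseudodifferential-type operator. First I would write $[b,\mathcal{R}](F) = b\,\mathcal{R}(F) - \mathcal{R}(bF)$ and split the analysis according to the oscillation scale $\lambda$. The key observation is that $\mathcal{R}$ has a kernel representation $\mathcal{R}(F)(x) = \int K(x-y) F(y)\, dy$ (plus lower-order pieces from the Leray projection, all of the same nature) with $K$ homogeneous of degree $-2$ in three dimensions, so that $[b,\mathcal{R}](F)(x) = \int K(x-y)\,(b(y)-b(x))\,F(y)\, dy$. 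Writing $F(y) = a(y) e^{i\lambda k\cdot y}$, the extra factor $b(y)-b(x)$ vanishes to first order at $y=x$, which formally gains one derivative: morally $[b,\mathcal{R}]$ behaves like an operator of order $-2$ applied against the data $\nabla b \cdot a$, up to commutator remainders. This is exactly what produces the gain $\lambda^{\alpha-2}\|b\|_1\|a\|_0$ in the leading term of \eqref{e:main_est_commutator}.

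The concrete execution I would follow mirrors the stationary-phase proof of Proposition~\ref{p:stat_phase}. Introduce the operator $\mathcal{R}_\lambda$ obtained by conjugating $\mathcal{R}$ with the modulation $e^{i\lambda k\cdot x}$, i.e. $\mathcal{R}(ae^{i\lambda k\cdot x}) = e^{i\lambda k\cdot x} \mathcal{R}_\lambda(a)$, where $\mathcal{R}_\lambda$ is a Fourier multiplier obtained by shifting the symbol of $\mathcal{R}$ by $\lambda k$. Since $|\lambda k|\geq \lambda \geq 1$ and $\mathcal R$ is a matrix of (homogeneous of degree $-1$) multipliers composed with one derivative, the shifted symbol and all its derivatives satisfy $|\partial_\xi^\gamma m(\xi + \lambda k)| \leq C_\gamma \lambda^{-1}$ on the region $|\xi|\lesssim \lambda$, with an extra $\lambda^{-1}$ for each derivative landing on the symbol. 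Then $[b,\mathcal R](F) = e^{i\lambda k\cdot x}\big(b\,\mathcal R_\lambda a - \mathcal R_\lambda(ba)\big)$, and I expand $\mathcal R_\lambda(ba)$ using the symbol calculus / Taylor expansion of $m(\xi+\lambda k)$ around $\xi = 0$: the zeroth order term cancels against $b\,\mathcal R_\lambda a$ (since at $\xi=0$ the multiplier is just a constant matrix times $b$), the first-order term produces $\partial_\xi m(\lambda k)\cdot \widehat{\nabla b\, a}$ which is $O(\lambda^{-2})$ applied to $\nabla b \cdot a$ and gives the main term with norm $\lesssim \lambda^{\alpha - 2}\|b\|_1\|a\|_0$ after a Schauder-type estimate, and the $m$-th order Taylor remainder is controlled by $\lambda^{-m}$ times $m$ derivatives distributed on $a$ and $b$, producing the error term $\lambda^{\alpha-m}(\|a\|_{m-1+\alpha}\|b\|_{1+\alpha} + \|a\|_\alpha\|b\|_{m+\alpha})$ via \eqref{e:Holderproduct} and interpolation \eqref{e:Holderinterpolation2}. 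The Leray-projection pieces in the definition of $\mathcal R$ (Definition~\ref{d:reyn_op}) are of the same pseudodifferential order and are handled identically, contributing only to the universal/structural constants.

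The step I expect to be the main obstacle is making the ``symbol Taylor expansion'' rigorous at the level of $C^\alpha$ estimates rather than $L^2$ or $C^k$ estimates: the multiplier $m(\xi+\lambda k)$ is smooth but not compactly supported, so one cannot literally Fourier-truncate, and one must instead work with the physical-space kernel $K_\lambda(z) = e^{-i\lambda k\cdot z} K(z)$ and exploit the decay/cancellation of $K$ together with non-stationary phase (integration by parts in $z$) to justify the expansion and control the remainder uniformly. Concretely, I would decompose the kernel into a near-diagonal part $|z|\lesssim \lambda^{-1}$, where one uses the homogeneity $|K(z)|\lesssim |z|^{-2}$, $|\nabla K(z)|\lesssim |z|^{-3}$ and the first-order vanishing of $b(x-z)-b(x)$ directly, and a far part $|z|\gtrsim \lambda^{-1}$, where repeated integration by parts against $e^{-i\lambda k\cdot z}$ gains arbitrary powers of $(\lambda|z|)^{-1}$ at the cost of derivatives of $K$ and of the amplitude; summing these contributions and optimizing gives precisely the claimed powers of $\lambda$. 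This is the same mechanism underlying Proposition~\ref{p:stat_phase}(ii), so I would model the bookkeeping on that proof, being careful that here there are \emph{two} amplitudes ($a$ and $\nabla b$, resp.\ higher derivatives of $b$) whose H\"older norms must be separated using \eqref{e:Holderproduct}, which is what forces the slightly asymmetric form of the error term in \eqref{e:main_est_commutator}.
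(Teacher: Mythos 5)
The paper does not actually include a proof of Proposition~\ref{p:commutator}: the appendix simply states the estimate and the document ends immediately after. (An estimate of precisely this type, with essentially the same statement, is proved in the appendix of \cite{BDS}, and the present paper silently defers to it.) So there is no paper-internal argument to compare against, and your proposal should be judged on its own terms.

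On those terms, what you outline is the standard argument from that circle of papers and it is, in structure, correct. Conjugating $\mathcal{R}$ by the modulation to get a Fourier multiplier with shifted symbol $m(\xi+\lambda k)$, Taylor expanding the shifted symbol to order $m$, observing that the zeroth-order piece cancels identically in $b\,\mathcal{R}_\lambda a-\mathcal{R}_\lambda(ba)$ while the first-order piece leaves the residue $a\,\partial_\xi m(\lambda k)\!\cdot\!\nabla b$, and noting that $\partial_\xi m$ is homogeneous of degree $-2$ so that this residue is $O(\lambda^{-2})\|b\|_1\|a\|_0$, is exactly the mechanism behind the leading term. The asymmetry of the error term is also correctly diagnosed: one derivative is always spent on $b$ inside the bracket $b(x-z)-b(x)$, so the two extreme distributions of the remaining $m-1$ derivatives are $\|a\|_{m-1+\alpha}\|b\|_{1+\alpha}$ and $\|a\|_{\alpha}\|b\|_{m+\alpha}$; intermediate cases are absorbed by interpolation.

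Two places deserve an explicit argument rather than a gesture. First, the constant $\bar C$ in front of the leading term is asserted to be \emph{universal}, whereas Schauder/Calder\'on--Zygmund estimates ordinarily give constants depending on $\alpha$. The reason a universal $\bar C$ is achievable is that the leading contribution is of the form $e^{i\lambda k\cdot x}g$ with $\|g\|_0\leq C\lambda^{-2}\|b\|_1\|a\|_0$, and the only $\alpha$-dependence that enters when passing to $\|\cdot\|_\alpha$ comes from $[e^{i\lambda k\cdot x}]_\alpha\leq 2(\lambda|k|)^\alpha$ (elementary and with universal constant), while $[g]_\alpha$ itself can be dominated by the $C(\alpha,m)\lambda^{\alpha-m}$ remainder; you should record this, or the universality claim looks unjustified. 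Second, the rigorous version of the symbol Taylor expansion is, as you anticipate, the delicate step. The cleanest route is the one you indicate: pass to the kernel $K$ of $\mathcal{R}$ (homogeneous of degree $-2$ on $\R^3$, periodized), note that $K(z)\bigl(b(x-z)-b(x)\bigr)=O(|z|^{-1})$ near the origin and hence locally integrable, and integrate by parts $m$ times against $e^{-i\lambda k\cdot z}$; each integration by parts trades a $z$-derivative (on $K$, on $b(x-z)-b(x)$, or on $a(x-z)$) for a factor $\lambda^{-1}$, and the near/far split you describe controls the boundary and tail contributions. With that bookkeeping carried out, the proposal gives the claimed estimate.
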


\bibliographystyle{acm}
\bibliography{eulerbib}

\end{document}